\definecolor{mygreen}{rgb}{0,0.7,0.3}
\definecolor{myblue}{rgb}{0,0.50,1.20}%cyan={0,1.74,2.39}
\definecolor{myorange}{rgb}{1,0.5,0.1}
\definecolor{fillred}{rgb}{1,0.9,0.9}
\definecolor{fillgreen}{rgb}{0.9,1,0.9}
 \definecolor{refkey}{rgb}{0,0.7,0.3}
 \definecolor{labelkey}{rgb}{1,0,0}
\numberwithin{equation}{section}
\declaretheorem[
  style=plain,
  name=Theorem,
  numberwithin=section
]{thm}
\declaretheorem[
  style=plain,
  name=Proposition,
  sibling=thm
]{prop}
\declaretheorem[
  style=plain,
  name=Corollary,
  sibling=thm
]{cor}
\declaretheorem[
  style=plain,
  name=Lemma,
  sibling=thm
]{lem}
\declaretheorem[
  style=plain,
  name=Theorem
]{introthm}
\declaretheorem[
  style=plain,
  name=Conjecture,
  sibling=introthm
]{introconj}
\declaretheorem[
  style=definition,
  name=Definition,
  sibling=thm
]{dfn}
\declaretheorem[
  style=definition,
  name=Conjecture,
  sibling=thm
]{conj}
\declaretheorem[
  style=definition,
  name=Notation,
  sibling=thm
]{conv}
\declaretheorem[
  style=remark,
  name=Remark,
  sibling=thm
]{rem}
\crefname{thm}{Theorem}{Theorems}
\crefname{prop}{Proposition}{Propositions}
\crefname{cor}{Corollary}{Corollaries}
\crefname{lem}{Lemma}{Lemmas}
\crefname{sublem}{Sublemma}{Sublemmas}
\crefname{introthm}{Theorem}{Theorems}
\crefname{introcor}{Corollary}{Corollaries}
\crefname{introconj}{Conjecture}{Conjectures}
\crefname{dfn}{Definition}{Definitions}
\crefname{defi}{Definition}{Definitions}
\crefname{ex}{Example}{Examples}
\crefname{claim}{Claim}{Claims}
\crefname{conj}{Conjecture}{Conjectures}
\crefname{conv}{Notation}{Notations}
\crefname{rmk}{Remark}{Remarks}
\crefname{rem}{Remark}{Remarks}
\crefname{prob}{Problem}{Problems}
\crefname{figure}{Figure}{Figures}
\crefname{section}{Section}{Sections}
\crefname{subsection}{Section}{Sections}
\crefname{appendix}{Appendix}{Appendices}
\newcommand{\bZ}{\mathbb{Z}}
\newcommand{\bR}{\mathbb{R}}
\newcommand{\bC}{\mathbb{C}}
\newcommand{\bM}{\mathbb{M}}
\newcommand{\bi}{{\boldsymbol{i}}}
\newcommand{\bp}{{\mathbf{p}}}
\newcommand{\bbp}{{\boldsymbol{p}}}
\newcommand{\br}{{\mathbf{r}}}
\newcommand{\by}{{\mathbf{y}}}
\newcommand{\cD}{\mathcal{D}}
\newcommand{\cF}{\mathcal{F}}
\newcommand{\cO}{\mathcal{O}}
\def\P{{\mathcal{P}}}%------------shortened
\newcommand{\cW}{\mathcal{W}}
\newcommand{\X}{\mathcal{X}}%--------------shortened
\newcommand{\ve}{\varepsilon}
\newcommand{\dbra}[1]{[ #1 ]}
\newcommand{\ddbra}[1]{\llbracket #1 \rrbracket}
\newcommand{\bD}{{\accentset{\mbox{\bfseries .}}{\tri} }}
\newcommand{\inv}{\mathrm{inv}}
\newcommand{\Hom}{\mathrm{Hom}}
\newcommand{\tri}{\triangle}
\newcommand{\sgn}{\mathrm{sgn}}
\newcommand{\Teich}{Teichm\"uller}
\newcommand{\interior}{{\mathrm{int}}}
\newcommand{\op}{\mathrm{op}}
\newcommand{\opU}{\mathsf{U}}
\newcommand{\opP}{\mathsf{P}}
\newcommand{\opX}{\mathsf{X}}
\newcommand{\opY}{\mathsf{Y}}
\newcommand{\opA}{\mathsf{A}}
\newcommand{\opT}{\mathsf{T}}
\newcommand{\opS}{\mathsf{S}}
\newcommand{\oph}{\mathsf{h}}
\newcommand{\opE}{\mathsf{E}}
\newcommand{\opF}{\mathsf{F}}
\newcommand{\opG}{\mathsf{G}}
\newcommand{\opV}{\mathsf{V}}
\newcommand{\opK}{\mathsf{K}}
\newcommand{\bra}[1]{\langle #1 |}
\newcommand{\ket}[1]{| #1 \rangle}
\newcommand{\braket}[2]{\langle #1 | #2 \rangle}
\newcommand{\mb}[1]{\underline{#1}}
\newcommand{\mbb}[1]{\uuline{#1}}
\newcommand{\wt}{w}
\newcommand{\tdot}{\scalebox{0.4}{$\bullet$}}
\newcommand{\bbe}{\mathbbm{e}}
\DeclareMathOperator{\Frac}{\mathrm{Frac}}
\DeclareMathOperator{\Ad}{\mathrm{Ad}}
\newcommand{\oset}[3][0ex]{%
  \mathrel{\mathop{#3}\limits^{
    \vbox to#1{\kern-2\ex@
    \hbox{$\scriptstyle#2$}\vss}}}}
\newcommand{\osetnear}[3][0ex]{%
  \mathrel{\mathop{#3}\limits^{
    \vbox to#1{\kern-.3\ex@
    \hbox{$\scriptstyle#2$}\vss}}}}
\tikzset{pics/.cd,
handle/.style={code={
\draw (-0.72,0) to[bend left] (0.72,0);
\draw (-0.9,0.1) to[bend right] (0.9,0.1);
}}}
\tikzset{
  % style to add an arrow in the middle of a path
  mid arrow/.style={postaction={decorate,decoration={
        markings,
        mark=at position .5 with {\arrow[#1]{stealth}}
      }}},
}
\tikzset{->-/.style 2 args={
	postaction={decorate},
	decoration={markings, mark=at position #1 with {\arrow[thick, #2]{>}}} 
    },
    ->-/.default={0.5}{}
}%refer arrow tips
\tikzset{-<-/.style 2 args={
	postaction={decorate},
	decoration={markings, mark=at position #1 with {\arrow[thick, #2]{<}}} 
    },
    -<-/.default={0.5}{}
}
\tikzset{
    partial ellipse/.style args={#1:#2:#3}{
        insert path={+ (#1:#3) arc (#1:#2:#3)}
    }
}
\newcommand{\dast}[1]{\draw #1 node[scale=0.5]{$\bullet$}}
\title{Cyclic quantum Teichm\"uller theory}
\author[Tsukasa Ishibashi]{Tsukasa Ishibashi}
\address{Tsukasa Ishibashi, Mathematical Institute, Tohoku University, 
6-3 Aoba, Aramaki, Aoba-ku, Sendai, Miyagi 980-8578, Japan.}
\email{tsukasa.ishibashi.a6@tohoku.ac.jp}
\date{\today}
\begin{document}

\begin{abstract}
Based on the pioneering ideas of Kashaev \cite{Kas98,Kas00}, we present a fully explicit construction of a finite-dimensional projective representation of the dotted Ptolemy groupoid when the quantum parameter $q$ is a root of unity, which reproduces the central charge of the $SU(2)$ Wess--Zumino--Witten model. A basic ingredient is the cyclic quantum dilogarithm \cite{FK}. 

A notable contribution of this work is a reinterpretation of the relations among the parameters in the cyclic quantum dilogarithm to ensure its pentagon identity in terms of the \emph{mutations of coefficients}. 
In particular, we find the dual roles of these parameters: as coefficients in quantum cluster algebras and as the central characters of quantum cluster variables. We also provide a geometric method to decompose the space of quantum states into irreducible modules of the Chekhov--Fock algebra. 

We introduce two versions of quantum intertwiners associated with a mapping class: on the entire representation space and on each irreducible component, each being an explicit composite of cyclic quantum dilogarithm operators. We prove that the former gives an intertwiner of local representations of quantum \Teich\ space in the sense of Bai--Bonahon--Liu \cite{BBL}, and also coincides with the transpose of the reduced quantum hyperbolic operator of Baseilhac--Benedetti \cite{BB-GD}. The mutation relation of coefficients is equivalent to the quantum gluing equation.
The irreducible intertwiner conjecturally coincides with the Bonahon--Liu intertwiner \cite{BL}, and we give a partial evidence. 

%Finally, we describe the connection to the Kashaev's $6j$-symbol \cite{Kas94,Kas95} within our framework, which elucidates the fundamental relation between our quantum invariant and the Kashaev's link invariant. 
\end{abstract}

\maketitle

\setcounter{tocdepth}{1} %depth of table of contents
\tableofcontents

\section{Introduction}\label{sec:intro}
Quantization of the \Teich\ theory is a program motivated by physical observations originally presented in the Verlinde's seminal paper \cite{Verlinde}, which has since drawn significant interest from the mathematical community. In a modern mathematical formulation, a key output of a quantum \Teich\ theory associated with a (marked) surface $\Sigma$ is a (projective) representation of (certain variants of) the \emph{Ptolemy groupoid}, whose objects are ideal triangulations of $\Sigma$ and morphisms are flips among them. Specifically, the program seeks to associate a vector space $V_\tri$ (`space of quantum states') to each triangulation $\tri$, and a linear operator $f_\omega: V_{\tri'} \to V_\tri$ (`quantum coordinate change') for a morphism $\omega:\tri \to \tri'$. 
These vector spaces should also carry actions of certain quantum tori (of local `quantum observables').
A fundamental consistency condition for these linear operators is the \emph{pentagon relation}, which involves five flips within the pentagon.

The representation can be finite-dimensional or infinite-dimensional, depending on the properties of the quantum parameter $\hbar$. 

\subsubsection*{Infinite dimensional case}
Infinite-dimensional representations have been extensively studied when $\hbar$ is generic (say $\hbar>0$), with two notable constructions provided by Kashaev \cite{Kas98,Kas99,Kas00,Kas01} and Chekhov--Fock \cite{CF99}. In both constructions, the representation spaces are $L^2$-spaces on certain coordinate spaces, and the flip operators are constructed using the \emph{Faddeev's (non-compact) quantum dilogarithm} \cite{FK}. The relation between these two constructions is elucidated by Teschner \cite{Tes07}, who demonstrated that these theories yield an infinite-dimensional modular functor corresponding to the Liouville CFT. Both of the constructions carry certain action of the quantum torus associated with each triangulation.
%, called the \emph{Chekhov--Fock algebra}. 

The Chekhov--Fock construction is significantly generalized into a representation theory of \emph{quantum cluster varieties} by Fock--Goncharov \cite{FG08} and Goncharov--Shen \cite{GS19}, including the higher \Teich\ spaces. The higher-rank analogs of these constructions are anticipated to produce the Toda CFTs, as discussed in \cite{SS17,GS19}. On the other hand, Kashaev's construction has been extended to a $(2+1)$-dimensional TQFT known as the \emph{Andersen--Kashaev TQFT} (or the \emph{\Teich\ TQFT}) \cite{AK-TQFT}, although its higher-rank generalizations remain undeveloped (see \cite{Kim16} for preliminary attempts). 

Notably, a key distinction between the two constructions lies in their underlying groupoid settings: the Chekhov--Fock construction provides a representation of the Ptolemy groupoid, whereas Kashaev's construction yields a projective representation of the \emph{dotted Ptolemy groupoid}, which parametrizes triangulations with `dots' and offers a refined control over the polarizations. 

\subsubsection*{Finite dimensional case}
Finite-dimensional representations have been studied from slightly different perspectives when $\hbar$ is rational (or equivalently $q:=e^{\pi i \hbar}$ is a root of unity), focusing on producing \emph{quantum invariants} of mapping classes or an appropriate class of 3-manifolds (with additional structures). Such studies include the works of Bonahon--Liu \cite{BL}, Bai--Bonahon--Liu \cite{BBL}, and Baseilhac--Benedetti \cite{BB-GD}, where the latter one is extended to the quantum hyperbolic field theory (QHFT) \cite{BB-GT,BB-AGT} and generalizes the Kashaev's link invariants \cite{Kas94,Kas95}. However, their approaches do not provide an explicit projective representation of the (dotted) Ptolemy groupoid. 

\bigskip
\paragraph{\textbf{Aim of the paper.}}
The primary goal of this paper is to provide a fully explicit construction of the finite-dimensional version of the Kashaev representation (building upon the ideas outlined in \cite{Kas98,Kas00}) in a way parallel to the works in the infinite-dimensional setting. We give all the quantum coordinate change maps in operator form, where the core ingredient is the \emph{cyclic quantum dilogarithm} \cite{FK}. 
We use a re-parametrized version of the cyclic quantum dilogarithm in literature, in order to clarify the role of the parameters in the framework of cluster algebra. 
We then obtain a unified (but partially conjectural) perspective for the works of Bonahon--Liu and Baseilhac--Benedetti, as outlined in \cref{fig:invariants}. 
This theory can be generalized to any quantum cluster varieties at roots of unity. This will be discussed in a future work. 

\subsection{Cyclic quantum dilogarithm and its pentagon relation}

Let $N$ be a positive odd integer. Let $q^2$ be a primitive $N$-th root of unity, and choose the square-root $q$ so that $q^N=1$ (see \cref{sec:dilog}). The cyclic quantum dilogarithm we use in this paper is given by the following expression:
\begin{align}\label{introeq:q-dilog}
    \Psi_\bp(X):=& \prod_{j=0}^{N-1} ( p^- + q^{2j+1} p^+X )^{j/N},
\end{align} 
where the parameter $\bp:=(p^+,p^-)$ is taken from the Fermat curve 
\begin{align*}
    \cF_N:=\{\bp=(p^+,p^-) \in (\bC^\ast)^2 \mid (p^+)^N+(p^-)^N=1\},
\end{align*}
and $X$ is a variable such that $X^N=1$. 
It is a re-parametrization of the version introduced by Kashaev \cite{Kas00}. See \cref{subsec:dilog} for the relation with the versions of cyclic quantum dilogarithms used in the literature. Our key observation is that the tuple $\bp=(p^+,p^-)$ plays the role of \emph{coefficient tuples} in the cluster algebra, while the ratio $y:=p^+/p^-$ is the \emph{exchange ratio} (also known as $y$-variables). 

It is well-known that the cyclic quantum dilogarithm \eqref{introeq:q-dilog} satisfies a pentagon relation \cite{FK}, where the parameters $\bp$ must satisfy several monomial relations. See \cref{thm:pentagon} in our notation. We show that these seemingly mysterious relations are, in fact, equivalent to the mutation relations of coefficients \cite[(1.5)]{FZ-CA2}:

\begin{introthm}[Reformulation of pentagon relation: \cref{thm:parameter_coeff}]\label{introthm:pentagon}
Let $\bp,\br \in \overline{\cF}_N$, and $\opU,\opP$ be operators satisfying $\opU^N=\opP^N=1$, $\opU\opP=q^2 \opP\opU$, acting on the standard representation $V_N$ \eqref{eq:std_rep}. Then the pentagon relation
\begin{align*}
    \Psi_\bp(\opU) \Psi_{\br'}(\opP) = \Psi_\br (\opP) \Psi_{\bp'}(q^{-1}\opU\opP) \Psi_{\bp''}(\opU) 
\end{align*}
holds on $V_N$ if and only if $\bp',\br',\bp'' \in \overline{\cF}_N$ satisfy the mutation relations
\begin{align}\label{introeq:pentagon_coeff_rel}
    \begin{aligned}
    \frac{{r'}^+}{{r'}^-} &= \frac{{r}^+}{{r}^-} (p^-)^{-1}, &
    \frac{{p''}^-}{{p''}^+} &= \frac{{p}^-}{{p}^+} ({r'}^-)^{-1}, &  
    \frac{{p'}^-}{{p'}^+} &= \frac{{r'}^-}{{r'}^+} ({p''}^+)^{-1}, \\
    \frac{{p''}^+}{{p''}^-} &= \frac{{r}^-}{{r}^+} {p'}^+, &
    \frac{{p'}^+}{{p'}^-} &= \frac{{p}^+}{{p}^-} {r}^+.
    \end{aligned}
\end{align}
\end{introthm}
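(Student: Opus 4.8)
The plan is to reduce the operator identity to a single functional equation for the cyclic quantum dilogarithm and then match that equation term-by-term against the coefficient mutation relations. First I would recall the explicit form of the (abstract) pentagon identity for $\Psi_\bp$ as stated in \cref{thm:pentagon}: it asserts that $\Psi_\bp(\opU)\Psi_{\br'}(\opP)$ factors as a product of three dilogarithms in the prescribed order \emph{provided} the parameters $\bp,\br,\bp',\br',\bp''$ satisfy a certain system of monomial relations $R$. So the task splits into two halves: (i) show that the system $R$ coming from the known pentagon identity is \emph{equivalent} to the system \eqref{introeq:pentagon_coeff_rel}; and (ii) verify there are no hidden constraints forcing a particular choice of $N$-th roots, i.e.\ that the passage between multiplicative relations in $\overline{\cF}_N$ and their exponents is clean because $N$ is odd.

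For half (i), I would proceed as follows. The operators $\opU$, $\opP$, and $q^{-1}\opU\opP$ form (up to the $q^2$-commutation) the three ``sides'' appearing in the classical quantum-dilogarithm pentagon $\Psi(\opU)\Psi(\opP) = \Psi(\opP)\Psi(q^{-1}\opU\opP)\Psi(\opU)$; the content here is the parameter dependence. I would substitute the defining product \eqref{introeq:q-dilog} into each factor, collect the $N$ linear factors $(p^- + q^{2j+1}p^+ X)$ on both sides, and compare. The key algebraic fact is that two expressions $\prod_j (a + q^{2j+1} b X)^{j/N}$ and $\prod_j (c + q^{2j+1} d X)^{j/N}$ (for $X^N=1$) agree as operators precisely when $(a:b)$ and $(c:d)$ define the same point of $\overline{\cF}_N$, together with the normalization condition baked into the Fermat curve. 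Carrying this out, the five independent monomial relations that emerge are exactly the five displayed equations in \eqref{introeq:pentagon_coeff_rel}, once one writes everything in terms of the ratios $y = p^+/p^-$, etc. Concretely: the first relation is the $y$-mutation in direction of the first index with coefficient sign $+$, the next two are $y$-mutations with the opposite sign, and the last two record how the \emph{coefficients} $p^\pm$ (not just their ratios) transform — these are precisely Fomin--Zelevinsky's \cite[(1.5)]{FZ-CA2} coefficient mutation rules, specialized so that the ``addition'' in the semifield is replaced by the multiplicative constraint on $\overline{\cF}_N$.

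I expect the main obstacle to be bookkeeping in half (i): matching the indexing of the product so that the shifted variable $q^{-1}\opU\opP$ produces, after reindexing $j \mapsto j+1$ or similar, exactly the factors on the left-hand side. One must be careful that $q^N = 1$ (not $-1$) so that the reindexing is genuinely cyclic and no stray sign or phase appears; this is where the hypothesis that $N$ is odd and the particular choice of square root $q$ are used. A secondary subtlety is that $\overline{\cF}_N$ (the closure, allowing $p^\pm$ to vanish) rather than $\cF_N$ is the correct parameter space: I would check that the degenerate loci $p^+=0$ or $p^-=0$ correspond to the dilogarithm degenerating to a monomial (a ``frozen'' coefficient), and that the mutation relations \eqref{introeq:pentagon_coeff_rel} remain well-posed there. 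For half (ii), once the multiplicative relations are established on $\overline{\cF}_N$, no extraction of roots is needed — the relations \eqref{introeq:pentagon_coeff_rel} are themselves stated multiplicatively — so this half is essentially automatic, but I would remark on it to make clear the equivalence is unconditional. Finally I would note that the ``if'' and ``only if'' directions both follow from the same term-by-term comparison: the pentagon holds iff all $N$ paired linear factors match iff the five monomial relations hold.
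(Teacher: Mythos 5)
Your proposal has a genuine gap: it never engages with the mechanism that actually produces the parameter constraints, and the substitute you offer does not work. The five dilogarithm factors in the pentagon are functions of the pairwise non-commuting operators $\opU$, $\opP$ and $q^{-1}\opU\opP$, each diagonal in a different basis, so there is no sense in which one can "collect the $N$ linear factors $(p^-+q^{2j+1}p^+X)$ on both sides and compare": the factors do not pair up, and your "key algebraic fact" (that $\prod_j(a+q^{2j+1}bX)^{j/N}$ determines $(a,b)$) concerns two functions of a \emph{single} cyclic operator and says nothing about an identity mixing three different ones. The paper's route is entirely different: one forms $F=(\Psi_\bp(\opU)\Psi_{\br'}(\opP))^{-1}\Psi_\br(\opP)\Psi_{\bp'}(q^{-1}\opU\opP)\Psi_{\bp''}(\opU)$, uses the $q$-difference relations of \cref{lem:q-diff_eq} to compute the conjugation of $\opU$ and $\opP$ through each factor, and the requirement $\Ad_F(\opU)=\opU$, $\Ad_F(\opP)=\opP$ yields, by term-wise comparison inside the Weyl algebra, the six monomial relations \eqref{eq:pentagon_parameter_rel}; this only shows $F$ is a scalar, and a separate argument (unital determinant of each $\Psi$ from \cref{lem:psi_product}, hence $F^N=1$, then continuity in $(\bp,\br)$ and the degenerate specialization $\br=\br'=\bp'=(0,1)$) is needed to conclude $F=1$. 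None of these steps — the adjoint computation, the scalar ambiguity, or its normalization — appears in your outline, and the claim that "if and only if" both follow from factor matching is therefore unsupported.

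The part of the statement that is genuinely new in the paper, \cref{thm:parameter_coeff}, is the two-way algebraic equivalence between that six-relation system and the five mutation relations \eqref{introeq:pentagon_coeff_rel}. You assert that "the five independent monomial relations that emerge are exactly the five displayed equations," but you never write down the six relations $p^-={p'}^-{p''}^-$, $p^+{r'}^-={p'}^-{p''}^+$, $p^+{r'}^+={p'}^+$, ${r'}^-=r^-{p'}^-$, ${r'}^+{p''}^-=r^+$, ${r'}^+{p''}^+=r^-{p'}^+$, nor carry out either direction of the derivation (e.g.\ extracting ${p''}^+=p^+r^-$ from the five ratio relations, which is the pivot of the converse). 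Your remarks on $q^N=1$ and on the boundary point $(0,1)\in\overline{\cF}_N$ are sensible but peripheral; to repair the proof you would need to (a) reproduce or cite the adjoint-action derivation of the six relations together with the scalar normalization, and (b) supply the explicit algebraic equivalence of the two systems.
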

The five relations in \eqref{introeq:pentagon_coeff_rel} are exactly the mutation relations among the coefficient variables along the pentagon cycle (also known as the $A_2$-cycle): see \cref{lem:coeff_rel=mutation}. 
Therefore, the pentagon relation of the cyclic quantum dilogarithm fits in with the spirit of \cite{KN}.

% \begin{introconj}[Cyclic quantum dilogarithm identity]
% Suppose that 
% \begin{align*}
%     \bs=\bs[1] \xrightarrow{\mu_{k_1}} \bs[2] \xrightarrow{\mu_{k_2}} \dots \xrightarrow{\mu_{k_{L}}} \bs[L+1] \xrightarrow{\nu} \bs[0]
% \end{align*}
% is a period of a quantum seed $\bs$, and let $(\epsilon_1,\dots,\epsilon_L)$ be the associated tropical sign sequence. Then the operator identity 
% \begin{align*}
%     \Psi_{\bp_1^{\epsilon_1}}(\opX_{\epsilon_1\alpha_1})^{\epsilon_1} \dots \Psi_{\bp_L^{\epsilon_L}}(\opX_{\epsilon_L\alpha_L})^{\epsilon_L} =\mathsf{1}_V
% \end{align*}
% holds on any irreducible representation $V$, where $\bp_j:=\bp_{k_j}[j]$ for $j=1,\dots,L$.
% \end{introconj}
% We remark that for a period of a quantum seed, the coefficient tuples along the mutation sequence are uniquely determined from the initial coefficient tuple. (See *** for the pentagon case)

\subsection{Finite-dimensional projective representation of the dotted Ptolemy groupoid}
Using the cyclic quantum dilogarithm, we construct a finite-dimensional projective representation of the dotted Ptolemy groupoid $\mathrm{Pt}_\Sigma^{\tdot}$ (\cref{def:Ptolemy_groupoid}). To each dotted triangulation $\bD$ of $\Sigma$, we assign a finite-dimensional vector space $V_{\bD}$, and to each dot rotation (resp. flip), we assign a linear operator $\opA_v$ (resp. $\opT_{vw}$) between these spaces. See \cref{subsec:functor} for details. The space $V_{\bD}$ quantizes the Kashaev coordinates on an extension of the \Teich\ space \cite{Kas98}.
The operator $\opA_v$ acts as a kind of `Fourier transformation' of order $3$ changing the polarizations, and $\opT_{vw}$ is constructed using the cyclic quantum dilogarithm. 

We introduce the concept of \emph{global coefficients} $\bbp_\Sigma \in \cF_\Sigma$ (\cref{def:coeff_space}) based on the reformulation in \cref{introthm:pentagon}, which is needed to ensure the pentagon identity `everywhere' in the dotted Ptolemy groupoid. A global coefficient contains the data of framed $PSL_2(\bC)$-local systems on $\Sigma$ (\cref{cor:proj_cluster}), as well as a consistent choice of $N$-th roots of its cross ratio parameters. 

The following result was already stated by Kashaev \cite[Example 3]{Kas00}, although our global treatment of the parameters (coefficients) appears to be novel:

\begin{introthm}[\cref{thm:AT_relations}]\label{introthm:functor}
For any global coefficient $\bbp_\Sigma=\{\bp_\tri\}_{\tri \in \mathrm{Pt}_\Sigma} \in \cF_\Sigma$ (\cref{def:coeff_space}), we have a projective representation $V_\ast(\bbp_\Sigma): \mathrm{Pt}^{\tdot}_\Sigma \to \mathrm{Vect}_\bC^\op$: 
\begin{enumerate}
    \item $\opA_v^3=\mathsf{1}$. 
    \item $\opT_{uv}\opT_{uw}\opT_{vw}=\opT_{vw}\opT_{uv}$. 
    \item $\opA_v \opT_{vw} \opA_w =\opA_w \opT_{wv} \opA_v$.
    \item $\opT_{vw}\opA_v \opT_{wv} = \zeta \opA_v \opA_w \opP_{(12)}$, where $\zeta =e^{\frac{\pi i}{6}(1-\frac 1N)}$. 
\end{enumerate}
\end{introthm}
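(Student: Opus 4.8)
The plan is to verify the four relations (1)--(4) one at a time, reducing each to an identity of operators on a single standard module $V_N$ (or a tensor product of two copies), since the global statement over a surface follows by locality: all four relations involve only the finitely many dotted triangulations differing near one or two triangles, and the coefficients $\bp_\tri$ attached to the relevant triangles are constrained precisely by the mutation relations built into the space of coefficients $\cF_\Sigma$. So after setting up the local models, the content is purely representation-theoretic.

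For (1), $\opA_v^3 = \mathsf 1$, I would write $\opA_v$ explicitly in terms of the generators $\opU,\opP$ (it is the ``order-3 Fourier transform'' intertwining the two polarizations of the quantum torus at a triangle), and check directly that its cube is the identity --- this is a finite Gauss-sum computation in $\bZ/N$, and since $N$ is odd the relevant quadratic Gauss sums are nonzero and the normalization can be chosen so no projective scalar survives. For (2), $\opT_{uv}\opT_{uw}\opT_{vw} = \opT_{vw}\opT_{uv}$, I expect this to be essentially a repackaging of the pentagon relation already proved in \cref{introthm:pentagon}: writing each $\opT$ as a cyclic quantum dilogarithm in the appropriate operators $\opU,\opP$ attached to the edges, the five-term identity $\Psi_\bp(\opU)\Psi_{\br'}(\opP) = \Psi_\br(\opP)\Psi_{\bp'}(q^{-1}\opU\opP)\Psi_{\bp''}(\opU)$ becomes exactly this relation once one matches the operator arguments with the Kashaev coordinates on the three triangles $u,v,w$ of the pentagon, and the matching of parameters is guaranteed by \eqref{introeq:pentagon_coeff_rel}, which are the mutation relations encoded in $\cF_\Sigma$.

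For (3) and (4), which mix $\opA$'s and $\opT$'s, I would combine the explicit Fourier-transform formula for $\opA$ with the product formula for $\Psi_\bp$. Relation (3), $\opA_v\opT_{vw}\opA_w = \opA_w\opT_{wv}\opA_v$, should follow by conjugating the cyclic quantum dilogarithm by the Fourier transform: conjugation by $\opA$ sends $\opU \mapsto \opP$ (up to an explicit monomial correction) and correspondingly transforms the parameter $\bp_\tri$ by an automorphism of $\overline{\cF}_N$, so both sides become the same operator after using the functional/inversion properties of $\Psi_\bp$ listed in \cref{subsec:dilog}. Relation (4), $\opT_{vw}\opA_v\opT_{wv} = \zeta\,\opA_v\opA_w\opP_{(12)}$ with $\zeta = e^{\frac{\pi i}{6}(1-\frac1N)}$, is the one I expect to be the main obstacle: it is the ``inversion'' relation and pins down the precise projective cocycle, so I must track every scalar. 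The strategy is to expand both sides as explicit $N\times N$ (or $N^2\times N^2$) matrices, reduce the left side using the pentagon-type move together with the known value of the ``central'' product $\prod_j(\cdots)$ for $\Psi_\bp$ on the Fermat curve, and identify the resulting overall constant. The factor $1 - 1/N$ in the exponent strongly suggests that the computation funnels down to the evaluation of a Gauss sum $\sum_{k} q^{k^2}$ (whose modulus is $\sqrt N$ and whose argument contributes the $e^{\pi i/6}$ from the classical quadratic Gauss sum formula) together with a correction $q^{-\text{(something)}} = e^{-\pi i/(6N)\cdot(\cdots)}$ coming from the finite-$N$ truncation of the dilogarithm; assembling these correctly, and checking that the two $\opA$'s and the permutation $\opP_{(12)}$ on $V_N\otimes V_N$ appear with no leftover unitary factor, is the delicate bookkeeping step. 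Throughout (4) I would lean on the concrete Weil-representation / Heisenberg-group description of $\opA_v$ so that all traces of Gauss sums are manifest.
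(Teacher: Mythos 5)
Your plan follows the paper's proof essentially step for step: (1) is the direct Gauss-sum/Fourier computation for the order-3 operator $\opA_v$, (2) reduces (after stripping off the monomial parts $\opS$) to the operator pentagon identity of \cref{thm:pentagon} with the coefficients matched by the mutation rule via \cref{thm:parameter_coeff}, and (3)--(4) are proved by conjugating the dilogarithm and the $\opS$-part by $\opA$ and tracking all scalars, with the cocycle in (4) coming from the inversion relation together with the Gauss sum $\sum_k q^{k^2}$, exactly as in \cref{subsec:AT_relations_proof}. One detail to adjust when carrying this out: in (3) the parameter $\bp_\alpha^\tri$ is literally the same on both sides (no automorphism of $\overline{\cF}_N$ and no inversion property is involved --- the identity rests on the symmetry $\opA_v\opS_{vw}\opA_v^{-1}=\opA_w\opS_{wv}\opA_w^{-1}$), whereas the inversion relation $\Psi_\bp(q^{2k})\Psi_{\bp^{-1}}(q^{-2k})=\gamma(k)\zeta_\inv$ is used only in (4), where the mutation rule forces the returning flip $\opT_{wv}$ to carry the coefficient $\bp^{-1}$.
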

As already noted by Kashaev \cite{Kas00} (see also \cite{Kas99}), the phase factor $\zeta$ agrees with the central charge of the $SU(2)$ Wess--Zumino--Witten model of level $k=N-2$. 
%$2c=\frac{6k}{k+2}=6-\frac{12}{N}$
%Each $V_{\bD}$ admits an action of a Weyl algebra $\cW_{\bD}$, which is intertwined by each morphism. 
The projective functor $V_\ast(\bbp_\Sigma)$ is referred to as a \emph{cyclic representation} of $\mathrm{Pt}^{\tdot}_\Sigma$ or a \emph{cyclic quantum \Teich\ theory}. In view of Bai--Bonahon--Liu \cite{BBL}, it corresponds to a local representation of the quantum \Teich\ space, as we will mention later.

\subsection{Chekhov--Fock algebras and quantum cluster Poisson transformations}
Fix a global coefficient $\bbp_\Sigma \in \cF_\Sigma$. 
To each ideal triangulation $\tri$ of $\Sigma$, we associate a quantum torus $\X_{\by^{\tri}}^{\tri}$, referred to as the \emph{Chekhov--Fock algebra} (or the \emph{quantum cluster Poisson torus}). See \cref{def:CF-alg}. Here we fix central characters as $X_\alpha^N=y_\alpha^N$ for each generator $X_\alpha$, where $y_\alpha=p^+_\alpha/p^-_\alpha$ is the exchange ratio defined from $\bbp_\Sigma$. 

In this setting, the quantum cluster Poisson transformations among these quantum tori can be expressed using the cyclic quantum dilogarithm:
\begin{align*}
    \mu_\kappa^\ast= \Ad_{\Psi_{\bp_\kappa}(\overline{X}_\kappa)} \circ \mu'_{\kappa,+},
\end{align*}
providing a root of unity analogue of the Fock--Goncharov decomposition \cite{FG09,KN}. Here $\overline{X}_\kappa:=y_\kappa^{-1}X_\kappa$, which satisfies $\overline{X}_\kappa^N=1$. See \cref{lem:mutation_decomp} for a signed version. 
Notably, the $\overline{X}$-variables are related by \emph{quantum cluster Poisson transformations with coefficients} (\cref{lem:cluster_transf_Xbar}) first introduced in \cite{CMM}, demonstrating that the parameter $\bbp_\Sigma$ serves dual roles as both coefficients and central characters. 

This algebraic structure naturally generalizes to any quantum cluster variety \cite{FG09}, including the quantum higher \Teich\ theory \cite{FG06,GS19}. Furthermore, one can develop the root of unity version of the Fock--Goncharov representation described in \cite{FG08}, where the intertwiners are also constructed using the cyclic quantum dilogarithm. In this context, a quantum invariant for mutation loops is produced, generalizing \eqref{introeq:q-trace} below. 
This direction will be explored in future work. 
Another representation is obtained by the irreducible decomposition of $V_\ast(\bbp_\Sigma)$, which is discussed in the next subsection. 

\subsection{$V_\ast(\bbp_\Sigma)$ as modules over the Chekhov--Fock algebra}
Given a dotted triangulation $\bD$ over $\tri$, we construct a canonical embedding $\iota_{\bD}: \X_{\by^\tri}^{\tri} \to \cW_{\bD}$ (\cref{lem:Weyl_embedding}). 
The projective functor $V_\ast(\bbp_\Sigma)$ is compatible with the quantum cluster Poisson transformations in the following way:

\begin{introthm}[Compatibility of transformations]
In the dotted Ptolemy groupoid of any marked surface $\Sigma$, we have the following commutative diagrams:
\begin{enumerate}
    \item For any dot rotation $A_v=[\tri_1,\tri_2]$, 
    \begin{equation*}
    \begin{tikzcd}
        \X^\tri_{\by^\tri} \ar[r,"\iota_{\bD_2}"] \ar[d,phantom,"=",sloped] & \cW_{\bD_2} \ar[d,"\Ad({\opA_v})"] \\
        \X^\tri_{\by^\tri} \ar[r,"\iota_{\bD_1}"'] & \cW_{\bD_1},
    \end{tikzcd}
    \end{equation*}
    where $\tri:=\tri_1=\tri_2$ is the common underlying triangulation. 
    \item  For any flip $T_{vw}=[\tri_1,\tri_2]$,
    \begin{equation*}
    \begin{tikzcd}
        \X^{\tri_2}_{\by^{\tri_2}} \ar[r,"\iota_{\bD_2}"] \ar[d,"\mu_\kappa^\ast"'] & \cW_{\bD_2} \ar[d,"\Ad(\opT_{vw})"] \\
        \X^{\tri_1}_{\by^{\tri_1}} \ar[r,"\iota_{\bD_1}"'] & \cW_{\bD_1}.
    \end{tikzcd}
    \end{equation*}
    Here $\mu_\kappa: \tri_1 \to \tri_2$ denotes the flip of the underlying triangulation.
\end{enumerate}
\end{introthm}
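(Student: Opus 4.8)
The plan is to reduce each square to a check on the generators $X_\alpha$, $\alpha$ an edge, of the Chekhov--Fock algebra, and then to use the locality of the operators $\opA_v$ and $\opT_{vw}$. Since the horizontal maps $\iota_{\bD_1},\iota_{\bD_2}$ are algebra embeddings (\cref{lem:Weyl_embedding}) and the vertical maps are (anti-)automorphisms of the ambient algebras, commutativity on each $X_\alpha$ suffices. The operators $\opA_v$ and $\opT_{vw}$ are assembled from cyclic quantum dilogarithm operators and monomial/``Fourier'' operators touching only the Weyl generators of $\cW_{\bD_i}$ attached to the triangle(s) incident to $v$ (resp.\ to the flipped edge $\kappa$); hence for an edge $\alpha$ disjoint from that local picture, $\Ad(\opA_v)$ (resp.\ $\Ad(\opT_{vw})$) fixes $\iota_{\bD_2}(X_\alpha)$ while the identity (resp.\ $\mu^\ast_\kappa$) carries $X_\alpha$ to the tautologically matching generator, so commutativity for such $\alpha$ is immediate from the definition of the embeddings. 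What remains is a finite, explicit computation involving only the edges adjacent to $v$ or to $\kappa$.

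In the dot-rotation case $A_v=[\tri_1,\tri_2]$ (so $\tri:=\tri_1=\tri_2$), $\opA_v$ is the order-$3$ ``finite Fourier transform'' changing the polarization at $v$, and I would compare the defining formulas for $\iota_{\bD_1}$ and $\iota_{\bD_2}$: the two embeddings differ only in how they encode the contributions of the triangle flagged by $v$, through a cyclic relabelling of its Weyl generators together with a monomial twist recording the moved dot. The identity $\Ad(\opA_v)\circ\iota_{\bD_2}(X_\alpha)=\iota_{\bD_1}(X_\alpha)$ on the affected edges then follows because $\opA_v$ normalizes the local quantum torus and acts on its generators by exactly that cyclic permutation with the matching scalar --- the same linear algebra that underlies \cref{thm:AT_relations}.

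For the flip $T_{vw}=[\tri_1,\tri_2]$ I would combine the Fock--Goncharov type decomposition $\mu^\ast_\kappa=\Ad_{\Psi_{\bp_\kappa}(\overline{X}_\kappa)}\circ\mu'_{\kappa,+}$ (\cref{lem:mutation_decomp}) with the parallel factorisation of the flip operator: $\opT_{vw}$ is a monomial/Fourier operator post-composed with conjugation by a cyclic quantum dilogarithm operator $\Psi_{\bp_\kappa}(\Xi)$, where $\Xi$ is the distinguished Weyl generator with $\Xi^N=1$ to which the relevant embedding sends $\overline{X}_\kappa$. The square then splits into a \emph{monomial square}, comparing $\mu'_{\kappa,+}$ with conjugation by the monomial/Fourier part through $\iota_{\bD_1},\iota_{\bD_2}$, and a \emph{dilogarithm square} $\iota\circ\Ad_{\Psi_{\bp_\kappa}(\overline{X}_\kappa)}=\Ad_{\Psi_{\bp_\kappa}(\Xi)}\circ\iota$; the latter reduces to the identification $\iota(\overline{X}_\kappa)=\Xi$, since $\Psi_{\bp_\kappa}$ is a Laurent function of a single variable and hence intertwined by any algebra map matching that variable, together with the functional equation of $\Psi_\bp$ (conjugation by $\Psi_\bp(\opU)$ multiplies $\opP$ by a linear function of $\opU$ whose coefficients are built from $\bp$) used to pin down the images of the neighbouring generators. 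The hypothesis $\bbp_\Sigma\in\cF_\Sigma$ is what reconciles the two sides: it forces $\bp_\kappa$ at $\tri_1$ and at $\tri_2$ to be related by the coefficient mutation \eqref{introeq:pentagon_coeff_rel} of \cref{introthm:pentagon}, equivalently the $\overline{X}$-variables transform by quantum cluster Poisson transformations with coefficients (\cref{lem:cluster_transf_Xbar}).

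The step I expect to be the real obstacle is the monomial square: the monomial part of a quantum cluster Poisson mutation is naturally written in Chekhov--Fock coordinates, whereas the monomial/Fourier part of $\opT_{vw}$ lives in the Kashaev/Weyl coordinates of $\cW_{\bD_i}$, so one must verify not merely that the underlying exchange-matrix monomials agree but that all powers of $q$ coming from Weyl ordering and all scalar twists involving $y_\alpha=p^+_\alpha/p^-_\alpha$ in the semidirect-product structure of $\cW_{\bD_i}$ cancel exactly; this is precisely where the Fermat relation $(p^+_\alpha)^N+(p^-_\alpha)^N=1$ and the coefficient-mutation identities \eqref{introeq:pentagon_coeff_rel} at the edges surrounding $\kappa$ enter. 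Once the monomial data is matched, the dilogarithm square and the reduction to generators make the rest of both diagrams formal.
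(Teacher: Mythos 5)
Your proposal is correct and follows essentially the same route as the paper: part (1) is proved via the cyclic action $\opP_v\mapsto\opU_v^{-1}\mapsto\dbra{\opU_v\opP_v^{-1}}\mapsto\opP_v$ of $\Ad(\opA_v)$ (\cref{lem:A_comm}) matching the relabelled Weyl weights at the dotted triangle, and part (2) by splitting $\opT_{vw}$ into the dilogarithm factor, which realizes $\mu^{\#}_{\kappa,+}$ because $\iota_{\bD_1}(\overline{X}_\kappa)=\dbra{\opP_v^{-1}\opU_v\opP_w}$, and the monomial factor $\opS_{vw}$, whose adjoint action is checked against $\mu'_{\kappa,+}$ on the five edges of the flipped square (all other generators being untouched). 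The only minor misplacement is your expectation that the Fermat relation and coefficient-mutation identities must be invoked inside the monomial square: in the paper these are already absorbed into the rescaled-variable form of the mutation (\cref{lem:cluster_transf_Xbar}, \cref{lem:mutation_decomp}), so the monomial square is just a short Weyl-ordering computation with $\opS_{vw}$ and powers of $q$.
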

See \cite{GuoLiu} for a related statement. 
In particular, the Chekhov--Fock algebras also act on the representation spaces $V_{\bD}$ in a compatible way. Although $V_{\bD}$ is not irreducible as an $\X_{\by^\tri}^{\tri}$-module, the modules $V_{\bD}$ yield a \emph{local representation} of the quantum \Teich\ space $\mathcal{T}_\Sigma^q$ in the sense of Bai--Bonahon--Liu \cite{BBL}. Instead of irreducibility, it exhibits a good behavior under the cutting and gluing of marked surfaces. 

It has turned out that an appropriate selection of an intertwiner between two local representations is a subtle problem, as pointed out and carefully analyzed by Mazzoli \cite{Mazzoli}. 
In our approach, the intertwiners among the modules are now explicitly given. See \cref{subsec:BBL} for a detailed discussion. 

\subsection{Irreducible decomposition of $V_\ast(\bbp_\Sigma)$}\label{introsec:irred_decomp}
Following the ideas of \cite{Kas98} and \cite[Section 10.3]{Tes07}, we further obtain an irreducible decomposition
\begin{align*}
    V_{\bD} = \bigoplus_{\boldsymbol{\lambda}} V_{\bD}(L,\boldsymbol{\lambda})
\end{align*}
by fixing a Lagrangian sub-lattice $L \subset H_1(\Sigma^{\mathrm{cl}};\bZ)$. Here, $\Sigma^{\mathrm{cl}}$ is the closed surface of genus $g \geq 0$ obtained from $\Sigma$ by capping a disk to each boundary component, and closing interior punctures, and $\boldsymbol{\lambda}:\widehat{L} \to \bZ_N$ runs over additive characters on the maximal isotropic sub-lattice $\widehat{L} \subset H_1(\Sigma^\ast,\partial\Sigma^\ast;\bZ)$ generated by elements of $L$ and certain elements associated with punctures and boundary components of $\Sigma$. Each irreducible component is a simultaneous eigenspace for specific \emph{homology operators} associated with $\widehat{L}$ (see \cref{thm:irrep}). These operators commute with all the generators of the Chekhov--Fock algebra. 
Note that an irreducible decomposition of a local representation has been obtained by Toulisse \cite{Toulisse}. Our formulation clarifies the geometric meaning of the decomposition indices, specifying them as additive characters $\boldsymbol{\lambda}$.

Since the homology operators are compatible with elementary operators $\opA_v$ and $\opT_{vw}$, we obtain: 
\begin{introthm}[\cref{thm:functor_irrep}]
For any global coefficient $\bbp_\Sigma \in \cF_\Sigma$, Lagrangian sub-lattice $L \subset H_1(\Sigma^{\mathrm{cl}};\bZ)$ and additive character $\boldsymbol{\lambda}:\widehat{L} \to \bZ_N$, the projective representation $V_\ast(\bbp_\Sigma)$ restricts to the projective representation
\begin{align*}
    V_\ast(\bbp_\Sigma;L,\boldsymbol{\lambda}): \mathrm{Pt}^{\tdot}_\Sigma \to \mathrm{Vect}_\bC^\op, \quad \bD \mapsto V_\bD(L,\boldsymbol{\lambda}).
\end{align*}
\end{introthm}
We call $V_\ast(\bbp_\Sigma;L,\boldsymbol{\lambda})$ the \emph{irreducible cyclic quantum \Teich\ theory} associated with a global coefficient $\bbp_\Sigma \in \cF_\Sigma$, Lagrangian sub-lattice $L \subset H_1(\Sigma^{\mathrm{cl}};\bZ)$ and additive character $\boldsymbol{\lambda}:\widehat{L} \to \bZ_N$. 

In \cref{subsec:example}, we describe a few quantum geometric structures that the irreducible cyclic quantum \Teich\ theory carries:

\begin{itemize}
    \item Thrice-punctured sphere and `Conformal block': Assign a triple $(\lambda,\mu,\nu)$ of $\bZ_N$-weights to the punctures. The irreducible component $V_{\bD}(\lambda,\mu,\nu)$ is always $1$-dimensional (\cref{prop:conformal_block}). Its generator resembles the `conformal block', though the Clebsch--Gordan rule is absent. 
    %\item Once-punctured torus: The Lagrangian subspace $L$ corresponds to a chosen homology cycle. We analyze the change of polarizations and compute the quantum intertwiners for Dehn twists in two directions. 
    \item Once-punctured disk and the quantum group: A canonical embedding of the quantum group $U_q(\mathfrak{sl}_2)$ into the Chekhov--Fock algebra is constructed by Ip \cite{Ip} and Schrader--Shapiro \cite{SS19}. We show that our irreducible theory recovers the cyclic representations of $U_q(\mathfrak{sl}_2)$ \cite{DJMM91} through this embedding.
\end{itemize}

\subsection{Quantum invariant of mapping classes I: Local quantum trace}
The mapping class group $MC(\Sigma)$ naturally acts on the dotted Ptolemy groupoid. 
%Let $V_\ast(\bbp_\Sigma): \mathrm{Pt}^{\tdot}_\Sigma \to \mathrm{Vect}_\bC^\op$ be the cyclic quantum \Teich\ theory associated with a global coefficient $\bbp_\Sigma \in \cF_\Sigma$. 
This action in turn induces an ``action'' on the cyclic quantum \Teich\ theory $V_\ast(\bbp_\Sigma)$, in the following sense. A mapping class $\phi \in MC(\Sigma)$ induces a canonical linear map $V_{\phi}^{\bD}(\bbp_\Sigma): V_{\bD} \xrightarrow{\sim} V_{\bD}$ defined up to powers of $\zeta$ in \cref{introthm:functor}, which we refer to as the \emph{local intertwiner} of $\phi$.
The absolute value of its trace 
\begin{align}\label{introeq:q-trace}
    Z_N(\phi;\bbp_\Sigma):= \big|\mathrm{Tr}(V_{\phi}^{\bD}(\bp_\Sigma))\big|
\end{align}
is independent of $\bD$, provided that $\bbp_\Sigma$ satisfies the $\phi$-invariance condition \eqref{eq:fixed}. This condition in particular implies that the image of $\bbp_\Sigma$ in the cluster Poisson variety under the map \eqref{eq:proj_cluster} is fixed under the natural action of $\phi$. The latter space is birationally isomorphic to a framed version of the $PSL_2(\bC)$-character variety. We call the invariant $Z_N(\phi;\bbp_\Sigma)$ the \emph{local quantum trace} of $\phi$. The local intertwiner is automatically normalized so that $|\det V_{\phi}^\bD(\bp_\Sigma)|=1$.
%, aligning with the normalization condition of the Bonahon--Liu's invariant \cite{BL}. 
In \cref{subsec:MCG}, we provide explicit computations of the quantum intertwiners for Dehn twists and a pseudo-Anosov mapping class whose mapping torus gives the figure-eight knot complement. 

Though the existence of a global $\phi$-invariant coefficient is a subtle problem (see \cref{thm:hyp_str}), we only need to find a $\phi$-invariant coefficient $\bbp_{\underline{\omega}}$ over a specified flip sequence $\underline{\omega}$ representing $\phi$ (\cref{def:coeff_local}) to compute the local intertwiner. We thus use the notation $V_\phi^\bD(\bbp_{\underline{\omega}})$ and $Z_N(\phi;\bbp_{\underline{\omega}})$.
This `local' choice includes the data of a $PSL_2(\bC)$-local system on the mapping torus $M_\phi$ (\cref{def:mapping_torus}), and is also the data used in the related earlier works (e.g. \emph{QH gluing variety} of \cite{BB-AGT}). 

% The following conjecture should be closely related to the Bonahon--Wong--Yang's volume conjecture \cite{BWY-1,BWY-2}:
% \begin{introconj}
% For a pseudo-Anosov mapping class $\phi \in MC(\Sigma)$ and $\bp^{\mathrm{hyp}}_\Sigma \in \cF_\Sigma$ the coefficient tuple corresponding to the complete hyperbolic structure of the mapping torus $M_\phi$, we have
% \begin{align*}
%     \lim_{N\to \infty} \frac 1 N \log|\mathrm{Tr}_N(\phi;\bp^{\mathrm{hyp}}_\Sigma)| = \frac{1}{4\pi} \mathrm{Vol}_{\mathrm{hyp}}(M_\phi),
% \end{align*}
% where $\mathrm{Vol}_{\mathrm{hyp}}(M_\phi)$ denotes the volume of the complete hyperbolic structure of $M_\phi$.
% \end{introconj}

\subsection{Relation to Baseilhac--Benedetti's quantum hyperbolic field theory (QHFT)}
Kashaev \cite{Kas95} introduced a state-sum invariant of links, now known as the \emph{Kashaev invariant}, by assembling the tetrahedron operator associated with each tetrahedron in a triangulated link exterior. Murakami--Murakami \cite{MM} proved that is coincides with the value at $e^{2\pi i/N}$ of the Jones polynomial. 
Kashaev's original construction was further clarified by Baseilhac--Benedetti, and generalized to the \emph{quantum hyperbolic field theory (QHFT)} \cite{BB-GT,BB-AGT}. The latter produces a quantum invariant of 3-manifolds with $PSL_2(\bC)$-characters (with a bit more additional structures), called the \emph{quantum hyperbolic invariant} (QHI for short), which recovers the Kashaev invariant at the trivial character \cite{BB-GGT}. If the 3-manifold has a non-empty boundary, it provides a \emph{quantum hyperbolic operator} (QH operator for short) between certain vector spaces associated to the boundary surfaces. 
In \cite{BB-QT}, it is shown that the QHI splits into two parts, the \emph{symmetry defect} and the \emph{reduced QHI}. 
In \cite{BB-GD}, it is shown that the transpose of the reduced QH operator of a mapping cylinder gives a canonical intertwiner of local representations of the quantum \Teich\ space. 

In \cref{sec:invariant}, we will prove that it also coincides with our quantum intertwiner:

\begin{introthm}[\cref{thm:QHFT}]\label{introthm:QHFT}
Take a mapping class $\phi \in MC(\Sigma)$, a sequence $\omega: \bD \to \phi^{-1}(\bD)$ of elementary moves in $\mathrm{Pt}^{\tdot}_\Sigma$ representing $\phi$, and a $\phi$-invariant coefficient $\bbp_{\underline{\omega}}$ over the underlying flip sequence $\underline{\omega}$. Then
\begin{itemize}
    \item $\bbp_{\underline{\omega}}$ determines a quantum shape assignment $\mathbf{w}$ (\cref{prop:coeff_shape}).
    \item $\omega$ determines a 3D triangulation $\Delta_{C_\phi}^{(3)}$ of the mapping cylinder $C_\phi$ with a weak branching $\widetilde{b}$. 
\end{itemize}
Upon these correspondences, the transposed reduced QH operator $\mathcal{H}_N^{\mathrm{red}}(\Delta_{C_\phi}^{(3)},\widetilde{b},\mathbf{w})^\top$ is conjugate to the quantum intertwiner $\opV_\phi^{\bD}(\bbp_{\underline{\omega}})$. In particular, the absolute value of the reduced QHI of the mapping torus $M_\phi$ (\cref{def:mapping_torus}) and the $PSL_2(\bC)$-character $\rho$ determined by $\bbp_{\underline{\omega}}$ coincides with the local quantum trace $Z_N(\phi;\bbp_{\underline{\omega}})$. 
\end{introthm}
Note that $\mathbf{w}$ is a `quantum' (or the $N$-th root) version of shape parameters of $\rho$. 
\cref{prop:coeff_shape} relates the mutation rule of coefficients to the quantum shape assignment, giving a `quantum' version of \cite[Proposition 4.1]{NTY}. An explicit conjugation of our projective representation $V_\ast(\bbp_\Sigma)$ is given in \cref{prop:two_realizations}.
% In our formulation, the parameter $-(p^+/p^-)^N$ corresponds to the shape parameter of the $PSL_2(\bC)$-character (see \cref{cor:proj_cluster} and Nagao--Terashima--Yamazaki \cite{NTY}). In \cref{subsec:QHFT}, we construct another projective representation $V'_\ast(\bbp_\Sigma)$ of the dotted Ptolemy groupoid, whose flip operator $\opT'_{vw}$ coincides with the tetrahedron operator of Baseilhac--Benedetti \cite{BB-GD}. Moreover, we give an explicit equivalence of the two representations $V_\ast(\bbp_\Sigma)$ and $V'_\ast(\bbp_\Sigma)$ (\cref{prop:two_realizations}). Thus we establish a connetion to the 2-dimensional content of the QHFT. \textcolor{red}{(Conjecture on reduced QHI ??)}

\subsection{Quantum invariant of mapping classes II: Irreducible quantum trace}
Recall the irreducible decomposition of $V_\ast(\bbp_\Sigma)$ discussed in \cref{introsec:irred_decomp}.
The local intertwiner of a mapping class undergoes the following reduction: 

\begin{introthm}[\cref{thm:polarized_intertwiner}]
Let $\phi \in MC(\Sigma)$ be a mapping class, and $\bbp_{\underline{\omega}}$ a $\phi$-invariant coefficient over a flip sequence $\underline{\omega}$ representing $\phi$. 
Given a Lagrangian sub-lattice $L \subset H_1(\Sigma^{\mathrm{cl}};\bZ)$ and an additive character $\boldsymbol{\lambda}:\widehat{L} \to \bZ_N$, the local intertwiner $\opV_\phi^{\bD}(\bbp_{\underline{\omega}})$ induces an intertwiner of $\X_{\by^\tri}^{\tri}$-modules
\begin{align*}
    \opV_{\phi}^{\bD}(\bbp_{\underline{\omega}};L,\boldsymbol{\lambda}): V_{\bD}(L,\boldsymbol{\lambda}) \xrightarrow{\sim} V_{\bD}(\phi(L),\phi_\ast\boldsymbol{\lambda}),
\end{align*}
where $\phi_\ast\boldsymbol{\lambda}: \phi(\widehat{L}) \to \bZ_N$ is given by $\phi_\ast\boldsymbol{\lambda}(\phi([c]))=\boldsymbol{\lambda}([c])$ for $[c] \in \widehat{L}$.
\end{introthm}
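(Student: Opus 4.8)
The plan is to reduce the statement to one naturality property of the homology operators of \cref{thm:irrep}, after which it becomes a short eigenvalue computation. First I would unwind the construction of the quantum intertwiner: since $\mathrm{Pt}^{\tdot}_\Sigma$ is connected, pick any morphism $\gamma\colon\bD\to\phi(\bD)$; the projective functor $V_\ast(\bbp_\Sigma)$ then supplies $f_\gamma\colon V_{\phi(\bD)}\to V_{\bD}$, and one sets $\opV_\phi^{\bD}(\bbp_\Sigma)=f_\gamma\circ c_\phi$, where $c_\phi\colon V_{\bD}\xrightarrow{\sim}V_{\phi(\bD)}$ is the tautological isomorphism induced by the combinatorial identification $\bD\cong\phi(\bD)$ carried by $\phi$. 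The $\phi$-invariance condition \eqref{eq:fixed} on $\bbp_\Sigma$ is exactly what makes $f_\gamma$, hence $\opV_\phi^{\bD}$, independent of $\gamma$ up to a scalar; that scalar is immaterial for the assertion, which only concerns how $\opV_\phi^{\bD}$ permutes the summands $V_{\bD}(L,\boldsymbol{\lambda})$.

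The key input I would isolate is a pair of compatibilities for the homology operators $H_{[c]}$ attached to classes $[c]\in H_1(\Sigma^{\mathrm{cl}};\bZ)$. On $V_{\bD}$ (over $\tri$) these lie in the commutant of $\iota_{\bD}(\X^{\tri}_{\by^{\tri}})$, and by \cref{thm:irrep} the component $V_{\bD}(L,\boldsymbol{\lambda})$ is the joint eigenspace on which $H_{[c]}^{\bD}$ acts by the scalar determined by $\boldsymbol{\lambda}([c])\in\bZ_N$, for each $[c]\in\widehat L$. The two compatibilities are: (i) every coordinate-change operator is strictly natural with respect to these operators, i.e., $f_\gamma\circ H^{\bD_2}_{[c]}=H^{\bD_1}_{[c]}\circ f_\gamma$ for any morphism $\gamma\colon\bD_1\to\bD_2$ in $\mathrm{Pt}^{\tdot}_\Sigma$ and any $[c]$; and (ii) the relabeling obeys $c_\phi\circ H^{\bD}_{[c]}=H^{\phi(\bD)}_{\phi([c])}\circ c_\phi$, which is immediate because $c_\phi$ literally transports the cycle $[c]$ drawn relative to $\bD$ to the cycle $\phi([c])$ drawn relative to $\phi(\bD)$. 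Property (i) is the substantive one; I would deduce it from the explicit forms of $\opA_v$ and $\opT_{vw}$ together with their compatibility with the quantum cluster Poisson transformations (the compatibility theorem just above), using the representative-independence of $H_{[c]}$, along the lines of the proof of \cref{thm:irrep}. The delicate point is that no scalar or cocycle factor appears on the right-hand side of (i); I expect this to follow from the monomial ``telescoping'' structure of the homology operators along a cycle, under which the cyclic quantum dilogarithm conjugations act cleanly.

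Granting (i) and (ii), conjugating through $\opV_\phi^{\bD}=f_\gamma\circ c_\phi$ gives, for every $[c]\in H_1(\Sigma^{\mathrm{cl}};\bZ)$,
\begin{align*}
  \opV_\phi^{\bD}\circ H^{\bD}_{[c]}
  &= f_\gamma\circ c_\phi\circ H^{\bD}_{[c]}
   = f_\gamma\circ H^{\phi(\bD)}_{\phi([c])}\circ c_\phi \\
  &= H^{\bD}_{\phi([c])}\circ f_\gamma\circ c_\phi
   = H^{\bD}_{\phi([c])}\circ \opV_\phi^{\bD}.
\end{align*}
Since $\phi$ preserves the intersection form on $H_1(\Sigma^{\mathrm{cl}};\bZ)$, $\phi(L)$ is again a Lagrangian sub-lattice, so the summand $V_{\bD}(\phi(L),\phi_\ast\boldsymbol{\lambda})$ (with index set $\phi(\widehat L)$) is defined. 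For $v\in V_{\bD}(L,\boldsymbol{\lambda})$ and $[c']=\phi([c])\in\phi(\widehat L)$, the displayed relation shows that $H^{\bD}_{[c']}(\opV_\phi^{\bD}v)$ equals the $\boldsymbol{\lambda}([c])$-eigenvalue of $H^\bD_{[c]}$ times $\opV_\phi^{\bD}v$, which by the very definition $\phi_\ast\boldsymbol{\lambda}(\phi([c]))=\boldsymbol{\lambda}([c])$ is the $(\phi_\ast\boldsymbol{\lambda})([c'])$-eigenvalue; hence $\opV_\phi^{\bD}v\in V_{\bD}(\phi(L),\phi_\ast\boldsymbol{\lambda})$, and invertibility of $\opV_\phi^{\bD}$ upgrades this to the claimed isomorphism. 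Finally, that the restricted map intertwines the $\X^{\tri}_{\by^{\tri}}$-module structures (up to the cluster automorphism of $\X^{\tri}_{\by^{\tri}}$ induced by $\phi$, built from the flips along $\gamma$ and the relabeling $c_\phi$) is inherited verbatim from the compatibility theorem, as $\opV_\phi^{\bD}$ is a composite of the operators treated there.

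I expect the single real obstacle to be step (i): establishing scalar-free naturality of the cyclic-quantum-dilogarithm flip operators on the homology operators; once that is settled, everything else is formal bookkeeping.
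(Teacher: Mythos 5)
Your proposal is correct and takes essentially the same route as the paper: the paper combines its naturality lemma (\cref{lem:homology_naturality}) and mapping-class-group equivariance lemma (\cref{lem:homology_MCG}) for the homology operators to obtain $\opV_\phi^{\bD}(\bbp_\Sigma)\,\oph^{\bD}_{[c]}=\oph^{\bD}_{\phi([c])}\,\opV_\phi^{\bD}(\bbp_\Sigma)$, and then does the same eigenspace/weight bookkeeping you describe. Your step (i) is precisely that naturality lemma, which the paper isolates as a separate statement rather than reproving inside the theorem.
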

In order to obtain an automorphism on $V_{\bD}(L,\boldsymbol{\lambda})$, we need to adjust the polarization from $\phi(L)$ to $L$. For this purpose, we introduce the \emph{transvection operators} \eqref{eq:transvection} that implement the transvection on the homology. Using an appropriate composite $\opF_\phi^{\bD}$ of transvection operators, we define the \emph{irreducible intertwiner} (\cref{def:reduced_intertwiner}) of $\phi$:
\begin{align*}
    \overline{\opV}_\phi^{\bD}(\bbp_{\underline{\omega}};L,\lambda):=\opF_\phi^{\bD} \circ \opV_\phi^{\bD}(\bbp_{\underline{\omega}};L,\boldsymbol{\lambda}): V_{\bD}(L,\boldsymbol{\lambda}) \xrightarrow{\sim} V_{\bD}(L,\boldsymbol{\lambda}).
\end{align*}
The absolute value of its trace 
\begin{align*}
    \overline{Z}_N(\phi;\bbp_{\underline{\omega}};L,\boldsymbol{\lambda}) := |\mathrm{Tr} (\overline{\opV}_\phi^{\bD}(\bbp_{\underline{\omega}};L,\boldsymbol{\lambda}))|
\end{align*}
is called the \emph{irreducible quantum trace} of $\phi$. 

\subsubsection*{Relation to the works of Bonahon--Liu and Bonahon--Wong--Yang}
Bonahon--Liu initiated the study of intertwiners associated with mapping classes for irreducible representations of the quantum \Teich\ space \cite{BL}. 
Since our reduced intertwiner $\opV_\phi^{\bD}(\bbp_{\underline{\omega}};L,\lambda)$ still intertwines the action of the Chekhov--Fock algebra, it provides (up to a constant factor) the Bonahon--Liu's intertwiner. 
We state the following:
\begin{introconj}[\cref{conj:BW_intertwiner}]\label{introconj:BW_intertwiner}
Each reduced intertwiner satisfies $|\det \overline{\opV}_\phi^{\bD}(\bbp_{\underline{\omega}};L,\boldsymbol{\lambda})|=1$. In particular, it coincides with the Bonahon--Liu intertwiner of $\phi$ associated with the irreducible representation $V_{\bD}(L,\boldsymbol{\lambda})$. 
\end{introconj}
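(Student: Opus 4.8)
The plan is to establish the determinant statement $|\det \overline{\opV}_\phi^{\bD}(\bbp_\Sigma;L,\boldsymbol{\lambda})|=1$ by tracking the determinants of all the building blocks and showing the normalizing phase factors cancel. First I would note that, by construction, the reduced intertwiner $\overline{\opV}_\phi^{\bD}$ is a composite $\opF_\phi^{\bD}\circ \opV_\phi^{\bD}(\bbp_\Sigma;L,\boldsymbol{\lambda})$, and $\opV_\phi^{\bD}(\bbp_\Sigma;L,\boldsymbol{\lambda})$ is itself the restriction to the eigenspace $V_{\bD}(L,\boldsymbol{\lambda})$ of the ambient quantum intertwiner $V_\phi^{\bD}(\bbp_\Sigma)$, which the excerpt already asserts is normalized so that $|\det V_\phi^{\bD}(\bbp_\Sigma)|=1$. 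So the task reduces to two points: (a) the passage from the ambient space $V_{\bD}$ to the irreducible summand $V_{\bD}(L,\boldsymbol{\lambda})$ does not change the absolute value of the determinant (i.e. the restriction $\opV_\phi^{\bD}(\bbp_\Sigma;L,\boldsymbol{\lambda})$ has $|\det|=1$ on the summand), and (b) the transvection operators making up $\opF_\phi^{\bD}$ each have $|\det|=1$ on the relevant summand.

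For (a), the key tool is that $V_\phi^{\bD}(\bbp_\Sigma)$ intertwines the Chekhov--Fock action and commutes with the homology operators defining the decomposition $V_{\bD}=\bigoplus_{\boldsymbol{\lambda}} V_{\bD}(L,\boldsymbol{\lambda})$; hence it permutes the summands according to $\boldsymbol{\lambda}\mapsto \phi_\ast\boldsymbol{\lambda}$. I would argue that all summands in a single $\phi$-orbit are isomorphic as $\X^\tri_{\by^\tri}$-modules (they are all irreducible of the same dimension $N^{\#}$ for an explicit exponent depending only on the topology of $\Sigma$), and that the block-permutation structure of $V_\phi^{\bD}(\bbp_\Sigma)$ forces $|\det|$ restricted to each orbit-block to be the same; combined with $|\det V_\phi^{\bD}(\bbp_\Sigma)|=1$ over all of $V_{\bD}$, this gives $|\det \opV_\phi^{\bD}(\bbp_\Sigma;L,\boldsymbol{\lambda})|=1$ provided the orbit of $\boldsymbol{\lambda}$ is a single point — i.e. when $L$ is $\phi$-invariant. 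When $L$ is not $\phi$-invariant this is precisely why the transvection correction $\opF_\phi^{\bD}$ is needed, and one must instead compare $V_{\bD}(L,\boldsymbol{\lambda})$ with $V_{\bD}(\phi(L),\phi_\ast\boldsymbol{\lambda})$; so (a) really says the $L$-to-$\phi(L)$ change of polarization, realized by the restriction of $V_\phi^{\bD}(\bbp_\Sigma)$, is a unitary-up-to-phase isomorphism, and one checks the phase is an $N$-th (or $6N$-th) root of unity of modulus $1$, hence contributes $1$ to $|\det|$.

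For (b), I would compute the determinant of each transvection operator directly. These are built to implement a symplectic transvection on $H_1(\Sigma^{\mathrm{cl}};\bZ)$, and concretely they should be, up to an explicit scalar, discrete Gauss--Fourier-type operators on the finite group $\bZ_N^g$ (or a sublattice), analogous to the $\opA_v$ operators which satisfy $\opA_v^3=\mathsf 1$ and hence have determinant a root of unity. The Gauss sum appearing in the normalization has modulus $\sqrt{N}$ raised to an integer power, and the whole point of the normalization built into $\opA_v$ and into the flip operators $\opT_{vw}$ (the factor $\zeta=e^{\frac{\pi i}{6}(1-1/N)}$) is that these moduli organize into $1$. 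So the claim is that $\opF_\phi^{\bD}$, being a product of such operators with the built-in normalization, has $|\det \opF_\phi^{\bD}|=1$ on each $V_{\bD}(L,\boldsymbol{\lambda})$. Putting (a) and (b) together with multiplicativity of $|\det|$ yields the conjectured identity, and then the final sentence of the conjecture — that $\overline{\opV}_\phi^{\bD}(\bbp_\Sigma;L,\boldsymbol{\lambda})$ \emph{is} the Bonahon--Liu intertwiner — follows from the stated fact that it already intertwines the Chekhov--Fock action on the irreducible module $V_{\bD}(L,\boldsymbol{\lambda})$, since Bonahon--Liu's intertwiner is characterized (by Schur's lemma) uniquely up to scalar, and the $|\det|=1$ normalization pins down that scalar up to a phase.

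The main obstacle I anticipate is step (b): proving that the accumulated Gauss-sum phases from the transvection operators $\opF_\phi^{\bD}$ really do have modulus exactly $1$ rather than some power of $\sqrt N$ that survives. This is a genuinely delicate bookkeeping problem because a mapping class is expressed as a long word in dot rotations and flips, each carrying its own normalization, and one must show the product telescopes correctly — essentially a quadratic-reciprocity/Maslov-index computation for the metaplectic-type cocycle governing how these Gauss operators compose. I would handle it by first checking the identity for Dehn twists using the explicit formulas in \cref{subsec:MCG}, then verifying that both sides of $|\det \overline{\opV}_\phi^{\bD}|=1$ are multiplicative under composition of mapping classes (using that $\overline{\opV}_{\phi\psi}^{\bD}$ differs from $\overline{\opV}_\phi^{\bD}\overline{\opV}_\psi^{\bD}$ only by a transvection-operator phase), and finally invoking that Dehn twists generate $MC(\Sigma)$; the residual issue is then showing the defect phase in the composition law is itself a root of unity, which should reduce to the relations (1)--(4) of \cref{thm:AT_relations} and the structure of the symplectic group.
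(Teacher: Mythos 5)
There is a genuine gap, and it sits exactly where the paper stops short of a proof: the statement you are trying to prove is left as a \emph{conjecture} in the paper, verified only for Dehn twists by explicit computation (\cref{subsub:example_torus}) and for once-punctured closed surfaces (\cref{cor:conj_once_punc}). Your step (a) is the unproven part. Knowing $|\det \opV_\phi^{\bD}(\bbp_\Sigma)|=1$ on all of $V_{\bD}$ (\cref{prop:normalization_total}) and $|\det\opF_\phi^{\bD}|=1$ (\cref{lem:transvection_normalized}) only tells you that the \emph{product} of the block determinants has modulus one; it does not force each block to have unit-modulus determinant. Your appeal to the ``block-permutation structure'' does not close this: when the blocks are genuinely permuted, the determinant of a single block map $V_{\bD}(L,\boldsymbol{\lambda})\to V_{\bD}(\phi(L),\phi_\ast\boldsymbol{\lambda})$ is not basis-independent, and even after composing around orbits one only controls products over cycles, not individual factors. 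What the paper actually proves (\cref{lem:character_change}, \cref{prop:intertwiner_conjugate}) is that blocks whose characters agree on the peripheral sublattice $L_0$ are conjugate under the square-root homology operators $g_{[c]}$, which commute with both $\opV_\phi^{\bD}(\bbp_\Sigma)$ and $\opF_\phi^{\bD}$, hence have equal $|\det|$. This equalizes blocks only \emph{within one isotypic component}; across different peripheral weights no such conjugating operator exists, and that is precisely why the statement remains conjectural in general. The once-punctured closed surface case goes through because there the peripheral character is pinned by the central load ($\oph_{[c_p]}^{\bD}=\iota_{\bD}(H)^2$), so all contributing blocks lie in a single isotypic component and the ``equal blocks with product of modulus one'' argument closes.

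Your fallback strategy (verify for Dehn twists, then use multiplicativity under composition and generation of $MC(\Sigma)$ by twists) also does not work as stated: the reduced intertwiner of $\phi$ is only defined relative to a $\phi$-invariant coefficient tuple \eqref{eq:fixed}, and a tuple invariant under a product $\phi_2\phi_1$ need not be invariant under either factor, so $\overline{\opV}_{\phi_2\phi_1}^{\bD}$ does not factor as a product of reduced intertwiners of the generators, and $|\det|$ is not multiplicative along a word in Dehn twists. Your step (b) is fine and is essentially the paper's \cref{lem:transvection_normalized}, and the final identification with the Bonahon--Liu intertwiner via Schur's lemma, \emph{conditional} on the determinant statement, matches the paper's intent; but the heart of the conjecture --- equality of $|\det|$ across distinct peripheral weight components, or a direct computation of each block determinant --- is not supplied by your argument.
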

This conjecture is verified for Dehn twists (\cref{subsub:example_torus}) by explicit computations, and for any mapping class on a once-punctured closed surface (\cref{cor:conj_once_punc}). 

The Bonahon--Liu invariant coincides \cite[Theorem 16]{BWY-1} with the Bonohon--Wong--Yang invariant \cite{BW15,BWY-1,BWY-2} arising from the skein theory. For the volume conjecture regarding the Bonahon--Wong--Yang invariant, see \cite{BWY-1,BWY-2}. A work of Garoufalidis--Yu \cite{GY} conjectures that a suitable ratio of Bonahon--Wong--Yang invariants coincides with the 1-loop invariant of the mapping torus. Thus our conjecture (and the partial results) provides a connection to these topics. 

%It is worth mentioning that our formulation constructs an intertwiner explicitly through the cyclic quantum dilogarithm, while the Bonahon--Liu or Bonahon--Wong abstractly selects an intertwiner using determinant normalization, the Schur's lemma (and the Unicity Theorem \cite{FKBL} for Bonahon--Wong).

The (partially conjectural) relations among several quantum invariants are summarized in \cref{fig:invariants}. 

Recent work of Garoufalidis--Yu \cite{GYrel} proves a refined version of \cref{introconj:BW_intertwiner}, clarifying the precise relationship between the Baseilhac--Benedetti invariant (=refined version of local quantum trace) and the Bonahon--Liu--Wong--Yang invariant (=refined version of irreducible quantum trace). Their work provides a refined control on the multiplicative ambiguity by roots of unity, without taking the absolute values.

\begin{figure}[ht]
    \centering
\begin{tikzpicture}
\node[draw,rounded corners=5pt,align=center](QT) at (-0.5,0) {Local quantum trace \\ $Z_N(\phi;\bbp_{\underline{\omega}})$}; 
\node[draw,rounded corners=5pt,align=center](RQT) at (-0.5,-3) {Irreducible quantum trace \\ $\overline{Z}_N(\phi;\bbp_{\underline{\omega}};L,\boldsymbol{\lambda})$};
\node[draw,rounded corners=5pt,align=right](RQHI) at (5,0) {Reduced QHI};
%\draw[dashed,rounded corners=5pt] (3.4,0.5) --++(0,-1.8) --++(3.5,0) node[above left]{Reduced QHFT} --++(0,1.8) coordinate(x) --cycle;
\node[draw,rounded corners=5pt] (SD) at (10,0) {Symmetry defect};
\node[draw,rounded corners=5pt,align=right](QHI) at (7.5,2) {QHI};
\node[draw,rounded corners=5pt,align=right](K) at (12,2){Kashaev invariant}; 
\node[draw,rounded corners=5pt,align=center](BWY) at (7,-3) {Bonahon--Liu inv. \\ =Bonahon--Wong--Yang inv.};
\node[draw,rounded corners=5pt,align=right](loop) at (13,-3){1-loop invariant}; 

\draw[<-] (RQHI.north) --++(0,0.5) coordinate(x);
\draw[<-] (SD.north)--++(0,0.5) coordinate(y);
\draw (x) -- (y);
\draw (QHI.south) --++(0,-0.83);
\draw[double distance=2pt,shorten >=2pt,shorten <=2pt] (QT.east) --node[above,scale=0.8]{\cref{introthm:QHFT}} (RQHI.west);
\draw[->] (QT.south) --node[right,align=center,scale=0.8]{irreducible \\ decomposition} (RQT.north);
\draw[->] (QHI.east) --node[above,align=center,scale=0.8]{trivial \\ character}node[below,scale=0.8]{\cite{BB-GGT}}  (K.west);
\draw[double distance=2pt,shorten >=2pt,shorten <=2pt] (RQT.east) --node[below]{?} node[above,scale=0.8]{\cref{introconj:BW_intertwiner}}  (BWY.west);
\draw[double distance=2pt,shorten >=2pt,shorten <=2pt] (BWY.east) --node[below]{?}node[above,scale=0.8]{\cite{GY}}  (loop.west);
\end{tikzpicture}
    \caption{Relations among several quantum invariants.}
    \label{fig:invariants}
\end{figure}

Since our formulation aligns with the Fock--Goncharov representation in the infinite-dimensional setting, this relation would also provide insight to the Hikami--Inoue's result \cite{HI14}, which connects the Kashaev's $R$-matrix to a 'root-of-unity limit' of the Fock--Goncharov representation in \cite{FG08}. Here we do not take an actual limit. The relation to the Kashaev's $6j$-symbol has been explained by Baseilhac \cite{Baseilhac} and Bai \cite{Bai}.

\subsection*{Organization of the paper}
This paper can be divided into three parts:
\begin{description}
    \item[Part I (Sections \ref{sec:dilog}--\ref{sec:Fourier})] \textbf{Cyclic quantum dilogarithm}. We investigate basic functional identities of cyclic quantum dilogarithm, and recall the discrete Fourier analysis. We provide all the proofs in our convention.  
    \item[Part II  (Sections \ref{sec:Kashaev}--\ref{sec:cluster})] \textbf{Representations of the dotted Ptolemy groupoid}. We present a fully explicit construction of the cyclic quantum \Teich\ theory $V_\ast(\bbp_\Sigma)$. Then we provide its irreducible decomposition as a module over the Chekhov--Fock algebra. 
    \item[Part III  (Sections \ref{sec:trace_local}--\ref{sec:trace_irred})] \textbf{Quantum invariants of mapping classes}. We introduce two versions of intertwiners associated with a mapping class: the local intertwiner and the irreducible intertwiner. Their traces define local quantum trace and the irreducible quantum trace, respectively. We discuss their relationship with the earlier works outlined in \cref{fig:invariants}. 
\end{description}
Necessary background on the mapping torus and the moduli space $\P_{PGL_2,\Sigma}$ is summarized in \cref{sec:mapping_torus}.
Necessary modifications in the case $q^N=-1$ is briefly discussed in \cref{sec:root}.

\subsection*{Acknowledgements}
The author expresses his gratitude to Yuji Terashima for inspiring him to study the root-of-unity quantum cluster transformations and for providing insightful comments. He is deeply grateful to St\'ephane Baseilhac for illuminating discussion on the relation to the QHFT and sharing ideas towards \cref{introconj:BW_intertwiner}. He also thanks Yoshiyuki Kimura for his valuable comments on the cyclic representations of quantum groups. His thanks also go to Yuma Mizuno for his crucial comment regarding the choice of $q$. He appreciates the helpful comments and suggestions by anonymous reviewers. 
The author is supported by JSPS KAKENHI Grant Number~JP20K22304 and JP24K16914.

% \begin{lem}
% For $q=e^{\pi i/N}$, we have 
% \begin{align*}
%     \sum_{k=0}^{N-1} \frac{1}{y-q^{2k}x} = N\frac{x^{N-1}}{x^N-y^N}.
% \end{align*}
% \end{lem}

% \begin{proof}
% We prove 
% \begin{align*}
%     \sum_{k=0}^{N-1} \frac{1}{1- q^{2k}z} = N\frac{z^{N-1}}{z^N-1}.
% \end{align*}
% By multiplying $\prod_{i=0}^{N-1} (z-q^{2i})=z^N-1$, we see this indeed holds since the left-hand side becomes
% \begin{align*}
%     \sum_{i=0}^{N-1} (z-q^0) (z-q^2) \dots \widehat{(z-q^{2i})}\dots (z-q^{2(N-1)}) = \frac{d}{dz}(z^N-1)=Nz^{N-1}.
% \end{align*}
% Then the original assertion follows from the substitution $z=x/y$.
% \end{proof}

\section{Cyclic quantum dilogarithm}\label{sec:dilog}
Let $N$ be a positive odd integer. Let $q^2$ be a primitive $N$-th root of unity, and choose its square-root to be $q:=(q^2)^{(N+1)/2}$ so that $q^N=1$. Our standard choice is $q^2=e^{2\pi i/N}$ and $q=-e^{\pi i/N}$. 
%Explicitly, we take $q^2:=e^{2\pi i/N}$ and $q:=(q^2)^{(N+1)/2}=-e^{\pi i/N}$. 

\begin{lem}\label{lem:cyclic_prod}
We have 
\begin{align*}
    \prod_{k=0}^{N-1} (y-q^{2k}x)=y^N-x^N, \quad \prod_{k=0}^{N-1} (y\pm q^{2k+1}x)=y^N\pm x^N.
\end{align*}
\end{lem}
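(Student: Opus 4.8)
The statement is the standard factorization of $y^N \pm x^N$ over the $N$-th (resp. $2N$-th) roots of unity, specialized to the present choice of $q$ with $q^N=1$ and $q^2$ a primitive $N$-th root of unity.

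\medskip

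The plan is to reduce everything to the elementary factorization of $Z^N-1$. First I would observe that since $q^2$ is a primitive $N$-th root of unity, the elements $1, q^2, q^4, \dots, q^{2(N-1)}$ are exactly the $N$ distinct $N$-th roots of unity; hence in $\bC[Z]$ we have the identity $Z^N - 1 = \prod_{k=0}^{N-1}(Z - q^{2k})$. Substituting $Z = y/x$ (working in the fraction field, or more cleanly by homogenizing) and clearing denominators by multiplying both sides by $x^N$ yields the first identity $\prod_{k=0}^{N-1}(y - q^{2k}x) = y^N - x^N$. The degenerate case $x=0$ is trivial and handled by continuity (both sides are polynomials), so no genuine division is needed.

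\medskip

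For the second identity, the key point is that multiplying each factor index by the fixed unit $q$ simply permutes the roots among the full set of $2N$-th roots of unity in a controlled way. Concretely, I would write $\prod_{k=0}^{N-1}(y + q^{2k+1}x) = \prod_{k=0}^{N-1}(y - (-q)\,q^{2k}x)$. Since $(-q)^N = (-1)^N q^N = -1$ (using that $N$ is odd and $q^N=1$), the element $-q$ is a primitive... more precisely, $(-q)^{2k}$ for $k=0,\dots,N-1$ ranges over all $N$-th roots of unity again because $(-q)^2 = q^2$. Thus $\prod_{k=0}^{N-1}(y - (-q) q^{2k} x)$: setting $w = (-q)x$ we get $\prod_{k=0}^{N-1}(y - q^{2k} w) = y^N - w^N = y^N - (-q)^N x^N = y^N + x^N$ by the first identity. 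The same computation with $-q$ replaced by $+q$ gives $\prod_{k=0}^{N-1}(y - q^{2k+1}x) = y^N - q^N x^N = y^N - x^N$; wait, that is the $-$ case, and indeed $\prod(y + q^{2k+1}x)$ with the sign flipped inside is obtained by $x \mapsto -x$ in the $+$-product. Let me instead record both at once: applying the first identity with $x$ replaced by $\mp q\, x$ gives $\prod_{k=0}^{N-1}(y \pm q^{2k+1} x) = \prod_{k=0}^{N-1}(y - q^{2k}(\mp q x)) = y^N - (\mp q x)^N = y^N - (\mp 1)^N q^N x^N = y^N \pm x^N$, again using $N$ odd so $(\mp1)^N = \mp 1$ and $q^N = 1$.

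\medskip

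I do not anticipate a genuine obstacle here; the only points requiring a little care are (i) justifying the substitution $Z \mapsto y/x$ rigorously (cleanest via homogenization, or by noting both sides are polynomials in $\bC[x,y]$ agreeing on the Zariski-dense locus $x\neq 0$), and (ii) the parity bookkeeping $(\mp 1)^N = \mp 1$, which is exactly where the oddness of $N$ enters and is precisely what makes the sign on the right-hand side match the sign in the factors. No deeper input is needed.
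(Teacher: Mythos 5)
Your proof is correct and follows essentially the same route as the paper: derive the first identity from the factorization of $Z^N-1$ over the powers of the primitive root $q^2$, then obtain the second by the substitution $x \mapsto \mp qx$, using $q^N=1$ and the oddness of $N$ for the sign. The only cosmetic difference is that the paper sets $z=x/y$ in $1-z^N=\prod_{k}(1-q^{2k}z)$ rather than $Z=y/x$.
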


\begin{proof}
The first equality follows from $1-z^N=\prod_{k=0}^{N-1} (1-q^{2k}z)$ by setting $z=x/y$. Then the second one is obtained by replacing $x \mapsto \pm qx$, with a notice that $q^N=1$ and $(-1)^N=1$.
\end{proof}

\begin{rem}
Our choice of $q$ follows that of e.g. \cite{Kas94}. When $N$ is even, we have no choice satisfying $q^N=1$. 
Another natural option is to take $\zeta:=e^{\pi i/N}$ so that $\zeta^N=-1$, which is adopted e.g. in \cite{BR,IY} and works for any positive integer $N$. 
%In the odd case, it is related to our choice by $\zeta=-q$. 
Most of the theory in this paper also works for this choice, together with extra signs in formulas. See \cref{sec:root}. 
\end{rem}

\subsection{Definition}\label{subsec:dilog}

Let 
\begin{align*}
    \cF_N:=\{\bp=(p^+,p^-) \in (\bC^\ast)^2 \mid (p^+)^N+(p^-)^N=1\}
\end{align*}
denote the (punctured) Fermat curve, and consider the projection 
\begin{align*}
    \pi:\cF_N \to \bC^\ast, \quad (p^+,p^-) \mapsto y:=p^+/p^-,
\end{align*}
which is a principal $\bZ_N$-bundle with the action given by $k.(p^+,p^-):=(q^{2k}p^+,q^{2k}p^-)$ for $k \in \bZ_N$. 
In \cref{sec:cluster}, the pair $(p^+,p^-)$ is interpreted as the coefficient tuple, while $y$ is the exchange ratio. We also consider the partial compactification $\overline{\cF}_N:=\cF_N \cup \{(0,1)\}$.
%There is a $\bC^\ast$-action $c\bp:=(cp^+,p^-)$, which results in $y \mapsto cy$ on the ratio.  

%Let $\cF_N^\circ:= \pi^{-1}(\bC^\ast) \subset \overline{\cF_N}$. Then the restriction $\pi: \cF_N^\circ \to \bC^\ast$ 
We use the following version of the cyclic quantum dilogarithm:

\begin{dfn}
Given a point $\bp=(p^+,p^-) \in \overline{\cF}_N$, we define the \emph{cyclic quantum dilogarithm} by the expression
\begin{align}\label{eq:q-dilog}
    \Psi_\bp(X):=& \prod_{j=0}^{N-1} ( p^- + q^{2j+1} p^+X )^{j/N}.
    %\prod_{j=0}^{N-1} \left( \bigg(\frac{1}{1+y^N}\bigg)^{1/N}-\bigg(\frac{1}{1+y^{-N}}\bigg)^{1/N}q^{2j+1}X \right)^{j/N}
    % =& (1+y^N)^{\frac{1-N}{2N}} \prod_{j=0}^{N-1} (1 -q^{2j+1}yX)^{j/N}.
\end{align}

\end{dfn}
Here, we need a care about the choice of a branch of the $N$-th root and the domain of the function $\Psi_{\bp}$. Fixing $\bp \in \overline{\cF}_N$, the function \eqref{eq:q-dilog} is firstly defined as a convergent power series on the domain $|yX|<1$ with $y=\pi(\bp) \in \bC$, and then analytically continued to the complex domain 
\begin{align*}
    \cD_N(y):= \bC \setminus \bigcup_{k=0}^{N-1} \bigg\{ |yX|>1,~ \arg(yX)=\frac{\pi (2k+1)}{N}\bigg\}.
\end{align*}
See \cref{fig:domain}. Then the cyclic quantum dilogarithm is regarded as a holomorphic function $\Psi_{\bp}: \cD_N(y) \to \bC$. Note that $\cD_N(q^2 y) = \cD_N(y)$.

\begin{figure}[ht]
    \centering
\begin{tikzpicture}
\filldraw[fill=gray!30,draw=black,dashed] (0,0) circle(1cm);
\draw[->] (-3,0) -- (3,0) node[right]{$\Re z$};
\draw[->] (0,-3) -- (0,3) node[above]{$\Im z$};
\foreach \i in {0,1,2,3,4}
\draw[thick,decorate,decoration={snake,amplitude=1pt,pre length=1pt,post length=1pt}] (72*\i+36:1) node{$\times$} -- (72*\i+36:2.8);
\end{tikzpicture}
    \caption{The domain $\cD_5(\bp)$ with $\arg(y)=0$.}
    \label{fig:domain}
\end{figure}

\begin{rem}\label{rem:use_of_dilog}
In practice, we will use $\Psi_{\bp}(\opX)$ for some cyclic operator $\opX$ via `functional analysis' (\cref{def:funct_analysis}). Since the only eigenvalues of such an operator are $q^{2k}$ with $k \in \bZ_N$, we only need the values $\Psi_{\bp}(q^{2k})$. The parameter $\bp$ is generically chosen so that $q^{2k} \in \cD_N(y)$. 
\end{rem}

Let us mention the relations to several versions of cyclic quantum dilogarithm used in the literature. 
Take a generic section 
\begin{align}\label{eq:generic_section}
    s: \bC^\ast \to \cF_N, \quad y \mapsto (p^+,p^-)=\bigg(\left(\frac{y^N}{1+y^N}\right)^{1/N},\left(\frac{1}{1+y^N}\right)^{1/N}\bigg)
\end{align}
by choosing a branch of the $N$-th root. Then we may also write
\begin{align*}
    \Psi_y(X):=\Psi_{s(y)}(X) =& (1+y^N)^{\frac{1-N}{2N}} \prod_{j=0}^{N-1} (1 +q^{2j+1}yX)^{j/N}.
\end{align*}

\begin{itemize}
    \item The specialization 
\begin{align*}
    \Psi_{-y}(1)^{-1}= (1+(-y)^N)^{\frac{N-1}{2N}} \prod_{j=0}^{N-1} (1 +\zeta^{2j+1}y)^{-j/N} 
\end{align*}
coincides with the cyclic quantum dilogarithm $d_N(y)$ used by Ip--Yamazaki \cite{IY} with $\zeta=-q$. They have obtained the quantum dilogarithm identities for $d_N(y)$, where the parameters $y$ are upgraded into quantum variables. 
    \item The specialization 
    \begin{align*}
    \Psi_{-q^{-1}y}(1)^{-1}= (1-y^N)^{\frac{N-1}{2N}} \prod_{j=0}^{N-1} (1 -\omega^{-j}y)^{-j/N}
\end{align*}
coincides with $d(y)=w(y,0)^{-1}$ used by Bazhanov--Reshetikhin \cite{BR} with $\omega=q^{-2}$ and $\omega^{1/2}=-q^{-1}$. 
    \item Our cyclic quantum dilogarithm is most closely related to the one used in \cite{Kas00} 
\begin{align}\label{eq:CQL_Kas00}
    \Psi^K_x(X):=\prod_{j=0}^{N-1}((1-x)^{1/N} -x^{1/N}\theta^{-2j}X)^{j/N}.
\end{align}
Here the relations between the parameter is $(p^+,p^-)=(x^{1/N},(1-x)^{1/N})$, and we have
\begin{align*}
    \Psi_\bp(X) = \Psi^K_{(p^+)^N}(\theta^{-1}X)
\end{align*}
with $\theta=-q^{-1}$. 
\end{itemize}

% \begin{rem}[OLD]
% We may also consider the three-parameter version (cf. \cite{FK13})
% \begin{align*}
%     \Psi_{a,b,c}(X):=\prod_{j=0}^{N-1} \left(\frac{c+aq^{2j}X}{b} \right)^{j/N},
% \end{align*}
% where $a,b,c \in \bC$ are such that $a^N+b^N=c^N$, 
% by the specialization $a/b:=qy_-$ and $c/b:=y_+$. 
% \begin{NB}
% Check: $(c/b)^N-(a/b)^N = \frac{1}{1+y^N} -q^N \frac{y^N}{1+y^{N}}=1$.
% \end{NB}
% Observe that it differs from the one in \cite{FK} by signs. 
% \end{rem}

% Here we summarize necessary functional relations satisfied by $\Psi_{\bp}(X)$ for our purpose. While equivalent statements can be found in literature, we rewrite them in our notation, and give proofs for completeness. In particular, our reinterpretation of the pentagon relation () is important. 
% Some of the proofs are postponed until \cref{sec:proof}.
%\subsection{Functional relations}

\subsection{$q$-difference relation}

\begin{lem}\label{lem:q-diff_eq}
If $X$ is a variable such that $X^N=1$, then $\Psi_\bp$ satisfies the difference equations 
\begin{align*}
    \Psi_\bp(q^{2}X) &= (p^- + qp^+X ) \Psi_\bp(X), \\
    \Psi_\bp(q^{-2}X) &= (p^- + q^{-1}p^+ X )^{-1} \Psi_\bp(X).
    % \Psi_\bp(q^{2}X) &= ((1+y^N)^{-1/N}-(1+y^{-N})^{-1/N}qX ) \Psi_\bp(X), \\
    % \Psi_\bp(q^{-2}X) &= ((1+y^N)^{-1/N}-(1+y^{-N})^{-1/N}q^{-1}X )^{-1} \Psi_\bp(X).
\end{align*}
\end{lem}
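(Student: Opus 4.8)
The plan is to verify the two $q$-difference relations directly from the product expression \eqref{eq:q-dilog}, exploiting the fact that replacing $X$ by $q^{\pm 2}X$ shifts the index $j$ in the product cyclically. First I would write
\[
\Psi_\bp(q^2 X) = \prod_{j=0}^{N-1} \bigl( p^- + q^{2j+1} p^+ (q^2 X) \bigr)^{j/N} = \prod_{j=0}^{N-1} \bigl( p^- + q^{2(j+1)+1} p^+ X \bigr)^{j/N},
\]
and then reindex by $k = j+1$, so that the factor $\bigl(p^- + q^{2k+1}p^+X\bigr)$ appears with exponent $(k-1)/N$ for $k=1,\dots,N$. Since $X^N = 1$ forces $q^{2N}=1$ to be consistent with the cyclic behaviour, the index $k=N$ can be folded back to $k=0$ using $q^{2N+1}=q$ (as $q^N=1$), contributing the factor $\bigl(p^- + q p^+ X\bigr)$ with exponent $(N-1)/N$.

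Comparing with $\Psi_\bp(X) = \prod_{k=0}^{N-1}\bigl(p^- + q^{2k+1}p^+X\bigr)^{k/N}$, the ratio $\Psi_\bp(q^2X)/\Psi_\bp(X)$ picks up, for each $k$, the factor $\bigl(p^- + q^{2k+1}p^+X\bigr)$ raised to the power $\frac{(k-1)/N - k/N}{1} = -1/N$ for $k=1,\dots,N-1$, and to the power $\frac{(N-1)/N - 0}{1} = (N-1)/N$ for the term coming from $k=0$. Thus the ratio equals $\bigl(p^- + q p^+ X\bigr)^{(N-1)/N} \cdot \prod_{k=1}^{N-1}\bigl(p^- + q^{2k+1}p^+X\bigr)^{-1/N}$, which by adjoining the $k=0$ factor to the product becomes $\bigl(p^- + q p^+ X\bigr) \cdot \prod_{k=0}^{N-1}\bigl(p^- + q^{2k+1}p^+X\bigr)^{-1/N}$. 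By \cref{lem:cyclic_prod} with $y=p^-$, $x=-p^+X$ (using $X^N=1$), the remaining product is $\bigl((p^-)^N - (-p^+X)^N\bigr)^{-1/N} = \bigl((p^-)^N + (p^+)^N\bigr)^{-1/N} = 1$ since $\bp \in \overline{\cF}_N$. Hence $\Psi_\bp(q^2X) = (p^- + qp^+X)\,\Psi_\bp(X)$, which is the first relation. The second relation follows by the same argument with the shift $j \mapsto j-1$, or simply by substituting $X \mapsto q^{-2}X$ into the first identity and rearranging: $\Psi_\bp(X) = (p^- + q^{-1}p^+X)\,\Psi_\bp(q^{-2}X)$.

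The main subtlety — and the only place where care is genuinely needed — is the bookkeeping of the fractional exponents and the choice of branch of the $N$-th root when reindexing the cyclic product, i.e. making sure that the identity $\bigl(ab\bigr)^{1/N} = a^{1/N}b^{1/N}$ is applied consistently within the domain $\cD_N(\bp)$ where $\Psi_\bp$ is holomorphic (cf. \cref{rem:use_of_dilog}); on the locus $|yX|<1$ where $\Psi_\bp$ is the convergent power series this is unambiguous, and the general case follows by analytic continuation. I expect no serious obstacle beyond this, since the cyclicity $q^{2N}=1$ together with the Fermat relation $(p^+)^N+(p^-)^N=1$ does all the work; I would state the proof for $|yX|<1$ and invoke analytic continuation, exactly as in the proof of \cref{lem:cyclic_prod}.
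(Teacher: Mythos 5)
Your argument is correct and is essentially the paper's own proof: reindex the product after the substitution $X\mapsto q^2X$, fold the $j=N$ factor back using $q^{2N}=1$, reduce the leftover cyclic product to $(p^-)^N+(p^+X)^N=1$ via \cref{lem:cyclic_prod}, and obtain the second relation by substituting $X\mapsto q^{-2}X$ into the first. One trivial point: the cyclic product $\prod_{k=0}^{N-1}(p^-+q^{2k+1}p^+X)$ is handled by the second identity of \cref{lem:cyclic_prod} with $y=p^-$, $x=p^+X$ (your citation with $x=-p^+X$ matches the $q^{2k}$ version, not the $q^{2k+1}$ one, though for odd $N$ the value comes out the same), and otherwise the bookkeeping and the branch/analytic-continuation remark are fine.
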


\begin{proof}
For the first equation, we compute
\begin{align*}
    \Psi_\bp(q^2X) &= \prod_{j=0}^{N-1}(p^- + q^{2j+3} p^+ X)^{j/N} \\
    &= \prod_{j=1}^{N}(p^- + q^{2j+1} p^+ X)^{(j-1)/N} \\
    &=\Psi_\bp(X) \prod_{j=1}^{N}(p^- + q^{2j+1} p^+ X)^{-1/N} \cdot (p^- + q^{2N+1}p^+ X) \\
    &= \Psi_\bp(X) (p^- + qp^+ X).
\end{align*}
Here in the last line, we use \cref{lem:cyclic_prod} to compute the cyclic product as $\prod_{j=1}^{N}(p^- + q^{2j+1} p^+ X)=(p^-)^N + (p^+ X)^N = (p^-)^N + (p^+)^N =1$. Then $\prod_{j=1}^{N}(p^- + q^{2j+1} p^+ X)^{-1/N}$ is an $N$-th root of unity, which is constant for $\bp \in \overline{\cF}_N$. Taking $(p^+,p^-)=(0,1)$, it is indeed seen to be $1$. 
% Set $x:=1/(1+y^{-N})$, and compute with the expression \eqref{eq:CQL_Kas00}. 
% \begin{align*}
%     \Psi_\bp(q^{2}X) &= \prod_{j=0}^{N-1}((1-x)^{1/N} -x^{1/N}q^{2j+3}X)^{j/N} \\
%     &= \prod_{j=1}^{N}((1-x)^{1/N} -x^{1/N}q^{2j+1}X)^{(j-1)/N} \\
%     &= \Psi_\bp(X) \prod_{j=1}^{N}((1-x)^{1/N} -x^{1/N}q^{2j+1}X)^{-1/N} \cdot ((1-x)^{1/N} -x^{1/N}q^{2N+1}X) \\
%     &= \Psi_\bp(X)((1-x) + xX^N)^{-1/N} \cdot ((1-x)^{1/N} -x^{1/N}qX) \\
%     &= \Psi_\bp(X)((1-x)^{1/N} -x^{1/N}qX).
%     %(1+y^N)^{\frac{1-N}{2N}} \prod_{j=0}^{N-1} (1 -q^{-2(j+1)}yX)^{j/N} \\
%     %&= (1+y^N)^{\frac{1-N}{2N}} \prod_{j=1}^{N} (1 -q^{-2j}yX)^{(j-1)/N} \\
%     %&=\Psi_\bp(X) \prod_{j=1}^{N} (1 -q^{-2j}yX)^{-1/N}\cdot  (1 -q^{-2N}yX)  
% \end{align*}
% Here we used \cref{lem:cyclic_prod2} to compute the cyclic product $\prod_{j=1}^N$, and the relation $X^N=1$. Returning to the $y$-variable, we get the first equation. 

The second equation is obtained from the first one by substituting $X \mapsto q^{-2}X$, where the condition $X^N=1$ is preserved. 
\end{proof}

\subsection{Values on $q$-powers}
Let us use the notation 
\begin{align*}
    \wt(\bp|k):=\prod_{j=1}^{k} (p^- +q^{2j-1}p^+),
\end{align*}
following \cite{FK,BR}. 
By using the $q^2$-Pochhammer symbol $(a;q^2)_k:=\prod_{j=1}^k (1-a q^{2j-2})$, we can also write $w(\bp|k)=(p^-)^k \cdot (-qy;q^2)_k$.

\begin{lem}
We have $w(\bp|k+N)=w(\bp|k)$. 
\end{lem}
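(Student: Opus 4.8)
The plan is to show periodicity of $w(\bp|k)$ with period $N$ by a direct computation using the defining product formula and the cyclic product identity from \cref{lem:cyclic_prod}. First I would write out the ratio
\begin{align*}
    \frac{w(\bp|k+N)}{w(\bp|k)} = \prod_{j=k+1}^{k+N} (p^- + q^{2j-1}p^+),
\end{align*}
which is valid since $w(\bp|k)$ is a product over $j=1,\dots,k$ and $w(\bp|k+N)$ over $j=1,\dots,k+N$, so the quotient leaves exactly the factors indexed by $j=k+1,\dots,k+N$. The key observation is that this is a product of $N$ consecutive factors of the form $p^- + q^{2j-1}p^+$; because $q^{2N}=1$ (equivalently $q^N=1$), the exponents $2j-1 \bmod 2N$ for $j$ ranging over any block of $N$ consecutive integers hit each residue class $\{1, 3, 5, \dots, 2N-1\}$ modulo $2N$ exactly once. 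Hence $\prod_{j=k+1}^{k+N}(p^- + q^{2j-1}p^+)$ equals $\prod_{j=1}^{N}(p^- + q^{2j-1}p^+)$, the $N$-th cyclic product.

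Next I would evaluate this cyclic product using \cref{lem:cyclic_prod}: taking $y = p^-$, $x = p^+$, and the $+$ sign, we get $\prod_{j=1}^{N}(p^- + q^{2j-1}p^+) = \prod_{k=0}^{N-1}(p^- + q^{2k+1}p^+) = (p^-)^N + (p^+)^N$. Since $\bp \in \overline{\cF}_N$ lies on the Fermat curve, $(p^+)^N + (p^-)^N = 1$, so the cyclic product equals $1$. Therefore $w(\bp|k+N)/w(\bp|k) = 1$, which gives the claim. (One should note that $w(\bp|k)\neq 0$ for the division step to be legitimate; this holds on the open Fermat curve $\cF_N$ since all factors are nonzero there, and the point $(0,1)$ can be handled directly since then every factor equals $p^- = 1$, so $w((0,1)|k)=1$ for all $k$, trivially periodic. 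Alternatively, one avoids division entirely by comparing the products factor by factor.)

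I do not anticipate any serious obstacle here; the only point requiring a little care is the bookkeeping of which indices appear in the quotient and the verification that a block of $N$ consecutive odd-shifted exponents reproduces the full cyclic product — this is immediate once one notes $q$ has order $N$. The cleanest writeup simply reindexes the product over $j=k+1,\dots,k+N$ by $j \mapsto j-k$ combined with the periodicity $q^{2(j+N)-1} = q^{2j-1}$, then applies \cref{lem:cyclic_prod} and the Fermat relation.
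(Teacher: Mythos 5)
Your proof is correct and follows essentially the same route as the paper: split off (or divide out) the block of $N$ consecutive factors $\prod_{j=k+1}^{k+N}(p^-+q^{2j-1}p^+)$, recognize it as the full cyclic product via \cref{lem:cyclic_prod}, and use the Fermat relation $(p^+)^N+(p^-)^N=1$. The paper's writeup sidesteps your division caveat by splitting the product multiplicatively, exactly as in your own "avoid division" remark.
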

Therefore we regard $w(\bp|-)$ as a function of $k \in \bZ_N$.

\begin{proof}
\begin{align*}
    w(\bp|k+N)= \prod_{j=1}^{k+N} (p^- +q^{2j-1}p^+) = \prod_{j=k+1}^{k+N} (p^- +q^{2j-1}p^+) \cdot \prod_{j=1}^{k} (p^- +q^{2j-1}p^+) = w(\bp|k).
\end{align*}
Here we have used $\prod_{j=k+1}^{k+N} (p^- +q^{2j-1}p^+)=\prod_{j \in \bZ_N} (p^- +q^{2j-1}p^+)=1$ by \cref{lem:cyclic_prod}.
\end{proof}

\begin{lem}\label{lem:Psi_diagonal}
For $k \in \bZ_N$, we have $\Psi_\bp(q^{2k}) = \Psi_\bp(1) \wt(\bp|k) = \Psi_\bp(1)(p^-)^k \cdot (-qy;q^2)_k$.
\end{lem}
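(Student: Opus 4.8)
The plan is to prove the identity $\Psi_\bp(q^{2k}) = \Psi_\bp(1)\, \wt(\bp|k)$ by induction on $k$, using the first $q$-difference relation of \cref{lem:q-diff_eq} as the engine. The base case $k=0$ is trivial since $\wt(\bp|0)$ is the empty product, equal to $1$. For the inductive step, I would apply \cref{lem:q-diff_eq} with the variable $X$ specialized to the $q$-power $q^{2k}$ — note that this specialization is legitimate because $(q^{2k})^N = (q^N)^{2k} = 1$, so the hypothesis $X^N=1$ of \cref{lem:q-diff_eq} is satisfied. This gives
\begin{align*}
    \Psi_\bp(q^{2(k+1)}) = \Psi_\bp(q^2 \cdot q^{2k}) = (p^- + q \cdot q^{2k} p^+)\, \Psi_\bp(q^{2k}) = (p^- + q^{2k+1}p^+)\, \Psi_\bp(q^{2k}).
\end{align*}
Combining this with the inductive hypothesis $\Psi_\bp(q^{2k}) = \Psi_\bp(1)\,\wt(\bp|k)$ yields
\begin{align*}
    \Psi_\bp(q^{2(k+1)}) = (p^- + q^{2k+1}p^+)\, \Psi_\bp(1)\, \wt(\bp|k) = \Psi_\bp(1)\, \wt(\bp|k+1),
\end{align*}
where the last equality is just the recursion $\wt(\bp|k+1) = (p^- + q^{2(k+1)-1}p^+)\,\wt(\bp|k) = (p^- + q^{2k+1}p^+)\,\wt(\bp|k)$ coming directly from the definition of $\wt(\bp|k)$.

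\textbf{Remarks on the scope of the induction.} Since both sides are, by the preceding lemmas, functions on $k \in \bZ_N$ (the left side because $\Psi_\bp$ is only being evaluated at the $N$-th roots $q^{2k}$, and the right side by the periodicity $\wt(\bp|k+N) = \wt(\bp|k)$ just established), it suffices to verify the identity for $k = 0, 1, \dots, N-1$, which the above induction accomplishes. One should also check consistency with the periodicity at the single overlap: running the recursion once more at $k = N-1$ reproduces $\Psi_\bp(q^{2N}) = \Psi_\bp(q^0) = \Psi_\bp(1)$ on the left and $\Psi_\bp(1)\,\wt(\bp|N) = \Psi_\bp(1)$ on the right (using $\wt(\bp|N) = \prod_{j=1}^N (p^- + q^{2j-1}p^+) = (p^-)^N + (p^+)^N = 1$ by \cref{lem:cyclic_prod}), so no contradiction arises; alternatively one can run the induction over all of $\bZ$ and invoke periodicity of both sides at the end.

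\textbf{Main obstacle.} There is essentially no serious obstacle here: the argument is a routine one-line induction once \cref{lem:q-diff_eq} is in hand. The only point demanding a little care is the branch/domain issue flagged in the text around \eqref{eq:q-dilog} and in \cref{rem:use_of_dilog} — one must make sure that evaluating $\Psi_\bp$ at the points $q^{2k}$ is meaningful (i.e. $q^{2k} \in \cD_N(\bp)$ for generic $\bp$, which is exactly the genericity assumption already adopted) and that the difference equation used was itself derived under the standing hypothesis $X^N = 1$, which holds for $X = q^{2k}$. Granting these, the computation above is complete.
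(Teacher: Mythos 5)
Your proof is correct, but it follows a different route from the paper's. The paper proves the lemma by a direct manipulation of the defining product: it shifts the index $j \mapsto j+k$ in $\prod_{j=0}^{N-1}(p^- + q^{2j+1}p^+)^{j/N}$, isolates the boundary terms, and kills the resulting full cyclic product $\prod_{j\in\bZ_N}(p^-+q^{2j+1}p^+)^{-k/N}$ via \cref{lem:cyclic_prod}, which telescopes directly to $\Psi_\bp(1)\wt(\bp|k)$. You instead run a one-line induction on $k$ powered by the first $q$-difference relation of \cref{lem:q-diff_eq}, specialized at $X=q^{2k}$ (legitimate, since $(q^{2k})^N=1$ and, generically in $\bp$, the values $\Psi_\bp(q^{2k})$ are defined as noted in \cref{rem:use_of_dilog}), together with the defining recursion $\wt(\bp|k+1)=(p^-+q^{2k+1}p^+)\wt(\bp|k)$ and the base case $\wt(\bp|0)=1$; your wrap-around check at $k=N-1$ using $\wt(\bp|N)=1$ matches the periodicity lemma and is a nice sanity check, though not strictly needed. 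The two arguments ultimately rest on the same cyclic product identity — your route simply packages it through the already-established \cref{lem:q-diff_eq}, which makes the proof shorter and more modular, while the paper's computation is self-contained and exhibits the telescoping explicitly without invoking the difference equation.
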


\begin{proof}
\begin{align*}
    \Psi_\bp(q^{2k})
    &= \prod_{j=0}^{N-1} (p^- + q^{
    2(j+k)+1}p^+)^{j/N} \\
    &= \prod_{j=k}^{N+k-1} (p^- + q^{
    2j+1}p^+)^{(j-k)/N} \\
    &= \Psi_\bp(1)\cdot \frac{\prod_{j=N}^{N+k-1} (p^- + q^{
    2j+1}p^+)^{j/N}}{\prod_{j=0}^{k-1} (p^- + q^{
    2j+1}p^+)^{j/N}}\cdot \prod_{j \in \bZ_N} (p^- + q^{
    2j+1}p^+)^{-k/N}.
\end{align*}
%In the product $\prod_{j=N}^{N+k-1}$, we shift $j \mapsto j-N$ to produce a cancelling term with the denominator. 
Here the cyclic product $\prod_{j \in \bZ_N}$ vanishes, thanks to \cref{lem:cyclic_prod} and an argument similar to the proof of \cref{lem:q-diff_eq}. 
Moreover, 
\begin{align*}
    \frac{\prod_{j=N}^{N+k-1} (p^- + q^{
    2j+1}p^+)^{j/N}}{\prod_{j=0}^{k-1} (p^- + q^{
    2j+1}p^+)^{j/N}} = \frac{\prod_{j=0}^{k-1} (p^- + q^{
    2j+1}p^+)^{(j+N)/N}}{\prod_{j=0}^{k-1} (p^- + q^{
    2j+1}p^+)^{j/N}} = \prod_{j=0}^{k-1} (p^- + q^{
    2j+1}p^+).
\end{align*}
Therefore we get
\begin{align*}
    \Psi_\bp(q^{2k}) = \Psi_\bp(1) \prod_{j=1}^{k} (p^- + q^{
    2j-1}p^+) = \Psi_\bp(1) \wt(\bp|k) 
\end{align*}
as desired. 
\end{proof}

% \begin{cor}
% Using the $q^2$-shifted Pochhammer symbol 
% \begin{align*}
%     (a;q^2)_k:=\prod_{j=0}^{k-1}(1-aq^{2j}),
% \end{align*}
% we have $\Psi_\bp(q^{2k})=\Psi_\bp(1)(p^-)^k(qy;q^2)_k$.
% \end{cor}

% \begin{proof}
% $\wt(\bp|k)=\prod_{j=0}^{k-1}(p^- -q^{2j+1}p^+) = (p^-)^k \prod_{j=0}^{k-1}(1 -q^{2j+1}y) = (p^-)^k(qy;q^2)_k$.
% \end{proof}

% % \begin{cor}
% % Let $\opX$ be an operator on $V$ such that $\opX^N=1$, and $\ket{k}$ an $q^{2k}$-eigenvector: $\opX\ket{k}=q^{2k}\ket{k}$. Then $\Psi_\bp(\opX)\ket{k}=\Psi_\bp(1) \wt(\bp|k)\ket{k}$.
% % \end{cor}

\begin{lem}\label{lem:psi_product}
$\prod_{k \in \bZ_N} \Psi_\bp(q^{2k}) = 1$.
\end{lem}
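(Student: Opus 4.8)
The plan is to compute the product directly from the definition \eqref{eq:q-dilog}, using the cyclic-product identity of \cref{lem:cyclic_prod} to collapse the exponents. First I would write
\[
\prod_{k \in \bZ_N} \Psi_\bp(q^{2k}) = \prod_{k=0}^{N-1} \prod_{j=0}^{N-1} (p^- + q^{2(j+k)+1} p^+)^{j/N}.
\]
For each fixed value of the combined index $\ell := j+k \bmod N$, the inner factor $p^- + q^{2\ell+1}p^+$ appears with total exponent $\frac{1}{N}\sum_{j=0}^{N-1} j = \frac{N-1}{2}$, since as $k$ ranges over $\bZ_N$ the residue $\ell$ is hit exactly once for each $j$. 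Hence the product equals $\bigl(\prod_{\ell \in \bZ_N}(p^- + q^{2\ell+1}p^+)\bigr)^{(N-1)/2}$, and by the second identity of \cref{lem:cyclic_prod} (with the $+$ sign) the base is $(p^-)^N + (p^+)^N = 1$ because $\bp \in \overline{\cF}_N$. Therefore the whole expression is $1^{(N-1)/2} = 1$.

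Alternatively — and perhaps more cleanly, to avoid worrying about branch choices of the fractional powers — I would use \cref{lem:Psi_diagonal}, which gives $\Psi_\bp(q^{2k}) = \Psi_\bp(1)\, \wt(\bp|k)$. Then
\[
\prod_{k \in \bZ_N} \Psi_\bp(q^{2k}) = \Psi_\bp(1)^N \prod_{k=0}^{N-1} \wt(\bp|k) = \Psi_\bp(1)^N \prod_{k=0}^{N-1}\ \prod_{j=1}^{k} (p^- + q^{2j-1}p^+).
\]
In the double product the factor $p^- + q^{2j-1}p^+$ occurs for every $k \ge j$, i.e. $N-j$ times, so the product is $\prod_{j=1}^{N-1}(p^- + q^{2j-1}p^+)^{\,N-j}$; pairing $j$ with $N-j$ (or reindexing) one checks this equals $\bigl(\prod_{j=1}^{N-1}(p^- + q^{2j-1}p^+)\bigr)^{(N-1)/2} \cdot (\text{symmetric correction})$, and combined with the factor $\Psi_\bp(1)^N = \prod_{j=0}^{N-1}(p^-+q^{2j+1}p^+)^{?}$ — here I would instead directly evaluate $\Psi_\bp(1)^N = \prod_{j=0}^{N-1}(p^- + q^{2j+1}p^+)^{j} = \prod_{j=1}^{N-1}(p^-+q^{2j+1}p^+)^{j}$ and combine with $\wt(\bp|N) = \prod_{j=1}^{N}(p^- + q^{2j-1}p^+) = (p^-)^N+(p^+)^N = 1$ together with the $q$-difference relation of \cref{lem:q-diff_eq} to telescope.

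The cleanest route is really the first one: the only subtlety is that the exponents $j/N$ are fractional, so I must confirm that the branch of the $N$-th root used to define $\Psi_\bp$ is consistent when we regroup the product, i.e. that $\prod_k (p^- + q^{2(j+k)+1}p^+)^{j/N}$ genuinely equals $\bigl(\prod_k (p^- + q^{2(j+k)+1}p^+)\bigr)^{j/N}$. This is fine because for fixed $j$ we are multiplying the \emph{same} power-function branch evaluated at the $N$ points obtained by rotating the argument, and the defining branch is the one analytically continued from the power series on $|yX|<1$; since we only ever evaluate at $X = q^{2k}$ with $\bp$ generic (as in \cref{rem:use_of_dilog}), all these points lie in $\cD_N(\bp)$ and the regrouping is legitimate. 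I expect this branch-bookkeeping to be the only real obstacle; the combinatorial collapse of the double product is routine. I would present the first argument as the main proof and perhaps remark that it also follows from \cref{lem:Psi_diagonal} combined with $\wt(\bp|N)=1$.
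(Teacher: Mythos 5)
Your main (first) argument is correct, but it takes a different route from the paper. The paper first establishes \cref{lem:Psi_diagonal}, writes $\prod_{k\in\bZ_N}\Psi_\bp(q^{2k})=\Psi_\bp(1)^N\prod_{k}\wt(\bp|k)$, and then observes that each base $p^-+q^{2j+1}p^+$ occurs with total exponent $j+(N-j)=N$, so the product is the $N$-th power of the cyclic product of \cref{lem:cyclic_prod}, hence $1$. You instead expand the double product directly from the definition \eqref{eq:q-dilog} and regroup by the combined index $\ell=j+k$, so each base occurs with the integer exponent $(N-1)/2$ (oddness of $N$ enters here), and \cref{lem:cyclic_prod} again finishes; this bypasses \cref{lem:Psi_diagonal} entirely and is slightly more self-contained, while the paper's route reuses a lemma it needs anyway and keeps all exponents integral from the start. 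One caveat: your branch discussion addresses the wrong grouping — in your regrouping the sum $\ell=j+k$ is fixed and $j$ varies, so the numbers being multiplied are fractional powers $z_\ell^{j/N}$ coming from \emph{different} factors of the defining product (each with its own continuation), not "the same power-function branch at rotated arguments". This is not a genuine obstruction at the paper's level of rigor (the proof of \cref{lem:Psi_diagonal} performs the same kind of cancellation, e.g. $\prod_{j\in\bZ_N}(p^-+q^{2j+1}p^+)^{-k/N}=1$), but the justifying sentence as written does not actually cover the step. Your second sketch is essentially the paper's proof; as left ("symmetric correction", telescoping) it is unfinished, and the clean completion is exactly the exponent count $j+(N-j)=N$ used in the paper.
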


\begin{proof}
\begin{align*}
    \prod_{k \in \bZ_N} \Psi_\bp(q^{2k}) &= \Psi_\bp(1)^N \prod_{k \in \bZ_N}\wt(\bp|k) \\
    &= \prod_{j=0}^{N-1}(p^- +q^{2j+1}p^+)^j \cdot\prod_{k =1}^{N}\prod_{j=0}^{k-1} (p^- +q^{2j+1}p^+)\\
    &= \prod_{j=0}^{N-1}(p^- +q^{2j+1}p^+)^j \cdot\prod_{j=0}^{N-1} (p^- +q^{2j+1}p^+)^{N-j}\\
    &=1.
\end{align*}
\end{proof}

\subsection{Inversion relation}
Consider the involution on the Fermat curve
\begin{align*}
    \cF_N \to \cF_N, \quad \bp=(p^+,p^-)\mapsto \bp^{-1}:=(p^-,p^+),
\end{align*}
which results in $y \mapsto y^{-1}$ on the ratio. 

\begin{thm}[Inversion relations]\label{thm:inv}
For any $\bp \in \cF_N$ and $k \in \bZ_N$, we have the following:
\begin{enumerate}
    \item $\wt(\bp|k)\cdot\wt(\bp^{-1}|-k) = \gamma(k)$.
    \item $\Psi_\bp(1) \cdot \Psi_{\bp^{-1}}(1) = \zeta_\inv$.
    \item $\Psi_\bp(q^{2k})\cdot \Psi_{\bp^{-1}}(q^{-2k})=\gamma(k)\zeta_\inv$. 
\end{enumerate}
Here $\gamma(k):=q^{k^2}$ and $\zeta_\inv:= e^{\frac{\pi i }{6}(N-\frac 1 N)}$.
\end{thm}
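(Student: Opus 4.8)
The plan is to prove the three statements in the order (1), (2), (3), obtaining (3) as an immediate consequence of (1) and (2) via \cref{lem:Psi_diagonal}. For statement (1), I would compute the product $\wt(\bp|k)\cdot\wt(\bp^{-1}|-k)$ directly from the definition $\wt(\bp|k)=\prod_{j=1}^k(p^-+q^{2j-1}p^+)$. Note that $\wt(\bp^{-1}|-k)$ involves $\prod_{j=1}^{-k}(p^+ + q^{2j-1}p^-)$; using the convention that a product over an empty/negative range is interpreted as a reciprocal product (i.e. $\prod_{j=1}^{-k} = \left(\prod_{j=-k+1}^{0}\right)^{-1}$, consistent with the established fact $\wt(\bp|k+N)=\wt(\bp|k)$ so that $\wt(\bp^{-1}|-k)=\wt(\bp^{-1}|N-k)$), I would rewrite everything as honest products of $N-k$ and $k$ factors respectively and pair them up. After the substitution $j \mapsto$ (appropriate shift) and using $q$-power identities, the factors $p^-+q^{2j-1}p^+$ and $p^+ + q^{2j'-1}p^-$ should combine: each pair contributes a monomial in $q$ because $(p^-+q^{2j-1}p^+)(p^+ + q^{-(2j-1)}p^-) = q^{-(2j-1)}(p^-+q^{2j-1}p^+)^2$ or similar, and more usefully the full product telescopes against \cref{lem:cyclic_prod} applied to $(p^+)^N+(p^-)^N=1$. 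Collecting the resulting power of $q$ should yield $q^{k^2}=\gamma(k)$; the exponent bookkeeping (a sum like $\sum_{j=1}^k(2j-1)=k^2$) is the arithmetic heart of this part.

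For statement (2), the product $\Psi_\bp(1)\cdot\Psi_{\bp^{-1}}(1)$ is a product of fractional powers: $\prod_{j=0}^{N-1}(p^-+q^{2j+1}p^+)^{j/N}\cdot\prod_{j=0}^{N-1}(p^++q^{2j+1}p^-)^{j/N}$. I would reindex the second product by $j\mapsto N-1-j$ (or $j \mapsto -j$), turning $q^{2j+1}\mapsto q^{-2j-1}$ up to the relation $q^N=1$, so that the two products run over the same set of base factors $(p^-+q^{2j+1}p^+)$ but with exponents $j/N$ and $(N-1-j)/N$ (after also using $p^++q^{-2j-1}p^- = q^{-2j-1}(p^-+q^{2j+1}p^+)$, which is where extra powers of $q$ are generated). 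The base factors then combine with total exponent $(j + N-1-j)/N = (N-1)/N$ on each, giving $\left(\prod_{j=0}^{N-1}(p^-+q^{2j+1}p^+)\right)^{(N-1)/N} = 1^{(N-1)/N}=1$ by \cref{lem:cyclic_prod}, while the accumulated power of $q$ from the rewriting $p^++q^{-2j-1}p^-=q^{-2j-1}(p^-+q^{2j+1}p^+)$ raised to the $j/N$ contributes $q^{-\sum_j (2j+1)j/N}$, and $\sum_{j=0}^{N-1}(2j+1)j = 2\cdot\frac{(N-1)N(2N-1)}{6} + \frac{(N-1)N}{2}$, which I would simplify modulo the order of $q$ to identify the phase $\zeta_\inv = e^{\frac{\pi i}{6}(N-\frac1N)}$. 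Keeping track of branch choices of the $N$-th roots — so that the equality holds as an identity of specific complex numbers and not merely up to an $N$-th root of unity — is the subtle point here.

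Statement (3) then follows formally: by \cref{lem:Psi_diagonal} we have $\Psi_\bp(q^{2k})=\Psi_\bp(1)\wt(\bp|k)$ and $\Psi_{\bp^{-1}}(q^{-2k})=\Psi_{\bp^{-1}}(1)\wt(\bp^{-1}|-k)$, so multiplying and applying (1) and (2) gives $\Psi_\bp(q^{2k})\Psi_{\bp^{-1}}(q^{-2k}) = \big(\Psi_\bp(1)\Psi_{\bp^{-1}}(1)\big)\big(\wt(\bp|k)\wt(\bp^{-1}|-k)\big) = \zeta_\inv\cdot\gamma(k)$, as claimed.

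The main obstacle I anticipate is statement (2): not the combinatorial identity $\sum_j(2j+1)j$ itself (that is routine), but the careful control of the branches of the $N$-th roots defining $\Psi_\bp(1)$, together with correctly tracking how the fractional powers of the monomials $q^{-2j-1}$ interact with the chosen square root $q=(q^2)^{(N+1)/2}$. One must verify that $\big(q^{-2j-1}\big)^{j/N}$ is interpreted consistently (as a genuine complex number via the power series definition of $\Psi_\bp$ on the domain $\cD_N(\bp)$, per \cref{rem:use_of_dilog} we really only need the values at $q$-powers), and that no stray $N$-th root of unity is dropped when passing between the two products. Once the branch conventions are pinned down at the start, the rest is a finite check; I would isolate the branch discussion into a short preliminary remark before doing the computation.
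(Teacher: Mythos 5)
Your proposal is correct and follows essentially the same route as the paper's proof: part (1) by interpreting $\wt(\bp^{-1}|-k)=\wt(\bp^{-1}|N-k)$, extracting the scalar $q^{(N-k)^2}=q^{k^2}$ and cancelling the remaining factors against the cyclic product of \cref{lem:cyclic_prod}; part (2) by matching the two fractional-power products so that the Fermat factors cancel and the leftover scalar $\prod_{j}(q^{2j+1})^{j/N}$ is evaluated, with branch care, to $\zeta_\inv$; and part (3) formally from (1), (2) and \cref{lem:Psi_diagonal}. One bookkeeping point: after your reindexing $j\mapsto N-1-j$ the extracted powers of $q$ carry exponent $(N-1-j)/N$ rather than $j/N$ (equivalent to the paper's expression only after reducing integer powers of $q^N$), which changes nothing essential but is precisely the spot where the paper's branch-cut footnote does the work you deferred to your preliminary remark.
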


\begin{proof}

%\textcolor{red}{(To be moved to Section 6)}
(1): 
\begin{align*}
    \wt(\bp|k)\cdot \wt(\bp^{-1}|-k) 
    &= \prod_{j=1}^k(p^- + q^{2j-1}p^+)\cdot \prod_{j=1}^{N-k}(p^+ + q^{2j-1}p^-) \\
    &= \prod_{j=1}^k(p^- + q^{2j-1}p^+)\cdot \prod_{j=1}^{N-k}(p^- + q^{-2j+1}p^+)\cdot \prod_{j=1}^{N-k}q^{2j-1}
\end{align*}
The last scalar term is computed as 
\begin{align*}
    \prod_{j=1}^{N-k}q^{2j-1}=q^{(N-k)(N-k+1)-(N-k)}=q^{(N-k)^2} = q^{k^2}
\end{align*}
by using $q^N=1$. The other two products cancel with each other as 
\begin{align*}
    \prod_{j=1}^k(p^- + q^{2j-1}p^+)\cdot \prod_{j=1}^{N-k}(p^- + q^{-2j+1}p^+)
    &= 
    \prod_{j=1}^k(p^- + q^{2j-1}p^+)\cdot \frac{\prod_{j=1}^{N}(p^- + q^{-2j+1}p^+)}{\prod_{j=N-k+1}^{N}(p^- + q^{-2j+1}p^+)}\\
    &= \frac{\prod_{j=1}^k(p^- + q^{2j-1}p^+)}{\prod_{j=1}^{k}(p^- + q^{-2(N+1-j)+1}p^+)}=1.
\end{align*}
Here we used $\prod_{j=1}^{N}(p^- + q^{-2j+1}p^+)=(p^-)^N+(p^+)^N=1$.

(2): 
\begin{align*}
    \Psi_\bp(1) \cdot \Psi_{\bp^{-1}}(1) 
    &= \prod_{j=0}^{N-1}(p^- + q^{2j+1}p^+)^{j/N} \cdot \prod_{j=0}^{N-1}(p^+ + q^{2j+1}p^-)^{j/N} \\
    &= \prod_{j=0}^{N-1}(p^- + q^{2j+1}p^+)^{j/N} \cdot \prod_{j=0}^{N-1}(p^- + q^{-2j-1}p^+)^{j/N} \cdot \prod_{j=0}^{N-1}(q^{2j+1})^{j/N}. 
\end{align*}
Using $\zeta:=-q=e^{\pi i/N}$, the last scalar term is computed as\footnote{Here, the sign shown in red needs a care. We are taking the standard branch cut $\bR_{<0}$ for the function $\log$. Therefore we need to take $\log(q^{2j+1})=\log(-\zeta^{2j+1}) = \log(\exp(\frac{\pi i}{N}(2j+1)\textcolor{red}{-\pi i}))$ so that $\frac{\pi}{N}(2j+1)-\pi \in (-\pi,\pi)$ for $j=0,\dots,N-1$.}
\begin{align*}
    \prod_{j=0}^{N-1}(q^{2j+1})^{j/N}&= \prod_{j=0}^{N-1}(-\zeta^{2j+1})^{j/N} = \prod_{j=0}^{N-1}(\zeta^{2j+1\textcolor{red}{-N}})^{j/N} = \prod_{j=0}^{N-1}\zeta^{\frac{2j^2-(N-1)j}{N}} \\
    &=\zeta^{\frac 1 3 (N-1)(2N-1)-\frac 1 2 (N-1)^2} = \zeta^{\frac 1 6 (N-1)(N+1)} =e^{\frac{\pi i}{6}(N-\frac 1 N)}.
\end{align*}
The other two products cancel with each other as 
\begin{align*}
    &\prod_{j=0}^{N-1}(p^- + q^{2j+1}p^+)^{j/N} \cdot \prod_{j=0}^{N-1}(p^- + q^{-2j-1}p^+)^{j/N} \\
    %&= \prod_{j=0}^{N-1}(p^- - q^{2j+1}p^+)^{j/N} \cdot \prod_{j=1-N}^{0}(p^- - q^{2j-1}p^+)^{-j/N} \\
    &= \prod_{j=0}^{N-1}(p^- + q^{2j+1}p^+)^{j/N} \cdot \prod_{j=0}^{N-1}(p^- + q^{-2(N-1-j)-1}p^+)^{(N-1-j)/N} =1.
\end{align*}
Here we again used 
$\prod_{j=1}^{N}(p^- + q^{-2j+1}p^+)=(p^-)^N+(p^+)^N=1$ and an argument similar to the proof of \cref{lem:q-diff_eq}.

(3): Follows from (1), (2) and \cref{lem:Psi_diagonal}.
\end{proof}

\section{Cyclic representations of the Weyl algebra and the discrete Fourier analysis}\label{sec:Fourier}
Let $V_N$ denote the $N$-dimensional vector space with a basis $\ket{k}$ parameterized by $k \in \bZ_N$. The $q$-Weyl algebra $\cW=\langle U,P \mid UP = q^2 PU \rangle$ acts on $V_N$ by
\begin{align}
    \begin{aligned}\label{eq:std_rep}
    \opU\ket{k}&:= q^{2k}\ket{k}, \\
    \opP\ket{k}&:=\ket{k+1}.
    \end{aligned}
\end{align}
We call \eqref{eq:std_rep} the \emph{standard representation} of $\cW$. In general, an operator $\opX \in \mathrm{End}(V_N)$ satisfying $\opX^N=\mathsf{1}$ is called a \emph{cyclic operator}. Then the eigenvalues of $\opX^N$ are $q^{2k}$, $k \in \bZ_N$. Any cyclic operator can be conjugated into $\opU$. 

\begin{dfn}\label{def:funct_analysis}
For any cyclic operator $\opX$ on $V_N$, choose a basis $\ket{\mb{k}_\opX}$ such that $\opX\ket{\mb{k}_\opX}=q^{2k}\ket{\mb{k}_\opX}$ for $k \in \bZ_N$. Then we define the linear operator $\Psi_\bp(\opX) \in \mathrm{End}(V_N)$ by
\begin{align}\label{eq:funct_analysis}
    \Psi_\bp(\opX)\ket{\mb{k}_\opX} := \Psi_\bp(q^{2k})\ket{\mb{k}_\opX}
\end{align}
for all $k \in \bZ_N$. 
\end{dfn}
Note that the definition \eqref{eq:funct_analysis} does not depend on the choice of eigenvectors of $\opX$. The parameter $\bp \in \cF_N$ must be chosen so that the values $\Psi_\bp(q^{2k})$ are well-defined (\cref{rem:use_of_dilog}).

\subsection{Pentagon relation}
% Let $V_N$ denote the $N$-dimensional vector space with a basis $\ket{k}$ parametrized by $k \in \bZ_N$. Consider the representation of the $q$-Weyl algebra $\cW=\langle U,P \mid UP = q^2 PU \rangle$ on $V_N$ given by
% \begin{align*}
%     \opU\ket{k}&:= q^{2k}\ket{k}, \\
%     \opP\ket{k}&:=\ket{k+1}.
% \end{align*}

Proofs of the following statements are given in \cref{sec:proof_pentagon}.

\begin{thm}[Faddeev--Kashaev]\label{thm:pentagon}
For any $\bp,\br \in \overline{\cF}_N$, 
%and $U,P$ be two variables satisfying $U^N=P^N=1$, $UP=q^2 PU$. Then 
the pentagon relation
\begin{align*}
    \Psi_\bp(\opU) \Psi_{\br'}(\opP) = \Psi_\br (\opP) \Psi_{\bp'}(q^{-1}\opU\opP) \Psi_{\bp''}(\opU) 
\end{align*}
holds on the standard representation $V_N$. Here $\bp',\br',\bp'' \in \overline{\cF}_N$ satisfy
\begin{align}
    \begin{aligned}\label{eq:pentagon_parameter_rel}
    &p^- = {p'}^- {p''}^-, \quad p^+{r'}^- = {p'}^-{p''}^+, \quad p^+{r'}^+ = {p'}^+, \\
    &{r'}^- = r^-{p'}^-, \quad {r'}^+{p''}^- = r^+, \quad {r'}^+{p''}^+ = r^-{p'}^+.
    \end{aligned}
\end{align}
% \begin{NB}
% Previous version: 
% \begin{align*}
%     \Psi_{\br'}(P) \Psi_\bp(U) = \Psi_{\bp''}(U) \Psi_{\bp'}(q^{-1}UP) \Psi_\br (P)
%     %\Psi_{\br'}(P) \Psi_\bp(U) = \Psi_{\bp''}(U) \Psi_{\bp'}(-UP) \Psi_\br (P)
% \end{align*}
% Equivalent to 
% \begin{align*}
%     \Psi^K_{x(z')}(P') \Psi^K_{x(y)}(U') = \Psi^K_{x(y'')}(U') \Psi_{x(y')}^K(-U'P') \Psi^K_{x(z)} (P').
% \end{align*}
% Indeed, setting $U:=-qU'$, $P:=-qP'$, we have again $UP=q^2PU$, and $\Psi_\br(P)=\Psi_\br(-qP')=\Psi_{x(z)}^K(P')$, $\Psi_{\bp'}(q^{-1}UP)=\Psi_{\bp'}(qU'P')=\Psi_{x(y')}^K(-U'P')$, and so on.
% \end{NB}
\end{thm}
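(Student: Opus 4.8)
The plan is to reduce the operator identity to a finite system of scalar equations by testing against the standard basis $\ket{k}$ of $V_N$ and using the $q$-difference relation of Lemma 2.8 to convert each dilogarithm operator into a product of linear factors. Since $\opU$ is diagonal in the basis $\ket{k}$ and $\opP$ is the cyclic shift, the operator $q^{-1}\opU\opP$ is again cyclic (its $N$-th power is $1$ because $(q^{-1}\opU\opP)^N = q^{-N}\opU^N\opP^N \cdot (\text{commutator factor}) = 1$ after collecting the $q^2$'s), so all three of $\Psi_\bp(\opU)$, $\Psi_{\bp'}(q^{-1}\opU\opP)$, $\Psi_{\bp''}(\opU)$, $\Psi_{\br'}(\opP)$, $\Psi_\br(\opP)$ make sense via Definition 2.7. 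First I would fix eigenbases: $\opU$ acts diagonally, $\opP$ is diagonalized by the finite Fourier transform $\ket{\widetilde{m}} := \frac{1}{\sqrt N}\sum_k q^{-2km}\ket{k}$ with $\opP\ket{\widetilde m} = q^{2m}\ket{\widetilde m}$, and the mixed operator $q^{-1}\opU\opP$ has its own eigenbasis obtained from one of these by a "Gauss sum" type transform. The strategy is then to expand both sides as operators on $V_N \otimes V_N$... no — both sides act on the single space $V_N$, so I instead track matrix coefficients $\bra{l}(\text{LHS})\ket{k}$ versus $\bra{l}(\text{RHS})\ket{k}$.

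The cleaner route, which I would actually carry out, is the standard "difference-equation" argument of Faddeev–Kashaev. Introduce the operator $W := \Psi_\br(\opP)^{-1}\Psi_\bp(\opU)\Psi_{\br'}(\opP)$ on one side and $\Psi_{\bp'}(q^{-1}\opU\opP)\Psi_{\bp''}(\opU)$ on the other, and show both satisfy the same pair of first-order $q$-difference equations in the two "coordinates" $\opU$ and $\opP$, together with the same normalization. Concretely: using Lemma 2.8, conjugation by $\opP$ sends $\Psi_\bp(\opU)$ to $\Psi_\bp(q^{2}\opU)$-type expressions, which by the difference equation equals $(p^- + q\,p^+\opU)\Psi_\bp(\opU)$ up to the shift; similarly conjugation by $\opU$ acts on $\Psi_{\br'}(\opP)$ and on $\Psi_{\bp'}(q^{-1}\opU\opP)$. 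Imposing that the commutation of the LHS with $\opP$ matches that of the RHS yields, after cancelling the common $\Psi$-factors, a monomial identity in $\opU$ (or its eigenvalue $q^{2k}$); imposing the matching commutation with $\opU$ yields a monomial identity in $\opP$. Reading off coefficients of $1$ and of the linear term in each gives exactly the six relations in \eqref{eq:pentagon_parameter_rel}. A short separate check on $\ket{0}$ (or on the product over all eigenvalues, using Lemma 2.11, $\prod_{k}\Psi_\bp(q^{2k})=1$) fixes the remaining overall constant, confirming there is no extra scalar and that $\bp',\br',\bp''$ indeed land in $\overline{\cF}_N$ (the Fermat condition $(p')^N\!+\!(p'^-)^N = 1$ etc. follows by taking $N$-th powers of the monomial relations and using $(p^+)^N + (p^-)^N = 1$).

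I would organize the computation so that the three independent scalar relations come from: (i) matching the action on $\ket{k}$ after moving $\Psi_\br(\opP)$ to the left, which diagonalizes neither side but produces a two-term recursion whose consistency is the first relation; (ii) the analogous computation in the Fourier basis $\ket{\widetilde m}$; and (iii) evaluating on the highest/lowest weight vector. Each of the six equations in \eqref{eq:pentagon_parameter_rel} is then a direct comparison of the unique monomial appearing. The boundary case $\bp = (0,1)$ or $\br = (0,1)$ (where $\Psi_{(0,1)} \equiv 1$) should be handled first as a sanity check and as the base case: it collapses the pentagon to the trivial identity $\Psi_{\br'}(\opP) = \Psi_\br(\opP)\Psi_{\bp''}(\opU)$ forcing $\br' = \br$, $\bp'' = (0,1)$, consistent with \eqref{eq:pentagon_parameter_rel}.

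The main obstacle I anticipate is bookkeeping the $q$-powers and branch choices in the non-commutative manipulation — specifically, verifying that when one commutes $\opU$ past $\Psi_{\br'}(\opP)$ the resulting argument shift is precisely $q^{\pm 2}$ (not $q^{\pm 1}$), and that the mixed operator $q^{-1}\opU\opP$ has the clean property $(q^{-1}\opU\opP)^N = 1$ with the correct eigenvalue normalization so that Lemma 2.8 applies verbatim to $\Psi_{\bp'}(q^{-1}\opU\opP)$. Once the difference equations are set up correctly, extracting \eqref{eq:pentagon_parameter_rel} is purely a matter of matching coefficients of $\{1, \opU\}$ and $\{1, \opP\}$, and I expect no conceptual difficulty there; the remaining work is the constant-matching via Lemma 2.11 and Theorem 2.12, which I would present as a one-line consequence.
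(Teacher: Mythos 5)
Your proposal follows essentially the same route as the paper's proof: conjugate both sides by $\opP$ and $\opU$ using the $q$-difference relation, compare the resulting monomials term-wise to obtain the six relations \eqref{eq:pentagon_parameter_rel}, conclude that the ratio of the two sides commutes with $\opU,\opP$ and is hence a scalar, and then normalize. The one point to sharpen is that the product formula $\prod_{k}\Psi_\bp(q^{2k})=1$ (via determinants) only shows the scalar is an $N$-th root of unity; the paper then invokes continuity in $(\bp,\br)$ together with the degenerate specialization $\br=\br'=\bp'=(0,1)$, $\bp''=\bp$ — exactly the "boundary case" you relegate to a sanity check — to force the scalar to equal $1$.
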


The following gives a new interpretation of these parameter relations: 

\begin{thm}\label{thm:parameter_coeff}
The relations \eqref{eq:pentagon_parameter_rel} are equivalent to the relations
\begin{align}\label{eq:pentagon_coeff_rel}
    \begin{aligned}
    \frac{{r'}^+}{{r'}^-} &= \frac{{r}^+}{{r}^-} (p^-)^{-1}, &
    \frac{{p''}^-}{{p''}^+} &= \frac{{p}^-}{{p}^+} ({r'}^-)^{-1}, &  
    \frac{{p'}^-}{{p'}^+} &= \frac{{r'}^-}{{r'}^+} ({p''}^+)^{-1}, \\
    \frac{{p''}^+}{{p''}^-} &= \frac{{r}^-}{{r}^+} {p'}^+, &
    \frac{{p'}^+}{{p'}^-} &= \frac{{p}^+}{{p}^-} {r}^+.
    \end{aligned}
\end{align}
\end{thm}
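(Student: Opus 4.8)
The statement is purely algebraic: it asserts the equivalence of two systems of monomial relations \eqref{eq:pentagon_parameter_rel} and \eqref{eq:pentagon_coeff_rel} among the six coefficient tuples $\bp,\br,\bp',\br',\bp''$, each lying on $\overline{\cF}_N$. The plan is to show $\eqref{eq:pentagon_parameter_rel} \Longrightarrow \eqref{eq:pentagon_coeff_rel}$ and $\eqref{eq:pentagon_coeff_rel} \Longrightarrow \eqref{eq:pentagon_parameter_rel}$ by direct manipulation of monomials, using at each stage the Fermat constraint $(p^+)^N+(p^-)^N=1$ only where it is needed — but in fact I expect it to be needed almost nowhere, since both systems are homogeneous in an appropriate sense and the equivalence should hold at the level of the ambient torus $(\bC^\ast)^2 \times \cdots$.

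\textbf{Forward direction.} First I would derive each of the five relations in \eqref{eq:pentagon_coeff_rel} from \eqref{eq:pentagon_parameter_rel}. For instance, the fifth relation ${p'}^+/{p'}^- = (p^+/p^-)\, r^+$ should follow by combining ${p'}^+ = p^+ {r'}^+$ (third relation in the first column) with ${p'}^- = p^-/{p''}^-$ (from $p^- = {p'}^-{p''}^-$) and then eliminating ${p''}^-$ and ${r'}^+$ using ${r'}^+{p''}^- = r^+$ (second relation, second row). Similarly the first relation ${r'}^+/{r'}^- = ({r}^+/{r}^-)(p^-)^{-1}$ should come from ${r'}^- = r^- {p'}^-$ together with ${r'}^+{p''}^- = r^+$ and $p^- = {p'}^-{p''}^-$, which gives ${r'}^+/{r'}^- = r^+/(r^- {p'}^- {p''}^-) = r^+/(r^- p^-)$. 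The other three go the same way; each is a two- or three-term elimination among the six monomial equations. I would present these as a short sequence of displayed computations, grouping the five derivations so no blank line falls inside an \texttt{align}.

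\textbf{Reverse direction.} Here the subtlety is that \eqref{eq:pentagon_parameter_rel} has six equations while \eqref{eq:pentagon_coeff_rel} has only five, so the ratios alone cannot recover everything — one must also use that all tuples lie on $\overline{\cF}_N$, i.e. the normalizations $({p'}^+)^N+({p'}^-)^N=1$, etc. Concretely, \eqref{eq:pentagon_coeff_rel} determines each of $\bp',\br',\bp''$ up to the $\bZ_N$-scaling action $k.(p^+,p^-)=(q^{2k}p^+,q^{2k}p^-)$ that fixes the ratio; pinning down the scaling is exactly what the Fermat equation does, via \cref{lem:cyclic_prod} applied to $\prod_k(\cdot)$. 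So the strategy for the reverse implication is: (i) from the five ratio relations, reconstruct $\bp',\br',\bp''$ as explicit monomials in $\bp,\br$ up to $\bZ_N$-scaling; (ii) observe that the six relations in \eqref{eq:pentagon_parameter_rel} are all monomial identities that are manifestly consistent with the ratios, hence hold up to $N$-th roots of unity; (iii) use the Fermat normalization to fix the remaining ambiguity and check the six equations hold on the nose. The cleanest way to organize (iii) is probably to note that the pentagon relation of \cref{thm:pentagon} already guarantees \emph{existence} of $\bp',\br',\bp''\in\overline{\cF}_N$ solving \eqref{eq:pentagon_parameter_rel}, and then argue that \eqref{eq:pentagon_coeff_rel} has a \emph{unique} solution in $\overline{\cF}_N$ (the five ratios plus three Fermat equations being six independent conditions on six unknowns), so the two solution sets coincide.

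\textbf{Main obstacle.} The routine forward direction is mechanical. The real work is the bookkeeping in the reverse direction: matching the count of equations (six monomial relations versus five ratio relations), and handling the $\bZ_N$-ambiguity so that one genuinely gets $\bp',\br',\bp''\in\overline{\cF}_N$ rather than merely in $(\bC^\ast)^2$. I would resolve this by an existence-plus-uniqueness argument rather than by brute-force solving, leaning on \cref{thm:pentagon} for existence and on \cref{lem:cyclic_prod} to pin down scalings; alternatively, if one prefers a self-contained computation, one shows directly that \eqref{eq:pentagon_coeff_rel} together with the three Fermat constraints implies each of the six equations of \eqref{eq:pentagon_parameter_rel} by multiplying out and invoking $(p^+)^N+(p^-)^N=1$ at the one or two places where a normalization factor appears.
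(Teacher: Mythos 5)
Your forward direction is fine and is essentially what the paper does: each of the five relations in \eqref{eq:pentagon_coeff_rel} falls out of a two- or three-term elimination among the six monomial equations \eqref{eq:pentagon_parameter_rel}. The problem is the reverse direction, where the difficulty you identify is a phantom and the resolution you propose does not work. The six relations in \eqref{eq:pentagon_parameter_rel} are not independent: multiplying the second by the fourth gives ${p''}^+ = p^+ r^-$, and multiplying the third by the sixth gives the same identity, so the system has a built-in dependency and is effectively five conditions. Consequently \eqref{eq:pentagon_coeff_rel} implies all six relations by pure monomial elimination, with no use of the Fermat constraints whatsoever. This is exactly how the paper argues: from the third, first and fifth relations of \eqref{eq:pentagon_coeff_rel} one gets ${p''}^+ = p^+ r^-$; feeding this into the second and fourth relations gives two expressions for ${p''}^-$, whose equality yields ${p'}^+/{r'}^- = p^+ r^+/(p^- r^-)$; and from these intermediate identities each of the six equations of \eqref{eq:pentagon_parameter_rel} follows in one line. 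Your stated premise that ``the ratios alone cannot recover everything --- one must also use the normalizations'' is therefore false, and note also that \eqref{eq:pentagon_coeff_rel} is not purely a system of ratios: its right-hand sides contain the actual values $p^-$, ${r'}^-$, ${p''}^+$, ${p'}^+$, $r^+$, which is why individual coordinates can be pinned against each other.

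The repair you propose would also fail on its own terms. First, \cref{thm:pentagon} does not assert \emph{existence} of $\bp',\br',\bp''$ solving \eqref{eq:pentagon_parameter_rel}; it is an implication assuming those relations, so leaning on it for existence is a misreading (and would in any case be an odd dependency, since \cref{thm:parameter_coeff} is meant as a standalone algebraic statement). Second, the uniqueness claim is wrong: as the paper's own remark following the theorem points out, \eqref{eq:pentagon_coeff_rel} determines $\bp',\br',\bp''$ only up to one parameter even within $\overline{\cF}_N$ (the $\bZ_N$ fiber ambiguity), so ``five ratios plus three Fermat equations = six independent conditions on six unknowns'' does not give a single point, and the existence-plus-uniqueness argument collapses. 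Finally, the Fermat relation $(p^+)^N+(p^-)^N=1$ is additive and cannot be combined with monomial identities to produce the missing multiplicative relations, so your fallback of ``invoking the normalization at the one or two places where it appears'' has no actual mechanism behind it. Once you discard the assumption that the Fermat curve is needed and simply carry out the eliminations (deriving ${p''}^+=p^+r^-$ and ${p'}^+/{r'}^- = p^+r^+/(p^-r^-)$ as above), the reverse direction closes, and that computation is the real content of the proof.
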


The five relations in \eqref{eq:pentagon_coeff_rel} is exactly the mutation relations among the coefficient variables along the pentagon cycle: see \cref{lem:coeff_rel=mutation}. 

\begin{rem}\label{rem:pentagon_ambiguity}
Given initial parameters $\bp,\br \in \cF_N$, the remaining parameters $\bp',\br',\bp''$ are uniquely determined by the relations \eqref{eq:pentagon_coeff_rel} up to the $\bZ_N$-action
\begin{align}\label{eq:pentagon_ambiguity}
    ({p'}^+,{p'}^-, {r'}^+,{r'}^-, {p''}^+,{p''}^-) \mapsto (q^2{p'}^+,q^2{p'}^-, q^{-2}{r'}^+,q^2{r'}^-, q^2{p''}^+,q^{-2}{p''}^-),
\end{align}
as follows. 

From the first and the last relations, we obtain $\pi(\br')$ and $\pi(\bp')$, respectively. Then the third relation gives ${p''}^+$. Using this, the second and the fourth relations determine the products ${p''}^-{r'}^-$ and ${p'}^+{p''}^-$, respectively. If we fix one of $\{{p'}^+,{r'}^-, {p''}^-\}$, say ${p'}^+$, it determines the remaining two. Then we obtain $\pi(\bp'')$ from the second relation. Hence all the remaining parameters are determined. Recall that the ratio $\pi(\bp')$ being determined, the only freedom for the pair $({p'}^+,{p'}^-)$ is the $\bZ_N$-action on the fiber, namely the simultaneous rescaling by $q^2$. Thus the only ambiguity is summarized as in \eqref{eq:pentagon_ambiguity}.
\end{rem}

\subsection{Fourier analysis over $\bZ_N$}
% We are going to substitute cyclic operators such as $\opX=\opU, \opP, -q^{-1}\opP\opU^{-1}$ into $\Psi_\bp$ via this definition. So let us discuss the diagonalization of these operators -- the Fourier analysis over $\bZ_N$. 
Note that $V_N$ can be viewed as $L^2(\bZ_N)$. 
The following table summarizes the dictionary with the Fourier analysis on $L^2(\bR)$.

\begin{table}[ht]
    \centering
    \begin{tabular}{c|cccc}
    State space & Standard operators & Measure & Fourier kernel & Gaussian kernel \\\hline
    $L^2(\bR)$ & $e^x$, $e^{(2 \pi i)^{-1} \partial/\partial x}$  & $\int_\bR \mathrm{d}x$ & $e^{2\pi ixy}$   & $e^{\pi ix^2}$  \\
    $V_N$     & $\opU$, $\opP$ & $\sum_{k \in \bZ_N}$ & $q^{2k \ell}$ & $\gamma(k)$ 
    \end{tabular}
    \vspace{5mm}
    \caption{Dictionary between the Fourier analysis over $\bR$ and $\bZ_N$.}
    \label{tab:Fourier}
\end{table}

Let $\bra{k} \in V_N^\ast$ denote the dual basis of $\ket{k}$. As is usual in the quantum physics, we denote the dual pairing $V_N^\ast \times V_N \to \bC$ by the Dirac's ``bra-ket'' notation so that $\braket{k}{\ell} = \delta_{k,\ell}$. 
% The summation symbol
% \begin{align*}
%     \dsum_{k \in \bZ_N} := \frac{1}{N}\sum_{k \in \bZ_N}
% \end{align*}
% also turns out to be useful.\footnote{While the Haar measure over $\bZ_N$ is $\frac {1}{\sqrt{N}} \sum_{k \in \bZ_N}$, we choose to use this `asymmetric' Fourier transform.} 
%For any vector $v=\sum_k v_k \ket{k} \in V_N$, we define the associated \emph{covector} to be $v^\vee:=$

\subsubsection{Diagonalization of the operator $\opP$}

In order to diagonalize the operator $\opP$, we introduce the ``momentum basis''
\begin{align*}
    \ket{\mb{\ell}}:= \sum_{k \in \bZ_N} q^{-2k\ell} \ket{k} \in  V_N
\end{align*}
and the (rescaled) dual basis $\bra{\mb{\ell}}:= \sum_{k \in \bZ_N} q^{2k\ell} \bra{k} \in V^\ast$. 
Then we have
\begin{align*}
    \opP\ket{\mb{\ell}} = \sum_{k \in \bZ_N} q^{-2k\ell} \ket{k+1} = \sum_{k \in \bZ_N} q^{-2(k-1)\ell} \ket{k} = q^{2\ell} \ket{\mb{\ell}}
\end{align*}
as desired. 
%Let $\bra{\mb{\ell}}$ denote the dual basis of $\ket{\mb{\ell}}$. 
We call the coefficient $\braket{\mb{\ell}}{k} =q^{2k\ell}=\overline{\braket{k}{\mb{\ell}}}$ the \emph{Fourier kernel}. They are normalized so that
\begin{align}\label{eq:momentum_normalization}
    \frac 1 N \braket{\mb{m}}{\mb{\ell}}=\delta_{m\ell},\quad   \frac 1 N\sum_{\ell \in \bZ_N} \ket{\mb{\ell}}\bra{\mb{\ell}} = \mathsf{1},
\end{align}
where the latter is an equality in $V_N^\ast \otimes V_N = \mathrm{End}(V_N)$.

The base-change matrix is the Vandermonde matrix $V(q):= (q^{2ij})_{i,j=0}^{N-1}$. 
Its inverse matrix is $\frac 1 N V(q^{-1})$. The following is well-known:
% Indeed, we have $V(q)V(q^{-1})=N\mathsf{1}$, as
% \begin{align*}
%     (V(q)V(q^{-1}))_{ij} = \sum_{k\in \bZ_N} q^{2ik}q^{-2kj} = \sum_{k \in \bZ_N} q^{2k(i-j)} = N \delta_{ij}.
% \end{align*}
%Let $\bra{\mb{m}}:= \sum_{k \in \bZ_N} q^{-2km} \bra{k} \in V^\ast$ denote the (rescaled) dual basis. 

\begin{lem}\label{lem:Vandermonde_det}
%The determinant of the Vandermonde matrix $V(q^{\pm 1})$ is given by
$\det V(q^{\pm 1}) = N^{N/2}e^{\pm\pi i \frac{(3N-2)(N-1)}{4}} = N^{N/2}i^{\pm \frac{(3N-2)(N-1)}{2}}$.
\end{lem}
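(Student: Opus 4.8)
The plan is to compute $\det V(q^{\pm 1})$ in two stages: first the modulus $|\det V(q^{\pm 1})|$, and then the argument. For the modulus, recall the classical Vandermonde evaluation
\begin{align*}
    \det V(q) = \prod_{0 \le i < j \le N-1} (q^{2j}-q^{2i}).
\end{align*}
Since the powers $q^{2k}$, $k \in \bZ_N$, are exactly the $N$-th roots of unity, $V(q^{-1})$ is (up to permutation of rows) the inverse matrix scaled by $N$, so $\det V(q) \cdot \det V(q^{-1}) = \pm N^N$; combined with $\overline{\det V(q)} = \det V(q^{-1})$ this forces $|\det V(q^{\pm 1})| = N^{N/2}$. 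Alternatively, one extracts the modulus directly from the product formula: $|q^{2j}-q^{2i}| = 2|\sin(\pi(j-i)/N)|$ and $\prod_{k=1}^{N-1} 2\sin(\pi k/N) = N$, which over the $\binom{N}{2}$ pairs yields $N^{N/2}$.

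For the argument, I would write each factor as $q^{2j}-q^{2i} = q^{i+j}(q^{j-i}-q^{-(j-i)}) = q^{i+j} \cdot 2i\sin\!\big(\tfrac{\pi(j-i)}{N}\big)$ using $q = -e^{\pi i/N}$, so each $q^{j-i} - q^{-(j-i)} = -2i\sin(\pi(j-i)/N) \cdot(\pm 1)$ — here one must keep track that $q^{m} - q^{-m} = (-1)^m(e^{\pi i m/N} - e^{-\pi i m/N}) = (-1)^m 2i\sin(\pi m/N)$, so the factor is $2i\sin(\pi m/N)$ times $(-1)^m$ times the prefactor $q^{i+j}$. Collecting: the total phase is a product of (i) $\prod_{i<j} q^{i+j}$, (ii) $\prod_{i<j}(-1)^{j-i}$, (iii) $\prod_{i<j} i = i^{\binom{N}{2}}$, all the $\sin$ factors being positive real and already accounted for in the modulus. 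One then computes $\sum_{0\le i<j\le N-1}(i+j)$ and $\sum_{0\le i<j\le N-1}(j-i)$ by elementary summation (both are cubic polynomials in $N$), reduces the exponent of $q$ modulo $N$ (remembering $q^N=1$, equivalently $q^{2N}=1$ and $q^{2}$ is a primitive $N$-th root), and combines with $(-1) = e^{\pi i}$ and $i = e^{\pi i/2}$ to land on $e^{\pi i (3N-2)(N-1)/4}$. The case $q^{-1}$ follows by complex conjugation, giving the sign $e^{-\pi i(3N-2)(N-1)/4}$. Finally, $e^{\pi i (3N-2)(N-1)/4} = i^{(3N-2)(N-1)/2}$ since $e^{\pi i/2} = i$; note that for $N$ odd the exponent $(3N-2)(N-1)/2$ is an integer, so this is unambiguous.

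The main obstacle will be the careful bookkeeping of the phase in the middle step: one has to be scrupulous about branch choices (the factor $(-1)^m$ coming from $q^m = (-1)^m e^{\pi i m/N}$), about which element of each pair $\{i,j\}$ carries the sine, and about reducing a large power of $q$ modulo $N$ without sign errors — exactly the kind of subtlety flagged in the footnote to \cref{thm:inv}. A clean way to sidestep some of this is to first reduce to the normalization $V(q) = D \cdot V_0 \cdot D$ where $V_0 = (e^{2\pi i jk/N})$ is the standard DFT matrix (whose determinant argument is classical) and $D = \mathrm{diag}(q^{k})_k$ or similar, so that $\det V(q) = (\det D)^2 \det V_0$; then only the well-documented Gauss-sum-type evaluation of $\det V_0$ plus the easy $(\det D)^2 = q^{N(N-1)}$ remain. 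I would present the argument this way, citing the standard evaluation of the DFT determinant and doing only the $(\det D)^2$ bookkeeping by hand.
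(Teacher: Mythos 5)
The paper offers no proof of this lemma --- it is simply recorded as well-known --- so there is no in-paper argument to compare against; your Vandermonde computation is a legitimate way to establish it, and its main line is correct. The modulus part is fine (in fact $V(q)V(q^{-1})=N\cdot\mathsf{1}$ exactly, with no row permutation and no sign ambiguity, so $|\det V(q^{\pm1})|=N^{N/2}$ follows at once), and the phase bookkeeping does close up: writing $q^{2j}-q^{2i}=q^{i+j}\,(q^{j-i}-q^{-(j-i)})$, the sign $(-1)^{i+j}$ coming from $q^{i+j}=(-1)^{i+j}e^{\pi i(i+j)/N}$ cancels identically against the sign $(-1)^{j-i}$ coming from $q^{j-i}-q^{-(j-i)}$, so the value depends only on $q^{2}=e^{2\pi i/N}$ --- as it must, since the entries of $V(q)$ involve only $q^{2}$. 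Carrying out the elementary sums, $\sum_{0\le i<j\le N-1}(i+j)=\tfrac{N(N-1)^{2}}{2}$ and the number of pairs is $\tfrac{N(N-1)}{2}$, so the argument of $\det V(q)$ is $\tfrac{\pi(N-1)^{2}}{2}+\tfrac{\pi N(N-1)}{4}=\tfrac{\pi(3N-2)(N-1)}{4}$, exactly the stated phase; complex conjugation gives the $q^{-1}$ case.

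The one step you should drop is the closing ``clean way to sidestep'': the decomposition $V(q)=D\,V_0\,D$ with $D=\mathrm{diag}(q^{k})_k$ is false, since $D\,V_0\,D$ has entries $q^{j+k}e^{2\pi i jk/N}$, not $q^{2jk}$. It is also unnecessary: with the paper's standard choice $q^{2}=e^{2\pi i/N}$, the matrix $V(q)=(q^{2jk})$ \emph{is} the standard DFT matrix, so the branch-cut worries about $q=-e^{\pi i/N}$ never enter the computation. Either quote the classical DFT determinant evaluation directly, or keep your direct factor-by-factor computation, which as noted above lands precisely on $N^{N/2}e^{\pm\pi i(3N-2)(N-1)/4}$.
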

Here note that $(3N-2)(N-1)/2 \in \bZ$.

\subsubsection{The operator $\dbra{\opP\opU^{-1}}$ and its diagonalization}

The following notation will be useful. 
\begin{dfn}[Weyl ordering]\label{def:Weyl}
For $q$-commuting variables/operators $X_1,\dots,X_n$ such that $X_i X_j=q^{2\epsilon_{ij}}X_jX_i$, we define
\begin{align*}
    \dbra{X_1\dots X_n}:= q^{-\sum_{i<j}\epsilon_{ij}}X_1\dots X_n.
\end{align*}
It is designed so that $\dbra{X_{\sigma(1)}\dots X_{\sigma(n)}}=\dbra{X_1\dots X_n}$ for any permutation $\sigma \in \mathfrak{S}_n$. 
\end{dfn}

\begin{lem}\label{lem:Weyl}
If $X_i^N=c_i$ for some $c_i \in \bC^\ast$ for all $i=1,\dots,n$, then $\dbra{X_1\dots X_n}^N=c_1\dots c_n$. 
\end{lem}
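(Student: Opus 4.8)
The plan is to induct on $n$, reducing the general Weyl-ordered monomial to the two-variable case, and to use $q^N=1$ throughout to kill every accumulated power of $q$.

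First I would settle the case $n=2$ by a direct computation. From $X_1X_2=q^{2\epsilon_{12}}X_2X_1$, repeatedly moving factors past one another gives
\[
(X_1X_2)^N = q^{-2\epsilon_{12}(1+2+\cdots+(N-1))}X_1^NX_2^N = q^{-\epsilon_{12}N(N-1)}\,c_1c_2,
\]
and since $q^N=1$ and $\epsilon_{12}\in\bZ$ the scalar prefactor is $1$. Hence $\dbra{X_1X_2}^N=q^{-N\epsilon_{12}}(X_1X_2)^N=c_1c_2$, again using $q^N=1$.

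For the inductive step, set $Y:=\dbra{X_1\cdots X_{n-1}}$, which by \cref{def:Weyl} is a fixed scalar multiple of $X_1\cdots X_{n-1}$, so that $Y^N=c_1\cdots c_{n-1}$ by the inductive hypothesis. The relations $X_iX_n=q^{2\epsilon_{in}}X_nX_i$ for $i<n$ show that $Y$ and $X_n$ are $q$-commuting: $YX_n=q^{2e}X_nY$ with $e:=\sum_{i=1}^{n-1}\epsilon_{in}$. Comparing the defining scalars then yields the associativity identity $\dbra{X_1\cdots X_n}=\dbra{YX_n}$. Applying the two-variable case to the pair $(Y,X_n)$ gives $\dbra{X_1\cdots X_n}^N=\dbra{YX_n}^N=Y^NX_n^N=c_1\cdots c_n$, completing the induction.

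The only step requiring care is the bookkeeping of $q$-exponents: one must check that every power of $q$ produced by the reorderings is an integer multiple of $N$, so that it collapses to $1$ — this is precisely where $q^N=1$ and the integrality of the $\epsilon_{ij}$ are used. Everything else is routine, and in fact the induction can be bypassed by performing a single global reordering of $(X_1\cdots X_n)^N$ into $X_1^N\cdots X_n^N$ times a power of $q$ of the form $q^{N\cdot(\text{integer})}$; the associativity remark merely organizes this computation cleanly.
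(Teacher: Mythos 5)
Your proof is correct and follows essentially the same route as the paper: reduce to the two-variable case via the associativity of the Weyl ordering (the paper states $\dbra{X\dbra{YZ}}=\dbra{XYZ}$ and stops there, which is exactly your induction), then compute $\dbra{X_1X_2}^N$ directly, with $q^N=1$ killing both the ordering prefactor and the reordering factor. The only difference is cosmetic — the paper groups from the right and you group from the left, and your closing remark about a single global reordering is a valid shortcut.
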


\begin{proof}
Observe the associativity $\dbra{X \dbra{YZ}} = \dbra{XYZ}$. 
Therefore it suffices to prove the case $n=2$.
If $X_1X_2=q^{2a}X_2X_1$, we have
\begin{align*}
    \dbra{X_1X_2}^N = q^{-aN} (X_1X_2)(X_1X_2)\dots (X_1X_2).
\end{align*}
Move all the $X_1$ to the left, getting the $q$-factor $\prod_{j=1}^{N-1}q^{2aj}=q^{a(N-1)N}=1$ by $q^N=1$. Therefore $\dbra{X_1X_2}^N = X_1^N X_2^N = c_1c_2$. 
\end{proof}

%%%%%(OLD NOTATION)%%%%%
% \begin{lem}
% Let $\ket{\mbb{k}}:=\sum_{m \in \bZ_m} \gamma(m) q^{-2km}\ket{m} = \sum_{m \in \bZ_m} (-1)^m q^{m(m-2k)}\ket{m}$. Then $\dbra{\opU\opP}\ket{\mbb{k}}=q^{2k}\ket{\mbb{k}}$. 
% \end{lem}

% \begin{proof}
% \begin{align*}
%     \dbra{\opU\opP}\ket{\mbb{k}} &= -q^{-1}\opU\opP\ket{\mbb{k}} \\
%     &=\sum_{m \in \br_m} (-1)^{m+1} q^{m(m-2k)}q^{-1+2(m+1)}\ket{m+1} \\
%     &=\sum_{m \in \br_m} (-1)^{m} q^{(m-1)(m-2k-1)+2m-1}\ket{m} \\
%     &=\sum_{m \in \br_m} (-1)^{m} q^{m^2-2km+2k}\ket{m}  =q^{2k}\ket{\mbb{k}}.
% \end{align*}

% \end{proof}

Recall $\gamma(k):=q^{k^2}$ (cf.~\cref{thm:inv}). 
\begin{lem}\label{lem:slant_diagonal}
Let $\ket{\mbb{k}}:=\sum_{m \in \bZ_m} \gamma(m)^{-1} q^{-2km}\ket{m}$. Then $\dbra{\opP\opU^{-1}}\ket{\mbb{k}}=q^{2k}\ket{\mbb{k}}$. 
\end{lem}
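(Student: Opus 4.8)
The plan is to verify directly that the vector $\ket{\mbb{k}}$ is an eigenvector of the Weyl-ordered operator $\dbra{\opP\opU^{-1}}$ with eigenvalue $q^{2k}$, by computing the action on the standard basis. First I would unwind the Weyl ordering: since $\opU\opP = q^2\opP\opU$, we have $\opP\opU^{-1} = q^{-2}\opU^{-1}\opP$, and Definition~\ref{def:Weyl} gives $\dbra{\opP\opU^{-1}} = q^{-\epsilon}\opP\opU^{-1}$ for the appropriate sign $\epsilon$ read off from $\opP\opU^{-1} = q^{2\epsilon}\opU^{-1}\opP$; here $\opP \opU^{-1} = q^{2}\opU^{-1}\opP$ gives $\epsilon = 1$, so $\dbra{\opP\opU^{-1}} = q^{-1}\opP\opU^{-1}$ (I would double-check the sign against the convention in Definition~\ref{def:Weyl} when writing it up, as this is the one place a slip is easy).

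Next I would compute $\dbra{\opP\opU^{-1}}\ket{m} = q^{-1}\opP\opU^{-1}\ket{m} = q^{-1}\opP(q^{-2m}\ket{m}) = q^{-1-2m}\ket{m+1}$, using \eqref{eq:std_rep}. Applying this to the definition $\ket{\mbb{k}} = \sum_{m\in\bZ_N}\gamma(m)^{-1}q^{-2km}\ket{m}$, I get
\begin{align*}
\dbra{\opP\opU^{-1}}\ket{\mbb{k}} = \sum_{m\in\bZ_N}\gamma(m)^{-1}q^{-2km}\, q^{-1-2m}\ket{m+1}.
\end{align*}
Then I reindex $m \mapsto m-1$ and collect the $q$-powers. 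With $\gamma(m)=q^{m^2}$ one has $\gamma(m-1)^{-1} = q^{-(m-1)^2} = q^{-m^2+2m-1}$, so the coefficient of $\ket{m}$ becomes $q^{-m^2+2m-1}\cdot q^{-2k(m-1)}\cdot q^{-1-2(m-1)} = q^{-m^2+2m-1-2km+2k-1-2m+2} = q^{-m^2-2km+2k} = q^{2k}\cdot q^{-m^2}q^{-2km} = q^{2k}\gamma(m)^{-1}q^{-2km}$. Hence the sum equals $q^{2k}\sum_{m\in\bZ_N}\gamma(m)^{-1}q^{-2km}\ket{m} = q^{2k}\ket{\mbb{k}}$, as claimed.

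The computation is entirely routine; the only genuine subtlety — and the step I would be most careful about — is getting the Weyl-ordering prefactor right, i.e.\ confirming whether $\dbra{\opP\opU^{-1}}$ equals $q^{-1}\opP\opU^{-1}$ or $q\,\opP\opU^{-1}$ under the sign convention $X_iX_j = q^{2\epsilon_{ij}}X_jX_i$ of Definition~\ref{def:Weyl}, since reversing it would shift the eigenvalue by $q^{\pm 2}$ and break the statement. Everything else is bookkeeping with powers of $q$ modulo $N$ and the periodicity $q^N=1$, which also guarantees the reindexing $m\mapsto m-1$ over $\bZ_N$ is harmless. I would also remark that this diagonalizes $\dbra{\opP\opU^{-1}}$ completely: the $\ket{\mbb{k}}$ for $k\in\bZ_N$ form a basis (the change of basis being again a Vandermonde-type matrix, cf.\ \cref{lem:Vandermonde_det}), consistent with $\dbra{\opP\opU^{-1}}^N = \opP^N\opU^{-N} = \mathsf{1}$ from \cref{lem:Weyl}.
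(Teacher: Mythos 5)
Your proof is correct and follows essentially the same route as the paper: both unwind the Weyl ordering to $q^{-1}\opP\opU^{-1}$, act on the standard basis via \eqref{eq:std_rep}, reindex $m\mapsto m-1$ over $\bZ_N$, and collect the powers of $q$ to recover $q^{2k}\gamma(m)^{-1}q^{-2km}$. (Only a cosmetic slip: you first write $\opP\opU^{-1}=q^{-2}\opU^{-1}\opP$ before correctly using $\opP\opU^{-1}=q^{2}\opU^{-1}\opP$ to get $\epsilon=1$ and the prefactor $q^{-1}$, which is the convention the paper also uses.)
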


\begin{proof}
\begin{align*}
    q^{-1}\opP\opU^{-1}\ket{\mbb{k}} &= \sum_{m \in \bZ_m}  q^{-m(m+2k)}q^{-1-2m}\ket{m+1} \\
    &=\sum_{m \in \bZ_m} q^{-(m-1)(m+2k-1)-2m+1}\ket{m} \\
    &=\sum_{m \in \bZ_m} q^{-m^2-2km+2k}\ket{m}  =q^{2k}\ket{\mbb{k}}.
\end{align*}
\end{proof}

% \begin{rem}\label{rem:eigen}
% Similarly, $\ket{\mbbb{k}}:=\sum_{m \in \bZ_N} \gamma(m)^{-1}q^{-2km}\ket{m}$ is a $q^{2k}$-eigenvector of $\dbra{\opP\opU^{-1}}$. 
% \end{rem}
% \begin{NB}
% Check: note $\gamma(1)=-q$, and
% \begin{align*}
%     -q^{-1}\opP\opU^{-1}\sum_{m \in \bZ_N} \gamma(m)^{-1}q^{-2km}\ket{m} &= -q^{-1}\sum_{m \in \bZ_N} \gamma(m)^{-1}q^{-2km-2m}\ket{m+1} \\
%     &= -q^{-1}\sum_{m \in \bZ_N} \gamma(m-1)^{-1}q^{-2(k+1)(m-1)}\ket{m} \\
%     &= -q^{-1}\sum_{m \in \bZ_N} \gamma(m)^{-1}\gamma(1)^{-1}q^{2m} q^{-2(km-k+m-1)}\ket{m} \\
%     &= \sum_{m \in \bZ_N} \gamma(m)^{-1} q^{-2(km-k)}\ket{m} =q^{2k}\ket{\mbbb{m}}.
% \end{align*}
% \end{NB}
We also use the covector $\bra{\uuline{\ell}}:=\sum_{m \in \br_m} \gamma(m) q^{2m\ell}\bra{m}$, which is normalized so that
\begin{align}\label{eq:slant_normalization}
    \frac 1 N \braket{\mbb{\ell}}{\mbb{k}}=\delta_{\ell k}, \quad \frac 1 N\sum_{\ell \in \bZ_N} \ket{\mbb{\ell}}\bra{\mbb{\ell}} = \mathsf{1}.
\end{align}

\subsection{Gaussian kernel over $\bZ_N$}\label{subsec:Gaussian}
The factor $\gamma(k)= q^{k^2}$
has appeared in the inversion relations (\cref{thm:inv}) and in the eigenvectors of the operator $\dbra{\opP\opU^{-1}}$. It is the cyclic analogue of the Gaussian kernel $e^{\pi i x^2}$. In terms of Andersen--Kashaev (e.g. \cite{AK-CS}), the abelian group $\bZ_N$ equipped with the function $\gamma:\bZ_N \to U(1)$ is an example of \emph{Gaussian group}. 
We collect here its basic properties.

\begin{rem}
In \cite{AK-CS}, the notion of a \emph{quantum dilogarithm} over any Gaussian group is defined by the inversion relation and the pentagon relation. While our inversion relation (\cref{thm:inv}) fits into their paradigm, the pentagon relation (\cref{thm:pentagon}) deviates as it involves extra parameters. See \cite{Marziori} for a quantum dilogarithm over the Gaussian group $\bR \times \bZ_N$. 
\end{rem}

\begin{lem}
$\gamma(k+N)=\gamma(k)$, 
$\gamma(-k)=\gamma(k)$, $\gamma(k+\ell)=\gamma(k)\gamma(\ell)q^{2k\ell}$.
\end{lem}

The following reciprocity formula is useful to calculate the Fourier transforms of $\gamma(k)^m$:

\begin{thm}[{\cite[Theorem 1.2.2]{BEW}}]\label{thm:Gauss_reciprocity}
For three integers $a,b,c \in \bZ$ such that $ac \neq 0$ and $ac+b$ is even, the generalized Gauss sum
\begin{align*}
    S(a,b,c):=\sum_{k=0}^{|c|-1} e^{\pi i(ak^2+bk)/c}.
\end{align*}
satisfies the reciprocity
\begin{align}\label{eq:reciprocity}
    S(a,b,c) = \left|\frac{c}{a} \right|^{1/2} e^{\frac{\pi i}{4}(\sgn(ac)-\frac{b^2}{ac})} S(-c,-b,a).
\end{align}
\end{thm}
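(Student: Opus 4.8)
This is a classical fact, and the plan is to derive \eqref{eq:reciprocity} from the transformation behaviour of a Gaussian under Poisson summation (the Landsberg--Schaar circle of ideas). The first step is to establish, for every $w$ in the open right half-plane $\Re w>0$ and every $\alpha\in\bR$, the Gaussian Poisson summation identity
\[
    \sum_{n\in\bZ} e^{-\pi w(n+\alpha)^2} = w^{-1/2}\sum_{m\in\bZ} e^{-\pi m^2/w + 2\pi i m\alpha},
\]
where $w^{-1/2}$ is the principal branch. This follows because the left-hand side is a $1$-periodic function of $\alpha$ whose $m$-th Fourier coefficient is the absolutely convergent integral $\int_\bR e^{-\pi w x^2-2\pi i mx}\,dx = w^{-1/2}e^{-\pi m^2/w}$ (complete the square and shift the contour).

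The second step is to specialize. Assume $a,c>0$ for definiteness (the other sign patterns are handled identically, only with a different branch value of $w^{-1/2}$ in the limit, which is exactly what yields the factor $e^{\frac{\pi i}{4}\mathrm{sgn}(ac)}$). Take $w=\varepsilon-ia/c$ with $\varepsilon\to 0^+$ and $\alpha=b/(2a)$. Completing the square gives, at $\varepsilon=0$, $-\pi w(n+\alpha)^2=\frac{\pi i}{c}(an^2+bn)+\frac{\pi i b^2}{4ac}$; splitting the sum over $n$ into residue classes $n=qc+r$, the hypothesis that $ac+b$ is even is precisely what makes $e^{\pi i(an^2+bn)/c}$ genuinely $c$-periodic in $n$, so the left-hand side becomes $e^{\pi i b^2/(4ac)}\,S(a,b,c)$ times the Gaussian-regularized factor $\sum_q e^{-\pi\varepsilon c^2(q+\cdots)^2}\sim(\varepsilon c^2)^{-1/2}$. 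The same manipulation on the right-hand side, completing the square in $m$ with shift $b/(2c)$ and splitting into residue classes modulo $a$, produces $e^{\pi i b^2/(4ac)}$ times a sum which, after reindexing $r\mapsto a-r$ (again using the parity hypothesis), equals $S(-c,-b,a)$, times $\sum_q e^{-\pi\varepsilon c^2(q+\cdots)^2}\sim(\varepsilon c^2)^{-1/2}$. Equating the two sides, the regularizing factors $(\varepsilon c^2)^{-1/2}$ cancel, the two occurrences of $e^{\pi i b^2/(4ac)}$ cancel against the constants absorbed into the $S$-sums leaving an overall $e^{-\frac{\pi i}{4}\,b^2/(ac)}$, and $w^{-1/2}\to(a/c)^{-1/2}e^{i\pi/4}=|c/a|^{1/2}e^{\frac{\pi i}{4}\mathrm{sgn}(ac)}$; this is exactly \eqref{eq:reciprocity}.

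The main obstacle is analytic rather than algebraic: after the specialization the sums converge only conditionally at $\varepsilon=0$, so one must keep $\varepsilon>0$ throughout---performing Poisson summation and the residue-class decomposition while the Gaussian damping is genuinely present---and only at the very end evaluate the elementary limit of $(\varepsilon c^2)^{-1/2}$ against its counterpart, whose ratio is independent of $\varepsilon$. The other delicate point is tracking the branch of $w^{-1/2}$ continuously as $w$ runs from the positive real axis to the ray $-ia/c$; this is what is responsible for the $\mathrm{sgn}(ac)$ in the exponent, and it is the step where one must be most careful when $a$ or $c$ is negative. All remaining steps---completing the square, the parity bookkeeping, the reindexing of residue classes---are routine. (Alternatively, one simply cites \cite[Theorem 1.2.2]{BEW}, as done in the statement.)
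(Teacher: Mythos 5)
The first thing to note is that the paper does not prove this statement at all: it is quoted as a classical result with the citation \cite[Theorem 1.2.2]{BEW}, so there is no internal proof to compare yours against. What you have written is the standard Landsberg--Schaar-type argument (theta transformation/Poisson summation with a Gaussian regulator), which is a legitimate alternative to the contour-integration proof in \cite{BEW}, and your constant bookkeeping does come out right: with $w=\varepsilon-ia/c$ and $\alpha=b/(2a)$ one gets $w^{-1/2}\to|c/a|^{1/2}e^{\frac{\pi i}{4}\sgn(ac)}$, the parity hypothesis $ac+b\in 2\bZ$ is exactly what makes both exponentials periodic modulo $|c|$ resp.\ $|a|$, and the single factor $e^{\pi i b^2/(4ac)}$ produced by completing the square on the left is what reappears as $e^{-\frac{\pi i}{4}\frac{b^2}{ac}}$ in \eqref{eq:reciprocity}.

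Two points in your write-up need repair, though neither is fatal. First, your description of the dual side is garbled: no completion of the square is needed there. The limiting dual phases are already $e^{\pi i(-cm^2+bm)/a}$, whose sum over residues mod $|a|$ is $S(-c,b,a)=S(-c,-b,a)$ (reindex $m\mapsto -m$, using the parity condition); if you instead extract $e^{\pi i b^2/(4ac)}$ by completing the square in $m$, the leftover sum is \emph{not} $S(-c,-b,a)$, so the sentence ``produces $e^{\pi i b^2/(4ac)}$ times a sum which equals $S(-c,-b,a)$'' double-counts that factor even though your final formula is correct. Second, the analytic step is slightly more delicate than ``the ratio of the regularizing factors is independent of $\varepsilon$'': for $\varepsilon>0$ the dual phase is $e^{-\pi i m^2 (a/c)/|w|^2}$, not $e^{-\pi i m^2 c/a}$, and $|w|$ is not exactly $|a/c|$, so both the phases and the damping on the right depend on $\varepsilon$. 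The fix is a short quantitative estimate: the phase error per term is $O(\varepsilon^2 m^2)$, the effective range of $m$ is $|m|\lesssim\varepsilon^{-1/2}$, so the total error is $O(\varepsilon^{1/2})=o(\varepsilon^{-1/2})$ and both sides are $\frac{1}{|c|\sqrt{\varepsilon}}(1+o(1))$ times the respective complete Gauss sums, after which dividing and letting $\varepsilon\to 0^+$ gives \eqref{eq:reciprocity}. With those two adjustments (and a sentence verifying the branch value of $w^{-1/2}$ in the case $\sgn(ac)=-1$, where $w\to i|a/c|$), your argument is a complete proof; alternatively, the citation to \cite{BEW} suffices, as in the paper.
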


\begin{lem}\label{lem:gamma_Fourier}
We have the Fourier transformation formulae
\begin{align*}
    \sum_{\ell \in \bZ_N}\gamma(\ell)q^{2k \ell} = \zeta_+\cdot \gamma(k)^{-1}, \quad \sum_{\ell \in \bZ_N}\gamma(\ell)^{-1}q^{2k \ell} = \zeta_-\cdot \gamma(k),
\end{align*}
where $\zeta_\pm = N^{1/2} i^{\mp \frac{N-1}{2}}$.
%We also have $\zeta_+\zeta_- =N$.
\end{lem}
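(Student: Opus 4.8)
The plan is to evaluate the two Gauss sums $\sum_{\ell \in \bZ_N}\gamma(\ell)^{\pm 1}q^{2k\ell}$ directly via the reciprocity formula of \cref{thm:Gauss_reciprocity}, completing the square in the exponent and reducing to the trivial sum $S(\ast,\ast,1)=1$. Recall that $q = -e^{\pi i/N}$, so $q^2 = e^{2\pi i/N}$, hence $\gamma(\ell)q^{2k\ell} = q^{\ell^2+2k\ell} = \exp\big(\tfrac{\pi i}{N}(2\ell^2 + 4k\ell)\big)$; summing over $\ell \in \{0,\dots,N-1\}$ this is exactly $S(a,b,c)$ with $a=2$, $b=4k$, $c=N$ (note $ac+b = 2N+4k$ is even, and $ac = 2N \neq 0$, so the hypotheses of \cref{thm:Gauss_reciprocity} are met).

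First I would apply \eqref{eq:reciprocity} to get
\begin{align*}
    S(2,4k,N) = \left|\tfrac{N}{2}\right|^{1/2} e^{\frac{\pi i}{4}\left(\sgn(2N) - \frac{(4k)^2}{2N}\right)} S(-N,-4k,2) = \left(\tfrac{N}{2}\right)^{1/2} e^{\frac{\pi i}{4}} e^{-\frac{2\pi i k^2}{N}} S(-N,-4k,2).
\end{align*}
Then I would compute the small sum $S(-N,-4k,2) = \sum_{j=0}^{1} e^{\pi i(-Nj^2 - 4kj)/2} = 1 + e^{-\pi i N/2}e^{-2\pi i k} = 1 + e^{-\pi i N/2}$; since $N$ is odd, $e^{-\pi i N/2} = e^{-\pi i/2}\cdot e^{-\pi i(N-1)/2} = -i\cdot(-1)^{(N-1)/2}$, so $S(-N,-4k,2) = 1 - i\,(-1)^{(N-1)/2} = 1 - i^{N}$ (using $i^N = i\cdot(-1)^{(N-1)/2}$ for odd $N$). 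A short case check ($N\equiv 1$ vs $N\equiv 3 \bmod 4$) shows $1 - i^N = \sqrt{2}\,e^{-\pi i/4}\, i^{-(N-1)/2}$ up to verifying the modulus and phase, so that $\big(\tfrac N2\big)^{1/2}e^{\pi i/4}\,(1-i^N) = N^{1/2} i^{-(N-1)/2}$. Combining, $S(2,4k,N) = N^{1/2} i^{-(N-1)/2}\, e^{-2\pi i k^2/N} = \zeta_+ \gamma(k)^{-1}$, since $e^{-2\pi i k^2/N} = q^{-2k^2} = \gamma(k)^{-2}\cdot\gamma(k)$... more directly $\gamma(k)^{-1} = q^{-k^2} = e^{-\pi i k^2/N}$, wait — here I must be careful: $\gamma(k) = q^{k^2} = (-1)^{k^2}e^{\pi i k^2/N} = (-1)^k e^{\pi i k^2/N}$. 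So the identification of the scalar prefactor and the $\gamma(k)^{-1}$ requires tracking these sign/branch subtleties precisely; this is where the bookkeeping is delicate but routine.

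The second formula follows by the substitution $q \mapsto q^{-1}$ (equivalently complex conjugation together with $k \mapsto -k$, using $\gamma(-k)=\gamma(k)$ and the symmetry of the sum over $\bZ_N$), or by rerunning the same computation with $a = -2$, $b = -4k$, $c = N$; this yields $\zeta_- = \overline{\zeta_+} = N^{1/2} i^{+\frac{N-1}{2}}$ and $\gamma(k)$ in place of $\gamma(k)^{-1}$.

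\textbf{Main obstacle.} The genuine difficulty is not the structure of the argument — which is a single application of Gauss reciprocity plus evaluation of a two-term sum — but the careful tracking of branches and signs: reconciling $q = -e^{\pi i/N}$ with $q^2 = e^{2\pi i/N}$ throughout (so that $\gamma(k) = q^{k^2}$ carries a $(-1)^k$ relative to $e^{\pi i k^2/N}$), correctly choosing the square root in $(N/2)^{1/2}$ and the fourth-root phase $e^{\pi i/4}$ from \eqref{eq:reciprocity}, and verifying that $1 - i^N$ has modulus $\sqrt{2}$ and the right argument in both residue classes of $N$ modulo $4$. Once these are pinned down, the scalars $\zeta_\pm = N^{1/2} i^{\mp(N-1)/2}$ drop out exactly as stated.
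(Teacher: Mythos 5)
Your plan (Gauss reciprocity plus completing the square) is the right idea and is essentially the paper's route, but the execution fails at the very first identification, and the failure is not a branch-bookkeeping issue. With $q=-e^{\pi i/N}$ one has
\begin{align*}
\gamma(\ell)q^{2k\ell}=q^{\ell^2+2k\ell}=(-1)^{\ell}\,e^{\pi i(\ell^2+2k\ell)/N}=e^{\pi i\left(\ell^2+(N+2k)\ell\right)/N},
\end{align*}
not $e^{\pi i(2\ell^2+4k\ell)/N}$. So the sum you feed into \cref{thm:Gauss_reciprocity} is $S(1,N+2k,N)$, whereas you computed $S(2,4k,N)=\sum_\ell\gamma(\ell)^2q^{4k\ell}$, i.e.\ the object of \cref{lem:Gauss_2}, not of this lemma. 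The discrepancy is visible in both places where you flagged "delicate but routine" bookkeeping: your reciprocity step produces the $k$-dependence $e^{-2\pi ik^2/N}=\gamma(k)^{-2}$, which is not $\gamma(k)^{-1}$ for any choice of signs or branches; and your constant $\bigl(\tfrac N2\bigr)^{1/2}e^{\pi i/4}(1-i^N)$ depends only on $N\bmod 4$, while $\zeta_+=N^{1/2}i^{-(N-1)/2}$ depends on $N\bmod 8$. Concretely, for $N=5$, $k=0$ your expression evaluates to $+\sqrt5$ (the classical quadratic Gauss sum mod $5$), whereas $\zeta_+=\sum_\ell(-1)^\ell e^{\pi i\ell^2/5}=-\sqrt5$; so the asserted final identity is false and the gap cannot be closed from your starting point.

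The fix is to keep the sign $(-1)^\ell$ inside the quadratic exponent, exactly as the paper does. Either apply reciprocity directly to $S(1,N+2k,N)$ (then $S(-N,\ast,1)=1$ and the phase $e^{\frac{\pi i}{4}\left(1-\frac{(N+2k)^2}{N}\right)}=e^{\frac{\pi i}{4}(1-N)}(-1)^k e^{-\pi ik^2/N}=e^{\frac{\pi i}{4}(1-N)}\gamma(k)^{-1}$ comes out correctly), or, as in the paper's proof, first complete the square to get $\sum_\ell\gamma(\ell)q^{2k\ell}=\gamma(k)^{-1}\sum_\ell\gamma(\ell)$, and then evaluate the constant $\zeta_\pm=\sum_k(-1)^ke^{\pm\pi ik^2/N}=S(1,N,\pm N)$ by reciprocity, which reduces to the one-term sum $S(\mp N,-N,1)=1$ and yields $\zeta_\pm=N^{1/2}e^{\mp\frac{\pi i}{4}(1-N)}$. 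Your closing step for the second formula (complex conjugation together with $\gamma(-k)=\gamma(k)$) is fine once the first formula is established correctly.
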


\begin{proof}
Let $\zeta_\pm:= \sum_{m \in \bZ_N} \gamma(m)^{\pm 1}$. Then we get
\begin{align*}
    \sum_{\ell \in \bZ_N}\gamma(\ell)q^{2k \ell} 
    = \sum_{\ell \in \bZ_N} q^{\ell^2+2k \ell} 
    = \sum_{\ell \in \bZ_N} q^{(\ell+k)^2-k^2} 
    = q^{-k^2}\sum_{\ell \in \bZ_N} q^{\ell^2} 
    = \gamma(k)^{-1}\zeta_+,
\end{align*}
and similarly for the second equation. 
% Let us consider direct-inverse composite of Fourier transform $\sum_{k \in \bZ_N}(\sum_{\ell \in \bZ_N}\gamma(\ell)q^{2k\ell})q^{-2km}$. If we take the sum over $k$ first, we get $\sum_{\ell \in \bZ_N}\gamma(\ell) N\delta_{\ell m} = N \gamma(m)$. On the other hand, by applying the above formula twice, we get
% \begin{align*}
%     \sum_{k \in \bZ_N}\bigg(\sum_{\ell \in \bZ_N}\gamma(\ell)q^{2k\ell}\bigg)q^{-2km} = \sum_{k \in \bZ_N}\zeta_+\gamma(k)^{-1} q^{-2km} =\zeta_+\zeta_- \gamma(-m) = \zeta_+\zeta_- \gamma(m). 
% \end{align*}
% Therefore $\zeta_+\zeta_-=N$.
It remains to determine the value
\begin{align}\label{eq:Gauss_rewrite}
    \zeta_\pm = \sum_{k \in \bZ_N} (-e^{\pi i/N})^{\pm k^2} = \sum_{k \in \bZ_N} (-1)^k e^{\pm \pi ik^2/N} = \sum_{k \in \bZ_N} e^{\pm \pi i(k^2+Nk)/N}.
\end{align}
By setting $a=1,b=N,c=\pm N$ in \cref{thm:Gauss_reciprocity}, it is calculated as
\begin{align*}
    \zeta_\pm = S(1,N,\pm N) = N^{1/2} e^{\mp \frac{\pi i}{4}(1-N)} = N^{1/2}i^{\mp \frac{N-1}{2}}.
\end{align*}
\end{proof}

\begin{lem}\label{lem:gamma_prod}
We have
\begin{align*}
    \prod_{k \in \bZ_N} \gamma(k)^{-1}=e^{\frac{\pi i}{6}(N^2-1)}=\zeta_\inv^N.
\end{align*}
%In other words, $\zeta_\inv^{-1}$ is the multiplicative mean of $\gamma(k)$'s.
\end{lem}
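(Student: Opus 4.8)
The claim is the evaluation of the product $\prod_{k \in \bZ_N} \gamma(k)^{-1}$ where $\gamma(k) = q^{k^2}$ and $q = -e^{\pi i/N}$. The plan is to reduce the product of $N$-th roots of unity to an exponential of a sum of squares, and then to evaluate that sum in closed form using the standard formula $\sum_{k=0}^{N-1} k^2 = \frac{(N-1)N(2N-1)}{6}$, being careful about the sign arising from $q = -e^{\pi i/N}$.

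\begin{proof}
Since $q = -e^{\pi i/N}$ and $N$ is odd, for each $k$ we have $q^{k^2} = (-1)^{k^2} e^{\pi i k^2/N} = (-1)^k e^{\pi i k^2/N} = e^{\pi i (k^2+Nk)/N}$, exactly as in \eqref{eq:Gauss_rewrite}. Hence
\begin{align*}
    \prod_{k \in \bZ_N} \gamma(k)^{-1}
    = \prod_{k=0}^{N-1} e^{-\pi i (k^2+Nk)/N}
    = \exp\!\left(-\frac{\pi i}{N}\sum_{k=0}^{N-1}(k^2+Nk)\right).
\end{align*}
Using $\sum_{k=0}^{N-1} k^2 = \frac{(N-1)N(2N-1)}{6}$ and $\sum_{k=0}^{N-1} k = \frac{N(N-1)}{2}$, the exponent becomes
\begin{align*}
    -\frac{\pi i}{N}\left(\frac{(N-1)N(2N-1)}{6} + N\cdot\frac{N(N-1)}{2}\right)
    = -\pi i\left(\frac{(N-1)(2N-1)}{6} + \frac{N(N-1)}{2}\right).
\end{align*}
Combining the two terms over the common denominator $6$ gives $\frac{(N-1)(2N-1) + 3N(N-1)}{6} = \frac{(N-1)(5N-1)}{6}$, so
\begin{align*}
    \prod_{k \in \bZ_N} \gamma(k)^{-1} = \exp\!\left(-\frac{\pi i (N-1)(5N-1)}{6}\right).
\end{align*}
It remains to reconcile this with $e^{\frac{\pi i}{6}(N^2-1)}$. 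Since both are of the form $e^{\pi i m/6}$ with $m$ an integer (note $(N-1)(5N-1)$ and $(N-1)(N+1)$ are both divisible by $6$ for odd $N$, as $N^2 \equiv 1 \pmod{3}$ and $N^2-1 \equiv 0 \pmod 8$... in fact one only needs divisibility of the exponents by the relevant modulus), it suffices to check that $-(N-1)(5N-1) \equiv (N-1)(N+1) \pmod{12}$. Indeed $-(5N-1) - (N+1) = -6N$, so $-(N-1)(5N-1) - (N-1)(N+1) = -6N(N-1) \equiv 0 \pmod{12}$ since $N(N-1)$ is even. Therefore $\prod_{k \in \bZ_N} \gamma(k)^{-1} = e^{\frac{\pi i}{6}(N^2-1)}$. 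Finally, by \cref{thm:inv} we have $\zeta_\inv = e^{\frac{\pi i}{6}(N-\frac 1N)}$, so $\zeta_\inv^N = e^{\frac{\pi i}{6}(N^2-1)}$, which completes the proof.
\end{proof}

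\textbf{Main obstacle.} The only subtlety is bookkeeping the sign coming from $q = -e^{\pi i/N}$ rather than a primitive root with $q^N = 1$ obtained differently, and then checking that the integer exponent one computes agrees modulo $12$ with $N^2-1$; this is a routine congruence but is the one place an error could creep in. Alternatively, one can avoid the congruence check entirely by noting $\prod_k \gamma(k)^{-1} = (\prod_k \gamma(k))^{-1}$ and invoking \cref{thm:inv}(1) with a suitable choice, or by relating the product directly to $\zeta_\inv$ via the telescoping identity $\wt(\bp|k)\wt(\bp^{-1}|-k) = \gamma(k)$ summed appropriately; but the direct Gauss-sum-style computation above is cleanest.
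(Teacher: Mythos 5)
Your proof is correct and follows essentially the same route as the paper: both evaluate $\prod_k\gamma(k)^{-1}$ directly from $\gamma(k)=q^{k^2}$ using $\sum_{k=0}^{N-1}k^2=\frac{(N-1)N(2N-1)}{6}$. The only difference is that the paper writes $(-1)^{k}=e^{-\pi i k}$, so the exponent is $-\frac{\pi i}{N}\sum_k(k^2-Nk)$ and factors immediately as $\frac{\pi i}{6}(N-1)(N+1)$, whereas your choice $(-1)^k=e^{+\pi i k}$ forces the final (correct) mod-$12$ reconciliation of $-(N-1)(5N-1)$ with $N^2-1$.
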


\begin{proof}
\begin{align*}
    \prod_{k \in \bZ_N} \gamma(k)^{-1} 
    = e^{-\frac{\pi i}{N} \sum_{k=0}^{N-1}(k^2-Nk)}=e^{\frac{\pi i}{N} (-\frac{(N-1)N(2N-1)}{6}+\frac{(N-1)N^2}{2})} = e^{\frac{\pi i}{N} \frac{(N-1)N}{6}(3N-(2N-1))} = e^{\frac{\pi i}{6} (N-1)(N+1)}.
    %= q^{-\sum_{k=0}^{N-1}k(k-N)} = q^{-\frac{(N-1)N(2N-1)}{6}+\frac{(N-1)N}{2}N}=q^{\frac{(N-1)N}{6}(3N-(2N-1))}=q^{\frac{(N-1)N(N-1)}{6}}.
\end{align*}
\end{proof}

\begin{lem}\label{lem:mbb-mb}
$\braket{\mbb{k}}{\mb{\ell}}=\zeta_+ \gamma(k-\ell)^{-1}$, $\braket{\mb{k}}{\mbb{\ell}}=\zeta_- \gamma(k-\ell)$.
\end{lem}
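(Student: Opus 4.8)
The plan is to compute both pairings directly from the definitions $\ket{\mbb{k}}=\sum_{m}\gamma(m)^{-1}q^{-2km}\ket{m}$ and $\ket{\mb{\ell}}=\sum_{n}q^{-2n\ell}\ket{n}$, together with the rescaled covectors $\bra{\mbb{k}}=\sum_{m}\gamma(m)q^{2m\ell}\bra{m}$ and $\bra{\mb{\ell}}=\sum_{n}q^{2n\ell}\bra{n}$, and then recognize the resulting sum as a Gauss sum already evaluated in \cref{lem:gamma_Fourier}. Concretely, for the first identity I would write
\begin{align*}
    \braket{\mbb{k}}{\mb{\ell}} = \sum_{m\in\bZ_N}\gamma(m)\,q^{2mk}\cdot q^{-2m\ell} = \sum_{m\in\bZ_N}\gamma(m)\,q^{2m(k-\ell)},
\end{align*}
using $\braket{m}{n}=\delta_{mn}$. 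This is exactly the left-hand side of the first Fourier transformation formula in \cref{lem:gamma_Fourier} with the index $k$ there replaced by $k-\ell$, so it equals $\zeta_+\gamma(k-\ell)^{-1}$.

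For the second identity the computation is symmetric: pairing $\bra{\mb{k}}=\sum_{n}q^{2nk}\bra{n}$ against $\ket{\mbb{\ell}}=\sum_{m}\gamma(m)^{-1}q^{-2m\ell}\ket{m}$ gives
\begin{align*}
    \braket{\mb{k}}{\mbb{\ell}} = \sum_{m\in\bZ_N}\gamma(m)^{-1}\,q^{2mk}\cdot q^{-2m\ell}\cdot\big|_{\text{reindex}} = \sum_{m\in\bZ_N}\gamma(m)^{-1}\,q^{2m(k-\ell)},
\end{align*}
which is the left-hand side of the second formula in \cref{lem:gamma_Fourier} (again with $k\rightsquigarrow k-\ell$), hence equals $\zeta_-\gamma(k-\ell)$. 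One small bookkeeping point to be careful about: the momentum and slanted covectors carry the ``rescaling'' built into their definitions (the $q^{2m\ell}$ and $\gamma(m)q^{2m\ell}$ coefficients rather than plain dual basis vectors), so I should make sure the sign of the exponent and the placement of $\gamma(m)^{\pm1}$ match the conventions fixed just above \eqref{eq:momentum_normalization} and \eqref{eq:slant_normalization}; this is the only place an error could creep in.

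There is essentially no hard step here — the statement is a two-line consequence of \cref{lem:gamma_Fourier} once the definitions are unwound. The only thing worth double-checking is that $\gamma$ being even ($\gamma(-k)=\gamma(k)$) and $N$-periodic makes the substitution $k\mapsto k-\ell$ legitimate as a statement about functions on $\bZ_N$, which is immediate. I would present the proof as the two displayed reindexings above, each followed by an invocation of the corresponding formula in \cref{lem:gamma_Fourier}.

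\begin{proof}
Using $\braket{m}{n}=\delta_{mn}$ and the definitions of the covector $\bra{\mbb{k}}$ and the vector $\ket{\mb{\ell}}$, we compute
\begin{align*}
    \braket{\mbb{k}}{\mb{\ell}} = \sum_{m\in\bZ_N}\gamma(m)\,q^{2mk}\cdot q^{-2m\ell} = \sum_{m\in\bZ_N}\gamma(m)\,q^{2m(k-\ell)} = \zeta_+\,\gamma(k-\ell)^{-1},
\end{align*}
where the last equality is the first Fourier transformation formula in \cref{lem:gamma_Fourier} applied to $k-\ell \in \bZ_N$. Similarly,
\begin{align*}
    \braket{\mb{k}}{\mbb{\ell}} = \sum_{m\in\bZ_N} q^{2mk}\cdot \gamma(m)^{-1} q^{-2m\ell} = \sum_{m\in\bZ_N}\gamma(m)^{-1}\,q^{2m(k-\ell)} = \zeta_-\,\gamma(k-\ell),
\end{align*}
by the second formula in \cref{lem:gamma_Fourier}.
\end{proof}
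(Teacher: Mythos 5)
Your proof is correct and matches the paper's argument exactly: unwind the definitions of the momentum/slanted (co)vectors and apply the Gauss-sum formulas of \cref{lem:gamma_Fourier} to the resulting sum in $k-\ell$. The paper only writes out the first pairing and leaves the second implicit, which you have supplied in the same straightforward way.
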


\begin{proof}
By using the Fourier transformation formula (\cref{lem:gamma_Fourier}), we obtain
\begin{align*}
    \braket{\mbb{k}}{\mb{\ell}} = \sum_{m \in \bZ_N} \gamma(m)q^{2km} q^{-2m\ell} = \zeta_+ \gamma(k-\ell)^{-1}.
\end{align*}
\end{proof}

\subsection{Square-root and $\gamma$-operators}
Recall our choice $q:=(q^2)^{M}$, where $M:=(N+1)/2$. Note that $M$ is again an integer when $N$ is odd. 

\begin{dfn}[square-root operator]\label{def:square-roots}
For a cyclic operator $\opX$ with eigenvalues $\{q^{2k}\}_{k \in \bZ_N}$, we define $\opX^{1/2} :=\opX^M$. 
\end{dfn}
If $\ket{\mb{k}_\opX}$ is a $q^{2k}$-eigenvector of $\opX$, then we have $\opX^{1/2}\ket{\mb{k}_\opX}=q^k\ket{\mb{k}_\opX}$. The square-roots of the standard operators $\opU,\opP$ are explicitly written on $V_N$ as
\begin{align*}
    \opU^{1/2}\ket{k}=q^k \ket{k}, \quad \opP^{1/2}\ket{k} = \ket{k+M}. 
\end{align*}
Observe that $q^{2M}=q$, and hence $M$ plays the role of $1/2$ in the exponent of $q$. More generally, we can define $\opX^{1/2^n} := \opX^{M^n}$
for $n \in \bZ_{>0}$. 

\begin{dfn}[$\gamma$-operator]\label{def:gamma_op}
For any cyclic operator $\opX$, we define
\begin{align*}
    \gamma(\opX):=\frac 1 N \sum_{i,j \in \bZ_N} q^{-ij}\opX^{\frac{i+j}{2}}.
\end{align*}
\end{dfn}

\begin{lem}\label{lem:gamma_op}
\begin{enumerate}
    \item If $\ket{\mb{k}_\opX}$ is a $q^{2k}$-eigenvector of $\opX$, then $\gamma(\opX)\ket{\mb{k}_\opX}=\gamma(k)\ket{\mb{k}_\opX}$. In particular, $\gamma(\opX)$ is again a cyclic operator. 
    \item The inverse operator is given by $\gamma(\opX)^{-1}=N^{-1} \sum_{i,j \in \bZ_N} q^{ij}\opX^{\frac{i+j}{2}}$. 
    \item If $\opY$ is another operator such that $\opX\opY=q^{2a} \opY\opX$, then $\gamma(\opX)\opY= [\opX^a\opY] \gamma(\opX)$. 
\end{enumerate}
\end{lem}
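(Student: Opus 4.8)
The plan is to prove \cref{lem:gamma_op} by a direct computation using the definition of $\gamma(\opX)$ together with the square-root operator formalism from \cref{def:square-roots}. Throughout I fix an eigenbasis $\ket{\mb{k}_\opX}$ with $\opX \ket{\mb{k}_\opX} = q^{2k}\ket{\mb{k}_\opX}$, so that $\opX^{1/2} = \opX^M$ acts by $q^k$ and, more generally, $\opX^{(i+j)/2}$ acts by $q^{i+j}$ on $\ket{\mb{k}_\opX}$ raised to the appropriate power — more precisely $\opX^{(i+j)/2}\ket{\mb{k}_\opX} = q^{k(i+j)}\ket{\mb{k}_\opX}$.

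\textbf{Part (1).} First I would apply $\gamma(\opX) = \frac{1}{N}\sum_{i,j \in \bZ_N} q^{-ij} \opX^{(i+j)/2}$ to $\ket{\mb{k}_\opX}$, obtaining the scalar $\frac{1}{N}\sum_{i,j} q^{-ij} q^{k(i+j)} = \frac{1}{N}\sum_{i,j} q^{-ij + ki + kj}$. Completing the square in the exponent as $-ij+ki+kj = -(i-k)(j-k) + k^2$, the sum factors (after shifting $i \mapsto i+k$, $j\mapsto j+k$, legitimate since we sum over all of $\bZ_N$) as $q^{k^2} \cdot \frac{1}{N}\sum_{i,j}q^{-ij}$. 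The double sum $\sum_{i,j \in \bZ_N} q^{-ij} = \sum_i \left(\sum_j q^{-ij}\right) = \sum_i N\,\delta_{i,0} = N$, so the whole expression collapses to $q^{k^2} = \gamma(k)$. This proves $\gamma(\opX)\ket{\mb{k}_\opX} = \gamma(k)\ket{\mb{k}_\opX}$; cyclicity of $\gamma(\opX)$ is then immediate since $\gamma(k)^N = q^{Nk^2} = 1$.

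\textbf{Part (2).} For the inverse, I would verify directly that $N^{-1}\sum_{i,j}q^{ij}\opX^{(i+j)/2}$ acts on $\ket{\mb{k}_\opX}$ by $\gamma(k)^{-1} = q^{-k^2}$: the same completion-of-the-square computation with the opposite sign gives $+(i-k)(j-k) - k^2$ in the exponent, yielding $q^{-k^2}\cdot\frac 1N \sum_{i,j}q^{ij} = q^{-k^2}$. Since this operator is diagonal in the same eigenbasis with eigenvalues reciprocal to those of $\gamma(\opX)$, it is $\gamma(\opX)^{-1}$.

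\textbf{Part (3).} This is the commutation relation and I expect it to be the main obstacle, since $\opY$ is not diagonal in the $\opX$-eigenbasis. The cleanest route is to work with the defining sum directly and use $\opX^{a}\opY = q^{-2a\cdot ?}\,\opY\,\opX^{a}$ — more carefully, from $\opX\opY = q^{2a}\opY\opX$ one gets $\opX^{m}\opY = q^{2am}\opY\opX^{m}$ for any integer $m$, and hence $\opX^{(i+j)/2}\opY = \opX^{M(i+j)}\opY = q^{2aM(i+j)}\opY\opX^{M(i+j)} = q^{a(i+j)}\opY\opX^{(i+j)/2}$ using $2M \equiv 1 \bmod N$. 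Therefore $\gamma(\opX)\opY = \frac 1N\sum_{i,j}q^{-ij}\opX^{(i+j)/2}\opY = \frac 1N\sum_{i,j}q^{-ij+a(i+j)}\,\opY\,\opX^{(i+j)/2}$. Now I would pull $\opY$ to the front and re-index: writing $q^{-ij + ai + aj}$ and completing the square as before (shift $i \mapsto i+a$, $j\mapsto j + a$ ... but this introduces cross terms with the remaining $\opX^{(i+j)/2}$), the exponent becomes $-(i-a)(j-a) + a(i+j) - a^2$ — I need to be careful that the shift also moves the argument of $\opX^{(i+j)/2}$. The correct bookkeeping: substitute $i' = i - a$, $j' = j-a$ (wait, this needs rechecking) — rather, I'd aim to recognize $\frac 1N \sum_{i,j}q^{-ij+a(i+j)}\opX^{(i+j)/2}$ as $\opX^{a}\gamma(\opX)$ by noting that $\opX^a \cdot \opX^{(i+j)/2}$ corresponds to shifting indices, or equivalently by checking equality of both sides on each $\opX$-eigenvector $\ket{\mb{k}_\opX}$: the left side gives $q^{k^2 - ?}$... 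Concretely, $\opX^{a}\gamma(\opX)\ket{\mb{k}_\opX} = q^{2ak}\gamma(k)\ket{\mb{k}_\opX} = q^{2ak + k^2}\ket{\mb{k}_\opX}$, while $\frac 1N\sum_{i,j}q^{-ij+a(i+j)}\opX^{(i+j)/2}\ket{\mb{k}_\opX} = \frac 1N\sum_{i,j}q^{-ij+a(i+j)+k(i+j)}\ket{\mb{k}_\opX}$; completing the square, $-ij + (a+k)(i+j) = -(i-a-k)(j-a-k) + (a+k)^2$, and summing gives $q^{(a+k)^2} = q^{a^2 + 2ak + k^2}$. This differs from $q^{2ak+k^2}$ by the factor $q^{a^2} = \gamma(a)$, which is a scalar — so in fact $[\opX^a\opY] = q^{-?}\opX^a\opY$ by the Weyl-ordering convention of \cref{def:Weyl} absorbs exactly this discrepancy. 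The hard part is thus purely organizational: matching the Gaussian-sum normalization against the Weyl-ordering bracket $[\opX^a\opY]$, which by \cref{def:Weyl} equals $q^{-a}\opX^a\opY$ (since $\opX^a\opY = q^{2a}\opY\opX^a$ means $\epsilon = a$), and confirming $q^{a^2}\cdot q^{-2ak-k^2}$... I would reconcile these by carefully tracking that $[\opX^a \opY]\gamma(\opX)$ on $\ket{\mb{k}_\opX}$ reproduces $q^{(a+k)^2}$, which it does. Once the eigenvalue computation matches on a basis, (3) follows since both sides are determined by their action on $\ket{\mb{k}_\opX}$.
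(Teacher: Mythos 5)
Parts (1) and (2) are correct and essentially the paper's own argument: a direct evaluation of the double Gauss sum on the $\opX$-eigenbasis. Whether one sums over $i$ first to produce $N\delta_{j,\pm k}$ (as the paper does) or completes the square and shifts indices (as you do) is immaterial, since $q$ is a primitive $N$-th root of unity and $\sum_{i,j\in\bZ_N}q^{\mp ij}=N$. (One small slip in (2): the exponent $ij+k(i+j)$ completes to $(i+k)(j+k)-k^2$, not $(i-k)(j-k)-k^2$; the shifted sum and the conclusion $q^{-k^2}$ are unaffected.)

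In part (3) your strategy is again the paper's: commute $\opY$ through $\gamma(\opX)$ via $\opX^{(i+j)/2}\opY=q^{a(i+j)}\opY\opX^{(i+j)/2}$, identify the remaining diagonal operator with $q^{a^2}\opX^a\gamma(\opX)$ (your eigenvalue computation giving $q^{(a+k)^2}$ does this correctly), and absorb the scalar $q^{a^2}$ into the Weyl bracket. However, the one step you actually record for that last point is wrong: you assert $[\opX^a\opY]=q^{-a}\opX^a\opY$ ``since $\opX^a\opY=q^{2a}\opY\opX^a$, so $\epsilon=a$''. From $\opX\opY=q^{2a}\opY\opX$ one gets $\opX^a\opY=q^{2a^2}\opY\opX^a$, so the Weyl-ordering exponent is $\epsilon_{12}=a^2$ and $[\opX^a\opY]=q^{-a^2}\opX^a\opY=q^{a^2}\opY\opX^a$, which is exactly the factor $q^{a^2}=\gamma(a)$ your computation produced; with your stated factor $q^{-a}$ the two sides would differ by $q^{a^2-a}\neq 1$ in general, so the reconciliation as written does not close. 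Once this exponent is corrected the argument is complete, with the further caveat that ``checking on $\opX$-eigenvectors'' is only legitimate after $\opY$ has been peeled off both sides (as you implicitly do): $\opY$ is not diagonal in that basis, so the honest comparison is between the two diagonal operators $\frac{1}{N}\sum_{i,j}q^{-ij+a(i+j)}\opX^{(i+j)/2}$ and $q^{a^2}\opX^a\gamma(\opX)$, after which $\gamma(\opX)\opY=q^{a^2}\opY\opX^a\gamma(\opX)=[\opX^a\opY]\gamma(\opX)$ follows.
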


\begin{proof}
(1) and (2): Follow from the computation
\begin{align*}
    &\gamma(\opX)^{\pm 1}\ket{\mb{k}_\opX} = \frac 1 N \sum_{i,j \in \bZ_N} q^{\mp ij}q^{(i+j)k}\ket{\mb{k}_\opX} = \sum_{j \in \bZ_N} \delta_{j, \pm k} q^{jk}\ket{\mb{k}_\opX} = q^{\pm k^2}\ket{\mb{k}_\opX}. 
\end{align*}
(3): For another operator $\opY$, we get
\begin{align*}
    \gamma(\opX) \opY =  \opY\cdot  \frac 1 N \sum_{i,j \in \bZ_N} q^{-ij}q^{(i+j)a} \opX^{\frac{i+j}{2}}= \opY\cdot  \frac 1 N \sum_{i,j \in \bZ_N} q^{-(i-a)(j-a)} q^{a^2} \opX^{\frac{i+j}{2}} = q^{a^2}\opY \opX^a \cdot \gamma(\opX).
\end{align*}
Observe that $q^{a^2}\opY \opX^a=[\opX^a \opY]$. 
\end{proof}

\begin{lem}\label{lem:Gauss_2}
We have the Fourier transformation formulae
\begin{align*}
    \sum_{k \in \bZ_N} \gamma(k)^2 q^{2k\ell} =  \zeta^{(2)}_+ \gamma(\ell)^{-M}, \quad \sum_{k \in \bZ_N} \gamma(k)^{-2} q^{2k\ell} = \zeta^{(2)}_- \gamma(\ell)^{M}.
\end{align*}
Here $M=(N+1)/2$, 
\begin{align*}
    \zeta^{(2)}_+:=\begin{cases}
        N^{1/2}i & \mbox{if $M$ is even}, \\
        N^{1/2} & \mbox{if $M$ is odd}, 
    \end{cases}
\end{align*}
and $\zeta^{(2)}_-:=(\zeta^{(2)}_+)^\ast$.
\end{lem}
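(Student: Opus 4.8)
The plan is to reduce both formulae to the Gauss-sum reciprocity of \cref{thm:Gauss_reciprocity}, exactly as in the proof of \cref{lem:gamma_Fourier}, but now tracking the extra factor $2$ (resp.\ $-2$) in the exponent. First I would write $\gamma(k)^{\pm 2} = q^{\pm 2k^2} = (-e^{\pi i/N})^{\pm 2k^2} = e^{\pm 2\pi i k^2/N}$, where the sign $(-1)^{2k^2}=1$ disappears because the exponent is even; this is the point where the factor $2$ simplifies things rather than complicating them. Hence
\begin{align*}
    \sum_{k \in \bZ_N} \gamma(k)^{\pm 2} q^{2k\ell}
    = \sum_{k \in \bZ_N} e^{\pm 2\pi i k^2/N} (-e^{\pi i/N})^{2k\ell}
    = \sum_{k \in \bZ_N} e^{2\pi i(\pm k^2 + k\ell)/N}.
\end{align*}
Since $N$ is odd, $2$ is invertible modulo $N$ with inverse $M=(N+1)/2$, so completing the square I substitute $k \mapsto k$ and use $\pm k^2 + k\ell \equiv \pm(k \pm M\ell)^2 \mp M^2\ell^2 \pmod N$ (the congruence is all that matters since $k$ runs over $\bZ_N$ and the summand is $N$-periodic). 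This yields $\sum_{k} \gamma(k)^{\pm 2} q^{2k\ell} = e^{\mp 2\pi i M^2 \ell^2/N}\sum_{k \in \bZ_N} e^{\pm 2\pi i k^2/N}$, and $e^{\mp 2\pi i M^2\ell^2/N} = q^{\mp 2M^2\ell^2} = (q^{2M\ell^2})^{\mp M} \cdot$ (a correction), which I would simplify to $\gamma(\ell)^{\mp M}$ using $q^{2M}=q$, $M^2 \equiv M \pmod N$ — indeed $M^2 = M + M(M-1)$ and $2\mid M-1$ exactly when... here one must be slightly careful: $M^2 \pmod N$ need not equal $M$, so instead I track $q^{2M^2} = q^{(2M)M} = q^M$, giving $e^{\mp 2\pi i M^2\ell^2/N} = q^{\mp M \ell^2} = \gamma(\ell)^{\mp M}$ directly from $\gamma(\ell) = q^{\ell^2}$. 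That settles the $\ell$-dependent factor.

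It remains to evaluate the constant $\sum_{k \in \bZ_N} e^{\pm 2\pi i k^2/N}$. Rewriting $e^{\pm 2\pi i k^2/N} = e^{\pm \pi i(2k^2)/N}$ and noting $2k^2 + Nk$ is even (as $2k^2$ is even and $Nk$ has the same parity as $k$, so $2k^2+Nk \equiv k \pmod 2$... hence I instead pad to make it even): more cleanly, $e^{\pm 2\pi i k^2/N} = e^{\pm\pi i(2k^2 + 2Nk)/N} \cdot e^{\mp 2\pi i k}$ and the last factor is $1$, so the sum equals $S(\pm 2, \pm 2N, N)$ in the notation of \cref{thm:Gauss_reciprocity} with $a = \pm 2$, $b = \pm 2N$, $c = N$ (and $ac+b = \pm 2N \pm 2N$ is even). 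Applying the reciprocity \eqref{eq:reciprocity} gives
\begin{align*}
    S(\pm 2, \pm 2N, N) = \left|\tfrac{N}{2}\right|^{1/2} e^{\frac{\pi i}{4}(\sgn(\pm 2N) - \frac{(2N)^2}{\pm 2N})} S(\mp N, \mp 2N, \pm 2),
\end{align*}
and the right-hand sum over a set of size $2$ is elementary: $S(\mp N, \mp 2N, \pm 2) = 1 + e^{\mp\pi i(N + 2N)/( \pm 2)} = 1 + e^{\mp 3\pi i N \cdot (\pm 2)^{-1}}$, which one evaluates by parity of $N$ (odd). Collecting these gives a value of the form $N^{1/2}$ times a fourth root of unity, and matching with the case split on $M$ even/odd (equivalently $N \equiv 3$ or $1 \pmod 4$, since $M = (N+1)/2$) produces exactly $\zeta^{(2)}_\pm$ as stated; the $\zeta^{(2)}_- = (\zeta^{(2)}_+)^\ast$ relation is immediate from complex conjugating the first formula.

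The main obstacle — really the only delicate point — is bookkeeping the fourth root of unity: the reciprocity formula contributes a phase $e^{\frac{\pi i}{4}(\sgn(ac) - b^2/(ac))}$ which must be combined with the elementary two-term sum, and one must be careful that the hypotheses of \cref{thm:Gauss_reciprocity} ($ac \ne 0$, $ac + b$ even) are genuinely met and that the padding $k^2 \to k^2 + Nk$ (or $2k^2 \to 2k^2 + 2Nk$) used to enforce parity does not alter the sum. I would double-check the final phase against the already-established \cref{lem:gamma_Fourier} and \cref{lem:gamma_prod} for consistency (e.g.\ by comparing $(\zeta^{(2)}_+)$ with $\zeta_+^2/N^{1/2}$ up to the expected discrepancy coming from the $M$ in the exponent), and against a small case such as $N = 3$ ($M = 2$, even) and $N = 5$ ($M = 3$, odd) to fix signs.
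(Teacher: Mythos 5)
Your argument is correct, and it is verifiably so: completing the square with $2^{-1}\equiv M \pmod N$ is legitimate here because $e^{\pm 2\pi i(\cdot)/N}$ only sees the exponent mod $N$, the identity $e^{\mp 2\pi i M^2\ell^2/N}=q^{\mp 2M^2\ell^2}=q^{\mp M\ell^2}=\gamma(\ell)^{\mp M}$ follows from $q^{2M}=q$ exactly as you say, and the remaining constant $\sum_{k\in\bZ_N}e^{\pm 2\pi i k^2/N}$ evaluates via \cref{thm:Gauss_reciprocity} (your padding $b=\pm 2N$ works, though $b=0$ already satisfies the parity hypothesis) to $N^{1/2}$ for $N\equiv 1\pmod 4$ ($M$ odd) and $\pm iN^{1/2}$ for $N\equiv 3\pmod 4$ ($M$ even), matching $\zeta^{(2)}_\pm$; the conjugation step for the second formula also needs only $\gamma(-\ell)=\gamma(\ell)$.

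The route differs from the paper's in how the $\ell$-dependence is handled. The paper applies the reciprocity formula directly to the full sum $S(2,2\ell,N)$, obtaining a phase $e^{-\pi i\ell^2/2N}$ times a two-term sum depending on the parity of $M+\ell$, and then reconciles this with $\gamma(\ell)^{-M}$ by a case-by-case parity check (the factor $(-1)^{\ell M}i^{\ell^2}\sigma(N;\ell)=\sigma(N;0)$, Tables \ref{tab:sign_1}--\ref{tab:sign_3}). You instead peel off $\gamma(\ell)^{\mp M}$ exactly by completing the square first — the same device the paper uses for $\gamma(k)^{\pm 1}$ in \cref{lem:gamma_Fourier}, made possible for $\gamma(k)^{\pm 2}$ by inverting $2$ mod $N$ — so reciprocity is only needed for the $\ell$-independent classical Gauss sum, and the mod-$4$ case analysis is confined to that single constant. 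Your version is somewhat cleaner and makes transparent why the answer is ``classical Gauss sum times $\gamma(\ell)^{\mp M}$''; the paper's version avoids modular inverses at the cost of the parity tables. Do carry out the final fourth-root-of-unity bookkeeping explicitly (as you note), since that is the only place an error could hide; your proposed sanity checks at $N=3,5$ suffice to pin it down.
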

Observe that $\gamma(\ell)^{\pm M}$ plays the role of $\gamma(\ell)^{\pm 1/2}$. 

\begin{proof}
Note that $\sum_{k \in \bZ_N} \gamma(k)^2 q^{2k\ell}=\sum_{k \in \bZ_N} q^{2k(k+\ell)}= \sum_{k=0}^{N-1} e^{\frac{\pi i}{N}(2k^2+2k\ell)}=S(2,2\ell,N)$. 
Then by \eqref{eq:reciprocity}, we get
\begin{align*}
    \sum_{k \in \bZ_N} \gamma(k)^2 q^{2k\ell} &= \big(\frac N 2\big)^{1/2} e^{\frac{\pi i}{4}(1-\frac{2\ell^2}{N})} S(-N,-2\ell,2) \\
    &= N^{1/2} \frac{1+i}{2}e^{-\frac{\pi i}{2N} \ell^2} \sum_{k=0}^1 e^{\frac{\pi i }{2}(-Nk^2-2k\ell)} \\
    &= N^{1/2} \frac{1+i}{2}e^{-\frac{\pi i}{2N} \ell^2} (1+ e^{\frac{\pi i }{2}(-N-2\ell)}) \\
    &= N^{1/2} \frac{1+i}{2}e^{-\frac{\pi i}{2N} \ell^2} (1+ (-1)^{M+\ell}i) \\
    &= N^{1/2} e^{-\frac{\pi i}{2N} \ell^2}\sigma(N;\ell).
\end{align*}
Here 
\begin{align*}
    \sigma(N;\ell) := \frac{(1+i)(1+ (-1)^{M+\ell}i)}{2} = \begin{cases}
        i & \mbox{if $M+\ell$ is even}, \\
        1 & \mbox{if $M+\ell$ is odd}. 
    \end{cases}
\end{align*}

On the other hand, $\gamma(\ell)^M = (-1)^{\ell M} e^{\frac{\pi i}{N}\frac{N+1}{2}\ell^2}=(-1)^{\ell M}i^{\ell^2} e^{\frac{\pi i}{2N}\ell^2}$. The case-by-case consideration on the parities of $\ell$ and $M$ shows that $(-1)^{\ell M}i^{\ell^2}\sigma(N;\ell)= \sigma(N;0)$. See Tables \ref{tab:sign_1}--\ref{tab:sign_3}. Therefore 
\begin{align*}
    \gamma(\ell)^M\sum_{k \in \bZ_N} \gamma(k)^2 q^{2k\ell} = N^{1/2}\sigma(N;0) = \zeta^{(2)}_+,
\end{align*}
proving the first equation. Then the second equation follows by taking the complex conjugate. 
\end{proof}
\begin{table}[ht]
\begin{minipage}{0.3\textwidth}
    \centering
    \caption{$\sigma(N;\ell)$}
    \begin{tabular}{c|c|c}
    \diagbox[height=2em,width=3em]{$\ell$}{$M$}    & even & odd  \\ \hline
     even    & $i$  & $1$ \\ \hline
     odd & $1$ & $i$
    \end{tabular}
    \label{tab:sign_1}
\end{minipage}
\begin{minipage}{0.3\textwidth}
    \centering
    \caption{$(-1)^{\ell M}i^{\ell^2}$}
    \begin{tabular}{c|c|c}
    \diagbox[height=2em,width=3em]{$\ell$}{$M$}    & even & odd  \\ \hline
     even    & $1$ & $1$ \\ \hline
     odd & $i$ & $-i$
    \end{tabular}
    \label{tab:sign_2}
\end{minipage}  
\begin{minipage}{0.3\textwidth}
    \centering
    \caption{Product}
    \begin{tabular}{c|c|c}
    \diagbox[height=2em,width=3em]{$\ell$}{$M$}    & even & odd  \\ \hline
     even    & $i$ & $1$ \\ \hline
     odd & $i$ & $1$
    \end{tabular}
    \label{tab:sign_3}
\end{minipage} 
    %\caption{Value tables for $\sigma(N;\ell)$ (Left) and $(-1)^{\ell M}i^{-\ell^2}$ (Right)}
\end{table}

\section{Cyclic quantum \Teich\ theory}\label{sec:Kashaev}
In this section, we present the construction of a finite dimensional projective representation of the dotted Ptolemy groupoid $\mathrm{Pt}^{\bullet}_\Sigma$ associated with any marked surface $\Sigma$. 
%Here, a \emph{projective representation} of a groupoid $\mathcal{G}$ means a functor $\mathcal{G} \to$

\subsection{Marked surfaces and their triangulations}\label{subsec:surface}
A marked surface $\Sigma$ is a compact oriented surface equipped with a fixed non-empty finite set $\bM \subset \Sigma$ of \emph{marked points}. 
% A marked point is called a \emph{puncture} if it lies in the interior of $\Sigma$, and a \emph{special point} otherwise. 
% Let $\bP=\bP(\Sigma)$ (resp. $\bM_\partial=\bM_\partial(\Sigma)$) denote the set of punctures (resp. special points), so that $\bM=\bP \sqcup \bM_\partial$. 
Let $\Sigma^*:=\Sigma \setminus \bM$, and $\bM_\partial :=\bM \cap \partial\Sigma$. 
We always assume the following conditions:
\begin{enumerate}
    \item[(S1)] Each boundary component (if exists) has at least one marked point.
    \item[(S2)] $-2\chi(\Sigma^\ast)+|\bM_\partial| >0$.
    %\item[(S3)] $(\Sigma,\bM)$ is not a once-punctured disk with a single special point on the boundary.
\end{enumerate}
An \emph{ideal triangulation} is an isotopy class of a triangulation $\tri$ of $\Sigma$ whose set of $0$-cells (vertices) coincides with $\bM$. 
The conditions above ensure the existence of such an ideal triangulation. 
For an ideal triangulation $\tri$, denote the set of edges (resp. interior edges, triangles) of $\tri$ by $e(\tri)$ (resp. $e_{\interior}(\tri)$, $t(\tri)$). By a computation of Euler characteristics, we get
\begin{align*}
    &|e(\tri)|=-3\chi(\Sigma^*)+2|\bM_\partial|, \quad |e_{\interior}(\tri)|=-3\chi(\Sigma^*)+|\bM_\partial|, \\
    &|t(\tri)|=-2\chi(\Sigma^*)+|\bM_\partial|.
\end{align*}
A triangle with two of its sides identified is called a \emph{self-folded triangle}. See \cref{fig:self_folded}. 

\begin{figure}[ht]
    \centering
\begin{tikzpicture}
\draw[blue](0,0) -- (0,2);
\draw[blue] (0,0) ..controls (45:0.5) and (1.5,2.5).. (0,2.5);
\draw[blue] (0,0) ..controls (135:0.5) and (-1.5,2.5).. (0,2.5);
\filldraw[fill=white](0,2) circle(2pt);
\fill(0,0) circle(1.5pt);
\node[blue] at (-0.3,1.2) {$\alpha$};
\node[blue] at (1.4,1) {$\pi_\tri(\alpha)$};

\end{tikzpicture}
    \caption{A self-folded triangle.}
    \label{fig:self_folded}
\end{figure}

Given an ideal triangulation $\tri \in \mathrm{Pt}_\Sigma$, define $\ve^\tri=(\ve_{\alpha\beta}^\tri)_{\alpha,\beta \in e(\tri)}$ by the following rule. 
For an edge $\alpha \in e(\tri)$, let $\pi_\tri(\alpha)$
be the edge defined as follows: if $\alpha$ is the interior edge of a self-folded triangle, then $\pi_\tri(\alpha)$ is the encircling edge (see \cref{fig:self_folded}); otherwise $\pi_\tri(\alpha):=\alpha$. 
For a non-self-folded triangle $T \in t(\tri)$ and two edges $\alpha,\beta \in e(\tri)$, let
\begin{align*}
    \ve_{\alpha\beta}(T):= \begin{cases}
        1 & \mbox{if $T$ has $\pi_\tri(\alpha)$ and $\pi_\tri(\beta)$ as its consecutive edges in the clockwise order}, \\
        -1 & \mbox{if the same holds with the counter-clockwise order}, \\
        0 & \mbox{otherwise}.
    \end{cases}
\end{align*}
Then we set $\ve_{\alpha\beta}^\tri:=\sum_T \ve_{\alpha\beta}(T)$, where $T$ runs over all non-self-folded triangles of $\tri$. %We need a modification if $\tri$ has self-folded triangles: see \cite[Definition 4.1]{FST}. 

It is known that any marked surface in our consideration admits an ideal triangulation without self-folded triangles \cite[Corollary 3.9]{FST}. Moreover, any two ideal triangulations without self-folded triangles can be connected by a sequence of flips within this class \cite[(6.4)]{LF}.
%In order to simplify the discussion, we will only consider ideal triangulations without self-folded triangles, unless otherwise stated. It is straightforward to generalize the theory for self-folded triangles. See, for instance, \cite{AB} and \cite{AK}. Note that two ideal triangulations without self-folded triangles can be connected by a sequence of flips within this class \cite{FST}. 

A \emph{dotted triangulation} $\bD$ of $\Sigma$ consists of an ideal triangulation $\tri$ of $\Sigma$ together with a choice of a corner of each triangle. We call $\tri$ the \emph{underlying triangulation} of $\bD$. 
In figures, the chosen corner is shown by the symbol $\bullet$, and called the \emph{dot}. 

\subsection{Dotted Ptolemy groupoid}
Recall that a \emph{full groupoid} over a set $S$ is the groupoid $\mathcal{G}_S$ where the set of objects is $S$, and there is exactly one morphism $[s,s'] \in \Hom_{\mathcal{G}_S}(s,s')$ for any objects $s,s' \in \mathcal{G}_S$. In particular, $[s,s]=\mathrm{id}_s$ and $[s,s'] \circ [s',s'']=[s,s'']$ for any $s,s',s'' \in \mathcal{G}_S$. Note that the composition of morphisms is read from the left to the right. 

\begin{dfn}[Ptolemy groupoids \cite{Kas98,Penner}]\label{def:Ptolemy_groupoid}
The \emph{Ptolemy groupoid} $\mathrm{Pt}_\Sigma$ of $\Sigma$ is the full groupoid over the set of ideal triangulations of $\Sigma$. 

The \emph{dotted Ptolemy groupoid} (also known as the \emph{Kashaev groupoid}) $\mathrm{Pt}^{\scalebox{0.4}{$\bullet$}}_\Sigma$ is the full groupoid over the set of dotted triangulations of $\Sigma$. There is a functor $\mathrm{Pt}^{\tdot}_\Sigma \to \mathrm{Pt}_\Sigma$, $\bD \mapsto \tri$ forgetting the dots.
\end{dfn}
We are going to mainly deal with the dotted Ptolemy groupoid $\mathrm{Pt}^{\tdot}_\Sigma$. 

\begin{dfn}[elementary morphisms]
We call the following morphisms in $\mathrm{Pt}^{\tdot}_\Sigma$ the \emph{elementary morphisms}.
\begin{enumerate}
    \item We call a morphism $[\bD_1,\bD_2]$ the \emph{dot rotation} at $v \in t(\tri)$ if $\bD_1,\bD_2$ have a common ideal triangulation and their dots only differ on the triangle $v$ as shown in the left of \cref{fig:elem_morph}. We write $A_v=[\bD_1,\bD_2]$ in this case.
    \item We call a morphism $[\bD_1,\bD_2]$ the \emph{flip} at a square $(v,w) \in t(\tri)^2$ if $\bD_1,\bD_2$ have a common ideal triangulation except for the square formed by $v,w$, where they are related as shown in the right of \cref{fig:elem_morph}. We write $T_{vw}=[\bD_1,\bD_2]$ in this case.
\end{enumerate}
\end{dfn}

\begin{figure}[ht]
    \centering
\begin{tikzpicture}
\draw(-30:1) -- (90:1) -- (210:1) --cycle;
\node at (0,0) {$v$};
\dast{(90:0.8)};
\node at (0,-1) {$\bD_1$};
\draw[->] (1,0.3) --node[midway,above]{$A_v$} (2,0.3);
\draw[shift={(3,0)}] (-30:1) -- (90:1) -- (210:1) --cycle;
\node at (3,0) {$v$};
\node at (3,-1) {$\bD_2$};
\dast{(3,0)++(210:0.8)};

\begin{scope}[xshift=6cm,yshift=-0.5cm]
\draw (0,0) -- (1.5,0) -- (1.5,1.5) -- (0,1.5) --cycle; 
\draw (0,1.5) -- (1.5,0);
\node at (0.5,0.5) {$v$};
\node at (1,1) {$w$};
\dast{(0.15,0.15)};
\dast{(1.4,0.25)};
\node at (0.75,-0.5) {$\bD_1$};
\draw[->] (2,0.75) --node[midway,above]{$T_{vw}$} (3,0.75);
\end{scope}
\begin{scope}[xshift=9.5cm,yshift=-0.5cm]
\draw (0,0) -- (1.5,0) -- (1.5,1.5) -- (0,1.5) --cycle; 
\draw (1.5,1.5) -- (0,0);
\node at (0.5,1) {$v$};
\node at (1,0.5) {$w$};
\dast{(0.1,0.25)};
\dast{(1.35,0.15)};
\node at (0.75,-0.5) {$\bD_2$};
\end{scope}
\end{tikzpicture}
    \caption{Elementary morphisms.}
    \label{fig:elem_morph}
\end{figure}

\begin{thm}[\cite{Kas01,Tes07,Kim16}]\label{thm:Ptolemy_presentation}
The dotted Ptolemy groupoid $\mathrm{Pt}^{\tdot}_\Sigma$ is generated by elementary moves. Namely, any morphism can be written as a finite composition of elementary morphisms. Moreover, any relations among them is generated by 
\begin{align*}
    A_v^3=\mathrm{id}, \quad T_{uv}T_{uw}T_{vw}=T_{vw}T_{uv}, \quad A_v T_{vw} A_w = A_w T_{wv} A_v, \quad T_{vw}A_v T_{wv} = A_v A_w \opP_{(vw)}.
\end{align*}
Here, $\opP_{(vw)}$ denotes the permutation of labels $v$ and $w$. 
\end{thm}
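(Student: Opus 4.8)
The plan is to split the statement into its two halves — (i) that the elementary morphisms generate every morphism of $\mathrm{Pt}^{\tdot}_\Sigma$, and (ii) that the four displayed relations generate all relations among them — and to reduce each, as far as possible, to the corresponding facts for the \emph{undotted} Ptolemy groupoid $\mathrm{Pt}_\Sigma$, which are classical: they are encoded in the connectivity and simple‑connectivity of the flip complex of ideal triangulations (cf.\ \cite{Penner}, whose topological input is the contractibility of the arc complex). This is the strategy of \cite{Kas01,Tes07,Kim16}.

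For (i), I would first invoke the classical fact that any two ideal triangulations of $\Sigma$ are joined by a finite sequence of flips, and then observe that the dot rotation $A_v$ cyclically permutes the three possible positions of the dot of the triangle $v$ and fixes everything else, so $A$‑moves alone act transitively on the dottings of any fixed underlying triangulation. Given dotted triangulations $\bD$ and $\bD'$, one then uses $A$‑moves to install, at each stage, whatever dots are required to perform the next flip of the underlying triangulations, followed by a last batch of $A$‑moves to match the terminal dotting; this writes $[\bD,\bD']$ as a composite of $A_v$'s and $T_{vw}$'s.

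For (ii), build the presentation $2$‑complex $\widehat X$: vertices are dotted triangulations, edges are elementary morphisms, and $2$‑cells are attached along the four displayed relations, together with the evident far‑commutativity of moves supported on disjoint parts of $\Sigma$ and the trivial cells needed to handle inverses of generators and the relabellings $\opP_{(vw)}$. Since $\mathrm{Pt}^{\tdot}_\Sigma$ is a full groupoid, $\pi_1$ is trivial at every object, so by the standard dictionary between groupoid presentations and $2$‑complexes, assertion (ii) is precisely that $\widehat X$ is simply connected (it is connected by (i)). Consider the forgetful map $p\colon\widehat X\to X^{\flat}$ onto the analogous flip complex of undotted triangulations — whose $2$‑cells are the bigons $f^2=\mathrm{id}$, the commuting‑flip squares, and the pentagons, and which is simply connected by the classical result above. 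The fibre $p^{-1}(\tri)$ — dottings of $\tri$ joined by $A$‑moves, with the $A_v^3=\mathrm{id}$ triangles and the commuting‑$A$ squares as $2$‑cells — is the Cartesian product over the triangles of $\tri$ of filled triangles, hence contractible. Now, given a loop $\gamma$ in $\widehat X$, push it down to a loop in $X^{\flat}$, fill that by a disk tiled by bigons/squares/pentagons, and lift the disk cell by cell: decorate the boundary of each undotted $2$‑cell with a coherent choice of dots, fill the resulting loop in $\widehat X$ using the four relations, and reconcile neighbouring lifts — whose boundary portions agree in $X^{\flat}$ — by inserting $A$‑moves; the reconciliation loops lie in fibres, hence are null‑homotopic, so $\gamma$ bounds in $\widehat X$.

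The heart of the argument, and the step I expect to be the main obstacle, is this cell‑by‑cell lifting: one must verify that the dotted realization of each undotted pentagon (resp.\ commuting‑flip square, bigon) relation, for \emph{every} admissible placement of the dots around its five (resp.\ four, two) corners, is a consequence of the four displayed relations. This is a finite but genuinely delicate case analysis, since each elementary move relocates dots and they must be tracked all the way around the cell; the mixed relations $A_vT_{vw}A_w=A_wT_{wv}A_v$ and $T_{vw}A_vT_{wv}=A_vA_w\opP_{(vw)}$ are exactly what lets one carry a dot across a flip inside such a filling, and working through all the cases is the combinatorial content of \cite{Kas01,Tes07,Kim16}. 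A secondary point that requires care is whether the far‑commutativity of disjointly supported moves — in particular $A_vA_w=A_wA_v$ for $v\neq w$ — is a genuine consequence of the four listed relations or must be appended to them; under the present conventions it should be derivable, but that derivation ought to be made explicit.
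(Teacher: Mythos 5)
The paper does not prove this theorem at all: it is imported from \cite{Kas01,Tes07,Kim16}, and the text only illustrates the pentagon relation (\cref{fig:pentagon_proof}) and points to \cite{Kim16} for the careful treatment with labelled triangles. So there is no in-paper argument to compare against; what can be said is that your outline — reduce to the classical presentation of the undotted Ptolemy groupoid via contractibility of the arc complex, treat the dottings over a fixed triangulation as a contractible fibre generated by $A$-moves, and lift fillings cell by cell — is indeed the strategy of the cited references, so the overall route is the right one.

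However, as written the proposal is a plan rather than a proof, and you identify the gap yourself: the entire substance of the theorem is the verification that \emph{every} dotted realization of each classical $2$-cell (pentagon, commuting-square, flip-involution bigon), for every admissible placement of dots, is filled by the four listed relations, with the mixed relations $A_vT_{vw}A_w=A_wT_{wv}A_v$ and $T_{vw}A_vT_{wv}=A_vA_w\opP_{(vw)}$ used to transport dots across flips. Deferring this finite-but-delicate case analysis to the references means the decisive step is not carried out, so the argument is not self-contained. Your secondary worry is also substantive: in Kashaev-type presentations the commutation of disjointly supported moves (and the compatibility with the label permutations $\opP_{(vw)}$) is normally listed among the defining relations (or built into the labelled formalism as in \cite{Kim16}) rather than derived from the four displayed ones; if you want to work with exactly the four relations as stated, you must either prove these ``trivial'' relations are consequences or make explicit that they are tacitly included — your lifting argument uses them when reconciling neighbouring lifts, so this cannot be left unresolved.
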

See \cref{fig:pentagon_proof} for the illustration of the pentagon relation $T_{uv}T_{uw}T_{vw}=T_{vw}T_{uv}$. We refer the reader to \cite{Kas01} for illustrations of the other relations. 
Strictly speaking, we should consider dotted triangulations with labelings on the triangles. See \cite{Kim16} for a detailed treatment.

Note that under the projection $\mathrm{Pt}^{\tdot}_\Sigma \to \mathrm{Pt}_\Sigma$, the morphism $T_{vw}$ (resp. $A_v$) descends to the ordinary flip of the underlying triangulations (resp. the identity morphism). The latter flip is denoted by $\mu_\kappa: \tri_1 \to \tri_2$, where $\kappa \in e_{\interior}(\tri)$ denotes the unique edge shared by the triangles $v$ and $w$.

\subsection{Global coefficients}
For each ideal triangulation $\tri$, consider the space
\begin{align*}
    \cF_{\tri}:= \prod_{\alpha \in e_{\mathrm{int}}(\tri)} \cF_N \times \prod_{\alpha \in e(\tri) \setminus e_{\mathrm{int}}(\tri)} \overline{\cF}_N.
\end{align*}
We call a point $\bp^\tri=(\bp_{\alpha}^\tri) \in \cF_\tri$ a \emph{coefficient tuple}. For a flip $\mu_\kappa: \tri \to \tri'$, two coefficient tuples $\bp^\tri=(\bp_\alpha) \in \cF_\tri$, $\bp^{\tri'}=(\bp'_\alpha) \in \cF_{\tri'}$ are said to be \emph{mutation-compatible} if they satisfy the relation 
\begin{align}\label{eq:coeff_relation}
    {p'_\kappa}^{\pm} = p_\kappa^{\mp}, \qquad 
    \frac{{p'}^+_\alpha}{{p'}^-_\alpha} &= \begin{cases}
        \displaystyle\frac{p^+_\alpha}{p^-_\alpha} (p_\kappa^+)^{\ve_{\alpha\kappa}} & \mbox{if $\alpha \neq \kappa$ and $\ve_{\alpha\kappa} \geq 0 $}, \vspace{3mm}\\ 
        \displaystyle\frac{p^+_\alpha}{p^-_\alpha}  (p_\kappa^-)^{\ve_{\alpha\kappa}} & \mbox{if $\alpha \neq \kappa$ and $\ve_{\alpha\kappa} <0 $}.
    \end{cases}
\end{align}

\begin{rem}\label{rem:coefficient_orbit}
If $\bp^\tri\in \cF_\tri$ and $\bp^{\tri'} \in \cF_{\tri'}$ are mutation-compatible, so are $\hat\bp^\tri$ and $\bp^{\tri'}$, where the former is
% Here, notice that the mutation compatibility is not an equivalence relation. For example, the coefficient tuple $\hat{\bp}^\tri=(\hat{\bp}_\alpha) \in \cF_\tri$ 
given by 
\begin{align*}
    (\hat{p}_\kappa^+,\hat{p}_\kappa^-):=(p_\kappa^+,p_\kappa^-) \quad \mbox{  and  } \quad (\hat{p}_\alpha^+,\hat{p}_\alpha^-):=(q^{2k}p_\alpha^+,q^{2k}p_\alpha^-), \quad \alpha \neq \kappa.
\end{align*}
%the fiber action $(p_\alpha^+,p_\alpha^-) \mapsto (q^{2k}p_\alpha^+,q^{2k}p_\alpha^-)$, $\alpha \neq \kappa$ does not change the relation 
%satisfies the same relation\eqref{eq:coeff_relation} with $\bp^{\tri'}$, and hence these two tuples may share a compatible tuple $({p'}_\alpha^+,{p'}_\alpha^-)$. 
In particular, the relation \eqref{eq:coeff_relation} does not uniquely determine $\bp^{\tri}$ from $\bp^{\tri'}$, and vice versa. Our mutation compatibility is closely related to the concept of \emph{seed orbits} in \cite{Fraser} (see also \cref{rem:semifield} below). 
\end{rem}

%where $y_\alpha=p_\alpha^+/p_\alpha^-$, etc.
% \begin{align}\label{eq:coeff_relation}
%     \frac{{p'_\alpha}^+}{{p'_\alpha}^-} = \begin{cases}
%         \displaystyle\frac{p_\kappa^-}{p_\kappa^+}  & \mbox{if $\alpha=\kappa$}, \vspace{3mm}\\
%         \displaystyle\frac{p_\alpha^+}{p_\alpha^-} (p_\kappa^+)^{\ve_{\alpha\kappa}} & \mbox{if $\alpha \neq \kappa$ and $\ve_{\alpha\kappa} \geq 0 $}, \vspace{3mm}\\ 
%         \displaystyle\frac{p_\alpha^+}{p_\alpha^-} (p_\kappa^-)^{-\ve_{\alpha\kappa}} & \mbox{if $\alpha \neq \kappa$ and $\ve_{\alpha\kappa} <0 $}.
%     \end{cases}
% \end{align}

\begin{dfn}[Global coefficient]\label{def:coeff_space}
A \emph{global coefficient} is a pairwise mutation-compatible collection $\bbp_\Sigma=\{\bp^\tri\}$ of coefficient tuples, one for each $\tri \in \mathrm{Pt}_\Sigma$. 
Let $\cF_\Sigma$ be the set of global coefficients. 
\end{dfn}

\begin{figure}[ht]
    \centering
\begin{tikzpicture}[scale=1.2]
\foreach \i in {0,1,2,3,4} {
\draw(\i*72+90:3) coordinate(A\i);
}
\draw[->,shorten >=1.44cm, shorten <=1.44cm] (A0) -- (A1);
\draw[->,shorten >=1.44cm, shorten <=1.44cm] (A1) -- (A2);
\draw[->,shorten >=1.44cm, shorten <=1.44cm] (A0) -- (A4);
\draw[->,shorten >=1.44cm, shorten <=1.44cm] (A4) -- (A3);
\draw[->,shorten >=1.44cm, shorten <=1.44cm] (A3) -- (A2);
\draw(A0) node[fill=white,circle,inner sep=0.1pt]{
\tikz[scale=0.36]{
\foreach \i in {0,1,2,3,4} {
\fill(\i*72+90:3) circle(1.3pt) coordinate(B\i);
\draw[blue] (\i*72+90:3) -- (\i*72+90+72:3);
\draw(72+90:2.5) node[scale=0.5]{$\ast$};
\draw(144+90:2.5)++(0.3,0.2) node[scale=0.5]{$\ast$};
\draw(216+90:2.5)++(0.25,0.5) node[scale=0.5]{$\ast$};
}
\draw[blue] (B0)--(B2);
\draw[blue] (B0)--(B3);
\node[scale=0.8] at (-2,0){$u$};
\node[scale=0.8] at (0,-0.5){$v$};
\node[scale=0.8] at (2,0){$w$};
\node[blue,scale=0.8] at (1.1,1){$\bp$};
\node[blue,scale=0.8] at (-1.1,1){$\br$};
}};
\draw(A1) node[fill=white,circle,inner sep=0.1pt]{
\tikz[scale=0.36]{
\foreach \i in {0,1,2,3,4} {
\fill(\i*72+90:3) circle(1.3pt) coordinate(B\i);
\draw[blue] (\i*72+90:3) -- (\i*72+90+72:3);
\draw(72+90:2.5) node[scale=0.5]{$\ast$};
\draw(144+90:2.5)++(0.3,0.3) node[scale=0.5]{$\ast$};
\draw(216+90:2.5) node[scale=0.5]{$\ast$};
}
\draw[blue] (B0)--(B2);
\draw[blue] (B2)--(B4);
\node[scale=0.8] at (-2,0){$u$};
\node[scale=0.8] at (0,0){$v$};
\node[scale=0.8] at (0.5,-1.8){$w$};
\node[blue,scale=0.8] at (1.5,0.5){$\bp^{-1}$};
\node[blue,scale=0.8] at (-1.1,1){$\br'$};
}};
\draw(A2) node[fill=white,circle,inner sep=0.1pt]{
\tikz[scale=0.36]{
\foreach \i in {0,1,2,3,4} {
\fill(\i*72+90:3) circle(1.3pt) coordinate(B\i);
\draw[blue] (\i*72+90:3) -- (\i*72+90+72:3);
\draw(72+90:2.5)++(0.5,0.35) node[scale=0.5]{$\ast$};
\draw(144+90:2.5)++(-0.2,0.1) node[scale=0.5]{$\ast$};
\draw(216+90:2.5) node[scale=0.5]{$\ast$};
}
\draw[blue] (B1)--(B4);
\draw[blue] (B2)--(B4);
\node[scale=0.8] at (0,1.5){$u$};
\node[scale=0.8] at (-1,-0.5){$v$};
\node[scale=0.8] at (0.5,-1.8){$w$};
\node[blue,scale=0.8] at (1,0){${\bp''}^{-1}$};
\node[blue,scale=0.8] at (-1.1,0.4){${\br'}^{-1}$};
}};
\draw(A3) node[fill=white,circle,inner sep=0.1pt]{
\tikz[scale=0.36]{
\foreach \i in {0,1,2,3,4} {
\fill(\i*72+90:3) circle(1.3pt) coordinate(B\i);
\draw[blue] (\i*72+90:3) -- (\i*72+90+72:3);
\draw(72+90:2.5)++(0.5,0.4) node[scale=0.5]{$\ast$};
\draw(144+90:2.5) node[scale=0.5]{$\ast$};
\draw(216+90:2.5)++(0.2,0.3) node[scale=0.5]{$\ast$};
}
\draw[blue] (B1)--(B3);
\draw[blue] (B1)--(B4);
\node[scale=0.8] at (0,1.5){$u$};
\node[scale=0.8] at (-1.5,-1){$v$};
\node[scale=0.8] at (-0.5,0){$w$};
\node[blue,scale=0.8] at (1,-1){$\bp''$};
\node[blue,scale=0.8] at (1.5,0.4){${\bp'}^{-1}$};
}};
\draw(A4) node[fill=white,circle,inner sep=0.1pt]{
\tikz[scale=0.36]{
\foreach \i in {0,1,2,3,4} {
\fill(\i*72+90:3) circle(1.3pt) coordinate(B\i);
\draw[blue] (\i*72+90:3) -- (\i*72+90+72:3);
\draw(72+90:2.5)++(0.2,0.1) node[scale=0.5]{$\ast$};
\draw(144+90:2.5) node[scale=0.5]{$\ast$};
\draw(216+90:2.5)++(0.25,0.5) node[scale=0.5]{$\ast$};
}
\draw[blue] (B1)--(B3);
\draw[blue] (B0)--(B3);
\node[scale=0.8] at (-0.5,1){$u$};
\node[scale=0.8] at (-1.5,-1){$v$};
\node[scale=0.8] at (2,0){$w$};
\node[blue,scale=0.8] at (1.1,1){$\bp'$};
\node[blue,scale=0.8] at (-0.5,0){$\br^{-1}$};
}};
\end{tikzpicture}
    \caption{Pentagon relation}
    \label{fig:pentagon_proof}
\end{figure}

The following is easily verified by inspection:

\begin{lem}\label{lem:coeff_rel=mutation}
Under the assignment of coefficient tuples $\bp,\br,\bp',\br',\bp'' \in \cF_N$ as in \cref{fig:pentagon_proof}, the relations \eqref{eq:pentagon_coeff_rel} coincides with the mutation rule \eqref{eq:coeff_relation}. 
\end{lem}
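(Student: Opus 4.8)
The plan is a direct, frame-by-frame verification along the pentagon cycle of Figure~\ref{fig:pentagon_proof}. Its five arrows project, under $\mathrm{Pt}^{\tdot}_\Sigma\to\mathrm{Pt}_\Sigma$, to the five flips $\mu_\kappa$ of the underlying pentagon triangulations, and on coefficient tuples each flip acts by the mutation rule~\eqref{eq:coeff_relation} applied to the two diagonals. For each of the five flips I would: read off from the figure which diagonal is flipped (this is the index $\kappa$) and which is retained (call it $\alpha$); compute the exchange-matrix entry $\ve_{\alpha\kappa}$ of the source triangulation, which is a single sign $\pm1$ since the two diagonals of a triangulated pentagon share exactly one triangle; substitute into \eqref{eq:coeff_relation}; and compare the output with the corresponding line of \eqref{eq:pentagon_coeff_rel}.

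Two remarks make the bookkeeping transparent. The ``swap'' clause ${p'_\kappa}^{\pm}=p_\kappa^{\mp}$ of \eqref{eq:coeff_relation} is exactly the reason Figure~\ref{fig:pentagon_proof} labels each flipped diagonal by the \emph{inverse} tuple (the $\bp$-diagonal of the first frame becomes the $\bp^{-1}$-diagonal of the next, and so on); this clause therefore records nothing beyond the figure's labeling convention. The remaining, ``ratio'' clause $\frac{{p'}^+_\alpha}{{p'}^-_\alpha}=\frac{p^+_\alpha}{p^-_\alpha}(p_\kappa^{\pm})^{\ve_{\alpha\kappa}}$ contributes precisely one equation of \eqref{eq:pentagon_coeff_rel} per flip: for the two flips with $\ve_{\alpha\kappa}=+1$ it produces a bare factor $p_\kappa^+$ on the right (these are the two relations of \eqref{eq:pentagon_coeff_rel} with an un-inverted $r^+$ resp.\ ${p'}^+$), and for the three flips with $\ve_{\alpha\kappa}=-1$ it produces a factor $(p_\kappa^-)^{-1}$ (the three relations written purely in ratios). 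I would write out one flip in full --- e.g.\ the first arrow, which flips the $\bp$-diagonal and relabels the retained $\br$-diagonal as $\br'$, yielding $\frac{{r'}^+}{{r'}^-}=\frac{r^+}{r^-}(p^-)^{-1}$ --- and then note that the other four are formally identical.

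The only real obstacle is fixing the sign conventions so that they match the figure: one has to commit to the orientation of $\Sigma$, to the clockwise/counter-clockwise convention in the definition of $\ve_{\alpha\beta}(T)$, and, in each of the five frames, to identifying the triangle shared by the two diagonals together with the cyclic order of its three edges --- this last point is what determines the sign of $\ve_{\alpha\kappa}$. One should also keep the residual $\bZ_N$-fiber ambiguity of \eqref{eq:coeff_relation} in view: that rule determines only the ratio $\pi(\bp'_\alpha)$ of the retained tuple, so the labels in Figure~\ref{fig:pentagon_proof} are to be understood as fixed lifts, and \eqref{eq:pentagon_coeff_rel} as a set of relations among those lifts --- the three ratio identities then holding on the nose, and the two identities carrying a bare coordinate being consistent with (and in fact part of the data specifying) the chosen lifts. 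With these conventions in place the five identities drop out mechanically, which is the precise content of ``verified by inspection.''
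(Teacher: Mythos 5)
Your plan is exactly the inspection the paper has in mind (the paper offers no written argument beyond ``verified by inspection''): project each of the five arrows of \cref{fig:pentagon_proof} to a flip, note that the swap clause of \eqref{eq:coeff_relation} is encoded in the inverse labels of the flipped diagonals, and match the ratio clause, one relation of \eqref{eq:pentagon_coeff_rel} per flip, with $\ve_{\alpha\kappa}=\pm1$. One bookkeeping slip in your parenthetical tally: following the arrows of the figure (source triangulations), the third relation $\frac{{p'}^-}{{p'}^+}=\frac{{r'}^-}{{r'}^+}({p''}^+)^{-1}$ comes from the flip $A_3\to A_2$ at the $\bp''$-labelled diagonal, where the retained edge carries ${\bp'}^{-1}\mapsto{\br'}^{-1}$; written in the standard orientation it reads $\frac{{r'}^-}{{r'}^+}=\frac{{p'}^-}{{p'}^+}\,{p''}^+$, i.e.\ a \emph{bare} factor and $\ve_{\alpha\kappa}=+1$ (the printed inversion only reflects solving for the source ratio). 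So the split is three flips with $\ve_{\alpha\kappa}=+1$ (the relations with factors ${p''}^+,\,{p'}^+,\,r^+$) and two with $\ve_{\alpha\kappa}=-1$ (factors $(p^-)^{-1},\,({r'}^-)^{-1}$), not two and three; carrying out your own recipe flip by flip produces this corrected matching and completes the verification.
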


\begin{rem}[coefficient semifield]\label{rem:semifield}
Consider the semifield $\mathbb{P}_N:=\bC^\ast/\bZ_N$, where $\bZ_N$ acts on $\bC^\ast$ by $k.z:=q^{2k}z$ for $k \in \bZ_N$, equipped with the operations
\begin{align*}
    z \oplus w :=(z^N + w^N)^{1/N}, \quad z\cdot w := zw.
\end{align*}
These operations are well-defined on $\mathbb{P}_N$, regardless of the choice of $N$-th roots. Then the Fermat condition $(p^+)^N+(p^-)^N=1$ can be written as $p^+ \oplus p^- =1$. The transformation rule \eqref{eq:coeff_relation} is exactly the mutation formula 
\cite[(1.5)]{FZ-CA2} of coefficients. 

In our theory, however, it is crucial to regard the coefficients $p^\pm$ as points of $\bC^\ast$ rather than their orbits in $\mathbb{P}_N$, since the multiplication by $q^{2k}$ affects the value of $\Psi_{\bp}(X)$.
\end{rem}

\subsubsection*{Projection to the classical cluster Poisson variety}

\begin{lem}[cf. {\cite[Remark 2.7]{FZ-CA4}}]\label{lem:cluster_transf_y}
Take a generic section \eqref{eq:generic_section}
% \begin{align*}
%     s: \bC^\ast \to \cF_N, \quad y \mapsto (p^+,p^-)=\bigg(\left(\frac{y^N}{1+y^N}\right)^{1/N},\left(\frac{1}{1+y^N}\right)^{1/N}\bigg)
% \end{align*}
by choosing a branch of the $N$-th root. 
Then the relation \eqref{eq:coeff_relation} implies the relation among the exchange ratios:
\begin{align}
    y'_\alpha&=
    \begin{cases}
        y_{\kappa}^{-1} & \mbox{if $\alpha=\kappa$},\\
        y_{\alpha}\big(1+y_{\kappa}^{-N\sgn (\ve_{\alpha \kappa})}\big)^{-\ve_{\alpha \kappa}/N} & \mbox{if $\alpha\neq \kappa$}.
    \end{cases}\label{eq:cluster_transf_y}
\end{align}
\end{lem}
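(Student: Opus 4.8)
The plan is to verify \eqref{eq:cluster_transf_y} by substituting the section \eqref{eq:generic_section} directly into the coefficient mutation rule \eqref{eq:coeff_relation}, handling the cases $\alpha=\kappa$ and $\alpha\neq\kappa$ separately.

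First I would dispose of the case $\alpha=\kappa$: the first part of \eqref{eq:coeff_relation} gives ${p'_\kappa}^{+}=p_\kappa^{-}$ and ${p'_\kappa}^{-}=p_\kappa^{+}$, so $y'_\kappa={p'_\kappa}^{+}/{p'_\kappa}^{-}=p_\kappa^{-}/p_\kappa^{+}=y_\kappa^{-1}$, which is the first case of \eqref{eq:cluster_transf_y}; no branch choice enters here. For $\alpha\neq\kappa$, the second part of \eqref{eq:coeff_relation} says precisely that $y'_\alpha={p'}^{+}_\alpha/{p'}^{-}_\alpha=y_\alpha\,(p_\kappa^{+})^{\ve_{\alpha\kappa}}$ when $\ve_{\alpha\kappa}\geq 0$ and $y'_\alpha=y_\alpha\,(p_\kappa^{-})^{\ve_{\alpha\kappa}}$ when $\ve_{\alpha\kappa}<0$, so all that remains is to express $p_\kappa^{\pm}$ in terms of $y_\kappa$.

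For that I would read off from \eqref{eq:generic_section} that $(p_\kappa^{+})^N=y_\kappa^N/(1+y_\kappa^N)=(1+y_\kappa^{-N})^{-1}$ and $(p_\kappa^{-})^N=(1+y_\kappa^N)^{-1}$, and then, using the branch of the $N$-th root fixed in \eqref{eq:generic_section}, conclude $p_\kappa^{+}=(1+y_\kappa^{-N})^{-1/N}$ and $p_\kappa^{-}=(1+y_\kappa^N)^{-1/N}$. Plugging these in yields $y'_\alpha=y_\alpha\,(1+y_\kappa^{-N})^{-\ve_{\alpha\kappa}/N}$ for $\ve_{\alpha\kappa}\geq 0$ and $y'_\alpha=y_\alpha\,(1+y_\kappa^{N})^{-\ve_{\alpha\kappa}/N}$ for $\ve_{\alpha\kappa}<0$, which combine into the single formula $y_\alpha\,(1+y_\kappa^{-N\sgn(\ve_{\alpha\kappa})})^{-\ve_{\alpha\kappa}/N}$ of \eqref{eq:cluster_transf_y} (the case $\ve_{\alpha\kappa}=0$ being vacuous).

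The computation itself is routine; the only point that needs care, and the one I would flag explicitly, is the bookkeeping of the $N$-th roots. The relations $(p_\kappa^{+})^N=(1+y_\kappa^{-N})^{-1}$ and $(p_\kappa^{-})^N=(1+y_\kappa^N)^{-1}$ are equalities of $N$-th powers, and promoting them to equalities of the roots requires consistently using the branch chosen in \eqref{eq:generic_section}. Conceptually this is harmless: the exchange ratio $y'_\alpha={p'}^{+}_\alpha/{p'}^{-}_\alpha$ is invariant under the residual $\bZ_N$-action on the fibers of $\pi$, so the resulting identity \eqref{eq:cluster_transf_y} does not depend on the ambiguity present in \eqref{eq:coeff_relation}, and it is exactly the $N$-th root counterpart of the classical $y$-mutation recalled in \cite[Remark 2.7]{FZ-CA4}.
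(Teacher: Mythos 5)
Your proposal is correct and is essentially the paper's own argument: you substitute the section values $p_\kappa^{+}=(1+y_\kappa^{-N})^{-1/N}$, $p_\kappa^{-}=(1+y_\kappa^{N})^{-1/N}$ into the coefficient mutation rule \eqref{eq:coeff_relation} and read off \eqref{eq:cluster_transf_y}, exactly as in the paper's proof. The additional remarks on the $\alpha=\kappa$ case and on the branch/$\bZ_N$-fiber bookkeeping are sound but routine.
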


\begin{proof}
We have $p_\kappa^\pm = s((1+y_\kappa^{\mp N})^{-1/N})$. 
Then from \eqref{eq:coeff_relation}, we get
\begin{align*}
    y'_\alpha =\begin{cases}
        y_\alpha (p_\kappa^+)^{\ve_{\alpha\kappa}} = y_\alpha (1+y_\kappa^{-N})^{-\ve_{\alpha\kappa}/N} & \mbox{if $\ve_{\alpha\kappa}\geq 0$}, \\
        y_\alpha (p_\kappa^-)^{\ve_{\alpha\kappa}} = y_\alpha (1+y_\kappa^{N})^{-\ve_{\alpha\kappa}/N} & \mbox{if $\ve_{\alpha\kappa}< 0$}.
    \end{cases}
\end{align*}
for $\alpha \neq \kappa$.
\end{proof}
Let $\overline{\X}_\Sigma= \bigcup_{\tri \in \mathrm{Pt}_\Sigma} \overline{\X}_\tri$ denote the (partially compactified) cluster $\X$-variety (also known as the cluster Poisson variety) of Fock--Goncharov \cite{FG09}, where $\overline{\X}_\tri\cong (\bC^\ast)^{e_{\interior}(\tri)} \times \bC^{e(\tri)\setminus e_{\interior}(\tri)}$ are algebraic tori parameterized by $\tri \in \mathrm{Pt}_\Sigma$,\footnote{More precisely, we need to include the tori associated with tagged triangulations \cite{FST} to get a genuine cluster variety.} which are partially compactified in the frozen direction. Let $\X_\Sigma \subset \overline{\X}_\Sigma$ be the (non-compactified) cluster Poisson variety. 

\begin{cor}\label{cor:proj_cluster}
The projections $\pi_N^\tri: \cF_\tri \to \overline{\X}_\tri$ given by $(\bp_\alpha^\tri)_\alpha \mapsto (Y_\alpha^\tri)_{\alpha}$ with $Y_\alpha^\tri:=(y_\alpha^\tri)^N$ combine to give a well-defined map
\begin{align}\label{eq:proj_cluster}
    \pi_N: \cF_\Sigma \to \overline{\X}_\Sigma.
\end{align}
In particular, a global coefficient $\bbp_\Sigma \in \cF_\Sigma$ determines a $PSL_2(\bC)$-character $\rho:\pi_1(\Sigma^\ast) \to PSL_2(\bC)$.
\end{cor}

\begin{proof}
From \eqref{eq:cluster_transf_y}, we get
\begin{align}
    Y'_\alpha&=
    \begin{cases}
        Y_{\kappa}^{-1} & \mbox{if $\alpha=\kappa$},\\
        Y_{\alpha}\big(1+Y_{\kappa}^{-\sgn (\ve_{\alpha \kappa})}\big)^{-\ve_{\alpha \kappa}} & \mbox{if $\alpha\neq \kappa$},
    \end{cases}\label{eq:cluster_transf_Y}
\end{align}
which is exactly the cluster Poisson transformation.

It is known \cite[Section 9]{AB} that the cluster Poisson variety $\X_\Sigma$ is identified with a Zariski open subspace of the moduli space $\P_{PGL_2,\Sigma}$ of framed $PSL_2(\bC)$-local systems with pinnings \cite{GS19} (see also \cref{sec:mapping_torus}). Forgetting the data of pinnings, we get a projection $\varpi:\mathcal{P}_{PGL_2,\Sigma} \to \X_{PGL_2,\Sigma}$ to the moduli space of Fock--Goncharov \cite{FG06}. It amounts to forget the frozen coordinates. 
Then we obtain $\varpi(\pi_N(\bbp_\Sigma)) \in \X_{PGL_2,\Sigma}$, which contains the data of a $PSL_2(\bC)$-local system on $\Sigma^\ast$, which is equivalent to a $PSL_2(\bC)$-character.  
\end{proof}

% \begin{dfn}[non-degeneracy]\label{dfn:coeff_non-degenerate}
% A global coefficient $\bbp_\Sigma \in \cF_\Sigma$ is said to be \emph{non-degenerate} if $(y_\alpha^\tri)^N \neq 1$ for all $\tri \in \mathrm{Pt}_\Sigma$ and $\alpha \in e_{\interior}(\tri)$.
% \end{dfn}

\begin{rem}
Recall from \cref{rem:use_of_dilog} that we need to choose $\bp$ generically so that $q^{2k} \in \mathcal{D}_N(y)$ for all $k \in \bZ_N$. The ``bad'' locus for this choice only depends on $y=\pi(\bp) \in \bC^\ast$, which is a real-analytic subset of $\bC^\ast$. When we vary $\tri \in \mathrm{Pt}_\Sigma$, since the mutation-compatibility is a monomial relation, the union of ``bad'' loci for the choices of $\bp^\tri$ determines an at most countable union of real-analytic subsets in the initial plane $(\bC^\ast)^{e(\tri)}$, which is also easy to avoid by a generic perturbation.   
\end{rem}

% \begin{rem}\label{rem:projection_X-var}
% By the relation \eqref{eq:coeff_relation}, the maps $\pi^N_\tri: \cF_\tri \to (\bC^\ast)^{e_{\interior}(\tri)}$, $(\bp_\alpha)_\alpha \mapsto (Y_\alpha:=(p_\alpha^+/p_\alpha^-)^N)_\alpha$ combine to give a projection
% \begin{align*}
%     \pi_\Sigma^N: \cF_\Sigma \to \X_\Sigma,
% \end{align*}
% where $\X_\Sigma$ denotes the cluster Poisson variety associated with $\Sigma$ \cite{FG09}. See also \cref{lem:cluster_transf_y}. 
% \end{rem}

\subsection{Representation functor}\label{subsec:functor}
We fix a global coefficient $\bbp_\Sigma \in \cF_\Sigma$. 
For each dotted triangulations $\bD$ of $\Sigma$, we associate the $q$-Weyl algebra $\cW_{\bD}$ generated by $U_v,P_v$ for $v \in t(\tri)$. To simplify the notation, let us label the triangles by $\{1,\dots,T\}$ with $T:=|t(\tri)|$. 

We construct a projective representation of the dotted Ptolemy groupoid $\mathrm{Pt}^{\bullet}_\Sigma$, as follows. 

\begin{description}
\item[State space] For a dotted triangulations $\bD$ of $\Sigma$, we associate the vector space  
    \begin{align*}
        V_{\bD}:= \bigotimes_{v \in t(\tri)} V_N = \bigoplus_{k_1,\dots,k_T \in \bZ_N}\bC \ket{k_1,\dots,k_T},
    \end{align*}
    where $\ket{k_1,\dots,k_T}:=\ket{k_1}\otimes \dots \otimes \ket{k_T}$.\footnote{We will also use a `mixed' basis like $\ket{k_1,\mb{k_2}}=\ket{k_1}\otimes \ket{\mb{k_2}}=\sum_{m_2 \in \bZ_N} q^{-2k_2m_2} \ket{k_1,m_2}$.}
    The Weyl algebra $\cW_{\bD}$ acts on $V_{\bD}$ by
\begin{align*}
    \opU_v\ket{k_1,\dots,k_T}&:= q^{2k_v}\ket{k_1,\dots,k_T}, \\
    \opP_v\ket{k_1,\dots,k_T}&:=\ket{k_1,\dots,k_v+1,\dots,k_T}.
\end{align*}
    In particular, $\opU_v^N=\opP_v^N=\mathsf{1}$ holds. 
    
\item[Quantum coordinate transformations] 
For any morphism $\omega=[\bD_1,\bD_2]$ in $\mathrm{Pt}^{\tdot}_\Sigma$, we contravariantly associate a linear isomorphism $V_\omega: V_{\bD_2} \to V_{\bD_1}$ as follows.%\footnote{Note that by convention, we read a composite morphism in $\mathrm{Pt}^{\tdot}_\Sigma$ from the left to the right, while linear maps acts on the vectors from the left (as usual) so that they are read from the right to the left. } 
\begin{enumerate}
    \item For a dot rotation $A_v=[\bD_1,\bD_2]$, associate the linear map $\opA_v: V_{\bD_2} \to V_{\bD_1}$ given by
    \begin{align*}
        \opA_v \ket{k_1,\dots,k_T}:=\zeta_A\ket{k_1,\dots,\mbb{k_v},\dots,k_T}
        % = (NC_N^-)^{-1/3}\sum_{k \in \bZ_N} \gamma(k)^{-1}q^{-2mk}\ket{k}.
        %\bra{n}\opA_v \ket{m}:= (NC_N^-)^{-1/3}\gamma(n)^{-1}q^{-2mn} 
    \end{align*}
    for $k_1,\dots,k_T \in \bZ_N$, where $\zeta_A:=(N\zeta_-)^{-1/3}=N^{-1/2}i^{(1-N)/6}$. Recall the basis $\ket{\mbb{k}}=\sum_{m \in \bZ_m} \gamma(m)^{-1} q^{-2km}\ket{m}$ of $V_N$ from \cref{lem:slant_diagonal}. 
    \item For a flip $t_{vw}=[\bD_1,\bD_2]$, associate the linear map $\opT_{vw}: V_{\bD_2} \to V_{\bD_1}$ given by
    \begin{align}\label{eq:flip_op}
        \opT_{vw}:= \Psi_{\bp_{\alpha}^{\tri_1}}(\dbra{\opP_v^{-1}\opU_v\opP_w})\opS_{vw}, \quad \opS_{vw}:=\frac 1 N \sum_{i,j=0}^{N-1} q^{2ij}\opU_w^i \opP_v^j .
    \end{align}
    Here $\alpha \in e_{\interior}(\tri)$ is the flipped edge.
\end{enumerate}
For a general morphism $\omega=[\bD_1,\bD_2]$, we define $V_\omega:V_{\bD_2} \to V_{\bD_1}$ to be the composite of these elementary maps.
\end{description}
The operator $\opA_v$ is designed so that it satisfies
\begin{align*}
    \opA_v\opU_v \opA_v^{-1} = \dbra{\opP_v \opU_v^{-1}},\quad \opA_v\opP_v \opA_v^{-1} = \opU_v^{-1}
\end{align*} 
and $\opA_v^3=\mathsf{1}$. See \cref{lem:A_comm}. In this sense, $\opA_v$ is a `slant Fourier transform' of order 3.

\begin{thm}\label{thm:AT_relations}
The following relations hold.
\begin{enumerate}
    \item $\opA_v^3=\mathsf{1}$. 
    \item $\opT_{uv}\opT_{uw}\opT_{vw}=\opT_{vw}\opT_{uv}$. 
    \item $\opA_v \opT_{vw} \opA_w =\opA_w \opT_{wv} \opA_v$.
    \item $\opT_{vw}\opA_v \opT_{wv} = \zeta \opA_v \opA_w \opP_{(12)}$, where $\zeta = \zeta_\inv(\zeta_A \zeta_-)^{-1}=e^{\frac{\pi i}{6}(1-\frac 1N)}$. 
\end{enumerate}
% \begin{NB} $i=e^{\pi i /2}$
% \begin{align*}
%     \zeta = \frac{\zeta_\inv}{\zeta_A \zeta_-} = \frac{e^{\frac{\pi i}{6}(N-\frac 1 N)}}{N^{-1/2}e^{\pi i\frac{1-N}{12}} \cdot N^{1/2}i^{\frac{N-1}{2}}} =e^{\frac{\pi i}{12} (2N-2N^{-1}-(1-N)-3(N-1)  )} =e^{\frac{\pi i}{6}(1-\frac 1N)}.
% \end{align*}
% \end{NB}
In particular, for any global coefficient $\bbp_\Sigma=\{\bp^\tri\}_{\tri \in \mathrm{Pt}_\Sigma} \in \cF_\Sigma$, we get a projective representation $V_\ast(\bbp_\Sigma): \mathrm{Pt}^{\tdot}_\Sigma \to \mathrm{Vect}_\bC^\op$.
\end{thm}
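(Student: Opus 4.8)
The plan is to establish the four operator identities (1)--(4) and then invoke the presentation of the dotted Ptolemy groupoid in \cref{thm:Ptolemy_presentation}. Once the operators $\opA_v$ and $\opT_{vw}$ realize the elementary moves in such a way that the four defining relations of $\mathrm{Pt}^{\tdot}_\Sigma$ hold up to scalars --- with the label transposition $\opP_{(vw)}$ realized by the transposition $\opP_{(12)}$ of the two tensor factors attached to the flipped square --- the contravariant assignment $\bD \mapsto V_\bD$, $\omega \mapsto V_\omega$ descends to a well-defined projective functor $\mathrm{Pt}^{\tdot}_\Sigma \to \mathrm{Vect}_\bC^\op$. Relations (1), (3), (4) each involve at most two triangles, so I would reduce them to computations on $V_N$ or $V_N \otimes V_N$; the pentagon (2) is the substantial one.

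For (1) I would compute $\opA_v^3$ by iterating $\opA_v\ket{k} = \zeta_A\ket{\mbb{k}}$ three times and collapsing the resulting Gauss sums with the Fourier transformation formulae of \cref{lem:gamma_Fourier} together with the cocycle identity $\gamma(k+\ell) = \gamma(k)\gamma(\ell)q^{2k\ell}$; the outcome is $\opA_v^3 = \zeta_A^3\zeta_- N\cdot\mathsf{1}$, which equals $\mathsf{1}$ exactly because $\zeta_A = (N\zeta_-)^{-1/3}$ --- this is recorded, along with the conjugation formulae $\opA_v\opU_v\opA_v^{-1} = \dbra{\opP_v\opU_v^{-1}}$ and $\opA_v\opP_v\opA_v^{-1} = \opU_v^{-1}$, in \cref{lem:A_comm}. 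For (3) and (4) I would first note that $\opS_{vw}$ acts on the two relevant tensor factors as the shear $\ket{k_v,k_w} \mapsto \ket{k_v - k_w, k_w}$, hence conjugates $\opU_v, \opP_v, \opU_w, \opP_w$ into monomials in these generators; combining this with the conjugation formulae for $\opA_v$ reduces (3) to an identity between two explicit operators on $V_N \otimes V_N$ and reduces (4) to such an identity up to scalar. The scalar in (4) I expect to assemble from the Gaussian scalars $\zeta_A, \zeta_-$ produced by the $\opA$- and $\opS$-parts together with the inversion relation $\Psi_\bp(1)\Psi_{\bp^{-1}}(1) = \zeta_\inv$ of \cref{thm:inv} for the quantum-dilogarithm parts --- the inverted coefficient $\bp^{-1}$ entering because of the dot rotation hidden in the composite $\opT_{vw}\opA_v\opT_{wv}$ --- producing precisely $\zeta = \zeta_\inv(\zeta_A\zeta_-)^{-1} = e^{\frac{\pi i}{6}(1-\frac1N)}$.

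For the pentagon (2) I would follow Kashaev's strategy. Writing $\opT_{vw} = \Psi_{\bp_\alpha^\tri}(\dbra{\opP_v^{-1}\opU_v\opP_w})\opS_{vw}$, substitute this decomposition into both sides of $\opT_{uv}\opT_{uw}\opT_{vw} = \opT_{vw}\opT_{uv}$ and commute every shear operator to the right, using the conjugation formulae above to transport the arguments of the $\Psi$'s; since $(\opP_v^{-1})^N = \opU_v^N = \opP_w^N = 1$, each such argument stays a cyclic operator by \cref{lem:Weyl}, so $\Psi$ of it is defined via \cref{def:funct_analysis}. What remains to check is (i) a purely combinatorial identity among the shear operators --- bijections of the standard basis, verified directly --- and (ii) an identity among three quantum-dilogarithm operators in two $q$-commuting cyclic variables, which is exactly the pentagon relation of \cref{thm:pentagon}. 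The coefficient tuples appearing as subscripts are precisely those that $\bbp_\Sigma \in \cF_\Sigma$ assigns to the five edges flipped along the $A_2$-cycle of \cref{fig:pentagon_proof}; by \cref{lem:coeff_rel=mutation} they satisfy the mutation relations \eqref{eq:coeff_relation}, equivalently the parameter relations \eqref{eq:pentagon_parameter_rel}, so \cref{thm:pentagon} applies and (2) holds as an exact equality.

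The hardest part will be the bookkeeping in (2): tracking how the dots --- and hence the labelling of tensor factors and the occurrences of the inversion $\bp \mapsto \bp^{-1}$ visible in \cref{fig:pentagon_proof} --- change under the five successive flips, and confirming that, once all shears have been commuted through, the three surviving $\Psi$-factors and their arguments match the two sides of \cref{thm:pentagon} with no residual scalar. With (1)--(4) in hand, \cref{thm:Ptolemy_presentation} then yields the projective representation $V_\ast(\bbp_\Sigma)$ for every $\bbp_\Sigma \in \cF_\Sigma$.
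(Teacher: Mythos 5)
Your proposal follows essentially the same route as the paper: (1) by iterating the slant Fourier transform and collapsing Gauss sums with \cref{lem:gamma_Fourier}, (2) by writing $\opT=\Psi\cdot\opS$, pushing the shears to the right, checking the shear pentagon on the standard basis, and invoking \cref{thm:pentagon} with parameters matched through \cref{lem:coeff_rel=mutation} and \cref{thm:parameter_coeff}, (3)--(4) by direct computations on $V_N\otimes V_N$ with the inversion relation supplying $\zeta_\inv$, and finally \cref{thm:Ptolemy_presentation} to get the projective functor. The only slight inaccuracy is your attribution of the inverted coefficient $\bp^{-1}$ in (4) to the dot rotation; it actually comes from the mutation rule ${p'_\kappa}^{\pm}=p_\kappa^{\mp}$ applied to the re-flipped edge, but this does not affect the argument.
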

The proof is given in \cref{subsec:AT_relations_proof}. In short, these relations essentially come from (2) the pentagon relation (\cref{thm:pentagon}), (3) a symmetry of $\opS_{vw}$ (\cref{lem:S_symmetry}), and (4) the inversion relation (\cref{thm:inv}), respectively.  

We call the projective functor $V_\ast(\bbp_\Sigma)$ a \emph{cyclic representation} of $\mathrm{Pt}^{\tdot}_\Sigma$ or a \emph{cyclic quantum \Teich\ theory}. 

\begin{rem}\label{rem:module}
Let $\mathrm{Tor}_q$ denote the category where the objects are quantum tori and morphisms are quantum `rational maps', namely algebra homomorphisms between the skew-fields of fractions. The assignment $\bD \mapsto \cW_{\bD}$ gives a functor
\begin{align*}
    \cW_\ast(\bbp_\Sigma): \mathrm{Pt}^{\tdot}_\Sigma \to \mathrm{Tor}_q^\op,
\end{align*}
where elementary morphisms $A_v,T_{vw}$ are send to the adjoint actions of $\opA_v,\opT_{vw}$. 
See \cref{lem:A_comm,prop:T_comm} for explicit formulae for these morphisms. 
The projective representation $V_\ast(\bbp_\Sigma)$ is a $\cW_\ast(\bbp_\Sigma)$-module: each $V_\bD$ is a $\cW_\bD$-module, and compatible with morphisms.     
\end{rem}

\section{Representations of Chekhov--Fock algebras}\label{sec:cluster}
In this section, we introduce the Chekhov--Fock algebra $\X_{\by^\tri}^\tri$ with specified values of central characters, and describe the quantum cluster Poisson transformations among them by using the cyclic quantum dilogarithm. Then we discuss the embedding $\iota_\bD: \X_{\by^\tri}^\tri \to \cW_{\bD}$ determined by a dotted triangulation. Via this embedding, the space $V_{\bD}$ can be viewed as a module over $\X_{\by^\tri}^\tri$, which is reducible. The goal is to describe its irreducible decomposition in a geometric way.

% \subsection{Representations of the cluster modular groupoid at roots of unity}

% Let $\bs$ be a \emph{mutation class} of quantum seeds, namely the set of quantum seeds that are obtained by an initial seed $t_0=(\ve_0,\bp_0,\bX_0)$ by finite applications of mutations and permutations of indices. The full groupoid $\mathcal{G}_\bs$ over $\bs$ is called the \emph{cluster modular groupoid}. The assignment $t=(\ve^{(t)},\bp^{(t)},\bX^{(t)}) \mapsto \X^{(t)}_\by$ and the mutation isomorphisms \eqref{eq:cluster_transf_frac} among them give rise to a functor
% \begin{align*}
%     \X^\bullet: \mathcal{G}_\bs \to \mathrm{Tor}^\op_q.
% \end{align*}
% We may identify the skew-fields of fractions $\Frac\X^{(t)}_\by$ via the mutation isomorphisms \eqref{eq:cluster_transf_frac}, and denote it by $\cF$.

\subsection{Chekhov--Fock algebras and quantum cluster Poisson transformations}\label{subsec:FG_Kashaev}
To each ideal triangulation of a marked surface $\Sigma$, we associate a quantum torus as follows. 
Fix a global coefficient $\bbp_\Sigma=(\bp^\tri)_{\tri \in \mathrm{Pt}_\Sigma} \in \cF_\Sigma$, and let $\by^\tri=(y_\alpha^\tri)$ denote the exchange ratio derived from $\bp^\tri$. 
\begin{dfn}\label{def:CF-alg}
The \emph{Chekhov--Fock algebra} (also known as the \emph{quantum cluster Poisson torus}) is defined to be the quantum torus algebra $\X^\tri$ over $\bC$ with generators $(X_\alpha^\tri)^{\pm 1}$ for $\alpha \in e(\tri)$ and the relations
\begin{align*}
    X_\alpha^\tri X_\beta^\tri = q^{2\ve_{\alpha\beta}^\tri} X_\beta^\tri X_\alpha^\tri.
\end{align*}
Each generator $X_\alpha^\tri$ is called a \emph{cluster Poisson variable}. Note that each $(X_\alpha^\tri)^N$ is a central element. 
Let $\X^\tri_{\by^\tri}$ be the quotient algebra of $\X^\tri$ obtained by further imposing the relation $(X_\alpha^\tri)^N = (y_\alpha^\tri)^N \in \bC^\ast$ for $\alpha \in e(\tri)$.  
\end{dfn}
Note that $\X^\tri$ is an Ore domain (see, for instance, \cite{MR}), while $\X^\tri_{\by^\tri}$ is not. 

For a flip $\mu_\kappa: \tri \to \tri'$ along $\kappa \in e_{\interior}(\tri)$, the exchange matrices $\ve_{\alpha\beta}=\ve^\tri_{\alpha\beta}$ and $\ve'_{\alpha\beta}=\ve^{\tri'}_{\alpha\beta}$ are related by the \emph{matrix mutation rule}
\begin{align}
    \ve'_{\alpha\beta}&=
    \begin{cases}
        -\ve_{\alpha\beta} & \mbox{if $\alpha=\kappa'$ or $\beta=\kappa$,}\\
        \varepsilon_{\alpha\beta}+[\ve_{\alpha\kappa}]_+[\ve_{\kappa\beta}]_+ - [-\ve_{\alpha\kappa}]_+[-\ve_{\kappa\beta}]_+ & \mbox{otherwise},
    \end{cases} \label{eq:matrix_mutation}
\end{align}
% and the mutation-equivalence relation \eqref{eq:coeff_relation} between $\bp=\bp^{\tri}$ and $\bp'=\bp^{\tri'}$ can be rewritten as
% \begin{align}
%     {p'_\kappa}^{\pm} &= p_\kappa^{\mp}, \qquad 
%     \frac{{p'}^+_\alpha}{{p'}^-_\alpha}=
%         \frac{p^+_\alpha}{p^-_\alpha} (p_\kappa^{\sgn(\ve_{\alpha\kappa})})^{\ve_{\alpha\kappa}} \quad \mbox{if $\alpha \neq \kappa'$}.  \label{eq:cluster_transf_p} 
% \end{align}
where $[a]_+:=\max(a,0)$ for $a \in \bR$. The Chekhov--Fock algebras are connected by `quantum rational maps' as follows. 

\begin{dfn}[quantum cluster Poisson transformation]
We define a $\bC$-algebra isomorphism $\mu_\kappa^\ast: \Frac \X^{\tri'} \xrightarrow{\sim} \Frac \X^\tri$ between the skew-fields of fractions by
\begin{align}
    \mu_\kappa^\ast (X'_\alpha):=
    \begin{cases}
        X_{\kappa}^{-1} & \mbox{if $\alpha=\kappa'$},\\
        X_{\alpha}\displaystyle\prod_{j=1}^{|\ve_{\alpha\kappa}|}\big(1 + q^{2j-1}X_{\kappa}^{-\sgn (\ve_{\alpha \kappa})}\big)^{-\ve_{\alpha \kappa}} & \mbox{if $\alpha\neq \kappa'$}.
    \end{cases}\label{eq:cluster_transf_X}
\end{align}
Here $X'_\alpha=X_\alpha^{\tri'}$ and $X_\alpha=X_\alpha^\tri$, and $\sgn(a) \in \{0,\pm \}$ denotes the sign of $a\in \bR$.
\end{dfn}
These constructions give rise to a functor $\X^\ast: \mathrm{Pt}^{\tdot}_\Sigma \to \mathrm{Tor}_q^\op$.

\begin{conv}\label{conv:flip}
As long as a single triangulation $\tri$ or a single flip $\mu_\kappa: \tri \to \tri'$ matter in the consideration, we drop the superscripts $\tri$ and $\tri'$ and abbreviate as $Z'_\alpha:=Z_\alpha^{\tri'}$, $Z_\alpha:=Z_\alpha^{\tri}$ for any variables/matrices (as we already did).
\end{conv}

For $\alpha \in e(\tri)$, we introduce the \emph{rescaled cluster Poisson variables} 
\begin{align*}
    \overline{X}_\alpha:=y_\alpha^{-1} X_\alpha.
\end{align*}
It satisfies $\overline{X}_\alpha^N=1$ in the quotient algebra $\X^\tri_{\by^\tri}$. 

\begin{lem}\label{lem:cluster_transf_Xbar}
In terms of the rescaled cluster Poisson variables, the map \eqref{eq:cluster_transf_X} is rewritten as
\begin{align}
    \mu_\kappa^\ast(\overline{X}'_\alpha) &= \begin{cases}
        \overline{X}_{\kappa}^{-1} & \mbox{if $\alpha=\kappa'$},\\
        \overline{X}_{\alpha}\displaystyle\prod_{j=1}^{|\ve_{\alpha\kappa}|}(p_\kappa^- + q^{2j-1}p_\kappa^+ \overline{X}_{\kappa})^{-\ve_{\alpha\kappa}} & \mbox{if $\alpha\neq \kappa'$ and $\ve_{\alpha\kappa} < 0$}, \\
        \overline{X}_{\alpha}\displaystyle\prod_{j=1}^{|\ve_{\alpha\kappa}|}(p_\kappa^+ + q^{2j-1}p_\kappa^- \overline{X}_{\kappa}^{-1})^{-\ve_{\alpha\kappa}} & \mbox{if $\alpha\neq \kappa'$ and $\ve_{\alpha\kappa} \geq 0$}.
    \end{cases}\label{eq:cluster_transf_Xbar} 
\end{align}
\end{lem}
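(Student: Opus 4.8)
The plan is to prove this by direct substitution of the rescaling $X_\alpha = y_\alpha\overline{X}_\alpha$ into \eqref{eq:cluster_transf_X}, and then absorbing the resulting scalar discrepancies using the coefficient mutation rule \eqref{eq:coeff_relation}. Since $\overline{X}_\alpha$ differs from $X_\alpha$ only by the central scalar $y_\alpha^{-1}$, the $q$-commutation relations among the $\overline{X}_\alpha$ coincide with those among the $X_\alpha$, and $\mu_\kappa^\ast$ is already known to be an algebra isomorphism; so nothing has to be checked on the structural side, and the content is purely the comparison of the two explicit formulas.

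First I would treat the exchanged variable $\alpha=\kappa'$: from \eqref{eq:cluster_transf_X} we have $y'_\kappa\overline{X}'_\kappa = X'_\kappa = X_\kappa^{-1} = y_\kappa^{-1}\overline{X}_\kappa^{-1}$, and since \eqref{eq:coeff_relation} gives ${p'_\kappa}^{\pm}=p_\kappa^{\mp}$, hence $y'_\kappa=y_\kappa^{-1}$, this reduces at once to $\overline{X}'_\kappa=\overline{X}_\kappa^{-1}$, which is the first line of \eqref{eq:cluster_transf_Xbar}.

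For $\alpha\neq\kappa'$ I would split into the two sign cases. When $\ve_{\alpha\kappa}<0$ we have $X_\kappa^{-\sgn(\ve_{\alpha\kappa})}=X_\kappa=y_\kappa\overline{X}_\kappa$, and substituting $y_\kappa=p_\kappa^+/p_\kappa^-$ rewrites each factor as $1+q^{2j-1}X_\kappa^{-\sgn(\ve_{\alpha\kappa})} = (p_\kappa^-)^{-1}(p_\kappa^- + q^{2j-1}p_\kappa^+\overline{X}_\kappa)$; when $\ve_{\alpha\kappa}\geq 0$ we instead have $X_\kappa^{-\sgn(\ve_{\alpha\kappa})}=X_\kappa^{-1}=(p_\kappa^-/p_\kappa^+)\overline{X}_\kappa^{-1}$, so that $1+q^{2j-1}X_\kappa^{-1} = (p_\kappa^+)^{-1}(p_\kappa^+ + q^{2j-1}p_\kappa^-\overline{X}_\kappa^{-1})$. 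Pulling the scalar prefactor out of each of the $|\ve_{\alpha\kappa}|$ factors, and converting $X_\alpha = y_\alpha\overline{X}_\alpha$ and $X'_\alpha = y'_\alpha\overline{X}'_\alpha$ on the two sides, one is left with $\overline{X}'_\alpha$ equal to the claimed product of $\overline{X}_\alpha$ with the shifted linear factors, multiplied by a single scalar built from $y_\alpha$, $y'_\alpha$ and a power of $p_\kappa^-$ (resp.\ $p_\kappa^+$).

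The one genuine computation is to check that this leftover scalar equals $1$, which is exactly the ratio part of the coefficient mutation \eqref{eq:coeff_relation} (equivalently the exchange-ratio mutation \eqref{eq:cluster_transf_y}): it says that $y'_\alpha/y_\alpha$ is precisely the compensating power of $p_\kappa^+$ (when $\ve_{\alpha\kappa}\geq 0$) or $p_\kappa^-$ (when $\ve_{\alpha\kappa}<0$) that was extracted. I expect the exponent bookkeeping here --- tracking the number $|\ve_{\alpha\kappa}|$ of factors against the exponent $-\ve_{\alpha\kappa}$ each one carries --- to be the only slightly delicate point, together with the observation that the $\bZ_N$-fiber ambiguity of $\bbp_\Sigma$ is irrelevant, since only the ratios $y_\alpha=p_\alpha^+/p_\alpha^-$ appear in both \eqref{eq:cluster_transf_X} and \eqref{eq:cluster_transf_Xbar}. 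Everything else is formal substitution.
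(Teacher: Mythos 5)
Your proposal is correct and follows essentially the same route as the paper's proof: divide \eqref{eq:cluster_transf_X} by $y'_\alpha$ (resp.\ $y'_\kappa=y_\kappa^{-1}$ for $\alpha=\kappa'$), substitute $X_\alpha=y_\alpha\overline{X}_\alpha$ and $y_\kappa=p_\kappa^+/p_\kappa^-$, and absorb the leftover scalar via the coefficient mutation $y'_\alpha=y_\alpha(p_\kappa^{\sgn(\ve_{\alpha\kappa})})^{\ve_{\alpha\kappa}}$ from \eqref{eq:coeff_relation}. Nothing further is needed.
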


\begin{proof}
The case $\alpha=\kappa'$ is obtained from \eqref{eq:cluster_transf_X} by dividing the both sides by ${y'}_\kappa=y_\kappa^{-1}$. The other cases are similarly obtained by the division by $y'_\alpha=y_\alpha (p^{\sgn(\ve_{\alpha\kappa})}_\kappa)^{\ve_{\alpha\kappa}}$. For example if $\ve_{\alpha\kappa}< 0$, then
\begin{align*}
    \mu_\kappa^\ast(\overline{X}'_\alpha) &={y'}_\alpha^{-1}X_{\alpha}\prod_{j=1}^{|\ve_{\alpha\kappa}|}\big(1 + q^{2j-1}X_{\kappa}\big)^{-\ve_{\alpha \kappa}} \\
    &= \overline{X}_\alpha \prod_{j=1}^{|\ve_{\alpha\kappa}|}\big(p_\kappa^- + q^{2j-1}p_\kappa^- y_\kappa\overline{X}_{\kappa}\big)^{-\ve_{\alpha \kappa}} = \overline{X}_\alpha \prod_{j=1}^{|\ve_{\alpha\kappa}|}\big(p_\kappa^- + q^{2j-1}p_\kappa^+\overline{X}_{\kappa}\big)^{-\ve_{\alpha \kappa}}.
\end{align*}
\end{proof}

\begin{rem}
The transformation rule \eqref{eq:cluster_transf_Xbar} coincides with the quantum cluster Poisson transformation with coefficients \cite[Proposition 1.1]{CMM}. The parameter $\bp_\alpha$ (or the ratio $y_\alpha$) plays the dual roles as `coefficients' in the quantum cluster algebra (when we use $\overline{X}_\alpha$) and as the central characters for the cluster Poisson variables (when we use $X_\alpha$). 
%In other words, one can trivialize one of them via the rescaling $\overline{X}_\alpha=y_\alpha^{-1}X_\alpha$. 
Both of these perspectives will be useful.
\end{rem}

\subsubsection*{Fock--Goncharov decomposition at roots of unity}
It is known that the transformation \eqref{eq:cluster_transf_X} can be written by using the compact quantum dilogarithm \cite{FG09} when $q$ is a formal parameter. We are going to give a similar description by using the cyclic quantum dilogarithm in the root of unity case.

For a flip $\mu_\kappa: \tri \to \tri'$ and a sign $\epsilon \in \{+,-\}$, define an monomial isomorphism $\mu'_{\kappa,\epsilon}: \X^{\tri'}\xrightarrow{\sim} \X^\tri$ by
\begin{align*}
    \mu'_{\kappa,\epsilon}(\overline{X}'_\alpha):=
    \begin{cases}
        \overline{X}_\kappa^{-1} & \mbox{if $\alpha=\kappa'$},\\
        \big[\overline{X}_\alpha \overline{X}_\kappa^{[\epsilon \ve_{\alpha\kappa}]_+} \big] & \mbox{otherwise}.
    \end{cases}
\end{align*}

% \begin{conv}
% Let us consider the lattice $N^\tri$ with generators $v_\alpha$ corresponding to $\alpha \in e(\tri)$. 
% For a vector $\lambda=\sum_{\alpha \in e(\tri)} a_\alpha v_\alpha \in N$, define 
% \begin{align*}
%     X^\lambda:= \bigg[\prod_{\alpha \in e(\tri)} X_\alpha^{a_\alpha} \bigg] \in \X, \quad y^\lambda := \prod_{\alpha \in e(\tri)} y_\alpha^{a_\alpha} \in \bC^\ast.
% \end{align*}
% Here $[\cdots]$ denotes the Weyl ordering (\cref{def:Weyl}).
% \end{conv}

\begin{lem}\label{lem:Xbar_N_power}
The monomial isomorphism $\mu'_{\kappa,\epsilon}$ descends to $\mu'_{\kappa,\epsilon}: \X^{\tri'}_{\by^{\tri'}}\xrightarrow{\sim} \X^\tri_{\by^\tri}$. 
\end{lem}

\begin{proof}
It suffices to prove $(\mu'_{\kappa,\epsilon}(\overline{X}'_\alpha))^N=1$ for $\alpha \in e(\tri')$, which follows from \cref{lem:Weyl}. 
\end{proof}

The following connects the cluster Poisson transformation with the cyclic quantum dilogarithm. 
\begin{prop}\label{lem:mutation_decomp}
%Assume that $\bbp_\Sigma$ is non-degenerate (\cref{dfn:coeff_non-degenerate}). 
The quantum cluster Poisson transformation \eqref{eq:cluster_transf_Xbar} descends to an isomorphism $\mu_\kappa^\ast: \X^{\tri'}_{\by'} \xrightarrow{\sim} \X^\tri_{\by}$. It is decomposed as $\mu_\kappa^\ast= \mu^\#_{\kappa,\epsilon} \circ \mu'_{\kappa,\epsilon}$
for any sign $\epsilon \in \{+,-\}$, where
\begin{align*}
    \mu^\#_{\kappa,\epsilon} := \Ad_{\Psi_{\bp_\kappa^\epsilon}(\overline{X}_\kappa^\epsilon)}^\epsilon 
\end{align*}
is an automorphism on $\X^\tri_{\by}$. 
\end{prop}

\begin{proof}
The first assertion follows from the second assertion and \cref{lem:Xbar_N_power}. 

Observe that the factor $(p_\kappa^- + q p_\kappa^+\overline{X}_\kappa)$ is invertible in $\X^\tri_\by$, since 
\begin{align*}
    1 + y_\kappa^N = (1+qy_\kappa \overline{X}_\kappa) (1- qy_\kappa \overline{X}_\kappa + q^2 (y_\kappa \overline{X}_\kappa)^2 -\dots +(-1)^{N-1}q^{N-1} (y_\kappa \overline{X}_\kappa)^{N-1})
\end{align*}
and $y_\kappa^N \neq -1$ by the Fermat condition. Similarly, each factor $(p_\kappa^- + q^{2j+1} p_\kappa^+\overline{X}_\kappa)$ is invertible. 

Now we verify the composite $\mu_{k,\epsilon} \circ \mu'_{k,\epsilon}$ reproduces the formula \eqref{eq:cluster_transf_Xbar}. 
The case $\alpha=\kappa'$ is clear. Consider the case $\ve_{\alpha\kappa} < 0$ and $\epsilon=+$. Then $\mu'_{\kappa,+}(\overline{X}_\alpha)=\overline{X}_\alpha$, and the right-hand side is computed as
\begin{align*}
    \Ad_{\Psi_{\bp_\kappa}(\overline{X}_\kappa)}(\overline{X}_\alpha) &= \overline{X}_\alpha \Psi_{\bp_\kappa}(q^{2\ve_{\kappa\alpha}}\overline{X}_\kappa)\Psi_{\bp_\kappa}(\overline{X}_\kappa)^{-1} \\
    &= \overline{X}_\alpha(p_\kappa^- + qp_\kappa^+ q^{2(\ve_{\kappa\alpha}-1)} \overline{X}_\kappa) \Psi_{\bp_\kappa}(q^{2(\ve_{\kappa\alpha}-1)}\overline{X}_\kappa)\Psi_{\bp_\kappa}(\overline{X}_\kappa)^{-1} \\
    &= \overline{X}_\alpha(p_\kappa^- + q^{2\ve_{\kappa\alpha}-1} p_\kappa^+ \overline{X}_\kappa)(p_\kappa^- + q^{2\ve_{\kappa\alpha}-3} p_\kappa^+ \overline{X}_\kappa)\dots (p_\kappa^- + q p_\kappa^+\overline{X}_\kappa),
\end{align*}
which coincides with the second equation in \eqref{eq:cluster_transf_Xbar}.  
The other cases are proved similarly. 
\end{proof}

% \begin{lem}
% For two $q$-commuting variables $X,Y$ such that $(-X)^N=\xi$ and $(-Y)^N=\eta$, we have $(-[XY])^N=\xi\eta$. 
% \end{lem}

% \begin{proof}
% If $XY=q^{2a}YX$, then $-[XY] = -q^{-a}XY$.
% \begin{align*}
%     (-[XY])^N = (-1)^N q^{-aN} (XY)\cdots(XY) = (-1)^N q^{-aN} q^{2\sum_{j=1}^{N}j} Y^N X^N = (-1)^N q^{-aN} q^{aN(N+1)} Y^N X^N
% \end{align*}
% Then by the condition $(-X_\alpha)^N=1$ and \cref{lem:modified_Weyl}, we get
% \begin{align*}
%     (-X_{\mu_{\kappa,+}^\ast (e_\alpha)})^N = \dbra{(-X_\alpha) (-X_\kappa)^\ell}^N = 1
% \end{align*}
% as desired.
% \end{proof}

Identify all the skew-fields of fractions $\Frac \X^\tri$ via the quantum cluster Poisson transformations and denote it by $\mathscr{F}$. Then each Chekhov--Fock algebra is a subalgebra of $\mathscr{F}$. 
%Fixing  a coefficient tuple $\bbp_\Sigma \in \cF_\Sigma$, we similarly identify $\Frac\X^\tri_{\by^\tri}$ and denote by $\mathscr{F}(\bbp_\Sigma)$. 
Define the \emph{quantum cluster Poisson algebra} to be the intersection:
\begin{align*}
    &\cO_q(\X_\Sigma) := \bigcap_{\tri \in \mathrm{Pt}_\Sigma} \X^{\tri} \subset \mathscr{F}. 
    %\\
    %&\cO_q(\X_\Sigma;\bbp_\Sigma) := \bigcap_{\tri \in \mathrm{Pt}_\Sigma} \X^{\tri}_{\by^\tri} \subset \mathscr{F}(\bbp_\Sigma).
\end{align*}
Fixing a global coefficient $\bbp_\Sigma \in \cF_\Sigma$, its specialization $(X_\alpha^\tri)^N=(y_\alpha^\tri)^N$ for all $\tri \in \mathrm{Pt}_\Sigma$, $\alpha \in e(\tri)$ is denoted by $\cO_q(\X_\Sigma;\bbp_\Sigma)$. We have an isomorphism $\cO_q(\X_\Sigma;\bbp_\Sigma) \cong \X_{\by^\tri}^\tri$ for each $\tri \in \mathrm{Pt}_\Sigma$. 
%Let $\cO_1(\X_\Sigma)$ denote the cluster Poisson algebra, namely the ring of regular functions on the cluster Poisson variety $\X_\Sigma$ \cite{FG09}. It is the intersection of Laurent polynomial rings associated with $\tri \in \mathrm{Pt}_\Sigma$ generated by classical cluster Poisson variables $X_{\alpha,\mathrm{cl}}^\tri$ for $\alpha \in e(\tri)$, which are related by the $q=1$ version of \eqref{eq:cluster_transf_X}. 

% \begin{prop}[Quantum Frobenius map]\label{prop:Frobenius}
% There exists an algebra embedding
% \begin{align*}
%     \mathbb{F}_N: \cO_1(\X_\Sigma) \to \cO_q(\X_\Sigma;\bbp_\Sigma)
% \end{align*}
% such that $X_{\alpha,\mathrm{cl}}^\tri \mapsto (y_\alpha^\tri)^N$, $\alpha \in e(\tri)$ for each $\tri \in \mathrm{Pt}_\Sigma$.
% \end{prop}

% \begin{proof}
% %We have $(X_\alpha^\tri)^N=(y^\tri_\alpha)^N$ from the definition, and 
% The parameters $Y_\alpha^\tri = (y_\alpha^\tri)^N$ transform as classical cluster Poisson variables by \cref{cor:proj_cluster}. Therefore $\mathbb{F}_N$ respects the transformations on the both sides.  
% \end{proof}

\begin{rem}
The algebra $\cO_q(\X_\Sigma;\bbp_\Sigma)$ is the same as the specialization $q=-e^{i\pi/N}$ of $\cO_q(\X_\Sigma)$, which was introduced in \cite[Section 3.3]{FG09}. Closely related is the \emph{polynomial core} of the \emph{quantum \Teich\ space} \cite{BL,BBL}, where the latter was formulated as the family $\{\X^\tri\}_{\tri \in \mathrm{Pt}_\Sigma}$. 
    %\item The algebra $\cO_q(\X_\Sigma;\bbp_\Sigma)$ is the same as the specialization $q=-e^{i\pi/N}$ of $\cO_q(\X_\Sigma)$, which was introduced in \cite[Section 3.3]{FG09}. However, it is necessary to use the cyclic quantum dilogarithm (rather than the compact quantum dilogarithm \cite[(51)]{FG09}) to realize the quantum cluster Poisson transformations by the adjoint action when $q$ is a root of unity. 
\end{rem}

\subsection{Embedding into the Weyl algebra}

We relate the algebra structures in the previous subsection to the one developed in \cref{sec:Kashaev}. 
Given a decorated triangulation $\bD \in \mathrm{Pt}^{\tdot}_\Sigma$ with the underlying triangulation $\tri$, we construct an algebra embedding 
\begin{align*}
    \iota_{\bD}: \X^\tri_{\by^\tri} \to \cW_{\bD}
\end{align*}
as follows (cf. \cite[Section 7.2]{Tes07}). Let $\tau$ be the dual trivalent graph of $\tri$. Since each vertex $v$ of $\tau$ corresponds to a triangle of $\tri$, one of the corners around $v$ inherits the decoration from $\bD$. %Let $\Xau$ denote the trivalent graph equipped with the dots arising in this way. 
We assign the \emph{Weyl weight} $W_{v;\alpha}^{\bD}$ to a half-edge $\alpha$ incident to a vertex $v$ as shown in \cref{fig:weight}. In particular, $W_{v;\alpha}^{\bD}$ acts on the $v$-th component of $V_{\bD}$ as a non-commutative Laurent monomial of $U_v$ and $P_v$.

\begin{figure}[ht]
    \centering
\begin{tikzpicture}
\foreach \i in {30,150,270} \draw(0,0) -- (\i:1);
\node[above] at (30:1) {$P_v$};
\node[above] at (150:1) {$U_v^{-1}$};
\node[below] at (270:1) {$\dbra{U_v P_v^{-1}}$};
\node[below right] at (0,0) {$v$};
\dast{(0,0.2)};
\end{tikzpicture}
    \caption{Weyl weights around a vertex $v$ of the dual graph {$\tau$}.}
    \label{fig:weight}
\end{figure}

\begin{lem}\label{lem:Weyl_embedding}
There is a unique algebra embedding $\iota_{\bD}: \X^\tri_{\by^\tri} \to \cW^{\bD}$ satisfying the following conditions. 
\begin{itemize}
    \item For a non-self-folded edge $\alpha \in e(\tri)$, 
    \begin{align*}
        \iota_{\bD}(\overline{X}_\alpha^\tri) = W_{v;\alpha}^{\bD} W_{w;\alpha}^{\bD}, 
    \end{align*}
    where $v,w$ denote the endpoints of the edge in $\tau$ dual to $\alpha$. 
\item For a self-folded edge $\alpha \in e(\tri)$, $\iota_{\bD}(\overline{X}_\alpha^\tri)$ is the Weyl-ordered product of the two factors associated to $\alpha$ and $\pi_\tri(\alpha)$ by the rule above.\footnote{This product rule resembles \cite[Definition 9.2 (ii)]{AB}.} 
\end{itemize}
\end{lem}

\begin{proof}
We drop the notations $\tri$ and $\bD$. 
The condition $\iota(\overline{X}_\alpha)^N =1$ holds by \cref{lem:Weyl}. 
In order to check the $q$-commutation relation, 
fix $v$ and let $\alpha_1,\alpha_2,\alpha_3$ be the three half-edges incident to $v$ in this counter-clockwise order so that $W_{v;\alpha_1}=P_v$, $W_{v;\alpha_2}=U_v^{-1}$, $W_{v;\alpha_3}=\dbra{U_vP_v^{-1}}$. Then it is easy to see that
\begin{align*}
    W_{v;\alpha_i}W_{v;\alpha_{i+1}} =q^2 W_{v;\alpha_{i+1}}W_{v;\alpha_{i}}
\end{align*}
for $i =1,2,3$, considered modulo $3$. With a notice that the Weyl weights assigned to distinct vertices commute with each other, it follows that
\begin{align*}
    \iota(\overline{X}_\alpha)\iota(\overline{X}_\beta) =q^{2\ve_{\alpha\beta}} \iota(\overline{X}_\beta)\iota(\overline{X}_\alpha)
\end{align*}
for all $\alpha,\beta \in e(\tri)$. Thus the assertion is proved. See \cref{fig:self_folded_Weyl} for a few examples. 
\end{proof}

\begin{figure}[ht]
    \centering
\begin{tikzpicture}[scale=0.9]
\draw (0,0) -- (4,0) -- (4,4) -- (0,4) --cycle; 
\draw (0,4) -- (4,0); 
\draw[red](1,1) --++(-90:1.6) node[left,scale=0.8]{$\opU_v^{-1}$};
\draw[red](1,1) node[right,scale=0.8]{$\dbra{\opU_v\opP_v^{-1}}$} --++(180:1.6) node[above,scale=0.8]{$\opP_v$};
\draw[red](3,3) --++(0:1.6) node[above,scale=0.8]{$\opU_w^{-1}$};
\draw[red](3,3)++(-0.2,-0.2) node[left,scale=0.8]{$\opP_w$};
\draw[red](3,3) --++(90:1.6) node[right,scale=0.8]{$\dbra{\opU_w\opP_w^{-1}}$};
\draw[red](1,1) -- (3,3);
\node[above left,scale=0.8] at (1,1) {$v$};
\node[below right,scale=0.8] at (3,3) {$w$};
\dast{(0.3,0.3)};
\dast{(3.8,0.5)};
\node at (1.4,3){$\kappa$};
\node[left] at (0,2){$\alpha$};
\node[below] at (2,0){$\beta$};
\node[right] at (4,2){$\gamma$};
\node[above] at (2,4){$\delta$};
\begin{scope}[xshift=8cm]
\draw(0,0) -- (0,2);
\draw (0,0) ..controls (45:1) and (2,3).. (0,3);
\draw (0,0) ..controls (135:1) and (-2,3).. (0,3);
\draw (0,2) circle(2cm);
\fill(0,0) circle(2pt);
\fill(0,2) circle(2pt);
\fill(0,4) circle(2pt);
\node at (-0.2,0.8) {$\alpha$};
\node at (-1.4,2) {$\beta$};
\node[red,above right,scale=0.8] at (0,2.4) {$\opU_v^{-1}$};
\node[red,scale=0.8] at (0.6,1.5) {$\opP_v$};
\node[red,scale=0.8] at (-1.0,1.5) {$[\opU_v \opP_v^{-1}]$};
\node[red,below right,scale=0.8] at (0,3.6) {$[\opU_w \opP_w^{-1}]$};
\draw[red] (0,2) circle(0.5cm);
\draw[red] (0,2.5) -- (0,3.5);
\draw[red] (0,3.5) --++(30:1.5) node[red,right,scale=0.8]{$\opP_w$};
\draw[red] (0,3.5) --++(150:1.5) node[red,left,scale=0.8]{$\opU_w^{-1}$};
\dast{(0,3.7)};
\dast{(0.2,0.4)};
\end{scope}
\end{tikzpicture}
    \caption{Examples of Weyl weights. Left: $\iota(\overline{X}_\kappa)=[\opU_v\opP_v^{-1}\opP_w]$. Right: $\iota(\overline{X}_\alpha)=\opU_w \opP_w^{-1}$ and $\iota(\overline{X}_\beta)=[\opU_v^{-1}\opU_w\opP_w^{-1}]$.}
    \label{fig:self_folded_Weyl}
\end{figure}

%(Reference: infinite-dimensional version ??)
\begin{thm}[Compatibility of transformations, cf.~\cite{GuoLiu}]\label{thm:K_FG}
In the decorated Ptolemy groupoid of any marked surface $\Sigma$, we have the following commutative diagrams:
\begin{enumerate}
    \item For any dot rotation $A_v=[\bD_1,\bD_2]$, 
    \begin{equation*}
    \begin{tikzcd}
        \X^\tri_{\by^\tri} \ar[r,"\iota_{\bD_2}"] \ar[d,phantom,"=",sloped] & \cW_{\bD_2} \ar[d,"\Ad({\opA_v})"] \\
        \X^\tri_{\by^\tri} \ar[r,"\iota_{\bD_1}"'] & \cW_{\bD_1},
    \end{tikzcd}
    \end{equation*}
    where $\tri$ is the common underlying triangulation. 
    \item  For any flip $T_{vw}=[\bD_1,\bD_2]$,
    \begin{equation*}
    \begin{tikzcd}
        \X^{\tri_2}_{\by^{\tri_2}} \ar[r,"\iota_{\bD_2}"] \ar[d,"\mu_\kappa^\ast"'] & \cW_{\bD_2} \ar[d,"\Ad(\opT_{vw})"] \\
        \X^{\tri_1}_{\by^{\tri_1}} \ar[r,"\iota_{\bD_1}"'] & \cW_{\bD_1}.
    \end{tikzcd}
    \end{equation*}
    Here $\mu_\kappa: \tri_1 \to \tri_2$ denotes the flip of the underlying triangulation.
\end{enumerate}
\end{thm}

\begin{figure}[ht]
    \centering
\begin{tikzpicture}[scale=1.2]
\begin{scope}
\draw (0,0) -- (2,0) -- (2,2) -- (0,2) --cycle; 
\draw (0,2) -- (2,0); 
\draw[red](0.5,0.5) --++(-90:0.8) node[below,scale=0.7]{$\opU_v^{-1}$};
\draw[red](0.5,0.5) node[right,scale=0.7]{$\dbra{\opU_v\opP_v^{-1}}$} --++(180:0.8) node[left,scale=0.7]{$\opP_v$};
\draw[red](1.5,1.5) --++(0:0.8) node[right,scale=0.7]{$\opU_w^{-1}$};
\draw[red](1.5,1.5)++(-0.2,-0.2) node[left,scale=0.7]{$\opP_w$};
\draw[red](1.5,1.5) --++(90:0.8) node[above,scale=0.7]{$\dbra{\opU_w\opP_w^{-1}}$};
\draw[red](0.5,0.5) -- (1.5,1.5);
\node[above left,scale=0.8] at (0.5,0.5) {$v$};
\node[below right,scale=0.8] at (1.5,1.5) {$w$};
\dast{(0.15,0.15)};
\dast{(1.9,0.25)};
\node[scale=0.8] at (0.7,1.5){$\kappa$};
\node[left,scale=0.8] at (0,1){$\alpha$};
\node[below,scale=0.8] at (1,0){$\beta$};
\node[right,scale=0.8] at (2,1){$\gamma$};
\node[above,scale=0.8] at (1,2){$\delta$};
\node at (1,-0.8) {$\bD_1$};
\draw[->] (3,0.75) --node[midway,above]{$T_{vw}$} (4,0.75);
\end{scope}
\begin{scope}[xshift=5cm]
\draw (0,0) -- (2,0) -- (2,2) -- (0,2) --cycle; 
\draw (2,2) -- (0,0);
\draw[red](1.5,0.5) --++(0:0.8) node[right,scale=0.7]{$\opU_w^{-1}$};
\draw[red](1.5,0.5) node[above=0.7em,scale=0.7]{$\dbra{\opU_w\opP_w^{-1}}$} --++(-90:0.8) node[below,scale=0.7]{$\opP_w$};
\draw[red](0.5,1.5) --++(180:0.8) node[left,scale=0.7]{$\opP_v$};
\draw[red](0.5,1.5) --++(90:0.8) node[above,scale=0.7]{$\dbra{\opU_v\opP_v^{-1}}$};
\draw[red](0.5,1.5)++(0.2,-0.2) node[below=0.3em,scale=0.7]{$\opU_v^{-1}$};
\draw[red](0.5,1.5) -- (1.5,0.5);
\node[above right,scale=0.8] at (0.5,1.5) {$v$};
\node[below left,scale=0.8] at (1.5,0.5) {$w$};
\dast{(0.1,0.25)};
\dast{(1.85,0.15)};
\node[scale=0.8] at (1.3,1.6){$\kappa'$};
\node[left,scale=0.8] at (0,1){$\alpha$};
\node[below,scale=0.8] at (1,0){$\beta$};
\node[right,scale=0.8] at (2,1){$\gamma$};
\node[above,scale=0.8] at (1,2){$\delta$};
\node at (1,-0.8) {$\bD_2$};
\end{scope}

% \begin{scope}[xshift=7cm]
% \draw(0.5,0.5) --++(-90:0.5);
% \draw(0.5,0.5) --++(180:0.5);
% \draw(1,1) --++(0:0.5);
% \draw(1,1) --++(90:0.5);
% \draw(0.5,0.5) -- (1,1);
% \node[above] at (0.5,0.5) {$v$};
% \node[below] at (1,1) {$w$};
% \dast{(0.15,0.15)};
% \dast{(1.4,0.25)};
% \node at (0.75,-0.5) {$\bD_1$};
% \draw[->] (2,0.75) --node[midway,above]{$T_{vw}$} (3,0.75);
% \end{scope}
% \begin{scope}[xshift=10.5cm]
% \draw (0,0) -- (1.5,0) -- (1.5,1.5) -- (0,1.5) --cycle; 
% \draw (1.5,1.5) -- (0,0);
% \node at (0.5,1) {$v$};
% \node at (1,0.5) {$w$};
% \dast{(0.1,0.25)};
% \dast{(1.35,0.15)};
% \node at (0.75,-0.5) {$\bD_1$};
% \end{scope}
\end{tikzpicture}
    \caption{A flip and the associated Weyl weights.}
    \label{fig:flip_proof}
\end{figure}

\begin{proof}
(1): From \cref{lem:A_comm}, we get $\opP_v \xmapsto{\Ad_{\opA_v}} \opU_v^{-1} \xmapsto{\Ad_{\opA_v}} \dbra{\opU_v\opP_v^{-1}} \xmapsto{\Ad_{\opA_v}} \opP_v$. 
Then the assertion immediately follows. 

(2): In order to simplify the notation, let us omit the symbol $\iota_{\bD_k}$ and regard $\X^{\tri_k}_{\by^{\tri_k}} \subset \cW_{\bD_k}$ as a subalgebra for $k=1,2$. We also employ the abbreviation in \cref{conv:flip}. 
%Moreover, let $X_\alpha:=X_\alpha^{\tri_1}$, $X'_\alpha:=X_\alpha^{\tri_2}$, $y_\alpha:=y_\alpha^{\tri_1}$, $p_\alpha:=p_\alpha^{\tri_1}$, and so on. 

Recall that $\opT_{vw}=\Psi_{\bp_\kappa}(\dbra{\opP_v^{-1}\opU_v\opP_w})\opS_{vw}$. Since $\dbra{\opP_v^{-1}\opU_v\opP_w}=\overline{X}_\kappa$, the adjoint action $\Ad(\Psi_{\bp_\kappa}(\dbra{\opP_v^{-1}\opU_v\opP_w}))$ restricts to 
% \begin{equation}\label{eq:flip_comm_1}
%     \begin{tikzcd}
%         \X^{\tri_1} \ar[r,"\iota_{\bD_1}"] \ar[d,"\mu_{\kappa,+}^\#"'] & \cW_{\bD_1} \ar[d,"\Ad\Psi_{\bp_\kappa}(\dbra{\opP_v^{-1}\opU_v\opP_w})"] \\
%         \X^{\tri_1} \ar[r,"\iota_{\bD_1}"'] & \cW_{\bD_1},
%     \end{tikzcd}
% \end{equation}
the automorphism part $\mu_{\kappa,+}^\#$ in  \cref{lem:mutation_decomp}. Therefore it suffices to show that $\Ad(\opS_{vw})$ restricts to the monomial part $\mu'_{\kappa,+}$. Under the labeling of \cref{fig:flip_proof}, using \eqref{eq:S_comm}, the adjoint action of $\opS_{vw}$ is computed as 
\begin{align*}
    &\overline{X}'_\kappa= \opU_v^{-1}\dbra{\opU_w\opP_w^{-1}} &&\mapsto\ (\opU_v\opU_w)^{-1}\dbra{\opU_w(\opP_w\opP_v^{-1})^{-1}} = \dbra{\opU_v^{-1}\opP_v}\opP_w^{-1} = \overline{X}_{\kappa}^{-1}, \\
    &\overline{X}'_\alpha= \opP_v &&\mapsto\ \opP_v = \overline{X}_{\alpha}, \\
    &\overline{X}'_\beta= \opP_w &&\mapsto\ \opP_w\opP_v^{-1} = q\dbra{\opU_v\opP_v^{-1}}\opP_w \cdot \opU_v^{-1} = q \overline{X}_{\kappa}\overline{X}_{\beta}, \\%= \overline{X}_{e_\beta+e_\kappa}, 
    &\overline{X}'_\gamma= \opU_w^{-1} &&\mapsto\ \opU_w^{-1} = \overline{X}_{\gamma}, \\
    &\overline{X}'_\delta= \dbra{\opU_v\opP_v^{-1}} &&\mapsto\ \dbra{(\opU_v\opU_w)\opP_v^{-1}} =q\dbra{\opU_v\opP_v^{-1}}\opP_w \cdot \dbra{\opU_w\opP_w^{-1}} = q \overline{X}_{\kappa}\overline{X}_{\delta}.
\end{align*}
Here, note that the Weyl weights coming from outside the square are omitted, since they are obviously invariant under $\opS_{vw}$.  
% Thus we verified the commutative diagram
% \begin{equation}\label{eq:flip_comm_2}
%     \begin{tikzcd}
%         \X^{\tri_2} \ar[r,"\iota_{\bD_2}"] \ar[d,"\mu_\kappa' "'] & \cW_{\bD_2} \ar[d,"\Ad(\opS_{vw})"] \\
%         \X^{\tri_1} \ar[r,"\iota_{\bD_1}"'] & \cW_{\bD_1}.
%     \end{tikzcd}
% \end{equation}
Thus we get the desired assertion. 
\end{proof}

In particular, the Chekhov--Fock algebra $\X_{\by^\tri}^\tri$ acts on $V_{\bD}$ through $\iota_{\bD}$. Thus we have:

\begin{cor}\label{cor:cluster_module}
The projective representation $V_\ast$ has a structure of $\widetilde{\X}^\ast$-module, where $\widetilde{\X}^\ast: \mathrm{Pt}^{\tdot}_\Sigma \to \mathrm{Tor}_q^\op$ is the composite functor
\begin{align*}
    \widetilde{\X}^\ast: \mathrm{Pt}^{\tdot}_\Sigma \to \mathrm{Pt}_\Sigma \xrightarrow{\X^\ast} \mathrm{Tor}_q^\op.
\end{align*}
\end{cor}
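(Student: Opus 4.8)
The plan is to read \cref{cor:cluster_module} as the assertion that the embeddings $\iota_\bullet=\{\iota_{\bD}\}$ of \cref{lem:Weyl_embedding} assemble into a natural transformation $\iota\colon\widetilde{\X}^\ast\Rightarrow\cW_\ast(\bbp_\Sigma)$ of functors $\mathrm{Pt}^{\tdot}_\Sigma\to\mathrm{Tor}_q^{\op}$, where $\cW_\ast(\bbp_\Sigma)$ is the functor of \cref{rem:module} sending a morphism $\omega$ to $\Ad(V_\omega)$; once this is in place, the desired $\widetilde{\X}^\ast$-module structure on $V_\ast(\bbp_\Sigma)$ is simply the restriction of its $\cW_\ast(\bbp_\Sigma)$-module structure along $\iota$. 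Concretely, one must check that for each dotted triangulation $\bD$ over $\tri$ the space $V_{\bD}$ is a $\X^\tri_{\by^\tri}$-module --- which is precisely the already-recorded fact that $\X^\tri_{\by^\tri}$ acts on $V_{\bD}$ through $\iota_{\bD}$ followed by the tautological $\cW_{\bD}$-action --- and that for every morphism $\omega=[\bD_1,\bD_2]$ the isomorphism $V_\omega\colon V_{\bD_2}\to V_{\bD_1}$ satisfies $\Ad(V_\omega)\circ\iota_{\bD_2}=\iota_{\bD_1}\circ\widetilde{\X}^\ast(\omega)$.

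For the naturality part I would first reduce to generators: since both $\widetilde{\X}^\ast$ and $\cW_\ast(\bbp_\Sigma)$ are honest functors, commutativity of the naturality square is preserved under composition of morphisms, so by \cref{thm:Ptolemy_presentation} it suffices to verify it for the dot rotations $A_v$, the flips $T_{vw}$, and the label permutations $\opP_{(vw)}$. For $A_v$ and $T_{vw}$ the required squares are exactly \cref{thm:K_FG}(1) and (2); note that $A_v$ lies in the kernel of the forgetful functor $\mathrm{Pt}^{\tdot}_\Sigma\to\mathrm{Pt}_\Sigma$, so $\widetilde{\X}^\ast(A_v)=\mathrm{id}$, consistently with \cref{thm:K_FG}(1) carrying the identity in its left column. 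For a label permutation $\opP_{(vw)}$, which leaves the underlying triangulation unchanged so that $\widetilde{\X}^\ast(\opP_{(vw)})=\mathrm{id}$, the square is immediate: $\Ad(\opP_{(vw)})$ merely interchanges the $v$-th and $w$-th tensor factors of $V_{\bD}$, transporting the Weyl weights of \cref{fig:weight} along the relabeling of triangles. Assembling these squares along an arbitrary factorization of $\omega$ then yields naturality on all of $\mathrm{Pt}^{\tdot}_\Sigma$.

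I expect no serious obstacle --- the real work is done in \cref{thm:K_FG} --- but three points should be addressed with care. First, $V_\ast(\bbp_\Sigma)$ is only a \emph{projective} functor, so $V_\omega$ is defined up to a scalar for a composite $\omega$; this is harmless since $\Ad(V_\omega)$ is insensitive to central scalars, so the resulting $\widetilde{\X}^\ast$-module structure is genuine, not projective. Second, for a flip $\widetilde{\X}^\ast(T_{vw})=\mu_\kappa^\ast$ a priori takes values in $\Frac\X^{\tri_1}_{\by^{\tri_1}}$; one should record that, for $\bbp_\Sigma$ chosen generically so that $\Psi_{\bp_\kappa}(\overline{X}_\kappa)$ and the factors $p_\kappa^- + q^{2j-1}p_\kappa^+\overline{X}_\kappa$ act invertibly on $V_{\bD}$ (cf.~\cref{rem:use_of_dilog}), the image of $\mu_\kappa^\ast$ still acts by honest operators, so the intertwining identity is a genuine equality in $\mathrm{End}(V_{\bD_1})$. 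Third, one uses that $\widetilde{\X}^\ast$ is well defined as a functor on the full groupoid $\mathrm{Pt}_\Sigma$ (the classical fact that quantum cluster Poisson transformations realize the Ptolemy relations), which in any case also follows a posteriori from \cref{thm:AT_relations} together with the generator-level squares, forcing the relations of $\mathrm{Pt}^{\tdot}_\Sigma$ to be transported consistently by $\iota$. The remainder is routine bookkeeping.
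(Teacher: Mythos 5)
Your argument is essentially the paper's: the corollary is stated there as an immediate consequence of \cref{thm:K_FG}, with the module structure on $V_{\bD}$ given by the tautological $\cW_{\bD}$-action restricted along $\iota_{\bD}$, and compatibility with morphisms reduced to the generator-level squares for $\opA_v$, $\opT_{vw}$ (and trivially for permutations). Your additional remarks on projective ambiguity, invertibility of the dilogarithm factors, and reduction to generators via \cref{thm:Ptolemy_presentation} are correct elaborations of what the paper leaves implicit.
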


\begin{rem}
As the flip operator \eqref{eq:flip_op} is a direct analogue of the tetrahedron operator in the Andersen--Kashaev TQFT (see the right-most side of \cite[(10)]{AK-TQFT}) and the embedding $\iota_{\bD}$ is compatible with it, we call $\iota_{\bD}$ the \emph{Andersen--Kashaev realization}. In \cref{sec:invariant}, we will compare it with another realization used in the Baseilhac--Benedetti's QHFT \cite{BB-GD}.
\end{rem}

\subsection{Relation with the local representations of Bai--Bonahon--Liu}\label{subsec:BBL}
In terms of Bai--Bonahon--Liu \cite{BBL}, $V_{\bD}=\bigotimes_{T \in t(\tri)} V_N$ is a local representation of the Chekhov--Fock algebra $\X^\tri_{\by^\tri}$. In view of the classification result \cite[Proposition 6]{BBL}, the classifying data of $V_{\bD}$ are:
\begin{itemize}
    \item the central characters $(y_\alpha^{\tri})^N$ for $\alpha \in e(\tri)$,
    %\item the puncture weights $\prod_{\alpha \in e(\tri)} (y_\alpha^\tri)^{i_p(\alpha)}$ for $p \in \bM \setminus \bM_\partial$ and the weights $\prod_{\alpha \in e(\tri)} (y_\alpha^\tri)^{i_b(\alpha)}$ for boundary components $b$ (see \cref{lem:Casimir} for their definitions),
    \item the central load $h=\prod_{\alpha \in e(\tri)} y_\alpha^{\tri}$.
\end{itemize}
In particular, our representations realize all the isomorphism classes of local representations when we vary $\bp^\tri$. Our particular realizations are specified by the data of dots of $\bD$. 
%For example the rotation operators $\opA_v$ define intertwiners among the different representatives in the isomorphism class. 

All the representations $V_{\bD}$ for $\bD \in \mathrm{Pt}^{\tdot}_\Sigma$ together yield a local representation of the quantum \Teich\ space $\mathcal{T}_\Sigma^q$, in the sense of \cite[Section 5]{BBL}. The required consistency condition is satisfied since $1+\opX$ is invertible for any cyclic operator $\opX$. Again, these representations realize all the isomorphism classes of local representations when we vary $\bbp_\Sigma$. 

It has turned out that an appropriate selection of an intertwiner between two local representations is a subtle problem, as pointed out and carefully analyzed by Mazzoli \cite{Mazzoli}. Mazzoli defined a finite collection of intertwiners canonically associated to a pair of local representations, where the redundancy of the choice is represented by $H_1(\Sigma;\bZ_N)$ (and powers of $q$). In our approach, we constructed the intertwiners $V_\omega$ among our realizations of local representations in a canonical way, which is independent of the choice of paths $\omega$ in $\mathrm{Pt}^{\tdot}_\Sigma$ up to powers of $q$.  

Baseilhac--Benedetti \cite{BB-GD} show that the transpose of the \emph{reduced quantum hyperbolic operator} (\emph{reduced QH operator} for short) associated with the mapping cylinder of $\phi$ gives an intertwiner of local representations \cite[Theorem 1.1 (1)]{BB-GD}. We will see in \cref{sec:invariant} that this gives the same intertwiner.

% \begin{rem}[local representations]\label{rem:local_rep}
% In terms of Bai--Bonahon--Liu \cite{BBL}, $V_{\bD}$ is a local representation of the Chekhov--Fock algebra $\X_{\by^\tri}^\tri$, which corresponds to the central characters $(y_\alpha^{\tri})^N$ and the central load $h=\prod_{\alpha \in e(\tri)} y_\alpha^{\tri}$ in the classification \cite[Proposition 6]{BBL}. They yield a local representation of the quantum \Teich\ space $\mathcal{T}_\Sigma^q$ in the sense of \cite[Section 5]{BBL}, since $1+\opX$ is invertible for any cyclic operator $\opX$.
% \end{rem}

\subsection{Irreducible decomposition of the representation $V_\ast(\bbp_\Sigma)$}
The representation space $V_{\bD}$ is typically reducible as an $\X_\by^\tri$-module. In order to obtain an irreducible decomposition, we introduce specific operators which commute with the Chekhov--Fock algebra (cf. \cite[(12)]{Kas98}, \cite[(7.3)]{Tes07} in the infinite-dimensional setting). 

\subsubsection*{Homology operators}
Let us consider the relative homology group
\begin{align*}
    H_\Sigma:= H_1(\Sigma^\ast,\partial\Sigma^\ast;\bZ_N).
\end{align*}
Here recall $\Sigma^*=\Sigma \setminus \bM$, and hence $\partial \Sigma^\ast=\partial\Sigma \setminus \bM_\partial$. 

\begin{dfn}[Homology operator]
Define a monomial map $\oph^{\bD}: H_\Sigma \to \cW_{\bD}$ as follows. For a curve $c$ representing a homology class $[c]\in H_\Sigma$, deform it to an efficient edge path $(\alpha_1,v_1,\alpha_2,\dots,v_{m-1},\alpha_m,v_m)$ on the dual graph $\tau$, where $\alpha_i$ are edges and $v_i$ are vertices traversed in this order. Then we define
\begin{align*}
    \oph^{\bD}_{[c]}:=\bigg[\prod_{i=1}^{m} (W_{v_i;\alpha_i}^{\bD}W_{v_i;\alpha_{i+1}}^{\bD})^{\sigma_i}\bigg],
\end{align*}
where $\alpha_{m+1}:=\alpha_1$ and $\sigma_i \in \{\pm 1\}$ is $+1$ (resp. $-1$) if $\alpha_i \to \alpha_{i+1}$ forms a clockwise (resp. counter-clockwise) turn at $v_i$. Extend this assignment multiplicatively: 
\begin{align*}
    \oph_{[c_1]+[c_2]}^{\bD}:=\big[\oph_{[c_1]}^{\bD}\cdot \oph_{[c_2]}^{\bD}\big]
\end{align*}
for any homology classes $[c_1],[c_2] \in H_\Sigma$. 
\end{dfn}
We have the following properties of the homology operators. The proofs are just a matter of linear algebra, and identical to the corresponding statements in the infinite-dimensional setting \cite{Kas98,Tes07} (with a straightforward generalization to the marked surfaces with boundary).

%Just similarly to the infinite-dimensional setting, we can prove that 

\begin{lem}[intersection number, cf. {\cite[Lemma 2 (ii)]{Tes07}}]\label{lem:homology_intersection}
The map $\oph^{\bD}$ is well-defined, and satisfies
\begin{align*}
    \oph^{\bD}_{[c_1]}\oph^{\bD}_{[c_2]} = q^{4\boldsymbol{i}([c_1],[c_2])} \oph^{\bD}_{[c_2]}\oph^{\bD}_{[c_1]}
\end{align*}
for $[c_1], [c_2] \in H_\Sigma$, where $\boldsymbol{i}:H_\Sigma \times H_\Sigma \to \frac 1 2\bZ_N$ denotes an extension of the homology intersection pairing, due to \cite[(2.1)]{KQ22}. 
\end{lem}

\begin{lem}[naturality]\label{lem:homology_naturality}
For any morphism $\omega: \bD_1 \to \bD_2$ in $\mathrm{Pt}^{\tdot}_\Sigma$, we have the commutative diagram
\begin{equation*}
\begin{tikzcd}
\cW_{\bD_2} \ar[r,"\mathrm{Ad}_{V_\omega}"] & \cW_{\bD_1} \\
H_\Sigma \ar[u,"\oph^{\bD_2}"] \ar[ur,"\oph^{\bD_1}"'] &
\end{tikzcd}
\end{equation*}
\end{lem}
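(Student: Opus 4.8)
The plan is to reduce to the generating morphisms of $\mathrm{Pt}^{\tdot}_\Sigma$ and to a single homology class. Since $\oph^{\bD}$ is multiplicative and $\Ad_{V_\omega}$ is an algebra homomorphism, it suffices to prove $\Ad_{V_\omega}(\oph^{\bD_2}_{[c]}) = \oph^{\bD_1}_{[c]}$ for every $[c]\in H_\Sigma$ in the three cases where $\omega$ is a label permutation, a dot rotation $A_v$, or a flip $T_{vw}$ (by \cref{thm:Ptolemy_presentation}); the identity for a general morphism then follows by concatenating these along a factorization of $\omega$. The label-permutation case is immediate, because $\oph^{\bD}_{[c]}$ is built intrinsically from the embedded dual graph and from $[c]$, with no reference to the labelling of the triangles.

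For a dot rotation $A_v=[\bD_1,\bD_2]$ I would argue as follows. The underlying triangulation, the dual graph $\tau$, a chosen efficient edge path representing $[c]$, and all the turning signs $\sigma_i$ are common to $\bD_1$ and $\bD_2$; only the three Weyl weights at the vertex $v$ are reassigned. By \cref{fig:weight}, rotating the dot at $v$ cyclically permutes the assignment of $\opP_v$, $\opU_v^{-1}$, $\dbra{\opU_v\opP_v^{-1}}$ to the three half-edges at $v$ while keeping their counterclockwise order, and by \cref{lem:A_comm} the automorphism $\Ad_{\opA_v}$ implements the compensating cyclic shift $\opP_v\mapsto\opU_v^{-1}\mapsto\dbra{\opU_v\opP_v^{-1}}\mapsto\opP_v$ and acts trivially on the weights at all other vertices. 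Since $\Ad_{\opA_v}$ respects the standard $q$-commutation relations, it commutes with the Weyl ordering of \cref{def:Weyl}, so it sends each corner factor $W^{\bD_2}_{v_i;\alpha_i}W^{\bD_2}_{v_i;\alpha_{i+1}}$ to the corresponding $\bD_1$-factor, hence $\oph^{\bD_2}_{[c]}$ to $\oph^{\bD_1}_{[c]}$.

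The flip $T_{vw}=[\bD_1,\bD_2]$ is the substantial case. I would represent $[c]$ by an efficient edge path on the dual graph of $\bD_2$ that, outside the flipped square (\cref{fig:flip_proof}), coincides with an efficient edge path on the dual graph of $\bD_1$; there $\Ad_{\opT_{vw}}$ acts trivially on the Weyl weights, so only the portion of the path inside the square contributes. There are finitely many local configurations for this portion — indexed by which pair among the four outer half-edges the path enters and exits through, and whether it turns at $v$ or at $w$ — and for each one I would compare the pre-flip corner monomial at $\{v,w\}$ with the post-flip one, using the explicit action of $\Ad(\opS_{vw})$ on Weyl weights computed in the proof of \cref{thm:K_FG}(2) (together with \eqref{eq:S_comm} and \cref{lem:S_symmetry}) for the combinatorial reconnection and the sign and Weyl-ordering bookkeeping, and the $q$-difference equation \cref{lem:q-diff_eq} to evaluate the remaining conjugation by $\Psi_{\bp_\kappa}(\dbra{\opP_v^{-1}\opU_v\opP_w})=\Psi_{\bp_\kappa}(\overline{X}_\kappa)$ on the individual half-edge weights adjacent to $\kappa$.

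The hard part will be this last ingredient. Conjugation by $\Psi_{\bp_\kappa}(\overline{X}_\kappa)$ turns a single half-edge weight into a non-monomial, $\bp_\kappa$-dependent element, so I must verify that the successive linear $q$-difference factors produced along the segment of the path running past the diagonal $\kappa$ telescope: the path enters the $\kappa$-region and leaves it, and the product of these factors collapses, leaving exactly the monomial $\oph^{\bD_1}_{[c]}$, which is manifestly independent of $\bp$. This cancellation, together with the enumeration of local path types, is precisely the finite-dimensional transcription of the computation underlying the infinite-dimensional homology operators of \cite[(12)]{Kas98} and \cite[(7.3)]{Tes07}, which is why — once the setup is in place — the verification is, as noted in the text, a matter of linear algebra.
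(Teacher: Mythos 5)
Your outline is the right one, and in fact it reconstructs the argument that the paper itself leaves implicit: the paper offers no written proof of this lemma, deferring to the statement that it is ``a matter of linear algebra, identical to the infinite-dimensional setting,'' so reducing to the generators of $\mathrm{Pt}^{\tdot}_\Sigma$ via \cref{thm:Ptolemy_presentation}, handling permutations trivially, handling $A_v$ by the order-three cyclic shift $\opP_v\mapsto\opU_v^{-1}\mapsto\dbra{\opU_v\opP_v^{-1}}\mapsto\opP_v$ of \cref{lem:A_comm} (exactly as in the proof of \cref{thm:K_FG}(1)), and handling $T_{vw}$ by a local analysis inside the flipped square is precisely what is intended.

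The one place where your proposal stops short is the flip case, where you assert but do not establish that the linear factors produced by conjugating the individual $\kappa$-adjacent weights by $\Psi_{\bp_\kappa}(\overline{X}_\kappa)$ telescope. You can avoid this bookkeeping entirely: since $\Ad_{\opT_{vw}}=\Ad_{\Psi_{\bp_\kappa}(\dbra{\opP_v^{-1}\opU_v\opP_w})}\circ\Ad_{\opS_{vw}}$ and $\dbra{\opP_v^{-1}\opU_v\opP_w}=\iota_{\bD_1}(\overline{X}_\kappa)$, it suffices to check two purely monomial facts: first, that $\Ad_{\opS_{vw}}$ carries the corner monomial of each local passage of the edge path through the square in $\bD_2$ to the corresponding corner monomial in $\bD_1$ (the same finite case analysis as in the proof of \cref{thm:K_FG}(2), using \eqref{eq:S_comm}); second, that the resulting monomial $\oph^{\bD_1}_{[c]}$ $q$-commutes trivially with $\overline{X}_\kappa$, which is the $\alpha=\kappa$ instance of \cref{lem:Casimir} and is itself an independent exponent count using that the edge path is a (relative) cycle. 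With that commutation in hand, the dilogarithm conjugation fixes $\oph^{\bD_1}_{[c]}$ outright, which also makes the independence of the result from $\bp_\kappa$ manifest---this is exactly the content of your telescoping claim, but packaged so that no cancellation of non-monomial factors ever has to be tracked. With that substitution your argument is complete and is, as far as one can compare with a proof the paper does not spell out, the intended one.
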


\begin{lem}[mapping class group equivariance]\label{lem:homology_MCG}
For any mapping class $\phi \in MC(\Sigma)$ (see \cref{sec:trace_local} below), we have the commutative diagram
\begin{equation*}
\begin{tikzcd}
    \cW_{\phi^{-1}(\bD)} \ar[r,"\phi_\ast"] & \cW_\bD \\
    H_\Sigma \ar[u,"\oph^{\phi^{-1}(\bD)}"] \ar[r,"\phi"'] & H_\Sigma \ar[u,"\oph^{\bD}"'].
\end{tikzcd}
\end{equation*}
Here $\phi_\ast(U_{\phi^{-1}(v)})=U_v$, $\phi_\ast(P_{\phi^{-1}(v)})=P_v$ for $v \in t(\tri)$. 
\end{lem}

For a puncture $p \in \bM \setminus \bM_\partial$, let $[c_p] \in H_\Sigma$ denote the class represented by a small loop around $p$ in the counter-clockwise direction. For a boundary component $b$ with marked points $m_1,\dots,m_n$, let $[c_{m_i}] \in H_\Sigma$ denote the class represented by a small corner arc at $m_i$ turning counter-clockwisely around $m_i$. See \cref{fig:boundary_sum}. Define $[c_b]:=\sum_{i=1}^n [c_{m_i}]$. 

\begin{figure}[ht]
    \centering
\begin{tikzpicture}
    \filldraw[gray!20](0,0) -- (0,-0.2) -- (4,-0.2) -- (4,0) --cycle;
    \node at (2,-0.4) {\scriptsize $m_i$};
    \foreach \i in {1,2,3} \draw[red,thick,->-] (\i+0.3,0) arc(0:180:0.3);
    \node[red] at (2,0.7) {$c_{m_i}$};
    \draw[thick] (0,0) -- (4,0);
    \foreach \i in {1,2,3} \filldraw[fill=white] (\i,0) circle(2pt);
\end{tikzpicture}
    \caption{The homology classes $[c_{m_i}]$.}
    \label{fig:boundary_sum}
\end{figure}

%The following lemma gives a relation between the operators coming from $\X_\by^\tri$ and the homology operators. 

\begin{lem}[relationship with cluster Poisson variables, cf. {\cite[Lemma 4 (iii), (iv)]{Tes07}}]\label{lem:Casimir}
We have
\begin{align*}
    \iota_{\bD}(X_\alpha^\tri)\cdot \oph_{[c]}^{\bD} = \oph_{[c]}^{\bD} \cdot \iota_{\bD}(X_\alpha^\tri) 
\end{align*}
for any $\alpha \in e(\tri)$ and $[c] \in H_\Sigma$. Moreover, 
\begin{align*}
    \iota_{\bD}(\overline{\theta}_p^\tri)=\oph^{\bD}_{[c_p]}, \quad \iota_{\bD}(\overline{\theta}_b^\tri)=\oph^{\bD}_{[c_b]}
\end{align*}
for any puncture $p$ and boundary component $b$. Here $\theta_p,\theta_b \in Z(\X_\by^\tri)$ are defined by
\begin{align*}
    \overline{\theta}_p:= \bigg[\prod_{\alpha \in e(\tri)}(\overline{X}_\alpha^\tri)^{i_p(\alpha)}\bigg], \quad \overline{\theta}_b:= \bigg[\prod_{\alpha \in e(\tri)}(\overline{X}_\alpha^\tri)^{i_b(\alpha)}\bigg],
\end{align*}
where $i_p(\alpha)$ (resp. $i_b(\alpha)$) counts the number of ends of $\alpha$ incident to the puncture $p$ (resp. incident to a marked point on $b$).
\end{lem}

% \begin{proof}
% The edge path $(\alpha_1,v_1,\alpha_2,\dots,v_{m-1},\alpha_m,v_m)$ representing $[c_p]$ is the cycle in $\tau$ around $p$ with $\sigma_i=+1$ for all $i$. Hence
% \begin{align*}
%     \oph^{\bD}_{[c_p]} &= \big[(W_{v_1;\alpha_1}^{\bD}W_{v_1;\alpha_{2}}^{\bD})(W_{v_2;\alpha_2}^{\bD}W_{v_2;\alpha_{3}}^{\bD})\dots (W_{v_m;\alpha_{m}}^{\bD}W_{v_{m};\alpha_{1}}^{\bD})\big] \\
%     &= \big[(W_{v_1;\alpha_{2}}^{\bD}W_{v_2;\alpha_2}^{\bD})(W_{v_2;\alpha_{3}}^{\bD}W_{v_3;\alpha_3}^{\bD})\dots (W_{v_{m};\alpha_{1}}^{\bD}W_{v_1;\alpha_1}^{\bD})\big] =\iota_{\bD}(\overline{\theta}_p).
% \end{align*}
% TO BE WRITTEN: $\theta_b$
% \end{proof}

Let 
\begin{align*}
    \overline{H}:=\bigg[\prod_{\alpha \in e(\tri)} \overline{X}_\alpha^\tri \bigg] \in \X_\by^\tri
\end{align*}
denote the total product of rescaled cluster Poisson variables, which also lies in the center of $\X_\by^\tri$.

\begin{lem}[special central element]\label{lem:special_center}
We have 
\begin{align*}
    \overline{H}^2=\prod_p \overline{\theta}_p \cdot \prod_b \overline{\theta}_b,
\end{align*}
where in the right-hand side, $p$ (resp. $b$) runs over all punctures (resp. boundary components). Moreover, $\iota_{\bD}(\overline{H})=1$. 
\end{lem}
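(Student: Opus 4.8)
The plan is to separate the assertion into the algebraic identity $\overline{H}^2 = \prod_p \overline{\theta}_p\cdot\prod_b\overline{\theta}_b$ and the normalization $\iota_{\bD}(\overline{H})=1$, deducing the former from a combinatorial exponent count combined with the good behaviour of Weyl-ordered monomials under multiplication, and the latter from a direct local computation on the dual graph $\tau$.

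The combinatorial input is this: every edge $\alpha\in e(\tri)$ has exactly two ends, and each end lies at a unique marked point — either a puncture $p$, contributing to $i_p(\alpha)$, or a marked point on a boundary component $b$, contributing to $i_b(\alpha)$. Summing over all marked points yields $\sum_p i_p(\alpha)+\sum_b i_b(\alpha)=2$ for every $\alpha$, an equality of integers (so there is no subtlety modulo $N$ since the intermediate partial sums never exceed $2$). Next I record the key algebraic fact: in the quantum torus generated by the $\overline{X}_\alpha^\tri$ with $(\overline{X}_\alpha^\tri)^N=1$, writing $\overline{X}^{\mathbf n}:=\dbra{\prod_\alpha (\overline{X}_\alpha^\tri)^{n_\alpha}}$, one gets from \cref{def:Weyl} an identity $\overline{X}^{\mathbf n}\,\overline{X}^{\mathbf m}=q^{f(\mathbf n,\mathbf m)}\,\overline{X}^{\mathbf n+\mathbf m}$ with $f$ an antisymmetric bilinear form. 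If $\overline{X}^{\mathbf n}$ is central it commutes with every generator, so $q^{2f(\mathbf n,e_\beta)}=1$ for all $\beta$; since $N$ is odd and $q$ has order $N$, this forces $q^{f(\mathbf n,e_\beta)}=1$, hence $q^{f(\mathbf n,\mathbf m)}=1$ for every $\mathbf m$ by bilinearity. Thus \emph{the product of a central Weyl-ordered monomial with any Weyl-ordered monomial is the Weyl-ordered monomial of the sum of exponents}.

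Applying this: $\overline{\theta}_p$ and $\overline{\theta}_b$ are central Weyl-ordered monomials by \cref{lem:Casimir}, and $\overline{H}$ is central (as recorded before the statement; equivalently, each triangle contributes $1+(-1)=0$ to every row sum of $\ve^\tri$). Multiplying these one at a time — each partial product again being central, hence again covered by the principle — gives $\prod_p\overline{\theta}_p\cdot\prod_b\overline{\theta}_b=\dbra{\prod_\alpha (\overline{X}_\alpha^\tri)^{\sum_p i_p(\alpha)+\sum_b i_b(\alpha)}}$ and $\overline{H}^2=\dbra{\prod_\alpha (\overline{X}_\alpha^\tri)^2}$, and these coincide by the exponent identity above. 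For the normalization, since $\iota_{\bD}$ is an algebra homomorphism, $\iota_{\bD}(\overline{H})=\dbra{\prod_\alpha \iota_{\bD}(\overline{X}_\alpha^\tri)}=\dbra{\prod_\alpha W_{v;\alpha}^{\bD}W_{w;\alpha}^{\bD}}$. Each half-edge of $\tau$ occurs exactly once among these factors, so the factors regroup one triangle $t$ at a time into the three Weyl weights $P_t,\ U_t^{-1},\ \dbra{U_tP_t^{-1}}$ of \cref{fig:weight}; since Weyl weights at distinct triangles commute, the Weyl ordering splits as a product over $t$, and it remains to check $\dbra{P_t\,U_t^{-1}\,\dbra{U_tP_t^{-1}}}=1$. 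This is a one-line computation: the three items pairwise $q$-commute with commutation exponents $1,-1,1$, so the Weyl prefactor is $q^{-1}$, while the underlying product is $P_t\,U_t^{-1}\cdot q\,U_tP_t^{-1}=q$, giving $1$. Hence $\iota_{\bD}(\overline{H})=1$, consistent with (and re-deriving, via the embedding $\iota_{\bD}$) the squared version of the first part.

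The main obstacle is the Weyl-ordering bookkeeping in the last step: one must treat $\dbra{U_tP_t^{-1}}$ as a single $q$-commuting item rather than expanding it, so that the per-triangle cancellation comes out to exactly $1$ and not a stray power of $q$; and one must justify cleanly that multiplying central Weyl-ordered monomials introduces no $q$-factor — which is precisely the point where the oddness of $N$ (so that $q^{2f}=1\Rightarrow q^{f}=1$) is used.
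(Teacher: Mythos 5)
Your proof is correct. Note that the paper does not actually supply a proof of this lemma (it is left as a routine verification), so there is nothing to compare against; your argument fills the gap along exactly the lines the surrounding text suggests: the exponent count $\sum_p i_p(\alpha)+\sum_b i_b(\alpha)=2$ for every edge, the fact that multiplying central Weyl-ordered monomials produces no $q$-prefactor, and the per-triangle cancellation $\dbra{P_v\,U_v^{-1}\,\dbra{U_vP_v^{-1}}}=q^{-1}\cdot q=1$ after regrouping the half-edge weights (I checked your commutation exponents $1,-1,1$ and the prefactor; they are right). Two small remarks. First, your detour through centrality modulo $N$ (and hence the oddness of $N$) is not needed here: the relevant exponent identities are exact over $\bZ$, namely $\sum_\alpha \ve_{\alpha\beta}=0$ and, more generally, $\sum_\alpha i_p(\alpha)\ve_{\alpha\beta}=0=\sum_\alpha i_b(\alpha)\ve_{\alpha\beta}$ for every $\beta$ (each triangle incident to $\beta$ contributes $+1-1$), so the bilinear-form values $f(\mathbf n,\mathbf m)$ vanish identically and the multiplication of the $\overline{\theta}$'s and of $\overline{H}$ with itself is $q$-factor free without invoking roots of unity; this also re-proves the centrality you quote from \cref{lem:Casimir} rather than assuming it. Second, your regrouping of $\iota_{\bD}(\overline{H})$ into per-triangle brackets tacitly assumes the two endpoints $v,w$ of each dual edge are distinct, i.e.\ no self-folded triangles — which is the paper's standing simplification, but worth flagging if one wants the statement in full generality.
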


\begin{rem}\label{rem:central_load}
We have the element
\begin{align*}
    H:=\bigg[\prod_{\alpha \in e(\tri)} X_\alpha^\tri \bigg],
\end{align*}
which satisfies $\iota_{\bD}(H)=\prod_{\alpha \in e(\tri)} y_\alpha^\tri \cdot \mathsf{1}$. The scalar $h=\prod_{\alpha \in e(\tri)} y_\alpha^\tri$ is called the \emph{central load}. It is a part of the classifying data of the local representation of $\X^\tri_{\by^\tri}$(\cref{subsec:BBL}).
\end{rem}

% \begin{proof}
% Each edge $\alpha \in e(\tri)$ contributes to the product in the right-hand side exactly twice, because it has two ends. Thus the first assertion follows. The second assertion follows from $\big[\prod_{i=1}^3 W_{v;\alpha_i}^{\bD}\big]=1$ for any vertex $v$.
% \end{proof}

\subsubsection*{Choice of Lagrangian}
Now we choose a `polarization' as follows. 
Let $\Sigma^{\mathrm{cl}}$ denote the closed surface of genus $g \geq 0$ obtained from $\Sigma$ by capping a disk to each boundary component, and closing interior punctures. Choose a Lagrangian sub-lattice $L=\langle a_1,\dots,a_g\rangle \subset H_1(\Sigma^{\mathrm{cl}};\bZ)$ with respect to the intersection pairing. Then the elements of $L$, the classes $[c_p]$ for the punctures $p$ and the classes $[c_b]$ for the boundary components $b$ generate a maximal isotropic sub-lattice $\widehat{L} \subset H_\Sigma$, where we have a relation $\sum_p [c_p] + \sum_b [c_b]=0$. Here we naturally regard $H_1(\Sigma^{\mathrm{cl}};\bZ) \subset H_\Sigma$. 
Consider the simultaneous eigenspace decomposition for $\oph^{\bD}(\widehat{L})$:
\begin{align*}
    V_{\bD} = \bigoplus_{\boldsymbol{\lambda}} V_{\bD}(L,\boldsymbol{\lambda}),
\end{align*}
where $\boldsymbol{\lambda}: \widehat{L} \to \bZ_N$ is an additive character. For any $[c] \in \widehat{L}$, the operator $\oph^{\bD}_{[c]}$ acts on $V_{\bD}(L,\boldsymbol{\lambda})$ by $\oph^{\bD}_{[c]} = q^{2\boldsymbol{\lambda}([c])} \mathsf{1}$. 
% The operators $\oph^{\bD}_{[c_p]},\oph^{\bD}_{[c_b]}$ acts on $V_{\bD}(\boldsymbol{\lambda})$ by
% \begin{align*}
%     \oph^{\bD}_{[c_p]} = q^{2\lambda_p} \mathsf{1}, \quad \oph^{\bD}_{[c_b]} = q^{2\lambda_b} \mathsf{1}.
% \end{align*}
% Let $V_{\bD}(\boldsymbol{\lambda},L) \subset V_{\bD}(\boldsymbol{\lambda})$ denote the direct sum of simultaneous eigenspaces for the mutually-commuting operators $\{\oph_{a_i}^{\bD}\}_{i=1}^g$:
% \begin{align*}
%     V_{\bD}(\boldsymbol{\lambda}) \supset V_{\bD}(\boldsymbol{\lambda},L) = \bigoplus_{\boldsymbol{\mu}}V_{\bD}(\boldsymbol{\lambda},\boldsymbol{\mu}),
% \end{align*}
% where $\boldsymbol{\mu}=(\mu_i)_{i=1}^g$ is a tuple of elements of $\bZ_N$. 
% The operators $\oph_{a_i}^{\bD}$ acts on $V_{\bD}(\boldsymbol{\lambda},\boldsymbol{\mu})$ by  
% \begin{align*}
%     \oph^{\bD}_{a_i} = q^{2\mu_i} \mathsf{1}. 
% \end{align*}
The dimension is
\begin{align}\label{eq:dim_irrep}
    \dim V_{\bD}(L,\boldsymbol{\lambda}) = N^{|t(\tri)|-g-(n_p+n_b-1)} = N^{3g-3+n_b+|\bM|},
\end{align}
where $n_p:=|\bM \cap \interior \Sigma|$ is the number of punctures, and $n_b:=|\pi_0(\partial\Sigma)|$ is the number of boundary components. Indeed, if $k$ mutually-commuting cyclic operators of order $N$ are acting on an $N^m$ dimensional complex vector space, then each simultaneous eigenspace has dimension $N^{m-k}$.

\begin{thm}\label{thm:irrep}
Each $V_{\bD}(L,\boldsymbol{\lambda})$ is an irreducible representation of $\X_{\by^\tri}^{\tri}$ acted through $\iota_{\bD}$.
\end{thm}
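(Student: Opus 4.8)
The plan is to exhibit $V_{\bD}(L,\boldsymbol{\lambda})$ as an irreducible module over the quotient of $\X_{\by^\tri}^\tri$ by the central relations that $\iota_{\bD}$ forces, by reducing to the standard fact that a quantum torus at a primitive $N$-th root of unity, modulo fixing the values of all central elements, acts irreducibly on a space of the expected dimension. First I would recall that $\iota_{\bD}(\X_{\by^\tri}^\tri) \subset \cW_{\bD} = \bigotimes_{v} \mathrm{End}(V_N)$ and that, by \cref{lem:Casimir} and \cref{lem:special_center}, the image of the center $Z(\X_{\by^\tri}^\tri)$ is generated (as an algebra) by scalars together with the homology operators $\oph^{\bD}_{[c_p]}$, $\oph^{\bD}_{[c_b]}$ and the operators $\oph^{\bD}_{[c]}$ for $[c]$ a class of a simple closed curve, with $\iota_{\bD}(\overline H)=1$. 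On $V_{\bD}(L,\boldsymbol{\lambda})$ all the operators $\oph^{\bD}_{[c]}$ with $[c]\in\widehat L$ act by the scalar $q^{2\boldsymbol{\lambda}([c])}$; since $\widehat L$ is a maximal isotropic sublattice of $H_\Sigma$, these together with the central elements of $\X_{\by^\tri}^\tri$ coming from monomials supported on $\widehat L$ account for the full center of the relevant quotient algebra.

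The core of the argument is then the following. Let $\Lambda$ be the lattice of exponent vectors of Laurent monomials in the $\overline X_\alpha$, with the skew form $\ve^\tri$; since $q^2$ is a primitive $N$-th root of unity, $\X_{\by^\tri}^\tri$ modulo the relations $\overline X_\alpha^N=1$ is an Azumaya algebra over its center, and the center is the group algebra of the radical $\Lambda_0 := \{v\in\Lambda\otimes\bZ_N : \ve^\tri(v,\cdot)\equiv 0\}$ of the induced pairing on $\Lambda\otimes\bZ_N$. Fixing a character of this center — which is exactly the data of the central characters $(y^\tri_\alpha)^N$, the central load, and $\boldsymbol{\lambda}$ — yields a simple algebra, whose unique irreducible module has dimension $N^{\frac12\,\mathrm{rk}(\ve^\tri\bmod N)} = N^{(|e(\tri)| - \dim\Lambda_0)/2}$. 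I would next identify $\dim\Lambda_0$ geometrically: the radical of the intersection-type pairing $\ve^\tri$ on $\bZ_N^{e(\tri)}$ is spanned by the puncture/boundary classes $i_p(\alpha)$, $i_b(\alpha)$ together with (the image of) $H_1(\Sigma^{\mathrm{cl}};\bZ_N)$, modulo the single relation $\sum_p[c_p]+\sum_b[c_b]=0$ and the relation $\iota_{\bD}(\overline H)=1$; this gives $\dim\Lambda_0 = 2g + (p+b-1)$, hence the irreducible dimension is $N^{(|e(\tri)|-2g-(p+b-1))/2}$, which matches \eqref{eq:dim_irrep} after substituting $|e(\tri)| = -3\chi(\Sigma^\ast)+2|\bM_\partial|$ and Euler-characteristic bookkeeping. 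Then a dimension count closes the proof: $V_{\bD}$ has dimension $N^{|t(\tri)|}$, it decomposes into the $N^{\dim\widehat L}$ eigenspaces $V_{\bD}(L,\boldsymbol{\lambda})$ (the characters $\boldsymbol{\lambda}$ that actually occur being constrained by the fixed central data), and each eigenspace, being a module over the simple quotient algebra, is a direct sum of copies of the unique irreducible; comparing $N^{|t(\tri)|}$ with the product of the number of occurring $\boldsymbol{\lambda}$'s and the irreducible dimension forces exactly one copy in each, i.e. irreducibility.

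Concretely I would organize the write-up as: (i) reduce to the quotient $\cW_{\bD}$-module generated by $\iota_{\bD}(\X_{\by^\tri}^\tri)$ and observe, using \cref{lem:Weyl_embedding} and the commutation computation there, that this image is a quantum torus with the $q$-form given by $\ve^\tri$ and all $N$-th powers central; (ii) invoke the Azumaya/simple-quotient structure of a quantum torus at a root of unity to get a unique irreducible of dimension $N^{\mathrm{rk}/2}$ over each central character; (iii) compute the rank of $\ve^\tri \bmod N$ via \cref{lem:Casimir}, \cref{lem:special_center} and the topology of $\Sigma$, matching \eqref{eq:dim_irrep}; (iv) run the global dimension count over $V_{\bD}=\bigoplus_{\boldsymbol{\lambda}} V_{\bD}(L,\boldsymbol{\lambda})$ to conclude each summand is irreducible. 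The main obstacle I anticipate is step (iii): pinning down precisely which central elements of $\X_{\by^\tri}^\tri$ become scalar under $\iota_{\bD}$ (not just the $\theta_p,\theta_b,\overline H$ but the full radical $\Lambda_0$), and verifying that the homology characters $\boldsymbol{\lambda}$ appearing in $V_{\bD}$ together with the fixed central load exhaust a full transversal to the radical — equivalently, that $\widehat L$ maps onto the quotient $\Lambda_0/(\text{classes already fixed by } (y^\tri_\alpha)^N)$. This is the genuinely surface-topological input, and I would handle it by an explicit basis of $\widehat L$ adapted to a pants decomposition, exactly as in the infinite-dimensional treatments of Kashaev and Teschner cited in the text, but it requires care with the $\bZ_N$-coefficients and the half-integral intersection pairing $\boldsymbol{i}$ of \cref{lem:homology_intersection}.
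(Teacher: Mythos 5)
Your overall strategy (restrict to a central quotient of $\X_{\by^\tri}^{\tri}$, use the root-of-unity quantum torus structure to pin down the unique irreducible dimension, then compare with $\dim V_{\bD}(L,\boldsymbol{\lambda})$) is in the same spirit as the paper, which however gets by with much less: the paper only checks that each $X_\alpha^\tri$ preserves $V_{\bD}(L,\boldsymbol{\lambda})$ (via \cref{lem:Casimir}) and then cites \cite[Theorem 1.2]{KQ22}, which says that \emph{every} irreducible representation of $\X_{\by^\tri}^{\tri}$ has dimension exactly \eqref{eq:dim_irrep}; since that equals $\dim V_{\bD}(L,\boldsymbol{\lambda})$, irreducibility is immediate. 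The genuine gap in your write-up is step (iii), the identification of the radical $\Lambda_0$. The classes coming from $H_1(\Sigma^{\mathrm{cl}};\bZ_N)$ do \emph{not} lie in the radical of $\ve^\tri$ mod $N$: the homology operators $\oph^{\bD}_{[c]}$ for genus classes are monomials in the Weyl algebra $\cW_{\bD}$ that commute with $\iota_{\bD}(\X_{\by^\tri}^{\tri})$ but are not themselves monomials in the $X_\alpha$'s — this is precisely why $V_{\bD}$ fails to be irreducible with multiplicity $N^g$ (Toulisse) and why one must adjoin the external operators $\oph^{\bD}_{[c]}$, $[c]\in\widehat L$, to cut it down. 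For $N$ odd the radical of $\ve^\tri$ over $\bZ_N$ has dimension $p+b$, spanned by the exponent vectors of $\theta_p$ and $\theta_b$, not $2g+(p+b-1)$. Your own formula exposes the error: with $\dim\Lambda_0=2g+(p+b-1)$ the exponent $(|e(\tri)|-\dim\Lambda_0)/2$ is not even an integer (for the once-punctured torus it gives $N^{1/2}$), and it does not match \eqref{eq:dim_irrep} as claimed; with the correct value $p+b$ it does.

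A second, smaller gap is in step (iv): the global count ``$N^{|t(\tri)|}=(\#\{\boldsymbol{\lambda}\ \text{occurring}\})\times(\text{irreducible dimension})$ forces one copy in each eigenspace'' requires knowing exactly which characters $\boldsymbol{\lambda}$ occur and that the corresponding eigenspaces are nonzero. The peripheral part of $\boldsymbol{\lambda}$ is sometimes rigid (e.g.\ on a once-punctured closed surface $\oph^{\bD}_{[c_p]}=\iota_{\bD}(H)^2$ is a fixed scalar, so only one peripheral weight occurs) and sometimes free (thrice-punctured sphere), so the count cannot be run uniformly without the same analysis that underlies \eqref{eq:dim_irrep}. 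The clean repair is to take \eqref{eq:dim_irrep} as given (as the paper does), note that central elements of $\X_{\by^\tri}^{\tri}$ act by scalars on $V_{\bD}(L,\boldsymbol{\lambda})$ because $\iota_{\bD}(\overline{\theta}_p)=\oph^{\bD}_{[c_p]}$, $\iota_{\bD}(\overline{\theta}_b)=\oph^{\bD}_{[c_b]}$ with $[c_p],[c_b]\in\widehat L$, and then apply either the KQ22 uniqueness-of-dimension theorem or your Azumaya argument with the corrected radical; no global counting over all $\boldsymbol{\lambda}$ is then needed.
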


\begin{proof}
The action of each cluster variable $X_\alpha^\tri$ preserves the subspace $V_{\bD}(L,\boldsymbol{\lambda})$, since it commutes with the operators $\oph_{[c]}^{\bD}$ for $[c] \in \widehat{L}$ by \cref{lem:Casimir}. Thus the quantum torus $\X_{\by^\tri}^{\tri}$ acts preserving $V_{\bD}(L,\boldsymbol{\lambda})$.

By \cite[Theorem 1.2]{KQ22}, any irreducible representation of $\X_{\by^\tri}^{\tri}$ must have the same dimension as \eqref{eq:dim_irrep}. Therefore $V_{\bD}(L,\boldsymbol{\lambda})$ is an irreducible representation.
\end{proof}

% \begin{rem}
% By \cref{lem:special_center}, we have
% \begin{align*}
%  \bigg[\prod_{\alpha \in e(\tri)} X_\alpha^\tri \bigg]=\prod_{\alpha \in e(\tri)} y_\alpha^\tri\mathsf{1}
% \end{align*}
% on $V_{\bD}(\boldsymbol{\lambda})$. In the theory of local representations \cite{BL}, the complex number $\prod_{\alpha \in e(\tri)} y_\alpha^\tri$ is called the \emph{central charge}. 
% \end{rem}

\begin{rem}
Toulisse \cite{Toulisse} has obtained an irreducible decomposition of a local representation $V_{\bD}$, concluding that each irreducible component has multiplicity $N^g$. While his description of the multiplicity space was implicit, our \cref{thm:irrep} gives a refined geometric description by identifying the multiplicity space with $\Hom(L,\bZ_N)$, where each irreducible component is identified with the eigenspace $V_{\bD}(L,\boldsymbol{\lambda})$ for $\boldsymbol{\lambda} \in \Hom(\widehat{L},\bZ_N)$. 
\end{rem}

\begin{thm}\label{thm:functor_irrep}
For any global coefficient $\bbp_\Sigma \in \cF_\Sigma$, Lagrangian sub-lattice $L \subset H_1(\Sigma^{\mathrm{cl}};\bZ)$ and additive character $\boldsymbol{\lambda}:\widehat{L} \to \bZ_N$, the projective representation $V_\ast(\bbp_\Sigma)$ restricts to the projective representation
\begin{align*}
    V_\ast(\bbp_\Sigma;L,\boldsymbol{\lambda}): \mathrm{Pt}^{\tdot}_\Sigma \to \mathrm{Vect}_\bC^\op, \quad \bD \mapsto V_\bD(L,\boldsymbol{\lambda}).
\end{align*}
\end{thm}

\begin{proof}
The elementary operators $\opV_\omega$ respects the decomposition by \cref{lem:homology_naturality}.
\end{proof}
We call $ V_\ast(\bbp_\Sigma;L,\boldsymbol{\lambda})$ the \emph{irreducible cyclic quantum \Teich\ theory}  associated with a global coefficient $\bbp_\Sigma \in \cF_\Sigma$, a Lagrangian $L \subset H_1(\Sigma^\mathrm{cl};\bZ)$ and a character $\boldsymbol{\lambda}:\widehat{L} \to \bZ_N$. 

\subsection{Examples}\label{subsec:example}
Here we give a few quantum geometric structures that the irreducible cyclic quantum \Teich\ theory carries. 

\subsubsection{Thrice-punctured sphere and the `conformal block'}\label{subsub:example_sphere}
Let $\Sigma=\Sigma_0^3$ be a thrice-punctured sphere, and take the dotted triangulation $\bD$ as shown in \cref{fig:sphere}. 
In this case, the unique choice is $L=0$, and the relative homology $\widehat{L}:=H_1(\Sigma^\ast;\bZ)$ itself is maximally isotropic. 
Let us assign the weights $\lambda,\mu,\nu \in \bZ_N$ such that $\lambda+\mu+\nu=0$ to the three punctures. 

\begin{figure}[ht]
    \centering
\begin{tikzpicture}[scale=1.3]
\draw (0,0) -- (2,0) -- (2,2) -- (0,2) --cycle; 
\draw (0,2) -- (2,0); 
\draw[red](0.5,0.5) --++(-90:0.8) node[below,scale=0.7]{$\dbra{\opU_v\opP_v^{-1}}$};
\draw[red](0.5,0.5) node[right,scale=0.7]{$\opP_v$} --++(180:0.8) node[left,scale=0.7]{$\opU_v^{-1}$};
\draw[red](1.5,1.5) --++(0:0.8) node[right,scale=0.7]{$\opU_w^{-1}$};
\draw[red](1.5,1.5)++(-0.2,-0.2) node[left,scale=0.7]{$\opP_w$};
\draw[red](1.5,1.5) --++(90:0.8) node[above,scale=0.7]{$\dbra{\opU_w\opP_w^{-1}}$};
\draw[red](0.5,0.5) -- (1.5,1.5);
\node[above left,scale=0.8] at (0.5,0.5) {$v$};
\node[below right,scale=0.8] at (1.5,1.5) {$w$};
\dast{(0.1,1.75)};
\dast{(1.9,0.25)};
\node[scale=0.8] at (0.7,1.5){$2$};
\node[left,scale=0.8] at (0,1){$3$};
\node[below,scale=0.8] at (1,0){$1$};
\node[right,scale=0.8] at (2,1){$1$};
\node[above,scale=0.8] at (1,2){$3$};
\node[above left,scale=0.8] at (0,2) {$\lambda$};
\node[below left,scale=0.8] at (0,0) {$\mu$};
\node[below right,scale=0.8] at (2,0) {$\nu$};
\node[above right,scale=0.8] at (2,2) {$\mu$};
\end{tikzpicture}
    \caption{A dotted triangulation of $\Sigma_0^3$. Here the edges of the same label are identified.}
    \label{fig:sphere}
\end{figure}
The homology operators $\oph_\lambda,\oph_\mu,\oph_\nu$ associated with these punctures are computed as
\begin{align*}
    \oph_\lambda &= [\opU_v^{-1}\opP_v \opU_w], \\
    \oph_\mu &= \opP_v^{-1}\opP_w^{-1}, \\
    \oph_\nu &= [\opU_v\opU_w^{-1}\opP_w].
\end{align*}

\begin{prop}[`Conformal block']\label{prop:conformal_block}
The space $V_{\bD}(\lambda,\mu,\nu)$ is $1$-dimensional, and spanned by the vector
\begin{align*}
    \ket{\Psi_{\lambda,\mu,\nu}} := \sum_{m,n \in \bZ_N} \gamma(m-n)^{-1} q^{-2m\lambda -2n\nu} \ket{m,n},
\end{align*}
where the first (resp. second) component corresponds to $v$ (resp. $w$).
\end{prop}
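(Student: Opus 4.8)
The plan is to combine the abstract dimension count with a direct check that the displayed vector is a nonzero simultaneous eigenvector of the homology operators.

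\textbf{Dimension.} First I would apply the dimension formula \eqref{eq:dim_irrep}: for $\Sigma=\Sigma_0^3$ we have $g=0$, $b=0$, $|\bM|=3$ (and $|t(\tri)|=2$), so $\dim V_{\bD}(\lambda,\mu,\nu)=N^{3g-3+b+|\bM|}=N^0=1$, and by \cref{thm:irrep} this space is the unique $1$-dimensional irreducible $\X_{\by^\tri}^\tri$-module attached to the character $\boldsymbol\lambda$ with $\boldsymbol\lambda([c_p])=\lambda$, etc. Here $L=0$ and $\widehat L=\langle [c_\lambda],[c_\mu],[c_\nu]\rangle$ subject to the single relation $[c_\lambda]+[c_\mu]+[c_\nu]=0$, which is exactly why the standing hypothesis $\lambda+\mu+\nu=0$ must be imposed; recall that on $V_{\bD}(\lambda,\mu,\nu)$ the operator $\oph^{\bD}_{[c_p]}$ acts by $q^{2\boldsymbol\lambda([c_p])}$.

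\textbf{The vector spans.} Since every coefficient $\gamma(m-n)^{-1}q^{-2m\lambda-2n\nu}$ is a root of unity, $\ket{\Psi_{\lambda,\mu,\nu}}\neq 0$; so it suffices to show $\ket{\Psi_{\lambda,\mu,\nu}}\in V_{\bD}(\lambda,\mu,\nu)$, i.e. that $\oph_\lambda,\oph_\mu,\oph_\nu$ act on it by $q^{2\lambda},q^{2\mu},q^{2\nu}$. Resolving the Weyl orderings gives $\oph_\lambda=q\,\opU_v^{-1}\opP_v\opU_w$ and $\oph_\nu=q\,\opU_v\opU_w^{-1}\opP_w$, while $\oph_\mu=\opP_v^{-1}\opP_w^{-1}$ needs no correction, so on the basis one computes $\oph_\mu\ket{m,n}=\ket{m-1,n-1}$, $\oph_\lambda\ket{m,n}=q^{2n-2m-1}\ket{m+1,n}$, and $\oph_\nu\ket{m,n}=q^{2m-2n-1}\ket{m,n+1}$. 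Substituting these into the defining sum and reindexing, the $\oph_\mu$ case is immediate from $\lambda+\nu=-\mu$; for $\oph_\lambda$ and $\oph_\nu$ the key is the cocycle identity $\gamma(k\pm1)=\gamma(k)\,q^{1\pm2k}$ (from $\gamma(k+\ell)=\gamma(k)\gamma(\ell)q^{2k\ell}$ and $\gamma(\pm1)=q$): the $\gamma$-shift produced by reindexing exactly cancels the $q$-powers contributed by $\opU_v^{\mp1}$ and by the Weyl constant, leaving the asserted eigenvalue. Alternatively, once $\oph_\lambda$ and $\oph_\mu$ are verified, the $\oph_\nu$ eigenvalue follows from $\oph_\lambda\oph_\mu\oph_\nu=\oph^{\bD}_{[c_\lambda]+[c_\mu]+[c_\nu]}=\oph^{\bD}_0=\mathsf{1}$ (the three operators commute by \cref{lem:homology_intersection}, peripheral classes having vanishing intersection number) together with $\lambda+\mu+\nu=0$. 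Since $V_{\bD}(\lambda,\mu,\nu)$ is $1$-dimensional and contains the nonzero vector $\ket{\Psi_{\lambda,\mu,\nu}}$, it is spanned by it.

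\textbf{Main obstacle.} There is no conceptual difficulty here; the only delicate point is the $q$-power bookkeeping in the $\oph_\lambda$ and $\oph_\nu$ computations — keeping track of the Weyl-ordering constant $q$ and of the $\gamma$-cocycle shift produced by the reindexing — which is routine but easy to get wrong.
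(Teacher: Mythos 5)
Your proposal is correct, and its computational core coincides with the paper's proof: you verify that $\ket{\Psi_{\lambda,\mu,\nu}}$ is a simultaneous eigenvector of $\oph_\lambda,\oph_\mu,\oph_\nu$, deducing one eigenvalue relation from the other two via $\oph_\lambda\oph_\mu\oph_\nu=\mathsf{1}$ and $\lambda+\mu+\nu=0$; your Weyl-ordering resolutions $\oph_\lambda=q\,\opU_v^{-1}\opP_v\opU_w$ and $\oph_\nu=q\,\opU_v\opU_w^{-1}\opP_w$, the basis actions, and the cocycle identity $\gamma(k\pm1)=\gamma(k)q^{1\pm 2k}$ all check out. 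The only place you take a different route is the $1$-dimensionality: you import it from the dimension formula \eqref{eq:dim_irrep} together with \cref{thm:irrep}, whereas the paper obtains it internally by noting that the two eigenvalue equations impose two independent recurrence relations on the coefficients $\psi_{mn}$ of a general eigenvector, which determine all coefficients from a single one, so the solution space is $1$-dimensional. Your route is legitimate given that \eqref{eq:dim_irrep} is stated earlier in the paper, and your remark that the displayed vector is nonzero is what guarantees the eigenspace for this particular character is nontrivial; the trade-off is that you lean on the general counting formula (not proved in the paper), while the recurrence argument is self-contained and delivers the dimension and the spanning vector in one stroke.
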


\begin{proof}
It is straightforward to verify that
$\oph_\lambda \ket{\Psi_{\lambda,\mu,\nu}} =q^{2\lambda} \ket{\Psi_{\lambda,\mu,\nu}}$, $\oph_\mu \ket{\Psi_{\lambda,\mu,\nu}} =q^{2\mu} \ket{\Psi_{\lambda,\mu,\nu}}$, $\oph_\nu \ket{\Psi_{\lambda,\mu,\nu}} =q^{2\nu} \ket{\Psi_{\lambda,\mu,\nu}}$ holds. Since $\oph_\lambda \oph_\mu \oph_\nu=\mathsf{1}$, one of them follows from the other two. Observe that these relations impose two independent recurrence relations for the coefficients $\psi_{mn}$ of $\ket{\Psi_{\lambda,\mu,\nu}}=\sum_{m,n}\psi_{mn}\ket{m,n}$, whose solution space is $1$-dimensional, as it agrees with \eqref{eq:dim_irrep}.
\end{proof}
The cluster Poisson variables acts on $\ket{\Psi_{\lambda,\mu,\nu}}$ by 
\begin{align*}
    \opX_1^\tri \ket{\Psi_{\lambda,\mu,\nu}} &= y_1 q^{-2\lambda}\ket{\Psi_{\lambda,\mu,\nu}}, \\
    \opX_2^\tri \ket{\Psi_{\lambda,\mu,\nu}} &= y_2 q^{-2\mu}\ket{\Psi_{\lambda,\mu,\nu}}, \\
    \opX_3^\tri \ket{\Psi_{\lambda,\mu,\nu}} &= y_3 q^{-2\nu}\ket{\Psi_{\lambda,\mu,\nu}}.
\end{align*}

\begin{rem}
The level $k=N+2$ Clebsch--Gordan rule is missing in this setting. 
\end{rem}

\subsubsection{Once-punctured disk and the quantum group}\label{subsub:example_disk}
Let $\Sigma$ be a once-punctured disk with two marked points on the boundary, and take the dotted triangulation $\bD$ as shown in \cref{fig:disk}. In this case, the unique choice is $L=0$, and $\widehat{L}=H_\Sigma$ is generated by $[c_p]$. 
Assign the weight $p \in \bZ_N$ to the unique puncture. The corresponding homology operators are
\begin{align*}
    \oph_{[c_p]} = \opU_v \opP_w^{-1}, \quad
    \oph_{[c_b]} = \opU_v^{-1}\opP_w.
\end{align*}

\begin{figure}[ht]
    \centering
\begin{tikzpicture}
\draw(0,0) circle(2cm);
\draw(0,-2) -- (0,2);
\draw[red] (0,0) circle(1cm);
\draw[red] (1,0) -- (2,0);
\draw[red] (-1,0) -- (-2,0);
\node[red,scale=0.7] at (-1.5,0.3){$\opU_v^{-1}$};
\node[red,scale=0.7] at (135:1.3){$\opP_v$};
\node[red,scale=0.7] at (225:1.45){$[\opU_v\opP_v^{-1}]$};
\node[red,scale=0.7] at (1.5,0.3){$\opP_w$};
\node[red,scale=0.7] at (45:1.3){$\opU_w^{-1}$};
\node[red,scale=0.7] at (-45:1.45){$[\opU_w\opP_w^{-1}]$};
\node[scale=0.8] at (150:2.2) {$1$};
\node[left,scale=0.8] at (0,1.3) {$2$};
\node[scale=0.8] at (30:2.2) {$3$};
\node[left,scale=0.8] at (0,-1.3) {$4$};
\dast{(-0.2,1.85)};
\dast{(0.2,1.85)};
\filldraw(0,-2) circle(1.5pt);
\filldraw(0,2) circle(1.5pt);
\filldraw[fill=white](0,0) circle(2pt) node[right,scale=0.8]{$p$};
\node[right,scale=0.8] at (-1,0){$v$};
\node[left,scale=0.8] at (1,0){$w$};
\end{tikzpicture}
    \caption{A dotted triangulation of a once-punctured disk with two marked points on the boundary.}
    \label{fig:disk}
\end{figure}
Following \cite{Ip,SS19,GS19}, we introduce the elements
\begin{align*}
    E &:= X_1 + [X_1X_2], \\
    F &:= X_3 + [X_3X_4], \\
    K &:= [X_1X_2X_3], \quad K' := [X_1X_4X_3],
\end{align*}
which generates the Drinfeld double of the quantum Borel subalgebra of $U_q(\mathfrak{sl}_2)$ inside $\X^\tri$ \cite[Theorem 4.14]{Ip}, \cite[Theorem 1]{SS19}. In particular, they satisfy the relations
\begin{align*}
    KE= q^2 EK, \quad KF= q^{-2}FK, \quad [E,F] = (q-q^{-1})(K' -K)
\end{align*}
and the quantum Serre relations. 
If we rescale the generators by $E= q^{-1/2}(q-q^{-1})\widetilde{E}$, $F= -q^{1/2}(q-q^{-1})\widetilde{F}$, then we get (cf. \cite[(505)]{GS19})
\begin{align*}
    [\widetilde{E},\widetilde{F}] = \frac{K-K'}{q-q^{-1}}.
\end{align*}
The quantum group $U_q(\mathfrak{sl}_2)$ is obtained by imposing the condition $KK'=1$.  
Via the embedding $\iota_{\bD}$, these elements give rise to the operators 
\begin{align*}
    \opE &= y_1 \opU_v^{-1} + y_1y_2 [\opU_v^{-1}\opP_v\opU_w^{-1}], \\
    \opF &= y_3 \opP_w + y_3y_4 [\opU_v\opP_v^{-1}\opU_w], \\ 
    \opK &= y_1y_2y_3 [\opU_v^{-1}\opP_w \opP_v \opU_w^{-1}], \quad
    \opK' = y_1y_4y_3 [\opP_v^{-1}\opU_w].
\end{align*}
We have
\begin{align*}
    \opK \opK' = (y_1^2 y_2y_4 y_3^2) \opU_v^{-1}\opP_w = (y_1^2 y_2y_4 y_3^2) \oph_{[c_p]}^{-1}.
\end{align*}
Therefore by imposing the condition $y_1^2 y_2y_4 y_3^2 q^{-2p}=1$, we obtain the relation $\opK\opK'=1$ on $V_{\bD}(L;p)$. 

\begin{lem}
The subspace $V_{\bD}(L;p) \subset V_{\bD}$ is spanned by the vectors
\begin{align*}
    \ket{\psi_m} := \sum_{n \in \bZ_N} q^{2(p-m)n} \ket{m,n}, 
\end{align*}
where the first (resp. second) component corresponds to $v$ (resp. $w$).
\end{lem}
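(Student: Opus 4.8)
The plan is to recognize $V_{\bD}(L;p)$ as a single simultaneous eigenspace and then solve the defining eigenvalue equation by hand on the standard tensor basis of $V_{\bD}=V_N\otimes V_N$.

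First I would observe that, since $L=0$ and $\widehat{L}=H_\Sigma$ is generated by $[c_p]$ — with $[c_b]=-[c_p]$ forced by the relation $\sum_p[c_p]+\sum_b[c_b]=0$ — the subspace $V_{\bD}(L;p)$ is precisely the $q^{2p}$-eigenspace of the single operator $\oph_{[c_p]}^{\bD}=\opU_v\opP_w^{-1}$; the eigenvalue of $\oph_{[c_b]}^{\bD}=\opU_v^{-1}\opP_w$ on such a vector is then automatically $q^{-2p}$, matching the additive character sending $[c_b]\mapsto -p$. Because $\opU_v$ and $\opP_w$ act on different tensor factors there is no Weyl-ordering subtlety, and one reads off $\opU_v\opP_w^{-1}\ket{m,n}=q^{2m}\ket{m,n-1}$.

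Next, writing a general vector as $\ket{\psi}=\sum_{m,n\in\bZ_N}c_{m,n}\ket{m,n}$, I would rewrite the eigenvalue equation $\opU_v\opP_w^{-1}\ket{\psi}=q^{2p}\ket{\psi}$, after reindexing the sum over $n$, as the family of recursions $c_{m,n+1}=q^{2(p-m)}c_{m,n}$ for all $m,n\in\bZ_N$. These are consistent modulo $N$ because $q^{2(p-m)N}=1$, so they impose no constraint on the initial values $c_{m,0}$; hence the solution space is $N$-dimensional, and the choice $c_{m,0}=\delta_{m,m_0}$ yields exactly $\ket{\psi_{m_0}}=\sum_n q^{2(p-m_0)n}\ket{m_0,n}$. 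Linear independence of the $\ket{\psi_{m_0}}$ is immediate since they are supported on disjoint subsets of the basis, and as a cross-check \eqref{eq:dim_irrep} gives $\dim V_{\bD}(L;p)=N^{3g-3+b+|\bM|}=N^{0-3+1+3}=N$, which agrees.

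There is no real obstacle here: the core computation is a one-variable geometric recursion. The only two points worth a sentence are (i) that among the generators of $\widehat{L}$ only $[c_p]$ produces an independent constraint, and (ii) the mod-$N$ consistency of the recursion; both are disposed of in a line as above. Alternatively one could shortcut the spanning statement by directly verifying $\oph_{[c_p]}^{\bD}\ket{\psi_m}=q^{2p}\ket{\psi_m}$ and invoking the dimension count from \eqref{eq:dim_irrep}, but the direct solution of the recursion is just as short and self-contained.
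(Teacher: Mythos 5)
Your proof is correct and follows essentially the same route as the paper, which simply verifies by direct calculation that $\oph_{[c_p]}^{\bD}\ket{\psi_m}=q^{2p}\ket{\psi_m}$ on the standard basis. You additionally make the spanning statement explicit by solving the recursion $c_{m,n+1}=q^{2(p-m)}c_{m,n}$ (and cross-checking with the dimension count), a point the paper leaves implicit; this is a welcome completion rather than a different method.
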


\begin{proof}
A direct calculation shows $\oph_{[c_p]}\ket{\psi_m} = q^{2p}\ket{\psi_m}$.
\end{proof}
It is known that the (unrestricted) quantum group $U_q(\mathfrak{sl}_2)$ has irreducible cyclic representations with three complex parameters \cite{DCK90,DJMM91}. See also \cite[2.13, Case 3]{Jan}.
Our representation $V_{\bD}(L;p)$ is identified with one of them by passing to another basis $\ket{\phi_\ell}:=\gamma(\ell)^M\sum_{m \in \bZ_N}q^{-2\ell m} \ket{\psi_m}$:

\begin{thm}\label{thm:cyclic_quantum_group}
Given complex parameters $r,s,\lambda \in \bC^\ast$, we set
\begin{align*}
    y_1:= -q^{-1}r^{-1}\lambda, \quad y_2:= -q r^2, \quad
    y_3:= s^{-1}\lambda^{-1}q^{2p}, \quad y_4:= -s^2 q^{-2p+1}.
\end{align*}
Then the actions of $\widetilde{\opE},\widetilde{\opF},\opK$ are given by
% \begin{align*}
%     &\opE \ket{\phi_\ell} = q^{-1/2}(q^{\mu-\lambda+\ell}-q^{-\mu+\lambda-\ell})\ket{\phi_{\ell-1}}, \\
%     &\opF \ket{\phi_\ell} = -q^{1/2}(q^{\mu+\lambda-\ell}-q^{-\mu-\lambda+\ell})\ket{\phi_{\ell+1}}, \\
%     &\opK \ket{\phi_\ell} = q^{2\lambda-2\ell}\ket{\phi_{\ell-1}}.
%\end{align*}
\begin{align*}
    &\widetilde{\opE} \ket{\phi_\ell} = \frac{rq^{\ell+1}-r^{-1}q^{-\ell-1}}{q-q^{-1}}\lambda\ket{\phi_{\ell+1}}, \\
    &\widetilde{\opF} \ket{\phi_\ell} = \frac{sq^{-\ell+1}-s^{-1}q^{\ell-1}}{q-q^{-1}}\lambda^{-1}\ket{\phi_{\ell-1}}, \\
    &\opK \ket{\phi_\ell} = \frac r s q^{2\ell}\ket{\phi_{\ell}}.
\end{align*}
\end{thm}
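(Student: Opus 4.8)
The plan is to compute the three actions directly, carrying the data through the two explicit bases $\ket{\psi_m}$ and $\ket{\phi_\ell}$ in turn.

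\emph{Step 1 (action on the $\ket{\psi_m}$ basis).} First I would record how the Weyl-ordered monomials occurring in $\opE,\opF,\opK$ act on $\ket{\psi_m}=\sum_{n\in\bZ_N}q^{2(p-m)n}\ket{m,n}$. Since the two tensor factors commute, the Weyl-ordering constants factor off cleanly; applying \eqref{eq:std_rep} on each factor and re-expanding into the $\ket{\psi}$-basis after a shift $n\mapsto n\pm1$ yields the one-line identities
\begin{align*}
    \opU_v\ket{\psi_m}&=q^{2m}\ket{\psi_m}, & \opP_w\ket{\psi_m}&=q^{2(m-p)}\ket{\psi_m},\\
    \dbra{\opU_v^{-1}\opP_v\opU_w^{-1}}\ket{\psi_m}&=q^{-2m-1}\ket{\psi_{m+1}}, & \dbra{\opU_v\opP_v^{-1}\opU_w}\ket{\psi_m}&=q^{2m-1}\ket{\psi_{m-1}},\\
    \dbra{\opU_v^{-1}\opP_w\opP_v\opU_w^{-1}}\ket{\psi_m}&=q^{-2p}\ket{\psi_{m+1}}.
\end{align*}
Hence $\opE$ and $\opF$ act on $\ket{\psi_m}$ as a diagonal term plus a unit shift, while $\opK$ acts as the pure shift $\ket{\psi_m}\mapsto y_1y_2y_3q^{-2p}\ket{\psi_{m+1}}$.

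\emph{Step 2 (passage to the $\ket{\phi_\ell}$ basis).} The vectors $\ket{\phi_\ell}=\gamma(\ell)^M\sum_m q^{-2\ell m}\ket{\psi_m}$ are $N$-periodic in $\ell$, and $\sum_m q^{-2\ell m}\ket{\psi_m}=\gamma(\ell)^{-M}\ket{\phi_\ell}$. Applying $\opE$ to $\ket{\phi_\ell}$ and reindexing the off-diagonal summand so that both summands become proportional to $\sum_m q^{-2(\ell+1)m}\ket{\psi_m}=\gamma(\ell+1)^{-M}\ket{\phi_{\ell+1}}$, the $m$-dependence cancels and one is left with a pure shift; the same reorganization produces $\ket{\phi_{\ell-1}}$ for $\opF$ and $\ket{\phi_\ell}$ for $\opK$. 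Using only $\gamma(k)=q^{k^2}$, the identity $\gamma(\ell)^M\gamma(\ell\pm1)^{-M}=(q^M)^{\mp(2\ell\pm1)}$, and $q^M=q^{1/2}$, this gives
\begin{align*}
    \opE\ket{\phi_\ell}&=(q^M)^{-(2\ell+1)}\,y_1\bigl(1+y_2q^{2\ell+1}\bigr)\ket{\phi_{\ell+1}},\\
    \opF\ket{\phi_\ell}&=(q^M)^{2\ell-1}\,\bigl(y_3q^{-2p}+y_3y_4q^{-2\ell+1}\bigr)\ket{\phi_{\ell-1}},\\
    \opK\ket{\phi_\ell}&=y_1y_2y_3\,q^{2\ell-2p}\,\ket{\phi_\ell}.
\end{align*}

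\emph{Step 3 (substitution of the parametrization).} It then remains to insert $y_1=-q^{-1}r^{-1}\lambda$, $y_2=-qr^2$, $y_3=s^{-1}\lambda^{-1}q^{2p}$, $y_4=-s^2q^{-2p+1}$, together with the rescalings $\widetilde{\opE}=q^{1/2}(q-q^{-1})^{-1}\opE$ and $\widetilde{\opF}=-q^{-1/2}(q-q^{-1})^{-1}\opF$. A short simplification gives $y_1(1+y_2q^{2\ell+1})=\lambda(rq^{2\ell+1}-r^{-1}q^{-1})$, $y_3q^{-2p}+y_3y_4q^{-2\ell+1}=\lambda^{-1}(s^{-1}-sq^{-2\ell+2})$, and $y_1y_2y_3q^{-2p}=rs^{-1}$. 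The surviving powers of $q^M$ (reduced via $(q^M)^2=q$) combine with the rescaling factors $q^{\pm1/2}=(q^M)^{\pm1}$ and distribute into the brackets, producing exactly the three displayed formulas for $\widetilde{\opE},\widetilde{\opF},\opK$. As a consistency check, the same substitution yields $y_1^2y_2y_4y_3^2q^{-2p}=1$, i.e.\ $\opK\opK'=\mathsf{1}$ on $V_{\bD}(L;p)$, so the representation indeed descends to $U_q(\mathfrak{sl}_2)$.

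There is no conceptual obstacle here; the only points requiring care are the consistent tracking of the half-integer $q$-exponents through the exponent $M=(N+1)/2$ and of the Weyl-ordering constants, and making sure the index shifts $n\mapsto n\pm1$ and $m\mapsto m\pm1$ are performed modulo $N$.
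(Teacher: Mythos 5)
Your proposal is correct and follows essentially the same route as the paper: compute the actions of $\opE,\opF,\opK$ on the $\ket{\psi_m}$ basis, pass to $\ket{\phi_\ell}$ via the $\gamma(\ell)^M$-twisted Fourier base-change, and substitute the parametrization together with the rescalings $\widetilde{\opE}=q^{1/2}(q-q^{-1})^{-1}\opE$, $\widetilde{\opF}=-q^{-1/2}(q-q^{-1})^{-1}\opF$. Your intermediate formulas (including the Weyl-ordering constants, the factors $(q^M)^{\mp(2\ell\pm1)}$, and the check $y_1^2y_2y_4y_3^2q^{-2p}=1$) all agree with the paper's computation.
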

This representation is exactly the one constructed in \cite{DJMM91}, where $\lambda=g_{11}$ in their notation.

\begin{proof}
The actions on the basis $\ket{\psi_m}$ are computed as
\begin{align*}
    &\opE \ket{\psi_m} = y_1 q^{-2m} \ket{\psi_m} + y_1y_2 q^{-2m-1}\ket{\psi_{m+1}}, \\
    &\opF \ket{\psi_m} = y_3 q^{2(m-p)} \ket{\psi_m} + y_3y_4 q^{2m-1}\ket{\psi_{m-1}}, \\
    &\opK \ket{\psi_m} = y_1y_2y_3 q^{-2p} \ket{\psi_{m+1}}.
\end{align*}
Then by the base-change, we obtain
\begin{align*}
    &\opE \ket{\phi_\ell} = y_1 (q^{-\ell-1/2} + q^{\ell+1/2}y_2) \ket{\phi_{\ell+1}} = q^{-1/2}(rq^{\ell+1}-r^{-1}q^{-\ell-1})\lambda \ket{\phi_{\ell+1}}, \\
    &\opF \ket{\phi_\ell} = y_3(q^{\ell-2p-1/2} + q^{-\ell+1/2}y_4) \ket{\phi_{\ell-1}} = -q^{1/2}(sq^{-\ell+1}-s^{-1}q^{\ell-1})\lambda^{-1}\ket{\phi_{\ell-1}}, \\
    &\opK \ket{\phi_\ell} = y_1y_2y_3 q^{-2p}q^{2\ell} \ket{\phi_\ell} = \frac r s q^{2\ell} \ket{\phi_\ell}.
\end{align*}
Observe that the condition $y_1^2 y_2y_4 y_3^2 q^{-2p}=1$ is satisfied, so that $\opK'=\opK^{-1}$. 
\end{proof}

\section{Invariants of mapping classes I: Local quantum trace}\label{sec:trace_local}
Here we discuss the invariant of a mapping class arising from the cyclic quantum \Teich\ theory constructed in \cref{sec:Kashaev}. We call it the \emph{local quantum trace}, since it corresponds to the local representations as explained in \cref{subsec:BBL}. We provide Dehn twists and a pseudo-Anosov examples on a once-punctured torus in \cref{subsec:MCG}.

\subsection{Local quantum trace of mapping classes}\label{subsec:trace}
Consider the mapping class group $MC(\Sigma):=\pi_0(\mathrm{Homeo}^+(\Sigma,\bM))$, where $\mathrm{Homeo}^+(\Sigma,\bM)$ denotes the group of orientation-preserving homeomorphisms of $\Sigma$ that preserves the subsets $\bM$ and $\partial\Sigma$. Typical elements are Dehn twists along simple closed curves, braidings of punctures, and rotations of boundary marked points. It naturally acts on the dotted Ptolemy groupoid $\mathrm{Pt}^{\tdot}_\Sigma$. 

Let $V_\ast(\bbp_\Sigma): \mathrm{Pt}^{\tdot}_\Sigma \to \mathrm{Vect}_\bC^\op$ be the projective representation associated with a global coefficient $\bbp_\Sigma \in \cF_\Sigma$. Fix an initial dotted triangulation $\bD \in \mathrm{Pt}^{\tdot}_\Sigma$.
%%, and label its triangles by $\{1,\dots,M\}$. 
Given a mapping class $\phi \in MC(\Sigma)$, there exists a finite sequence 
\begin{align}\label{eq:rep_seq}
    \omega: \bD=\bD_0 \xrightarrow{E_1} \bD_1 \xrightarrow{E_2} \dots \xrightarrow{E_m} \bD_m = \phi^{-1}(\bD)
\end{align}
of elementary morphisms and permutations of labels. We call $\omega$ a \emph{representation path} of $\phi$, and its image $\underline{\omega}$ in $\mathrm{Pt}_\Sigma$ the \emph{underlying flip sequence}. 
Let 
\begin{align}\label{eq:rep_seq_op}
    V_\omega: V_{\phi^{-1}(\bD)} = V_{\bD_m} \xrightarrow{V_{E_m}} \dots \xrightarrow{V_{E_2}} V_{\bD_1} \xrightarrow{V_{E_1}} V_{\bD_1} = V_{\bD}
\end{align}
be the associated sequence of linear maps. Then we consider the linear isomorphism $\iota_\phi: V_{\bD}\xrightarrow{\sim} V_{\phi^{-1}(\bD)} $ identifying the basis vectors, according to the correspondence between the triangles under $\phi$. 

% \begin{lem}\label{lem:intertwiner}
% The composite $\opV_{\phi}^{\bD}(\bbp_\Sigma):= V_\omega \circ \iota_\phi: V_\bD \xrightarrow{\sim} V_\bD$ is a $\cW_{\bD}$-intertwiner.
% \end{lem}

% \begin{proof}

% \end{proof}

\begin{dfn}[$\phi$-invariance]
We say that $\bbp_\Sigma \in \cF_\Sigma$ is \emph{$\phi$-invariant} if it satisfies
\begin{align}\label{eq:fixed}
    \bp_{\phi^{-1}(\alpha)}^{\phi^{-1}(\tri)}=\bp_\alpha^{\tri}
\end{align}
for all $\tri \in \mathrm{Pt}_\Sigma$ and $\alpha \in e(\tri)$. 
\end{dfn}
Note that the $\phi$-invariance implies the fixed point condition $\phi(\pi_\Sigma^N(\bbp_\Sigma)) = \pi_\Sigma^N(\bbp_\Sigma)$
in $\overline{\X}_\Sigma$, where we recall \cref{cor:proj_cluster}.%\footnote{This corresponds to a choice of the $\phi$-invariant $PGL_2$-character of $\pi_1(\Sigma)$ together with an invariant \emph{puncture weights} in \cite{BWY-1}.} 
%$y^{(j)}_{vw} := p_{\bD_j;\alpha}^+/p_{\bD_j;\alpha}^-$ denotes the ratio associated with the edge $\alpha$ in $\tri_j$ shared by $v,w$. 
%When $\bbp_\Sigma$ is $\phi$-invariant, this matches the coefficient tuples. 

% \begin{conv}
% For two quantities $A,B$ such as numbers, vectors or operators over $\bC$, we write $A \eqN B$ if $A=q^{2k} B$ for some $k \in \bZ_N$. 
% \end{conv}

\begin{dfn}[Local quantum trace]\label{def:intertwiner}
Under the condition \eqref{eq:fixed}, the composite
\begin{align*}
    \opV_{\phi}^{\bD}(\bbp_\Sigma):= V_\omega \circ \iota_\phi: V_\bD \xrightarrow{\sim} V_\bD
\end{align*}
is called the \emph{local intertwiner} of the mapping class $\phi \in MC(\Sigma)$. We call
\begin{align*}
    Z_N(\phi;\bbp_\Sigma):= |\mathrm{Tr}(\opV_{\phi}^{\bD}(\bbp_\Sigma))|
\end{align*}
the \emph{local quantum trace} of $\phi$. 
\end{dfn}

% \begin{rem} 
% The terminology ``intertwiner'' for $V_\phi^{\bD}(\bbp_\Sigma)$ will be justified in \cref{prop:intertwiner}, as it intertwines the actions of cluster Poisson variables. 
% \end{rem}

\begin{prop}[Naturality]
If $\bbp_\Sigma$ is $\phi$-invariant, then the local intertwiner $\opV_{\phi}^{\bD}(\bbp_\Sigma)$ is independent of the choice of $\omega$ up to a multiplication by $\zeta$ in \cref{thm:AT_relations}. Moreover, the local quantum trace $Z_N(\phi;\bbp_\Sigma)$ is independent of the choice of  $\bD$.
%up to a multiplication by $\zeta$. 
\end{prop}

\begin{proof}
If we choose another path $\omega'$ from $\bD$ to $\phi^{-1}(\bD)$, then $\omega \circ (\omega')^{-1}$ is a closed path in $\mathrm{Pt}^{\tdot}_\Sigma$. By the presentation in \cref{thm:Ptolemy_presentation} and by \cref{thm:AT_relations}, such a closed path gives rise to an identity operator up to multiplication by $\zeta$. 

For two labeled triangulation $\bD_1,\bD_2$, take an arbitrary paths $\omega_j: \bD_j \to \phi^{-1}(\bD_j)$ for $j=1,2$, and
$\rho: \bD_1 \to \bD_2$ in $\mathrm{Pt}^{\tdot}_\Sigma$. By translating by $\phi^{-1}$, we get a path $\phi^{-1}(\rho): \phi^{-1}(\bD_1) \to \phi^{-1}(\bD_2)$. We have the following diagram:
\begin{equation*}
\begin{tikzcd}
    \bD_1 \ar[r,"\omega_1"] \ar[d,"\rho"'] & \phi^{-1}(\bD_1) \ar[d,"\phi^{-1}(\rho)"] \\
    \bD_2 \ar[r,"\omega_2"'] & \phi^{-1}(\bD_2).
\end{tikzcd}
\end{equation*}
Via the identifications of labels between $\bD_j$ and $\phi^{-1}(\bD_j)$ for $j=1,2$, we see that $\rho$ and $\phi^{-1}(\rho)$ are the same composite $V_\rho$ of elementary morphisms and permutations. Here, the $\phi$-invariance condition of $\bbp_\Sigma$ ensures that the coefficients appearing in these sequences are the same. 
By the first statement, we have
\begin{align*}
    \opV_{\phi}^{\bD_1}(\bbp_\Sigma) = \zeta^\bullet\cdot V_\rho \circ \opV_{\phi}^{\bD_2}(\bbp_\Sigma) \circ V_\rho^{-1}
\end{align*}
up to a power of $\zeta$. Taking the absolute value of the trace of both sides, we get the desired statement.
\end{proof}

\begin{rem}\label{rem:ambiguity}
$\mathrm{Tr}(\opV_{\phi}^{\bD}(\bbp_\Sigma))$ is well-defined up to powers of $\zeta$.
\end{rem}

\begin{rem}\label{rem:concatenation}
For two mapping classes $\phi_1,\phi_2 \in MC(\Sigma)$, take a sequence $\omega_i:\bD \to \phi_i^{-1}(\bD)$ of elementary moves for $i=1,2$. Let $\phi_1^{-1}(\omega_2):\phi_1^{-1}(\bD) \to \phi_1^{-1}(\phi_2^{-1}(\bD))$ be the translate of $\omega_2$ by the action of $\phi_1^{-1}$ on $\mathrm{Pt}^{\tdot}_\Sigma$. Then the concatenation
\begin{align*}
    \omega_1 \ast \phi_1^{-1}(\omega_2): \bD \to \phi_1^{-1}(\bD) \to \phi_1^{-1}(\phi_2^{-1}(\bD))=(\phi_2\phi_1)^{-1}(\bD)
\end{align*}
gives a sequence for the product $\phi_2\phi_1$. 
\end{rem}

\begin{prop}[Normalization]\label{prop:normalization_total}
The local intertwiner satisfies $|\det \opV_{\phi}^{\bD}(\bbp_\Sigma)|=1$.
\end{prop}
%This corresponds to the normalization condition of the Bai--Bonahon--Liu's invariant \cite{BL,BWY-1,BWY-2}. 

\begin{proof}
The local intertwiner $\opV_{\phi}^{\bD}(\bbp_\Sigma)$ is a composite of the operators $\opA$, $\opT$ and permutations. Each of these operators has unital determinant by the lemma below. 
\end{proof}

\begin{lem}
We have
\begin{align*}
    \det \opA_v = \zeta_A^N \zeta_\inv^N \det V(q^{-1}), \quad
    \det \opT_{vw} = 1.
\end{align*}
% where 
% \begin{align*}
%     \sigma(N):=\begin{cases}
%         1 & \mbox{if $N$ is odd}, \\
%         (-1)^{N/2} & \mbox{if $N$ is even}.
%     \end{cases}
% \end{align*}
In particular, $|\det \opA_v| = |\det \opT_{vw}|=1$. 
\end{lem}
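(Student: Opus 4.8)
The plan is to handle the two operators separately, reducing each determinant to the computations already available in \cref{lem:Vandermonde_det}, \cref{lem:gamma_prod} and \cref{lem:psi_product}. For $\opA_v$, I would first note that it acts as the identity on every tensor factor of $V_{\bD}=V_N^{\otimes T}$ except the $v$-th one, so it suffices to treat the single-copy operator $A\colon\ket{k}\mapsto\zeta_A\ket{\mbb{k}}=\zeta_A\sum_{m\in\bZ_N}\gamma(m)^{-1}q^{-2km}\ket{m}$ and then raise to the (harmless) inert power $N^{T-1}$. Reading off matrix entries in the standard basis gives $A=\zeta_A\,D\,V(q^{-1})$ with $D=\mathrm{diag}(\gamma(m)^{-1})_{m\in\bZ_N}$ and $V(q^{-1})=(q^{-2mk})_{m,k}$, hence $\det A=\zeta_A^{\,N}\bigl(\prod_{m\in\bZ_N}\gamma(m)^{-1}\bigr)\det V(q^{-1})$, and \cref{lem:gamma_prod} rewrites the middle factor as $\zeta_\inv^{\,N}$, giving the stated formula. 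For the absolute value I would just collect moduli: $|\zeta_A^{\,N}|=N^{-N/2}$, $|\zeta_\inv^{\,N}|=1$, and $|\det V(q^{-1})|=N^{N/2}$ by \cref{lem:Vandermonde_det}; these cancel to $|\det\opA_v|=1$.

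For $\opT_{vw}=\Psi_{\bp_\alpha^\tri}(\dbra{\opP_v^{-1}\opU_v\opP_w})\,\opS_{vw}$ I would compute the two determinants. First, by \cref{lem:Weyl} the operator $\opX:=\dbra{\opP_v^{-1}\opU_v\opP_w}$ satisfies $\opX^N=\mathsf{1}$; since $\opX$ and $\opU_w$ form a $q$-commuting pair, the classification of representations of the $q$-Weyl algebra forces every eigenvalue $q^{2k}$ of $\opX$ on $V_{\bD}$ to occur with the same multiplicity $N^{T-1}$, so $\det\Psi_{\bp}(\opX)=\bigl(\prod_{k\in\bZ_N}\Psi_{\bp}(q^{2k})\bigr)^{N^{T-1}}=1$ by \cref{lem:psi_product}. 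Second, carrying out the sum over $i$ in $\opS_{vw}=\tfrac1N\sum_{i,j}q^{2ij}\opU_w^i\opP_v^j$ shows it sends $\ket{\dots,k_v,\dots,k_w,\dots}$ to $\ket{\dots,k_v-k_w,\dots,k_w,\dots}$, so it is a permutation matrix; on the $v$-th index, with the remaining indices fixed, it is the cyclic shift $k_v\mapsto k_v-k_w$, a permutation of $\bZ_N$ into $\gcd(N,k_w)$ equal-length cycles of sign $(-1)^{N-\gcd(N,k_w)}=+1$ because $N$ is odd. Hence $\det\opS_{vw}=1$ and $\det\opT_{vw}=1$, so $|\det\opT_{vw}|=1$.

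I expect the only genuinely delicate point to be the sign of $\opS_{vw}$: one must verify that each cyclic-shift block is an even permutation, which is exactly where the standing hypothesis that $N$ is odd is used (for even $N$ a shift by an odd amount would be odd). A secondary point worth recording carefully is the uniform-multiplicity claim for $\dbra{\opP_v^{-1}\opU_v\opP_w}$, since without it $\det\Psi_{\bp}(\dbra{\cdots})=1$ would not follow directly from \cref{lem:psi_product}; everything else is routine bookkeeping of the constants $\zeta_A$, $\zeta_\inv$ and $\det V(q^{-1})$.
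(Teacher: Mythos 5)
Your proposal is correct, and for the $\opA_v$ part it follows the paper's own route: read off the matrix $\zeta_A\,\mathrm{diag}(\gamma(n)^{-1})\,V(q^{-1})$ in the standard basis, pull out $\prod_n\gamma(n)^{-1}=\zeta_\inv^N$ via \cref{lem:gamma_prod}, and balance the moduli $|\zeta_A^N|=N^{-N/2}$ against $|\det V(q^{-1})|=N^{N/2}$ from \cref{lem:Vandermonde_det}. (The paper's displayed formula is for the single $N\times N$ block, so your remark about the inert power $N^{T-1}$ is a harmless normalization point.)

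For $\opT_{vw}$ you use the same factorization $\det\opT_{vw}=\det\Psi_{\bp}(\dbra{\opP_v^{-1}\opU_v\opP_w})\cdot\det\opS_{vw}$ but compute the second factor differently. The paper diagonalizes $\opS_{vw}$ in the mixed basis $\ket{\mb{k},\ell}$, where it acts by the scalar $q^{-k\ell}$, and checks that the product $\prod_{k,\ell}q^{-k\ell}$ equals $1$ (using $N$ odd). You instead use the standard-basis description of \cref{lem:S_action}: $\opS_{vw}$ is a permutation matrix, and each block $k_v\mapsto k_v-k_w$ is a shift on $\bZ_N$, hence a product of $\gcd(N,k_w)$ equal cycles of sign $(-1)^{N-\gcd(N,k_w)}=+1$ since every divisor of the odd number $N$ is odd. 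Both arguments are valid and both localize the use of oddness of $N$; yours is more elementary (no Gauss-type exponent bookkeeping), while the paper's eigenvalue computation is shorter once the mixed basis is in hand. You also make explicit a point the paper leaves implicit when invoking \cref{lem:psi_product}: that the eigenvalues $q^{2k}$ of the cyclic operator $\dbra{\opP_v^{-1}\opU_v\opP_w}$ all occur with the same multiplicity, which your conjugation-by-$\opU_w$ argument (eigenvalue shift under a $q$-commuting partner) justifies correctly; this is a worthwhile addition rather than a deviation.
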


\begin{proof}
Observe that $\bra{n}\opA_v\ket{m}=\zeta_A \gamma(n)^{-1}q^{-2mn}$, and its $n$-th row has the common factor $\gamma(n)^{-1}$ for each $n \in \bZ_N$. Hence
\begin{align*}
    \det \opA_v = \zeta_A^N \prod_{n \in \bZ_N}\gamma(n)^{-1} \cdot \det V(q^{-1}) = \zeta_A^N \zeta_\inv^N \det V(q^{-1}).
\end{align*}
Here we have used \cref{lem:gamma_prod}. In particular, $|\det \opA_v| = N^{-N/2} \cdot 1 \cdot N^{N/2} =1$.

By \cref{lem:psi_product}, we get
\begin{align*}
    \det \opT_{vw} = \det \Psi_{\bp_{vw}}(\dbra{\opP_v^{-1}\opU_v\opP_w})\cdot \det\opS_{vw} = \det \opS_{vw}.
\end{align*}
By taking the basis $\ket{\mb{k},\ell} \in V_N \otimes V_N$, we compute
\begin{align*}
    \opS_{vw}\ket{\mb{k},\ell} &= \sum_{i,j \in \bZ_N} q^{2ij}q^{2j\ell}q^{2ik}\ket{\mb{k},\ell} \\
    &=\sum_{j \in \bZ_N}q^{2j\ell}\bigg(\sum_{i \in \bZ_N} q^{2i(j+k)}\bigg)\ket{\mb{k},\ell} \\
    &= \sum_{j \in \bZ_N}q^{j\ell} N\delta_{j,-k}\ket{\mb{k},\ell}  = q^{-k\ell}\ket{\mb{k},\ell}. 
\end{align*}
Therefore
\begin{align*}
    \det\opS_{vw} = \prod_{k,\ell \in \bZ_N}q^{-k\ell} = q^{-2\sum_{k,\ell}k\ell} = q^{-(N-1)N\sum_k k} = (-1)^{\frac{(N-1)^2N}{2}} =1.
\end{align*}
Thus we get $\det\opT_{vw}=1$ as desired.
\end{proof}

\begin{rem}\label{rem:gluing} 
The projective functor $V_\ast(\bbp_\Sigma)$ only gives a projective representation of the stabilizer subgroup of $\bbp_\Sigma$ in $MC(\Sigma)$, since we need the $\phi$-invariance condition for the correct identification of vector spaces. 
\end{rem}
% The quantum trace $Z_N(\phi;\bbp_\Sigma)$ can be viewed as an invariant of the following $3$-manifold (with fibered strcture):

% \begin{dfn}[Mapping torus]\label{def:mapping_torus}
% Given a mapping class $\phi \in MC(\Sigma)$, let 
% \begin{align*}
%     M_\phi^\circ:= ([0,1] \times \Sigma)/(0,x) \sim (1,\phi^{-1}(x)). 
% \end{align*}
% \begin{itemize}
%     \item If $\partial\Sigma=\emptyset$ (and hence $\bM_\partial=\emptyset$), let $M_\phi:=M_\phi^\circ$. 
%     \item Otherwise, each boundary component of $\Sigma$ gives rise to a torus boundary of $M_\phi^\circ$. The orbits of marked points form a torus link. Attach a solid torus so that its meridean is identified with (any) component of this torus link. The resulting  space is denoted by $M_\phi$.
% \end{itemize}
% \end{dfn}
% For example, if $\Sigma$ is a disk with $n$ marked points on the boundary and $\phi \in MC(\Sigma)$ is the $1/n$ rotation, then $M_\phi=L(n,1)$ is the lens space.

% \begin{lem}
% A $\phi$-invariant global coefficient $\bbp_\Sigma \in \cF_\Sigma$ determines a group homomorphism $\rho: \pi_1(M_\phi) \to PSL_2(\bC)$.
% \end{lem}

\subsubsection*{Action on the quantum cluster Poisson algebra}
The mapping class group $MC(\Sigma)$ acts on $\cO_q(\X_\Sigma)$ from the right as algebra automorphisms so that $\phi^\ast(X_\alpha^{\tri}):= X_{\phi^{-1}(\alpha)}^{\phi^{-1}(\tri)}$ for $\phi \in MC(\Sigma)$, $\tri \in \mathrm{Pt}_\Sigma$ and $\alpha \in e(\tri)$. More explicitly, the action on each cluster Poisson variable can be computed as follows. Fix $\tri \in \mathrm{Pt}_\Sigma$, and take a flip sequence $\underline{\omega}$ from $\tri$ to $\phi^{-1}(\tri)$ (compare with \eqref{eq:rep_seq}). Then the rational action $\phi^\ast: \Frac \X^\tri \to \Frac \X^\tri$ is the composite
\begin{align}\label{eq:MC_action_rational}
    \phi^\ast: \Frac \X^\tri \xrightarrow{\sim} \Frac \X^{\phi^{-1}(\tri)} \xrightarrow{\mu_{\underline{\omega}}^\ast} \Frac \X^\tri,
\end{align}
where the first map is induced from the identification $X_\alpha^\tri \mapsto X_{\phi^{-1}(\alpha)}^{\phi^{-1}(\tri)}$, and the latter is the reverse composite of quantum cluster Poisson transformations along $\underline{\omega}$.

\begin{prop}
Let $\phi \in MC(\Sigma)$ be a mapping class. The automorphism $\phi^\ast$ on $\cO_q(\X_\Sigma)$ descends to an automorphism on $\cO_q(\X_\Sigma;\bbp_\Sigma)$ if  $\bbp_\Sigma \in \cF_\Sigma$ is $\phi$-invariant. 
\end{prop}

\begin{proof}
In this case, the action $\phi^\ast(X_\alpha^{\tri})= X_{\phi^{-1}(\alpha)}^{\phi^{-1}(\tri)}$ is equivalent to $\phi^\ast(\overline{X}_\alpha^{\tri})= \overline{X}_{\phi^{-1}(\alpha)}^{\phi^{-1}(\tri)}$. Therefore it preserves the defining relations.
\end{proof}

% \begin{rem}
% %In general, the definition $\phi^\ast(\overline{X}_\alpha^{\tri}):= \overline{X}_{\phi^{-1}(\alpha)}^{\phi^{-1}(\tri)}$ might appear to make sense, but the automorphism part $\mu^\#_{\kappa,\epsilon}$ is not preserved under such $\phi^\ast$ when $\bbp_\Sigma$ is not $\phi$-invariant.
% \end{rem}

\begin{prop}\label{prop:intertwiner}
Let $\bbp_\Sigma \in \cF_\Sigma$ be a global coefficient, which is invariant under a mapping class $\phi \in MC(\Sigma)$. Then for any $\bD \in \mathrm{Pt}^{\tdot}_\Sigma$, the local intertwiner $V_\phi^\bD(\bbp_\Sigma)$ is an $\X^\tri$-intertwiner:
\begin{align*}
    V_\phi^\bD(\bbp_\Sigma) \circ X_\alpha^{\tri} = \phi^\ast(X_\alpha^\tri) \circ V_\phi^\bD(\bbp_\Sigma)
\end{align*}
for $\alpha \in e(\tri)$.
\end{prop}

\begin{proof}
Recall that $V_\phi^\bD(\bbp_\Sigma)=V_\omega \circ \iota_\phi$, where $V_\omega$ is the composite \eqref{eq:rep_seq_op} of elementary operators along a representation path $\omega$ of $\phi$, and $\iota_\phi:V_\bD \xrightarrow{\sim} V_{\phi^{-1}(\bD)}$ is the identification of basis vectors associated with triangles. Compare it with the factorization \eqref{eq:MC_action_rational}. From \cref{thm:K_FG}, the adjoint action of $V_\omega$ on the Chekhov--Fock algebra coincides with $\mu_{\underline{\omega}}^\ast$. The adjoint action of $\iota_\phi$ also coincides with the identification $X_\alpha^\tri \mapsto X_{\phi^{-1}(\alpha)}^{\phi^{-1}(\tri)}$, since the $\phi$-action on the dotted triangulations preserves the incidence relation of triangles and edges. Thus the assertion is proved. 
\end{proof}

\subsection{Geometric content of a $\phi$-invariant global coefficient $\bbp_\Sigma$.}
We refer the reader to \cref{sec:mapping_torus} for the definition of the mapping torus $M_\phi$ and the Goncharov--Shen's moduli space $\P_{PGL_2,\Sigma}$. 

\begin{prop}
A generic $\phi$-invariant global coefficient $\bbp_\Sigma \in \cF_\Sigma$ determines a $PSL_2(\bC)$-character $\widehat{\rho}:\pi_1(M_\phi) \to PSL_2(\bC)$ on the mapping torus $M_\phi$.
\end{prop}

\begin{proof}
Recall \cref{cor:proj_cluster} and its proof. 
Assume that $\xi:=\pi_N(\bbp_\Sigma)$ belongs to the non-compactified cluster variety $\X_\Sigma$. Then $\xi$ determines a point of $\mathcal{P}_{PGL_2,\Sigma}$. If $\bbp_\Sigma$ is $\phi$-invariant, then $\xi \in \P_{PGL_2,\Sigma}^\phi$. By \cref{thm:mapping_torus}, it lifts to a data on $M_\phi$, which contains a character $\widehat{\rho}: \pi_1(M_\phi) \to PSL_2(\bC)$. Here the lift is unique if $\xi$ is sufficiently generic, in particular if $\bM_\partial \neq \emptyset$. 
\end{proof}

\begin{thm}\label{thm:hyp_str}
%If $\partial\Sigma=\emptyset$ (and hence $\bM_\partial=\emptyset$) and $\phi \in MC(\Sigma)$ is a pseudo-Anosov mapping class, 
If the mapping torus $M_\phi$ has a complete hyperbolic structure (cf. \cref{rem:mapping_torus_hyperbolic}), 
then there exists a $\phi$-fixed point in $\X_\Sigma$ that satisfies $Y_\alpha^\tri \in \bC \setminus \{0,1\}$ for all $\tri \in \mathrm{Pt}_\Sigma$ and $\alpha \in e(\tri)$.
%$\phi$-invariant non-degenerate global coefficient $\bbp_\Sigma$.
\end{thm}

\begin{proof}
%The cluster Poisson variety $\X_\Sigma$ is a Zariski open subspace of the moduli space $\X_{PGL_2,\Sigma}$ of framed $PGL_2(\bC)$-local systems on $\Sigma$ \cite{FG06}. See \cite[Section 9]{AB20} for details. Recall that a $PGL_2(\bC)$-local system on $\Sigma$ is $\phi$-invariant if and only if it extends to the mapping torus $M_\phi$ of $\phi$ (see \cref{def:mapping_torus} below). When $\phi$ is pseudo-Anosov, the mapping torus $M_\phi$ admits a complete hyperbolic structure. 
Let $\widehat{\rho}_{\mathrm{hyp}}: \pi_1(M_\phi) \to PSL_2(\bC)$ denote the monodromy homomorphism of the complete hyperbolic structure. We refer the reader to \cite[Section 3.2]{BWY-1} for the properties of $\widehat{\rho}_{\mathrm{hyp}}$ that we need. Since $\widehat{\rho}_{\mathrm{hyp}}$ is boundary-unipotent, it admits a unique $\mathbb{P}^1$-framing $\widehat{\beta}$. If we restrict the data $(\widehat{\rho}_{\mathrm{hyp}},\widehat{\beta})$ to the surface $\Sigma^\ast=\{0\} \times \Sigma^\ast \subset M_\phi$, we get a $\phi$-fixed point $(\rho_{\mathrm{hyp}},\beta) \in \X_{PGL_2,\Sigma}$. Since the framing determines a generic configuration of points in $\mathbb{P}^1$, the cluster coordinates satisfy
$Y_\alpha^\tri(\rho_{\mathrm{hyp}},\beta) \in \bC \setminus \{0,-1\}$ for any ideal triangulation $\tri$ and $\alpha \in e_{\interior}(\tri)$\footnote{Here, our convention of cross ratio is $[\infty:-1:0:z]=z$.}. 

We can supply the $\widehat{\mathbb{P}}^1$-framing $\widehat{\alpha}$ (and hence the frozen coordinates), as follows. For simplicity, assume that $\phi$ rotates the marked points on each boundary component by one step. 
Note that the monodromy $\rho_{\mathrm{hyp}}(\gamma_b)$ around a boundary component of $\Sigma^\ast$ is loxodromic. By the relation \eqref{eq:rel_filling}, the monodromy $\widehat{\rho}_{\mathrm{hyp}}(t)$ along the fiber loop is also loxodromic. For each component $c$ of the boundary link $L_b$, choose any section $z_c$ of $\widehat{\mathcal{L}}\times_{PGL_2} \widehat{\mathbb{P}}^1$ at a point of $c$ (namely a point of $\widehat{\mathbb{P}}^1$), and then extend via parallel-transport along $L_b$ to obtain $\widehat{\alpha}$. It is possible since $L_b$ is a trivial loop in $M_\phi$. If we restrict $\widehat{\alpha}$ to the surface $\Sigma^\ast=\{0\} \times \Sigma^\ast \subset M_\phi$, we get a $\widehat{\mathbb{P}}^1$-framing $\alpha$ of $\mathcal{L}$. A consecutive pair is given by $(z_c,\widehat{\rho}_{\mathrm{hyp}}(t)(z_c))$. 
If we choose $z_c$ generically so that $\pi(\widehat{\rho}_{\mathrm{hyp}}(t)(z_c)) \neq \pi(z_c)$ in $\mathbb{P}^1$, we get a $\phi$-fixed point $\xi_{\mathrm{hyp}}:=(\rho_{\mathrm{hyp}},\beta,\alpha) \in \P_{PGL_2,\Sigma}$.
%In particular, it belongs to $\X_\Sigma \subset \X_{PGL_2,\Sigma}$. 
By further choosing $z_c$ generically, we may ensure $Y_\alpha^\tri(\xi_{\mathrm{hyp}}) \in \bC \setminus \{0,-1\}$ for any boundary edge $\alpha$.

%Then we can take the image $\bp_\alpha^\tri:=s(Y_\alpha^\tri(\xi_{\mathrm{hyp}})) \in \cF_N$ of the local section \eqref{eq:generic_section}. Since the coordinates of $\xi_{\mathrm{hyp}}$ are $\phi$-invariant, so are the coefficients $\bp_\tri:=(\bp_\alpha^\tri)_\alpha$. Note that the ambiguity of the local section, namely the powers of $q^2$, does not matter for this condition. The assertion is proved. 
%%There exists a $\phi$-fixed point in the cluster variety $\X_\Sigma$ having non-vanishing cluster coordinates along . See, for instance, \cite[Proposition 13]{BL07}. 
\end{proof}
We do not know if there exists a $\phi$-invariant global coefficient $\bbp_\Sigma$ such that $\pi_N(\bbp_\Sigma)=\xi_\mathbf{hyp}$ in general. See \cref{prop:LR_local}. 

In practice, given a mapping class $\phi$, we only need the following data to compute the local quantum trace: 

\begin{dfn}\label{def:coeff_local}
Let $\underline{\omega}: \tri=\tri_0 \to \dots \to \tri_n=\phi^{-1}(\tri)$ be a flip sequence in $\mathrm{Pt}_\Sigma$ representing $\phi$. A \emph{$\phi$-invariant coefficient over $\underline{\omega}$} is the tuple $\bp_{\underline{\omega}}=\bp^{\tri_\nu} \in \cF_\Sigma$ of coefficients  for $\nu=0,\dots,n$ such that each consecutive pair is related by the mutation rule \eqref{eq:coeff_relation}, and the condition $\bp_{\phi^{-1}(\alpha)}^{\phi^{-1}(\tri)} = \bp_\alpha^\tri$ holds for all $\alpha \in e(\tri)$.  
\end{dfn}
% Such $\bbp_{\underline{\omega}}$ can be found as follows. Let $(Y_\alpha^{(\tri_\nu)})_{\alpha \in e(\tri_\nu)}$ be complex parameters for $\nu=0,\dots,n$ related by \eqref{eq:cluster_transf_y} and the $\phi$-invariance condition $Y_{\phi^{-1}(\alpha)}^{\phi^{-1}(\tri)} = Y_\alpha^\tri$. We can write $(Y_\alpha^{\tri_\nu})_{\alpha \in e(\tri_\nu)}$ for $\nu=1,\dots,n$ as rational functions of initial parameters $(Y_\alpha^\tri)_{\alpha \in e(\tri)}$, and then $Y_{\phi^{-1}(\alpha)}^{\phi^{-1}(\tri)} = Y_\alpha^\tri$ gives algebraic equations, which can be easily solved. In the hyperbolic case (\cref{thm:hyp_str}), $Y_\alpha^\tri:=Y_\alpha^\tri(\xi_\mathrm{hyp})$ is a preferred solution. Suppose that neither of these parameters are $0$ or $-1$. 

% Fix an arbitrary coefficient $\bp_\alpha^{\tri_\nu} \in \cF_N$ such that $\pi_N(\bp_\alpha^{\tri_\nu}) = Y_\alpha^{\tri_\nu}$ for $\alpha \in e(\tri_\nu)$ and $\nu=0,\dots,n$. Then the condition $\bp_{\phi^{-1}(\alpha)}^{\phi^{-1}(\tri)} = \bp_\alpha^\tri$ holds up to powers of $q^2$. Note that if we write $\bp_\alpha^{\tri_\nu}=(p^+,p^-)$, then $(q^{2a}p^+,q^{2b}p^-)$ also gives a lift of $Y_\alpha^{\tri_\nu}$ for any $a,b \in \bZ$. Therefore we prepare $2|e(\tri)|$
Such $\bbp_{\underline{\omega}}$ can be found as follows. 
The mutation relation \eqref{eq:cluster_transf_Y} and the $\phi$-invariance condition for the variables $Y_\alpha^{\tri_\nu}:=\pi_N(\bbp_\alpha^{\tri_\nu})$
impose a set of algebraic equations, which can be easily solved. In the hyperbolic case (\cref{thm:hyp_str}), $Y_\alpha^\tri:=Y_\alpha^\tri(\xi_\mathrm{hyp}) \in \bC \setminus \{0,1\}$ is a preferred solution. Suppose that neither of these parameters are $0$ nor $-1$.

Choosing any lifts $\bbp_\alpha^{\tri_\nu}$, the condition $\bp_{\phi^{-1}(\alpha)}^{\phi^{-1}(\tri)} = \bp_\alpha^\tri$ holds up to powers of $q^2$. In order to match these powers, we may introduce phase shifts $(q^{2a_\alpha^\nu} p_\alpha^{\tri_\nu,+}, q^{2b_\alpha^\nu} p_\alpha^{\tri_\nu,-})$ with $(a_\alpha^\nu,b_\alpha^\nu) \in \bZ$. Then the mutation relation \eqref{eq:coeff_relation} and the condition $\bp_{\phi^{-1}(\alpha)}^{\phi^{-1}(\tri)} = \bp_\alpha^\tri$ impose a set of linear equations among $(a_\alpha^\nu,b_\alpha^\nu) \in \bZ$, whose number is less than or equal to the number of parameters, hence can be solved. 

The solution space is closely related to the \emph{QH gluing variety} of \cite{BB-AGT}.

\begin{rem}
Even in the non-hyperbolic case (e.g. Dehn twists), one may further compactify $\cF_\Sigma$ and $\overline{\X}_\Sigma$ in the directions corresponding to the edges not being flipped along $\omega$. We may find $\phi$-invariant coefficients over a flip sequence in this enlarged space. See \cref{subsec:MCG} for examples. 
\end{rem}

\subsection{Examples}\label{subsec:MCG}
Let $T_c \in MC(\Sigma)$ denote the left-hand Dehn twist along an essential simple closed curve $c \subset \Sigma$. 
Let us consider a once-punctured torus $\Sigma=\Sigma_1^1$, together with the dotted triangulation $\bD$ shown in the top-left of \cref{fig:Dehn_twist}. Consider the Dehn twists $T_a,T_b^{-1}$. 
We can take the sequences of elementary moves in $\mathrm{Pt}^{\tdot}_\Sigma$ as shown in \cref{fig:Dehn_twist} from $\bD$ to $T_a^{-1}(\bD)$ and $T_b(\bD)$, respectively. Therefore the local intertwiners of $T_a$ and $T_b^{-1}$ are given by
\begin{align}\label{eq:Dehn_operator}
    \opV_{T_a}^{\bD}(\bbp_a) = \opT_{vw} \opA_w^{-1} \opP_{(vw)}, \quad
    \opV_{T_b^{-1}}^{\bD}(\bbp_b) =\opT_{vw} \quad: V_{\bD} \to V_{\bD}
\end{align}
for some global coefficients $\bbp_a,\bbp_b \in \cF_\Sigma$, where the identifications $\iota_{T_a},\iota_{T_b^{-1}}$ are omitted. 

The underlying flip sequence $\underline{\omega}_a$ for $T_a$ is the flip $\mu_1$ followed by the transposition $P_{(12)}$ of edge labels, which imposes the fixed point condition
\begin{align*}
    Y_1 = Y_2(1+Y_1)^2, \quad Y_2 = Y_1^{-1}, \quad Y_3 = Y_3(1+Y_1^{-1})^{-2}
\end{align*}
for $Y$-variables $Y_i=((p^\tri_i)^+/(p^\tri_i)^-)^N$, $i=1,2,3$ in the initial triangulation $\tri$. 
A solution is $(Y_1,Y_2,Y_3) = (-1/2,-2,0)$.

The mutation-equivalence and invariance conditions on the coefficients are:
\begin{align*}
     &{p'}^\pm_1 = p^\mp_1, \quad \frac{{p'}^+_2}{{p'}^-_2} = \frac{p^+_2}{p^-_2}(p_1^-)^{-2}, \quad \frac{{p'}^+_3}{{p'}^-_3} = \frac{p^+_3}{p^-_3}(p_1^+)^{2}\\
     &\bp'_1 = \bp_2, \quad \bp'_2 = \bp_1, \quad \bp'_3=\bp_3.
\end{align*}
Here $\bp^\tri=(\bp_1,\bp_2,\bp_3)$ and $\bp^{\mu_1(\tri)}=(\bp'_1,\bp'_2,\bp'_3)$. The unique solution over $(Y_1,Y_2,Y_3) = (-1/2,-2,0)$ is
%\footnote{A practical way to find a solution is to consider the equation for variables $Y_\alpha^\tri=\pi_N(\bp_\alpha^\tri)$ imposed by cluster transformations \eqref{eq:cluster_transf_Y}, and take their lifts via \eqref{eq:generic_section}. We do it in the pseudo-Anosov example below.} 
\begin{align*}
    \bp_1=(e^{\pi i/N},\sqrt[N]{2})=\bp'_2, \quad \bp_2=(\sqrt[N]{2},e^{\pi i/N})=\bp'_1, \quad \bp_3=(0,1)=\bp'_3.
\end{align*}
These data define a $T_a$-invariant coefficient $\bbp_a$ over $\underline{\omega}_a$. A $T_b^{-1}$-invariant coefficient $\bbp_b$ for $T_b^{-1}$ can be similarly found. 
% For example, the global coefficient $\bbp_a$ is chosen so that the fixed point condition
% \begin{align*}
%     Y_1 = Y_2(1+Y_1)^2, \quad Y_2 = Y_1^{-1}, \quad Y_3 = Y_3(1+Y_1^{-1})^{-2}
% \end{align*}
% holds for $Y_i:=((p^\tri_i)^+/(p^\tri_i)^-)^N$, $i=1,2,3$ in the initial triangulation $\tri$. 
%A solution is $(Y_1,Y_2,Y_3) = (-1/2,-2,0)$.

\begin{figure}[ht]
    \centering
\begin{tikzpicture}
\begin{scope}
\draw (0,0) -- (2,0) -- (2,2) -- (0,2) --cycle; 
\draw (0,2) -- (2,0); 
\node[scale=0.9] at (0.5,0.5) {$v$};
\node[scale=0.9] at (1.5,1.5) {$w$};
\dast{(0.15,0.15)};
\dast{(1.9,0.25)};
{\color{blue}
\node[scale=0.8] at (0.7,1.5){$1$};
\node[left,scale=0.8] at (0,1){$2$};
\node[right,scale=0.8] at (2,1){$2$};
\node[above,scale=0.8] at (1,2){$3$};
\node[below,scale=0.8] at (1,0){$3$};
}
\draw[red,thick] (0,1) -- (2,1) node[below left]{$a$};
\node at (2,-0.5) {$\bD$};
\draw[<-,thick] (2.5,1) --node[midway,above]{$T_a$} ++(1,0);
\draw[->] (0.75,-0.5) --node[midway,left]{$T_{vw}$} node[blue,midway,right]{$\mu_1$} ++(0,-1);
\end{scope}
\begin{scope}[xshift=4cm]
\draw (0,0) -- (2,0) -- (2,2) -- (0,2) --cycle; 
\draw (2,2) -- (0,0);
\node[scale=0.9] at (0.5,1.5) {$w$};
\node[scale=0.9] at (1.5,0.5) {$v$};
\dast{(0.1,0.25)};
\dast{(0.25,0.1)};
{\color{blue}
\node[scale=0.8] at (1.3,1.5){$2$};
\node[left,scale=0.8] at (0,1){$1$};
\node[right,scale=0.8] at (2,1){$1$};
\node[above,scale=0.8] at (1,2){$3$};
\node[below,scale=0.8] at (1,0){$3$};
}
\node at (2,-0.5) {$T_a^{-1}(\bD)$};
\draw[<-] (0.75,-0.5) --node[midway,right]{$P_{(vw)}$} node[blue,midway,left]{$P_{(12)}$} ++(0,-1);
\end{scope}

\begin{scope}[yshift=-4cm]
\draw (0,0) -- (2,0) -- (2,2) -- (0,2) --cycle; 
\draw (2,2) -- (0,0);
\node[scale=0.9] at (0.5,1.5) {$v$};
\node[scale=0.9] at (1.5,0.5) {$w$};
\dast{(0.1,0.25)};
\dast{(1.85,0.1)};
{\color{blue}
\node[scale=0.8] at (1.3,1.5){$1$};
\node[left,scale=0.8] at (0,1){$2$};
\node[right,scale=0.8] at (2,1){$2$};
\node[above,scale=0.8] at (1,2){$3$};
\node[below,scale=0.8] at (1,0){$3$};
}
\draw[->] (2.5,1) --node[midway,below]{$A_w^{-1}$} node[blue,midway,above]{$\mathrm{id}$} ++(1,0);
\end{scope}

\begin{scope}[xshift=4cm,yshift=-4cm]
\draw (0,0) -- (2,0) -- (2,2) -- (0,2) --cycle; 
\draw (2,2) -- (0,0);
\node[scale=0.9] at (0.5,1.5) {$v$};
\node[scale=0.9] at (1.5,0.5) {$w$};
\dast{(0.1,0.25)};
\dast{(0.25,0.1)};
{\color{blue}
\node[scale=0.8] at (1.3,1.5){$1$};
\node[left,scale=0.8] at (0,1){$2$};
\node[right,scale=0.8] at (2,1){$2$};
\node[above,scale=0.8] at (1,2){$3$};
\node[below,scale=0.8] at (1,0){$3$};
}
\end{scope}

\begin{scope}[xshift=8cm]
\draw (0,0) -- (2,0) -- (2,2) -- (0,2) --cycle; 
\draw (0,2) -- (2,0); 
\node[scale=0.9] at (0.5,0.5) {$v$};
\node[scale=0.9] at (1.5,1.5) {$w$};
\dast{(0.15,0.15)};
\dast{(1.9,0.25)};
{\color{blue}
\node[scale=0.8] at (0.7,1.5){$1$};
\node[left,scale=0.8] at (0,1){$2$};
\node[right,scale=0.8] at (2,1){$2$};
\node[above,scale=0.8] at (1,2){$3$};
\node[below,scale=0.8] at (1,0){$3$};
}
\draw[red,thick] (1,0) -- (1,2) node[below right]{$b$};
\node at (2,-0.5) {$\bD$};
\draw[<-,thick] (2.5,1) --node[midway,above]{$T_b^{-1}$} ++(1,0);
\draw[->] (1,-0.5) --node[midway,below]{$T_{vw}$} node[blue,midway,above]{$\mu_1$} ++(2.5,-2.5);
\end{scope}

\begin{scope}[xshift=12cm]
\draw (0,0) -- (2,0) -- (2,2) -- (0,2) --cycle; 
\draw (2,2) -- (0,0);
\node[scale=0.9] at (0.5,1.5) {$v$};
\node[scale=0.9] at (1.5,0.5) {$w$};
\dast{(0.1,0.25)};
\dast{(1.85,0.1)};
{\color{blue}
\node[scale=0.8] at (1.3,1.5){$3$};
\node[left,scale=0.8] at (0,1){$2$};
\node[right,scale=0.8] at (2,1){$2$};
\node[above,scale=0.8] at (1,2){$1$};
\node[below,scale=0.8] at (1,0){$1$};
}
\node at (2,-0.5) {$T_b(\bD)$};
\end{scope}

\begin{scope}[xshift=12cm,yshift=-4cm]
\draw (0,0) -- (2,0) -- (2,2) -- (0,2) --cycle; 
\draw (2,2) -- (0,0);
\node[scale=0.9] at (0.5,1.5) {$v$};
\node[scale=0.9] at (1.5,0.5) {$w$};
\dast{(0.1,0.25)};
\dast{(1.85,0.1)};
{\color{blue}
\node[scale=0.8] at (1.3,1.5){$1$};
\node[left,scale=0.8] at (0,1){$2$};
\node[right,scale=0.8] at (2,1){$2$};
\node[above,scale=0.8] at (1,2){$3$};
\node[below,scale=0.8] at (1,0){$3$};
}
\node at (2,-0.5) {$T_b(\bD)$};
\draw[->] (1,2.5) --node[midway,right]{$\mathrm{id}$} node[blue,midway,left]{$P_{(13)}$} ++(0,1);
\end{scope}

\end{tikzpicture}
\caption{The representation paths for the Dehn twists $T_a$ and $T_b^{-1}$. Here the pairs of opposite sides are identified. The underlying flip sequence and the identification of edge labels are shown in \textcolor{blue}{blue}.}
\label{fig:Dehn_twist}
\end{figure}

\begin{prop}[Proof in \cref{subsubsec:proof_Dehn}]\label{prop:Dehn_twist}
\begin{enumerate}
    \item For the Dehn twist $T_a$, we have
\begin{align*}
    \bra{\mbb{m},\mb{n}} \opV_{T_a}^{\bD}(\bbp_a) \ket{\mbb{k},\mb{\ell}} = N i^{(1-N)/3} \Psi_{\bp_1}(q^{2(n-m)}) \gamma(m-\ell)^{-1}q^{2(n-\ell)k} 
\end{align*}
and 
\begin{align*}
    Z_N(T_a;\bbp_a) = \left|\sum_{u \in \bZ_N} \Psi_{\bp_1}(q^{2u}) \gamma(u)^{-1}\right|.
\end{align*}
\item For the Dehn twist $T_b^{-1}$, we have
\begin{align*}
    \bra{\mbb{m},\mb{n}} \opV_{T_b^{-1}}^{\bD}(\bbp_b) \ket{\mbb{k},\mb{\ell}} = N \Psi_{\bp_1}(q^{2(n-m)})\gamma(k-m)q^{2(n-m-\ell)(m-k)}
\end{align*}
and
\begin{align*}
    Z_N(T_b^{-1};\bbp_b) = \left| \sum_{u \in \bZ_N}\Psi_{\bp_1}(q^{2u})\right|.
\end{align*}
\end{enumerate}

\end{prop}

\paragraph{\textbf{General examples}}
The modular group $MC(\Sigma_1^1)\cong SL_2(\bZ)$ is generated by the two elements
\begin{align*}
    L :=\begin{pmatrix}
        1 & 1 \\ 0 & 1 
    \end{pmatrix}, \quad 
    R:=\begin{pmatrix}
        1 & 0 \\ 1 & 1
    \end{pmatrix}.
\end{align*}
The actions of $T_a$ and $T_b$ on the homology group $H_1(\Sigma_1^1)=\langle a,b \rangle$ are presented by $L^{-1}$ and $R$, respectively. Any mapping class $\phi \in MC(\Sigma_1^1)$ can be written as a word in $\{L^{\pm 1},R^{\pm 1}\}$, and \cref{rem:concatenation} tells us a way to obtain a sequence of elementary moves for $\phi$. 

For example, consider the pseudo-Anosov mapping class $\phi:=T_b^{-1}T_a$, which corresponds to the matrix $R^{-1}L^{-1}$. It is conjugate to $LR=\begin{pmatrix}
    2 & 1 \\ 1 & 1
\end{pmatrix}$, whose mapping torus is the figure-eight knot complement.

\begin{figure}[ht]
    \centering
\begin{tikzpicture}
%\draw (0,0) -- (2,0) -- (2,2) -- (0,2) --cycle; 
\draw[mygreen,thick] (0,2) -- (2,0); 
\draw[red,thick] (0,0) -- (2,0);
\draw[red,thick] (0,2) -- (2,2);
\draw[myblue,thick] (0,0) -- (0,2);
\draw[myblue,thick] (2,0) -- (2,2);
\node[scale=0.9] at (0.5,0.5) {$v$};
\node[scale=0.9] at (1.5,1.5) {$w$};
\dast{(0.15,0.15)};
\dast{(1.9,0.25)};
{\color{blue}
\node[scale=0.8] at (0.7,1.5){$Y_1$};
\node[left,scale=0.8] at (0,1){$Y_2$};
\node[right,scale=0.8] at (2,1){$Y_2$};
\node[above,scale=0.8] at (1,2){$Y_3$};
\node[below,scale=0.8] at (1,0){$Y_3$};
}
\node[scale=0.9] at (2,-0.5) {$\bD$};
\draw[->,thick] (2.6,1) --node[midway,above,scale=0.8]{$T_{vw}A_w^{-1}P_{(vw)}$} node[midway,below,blue,scale=0.9]{$\mu_1$} ++(1.8,0);

\begin{scope}[xshift=5cm]
\draw (0,0) -- (2,0) -- (2,2) -- (0,2) --cycle; 
\draw (2,2) -- (0,0);
\node[scale=0.9] at (0.5,1.5) {$w$};
\node[scale=0.9] at (1.5,0.5) {$v$};
\dast{(0.1,0.25)};
\dast{(0.25,0.1)};
{\color{blue}
\node[scale=0.8] at (1.1,1.4){$Y'_1$};
\node[left,scale=0.8] at (0,1){$Y'_2$};
\node[right,scale=0.8] at (2,1){$Y'_2$};
\node[above,scale=0.8] at (1,2){$Y'_3$};
\node[below,scale=0.8] at (1,0){$Y'_3$};
}
\node[scale=0.9] at (2,-0.5) {$T_a^{-1}(\bD)$};
\node[scale=1.2,rotate=90] at (1,-1) {$=$};
\end{scope}

\begin{scope}[xshift=5cm,yshift=-3.5cm]
\draw (0,0) -- (4,0) -- (4,2) -- (0,2) --cycle; 
\draw (2,0) -- (2,2);
\draw (2,2) -- (0,0);
\draw (4,2) -- (2,0);
\node[scale=0.9] at (0.5,1.5) {$w$};
\node[scale=0.9] at (1.5,0.5) {$v$};
\node[scale=0.9] at (2.5,1.5) {$w$};
\node[scale=0.9] at (3.5,0.5) {$v$};
\dast{(0.1,0.25)};
\dast{(0.25,0.1)};
\dast{(2.1,0.25)};
\dast{(2.25,0.1)};
{\color{blue}
\node[scale=0.8] at (1.1,1.4){$Y'_1$};
\node[scale=0.8] at (3.1,0.7){$Y'_1$};
%\node[left,scale=0.8] at (0,1){$1$};
\node[right,scale=0.8] at (2,1){$Y'_2$};
\node[above,scale=0.8] at (3,2){$Y'_3$};
\node[below,scale=0.8] at (1,0){$Y'_3$};
}
\draw[->,thick] (4.6,1) --node[midway,above,scale=0.8]{$T_{vw}$}node[midway,below,blue,scale=0.9]{$\mu_2$} ++(1.8,0);
\end{scope}

\begin{scope}[xshift=12cm,yshift=-3.5cm]
\draw[gray!40] (0,0) -- (4,0) -- (4,2) -- (0,2) --cycle; 
\draw[mygreen,thick] (0,0) -- (2,0);
\draw[mygreen,thick] (2,2) -- (4,2);
\draw[red,thick] (0,0) -- (4,2);
\draw[myblue,thick] (2,2) -- (0,0);
\draw[myblue,thick] (4,2) -- (2,0);
%\node[scale=0.9] at (0.5,1.5) {$w$};
\node[scale=0.9] at (1.8,0.5) {$w$};
\node[scale=0.9] at (2.2,1.5) {$v$};
%\node[scale=0.9] at (3.5,0.5) {$v$};
%\dast{(0.1,0.25)};
\dast{(0.5,0.35)};
\dast{(1.9,0.15)};
%\dast{(2.25,0.1)};
{\color{blue}
\node[scale=0.8] at (1.1,1.4){$Y''_1$};
\node[scale=0.8] at (3.1,0.7){$Y''_1$};
%\node[left,scale=0.8] at (0,1){$1$};
\node[right,scale=0.8] at (2,0.8){$Y''_2$};
\node[above,scale=0.8] at (3,2){$Y''_3$};
\node[below,scale=0.8] at (1,0){$Y''_3$};
}
\node[scale=0.9] at (2,3) {$T_a^{-1}T_b(\bD)=\phi^{-1}(\bD)$};
\end{scope}

% \begin{scope}[xshift=12cm]
% \draw[gray!40] (0,0) -- (4,0) -- (4,2) -- (0,2) --cycle; 
% \draw (0,0) -- (2,0);
% \draw (2,2) -- (4,2);
% \draw (0,0) -- (4,2);
% \draw (2,2) -- (0,0);
% \draw (4,2) -- (2,0);
% %\node[scale=0.9] at (0.5,1.5) {$w$};
% \node[scale=0.9] at (1.8,0.5) {$w$};
% \node[scale=0.9] at (2.2,1.5) {$v$};
% %\node[scale=0.9] at (3.5,0.5) {$v$};
% %\dast{(0.1,0.25)};
% \dast{(0.5,0.35)};
% \dast{(1.9,0.15)};
% %\dast{(2.25,0.1)};
% {\color{blue}
% \node[scale=0.8] at (1.1,1.4){$Y''_2$};
% \node[scale=0.8] at (3.1,0.7){$Y''_2$};
% %\node[left,scale=0.8] at (0,1){$1$};
% \node[right,scale=0.8] at (2,0.8){$Y''_3$};
% \node[above,scale=0.8] at (3,2){$Y''_1$};
% \node[below,scale=0.8] at (1,0){$Y''_1$};
% }
% \end{scope}

\end{tikzpicture}
    \caption{The representation sequence for $\phi=T_b^{-1}T_a$. In the first and last triangulations, the edges with the same color and identified by $\phi$.}
    \label{fig:flip_LR}
\end{figure}

\begin{prop}\label{prop:LR_local}
We have
\begin{align}\label{eq:LR_operator}
    \opV_\phi^{\bD}(\bbp_\phi) = \opT_{vw} \opA_w^{-1} \opP_{(vw)} \opT_{vw}
\end{align}
for the same initial dotted triangulation $\bD$ as in \cref{fig:Dehn_twist}, 
where an example of $\phi$-invariant coefficient $\bbp_\phi$ is shown in \eqref{eq:LR_coeff}. The local quantum trace is given by
\begin{align*}
    Z_N(\phi;\bbp_\phi) = N\left|\bigg(\sum_{k \in \bZ_N} \Psi_{\bp_1}(q^{2k})q^{-Mk^2} \bigg)\bigg(\sum_{\ell \in \bZ_N} \Psi_{\bp'_2}(q^{2\ell})q^{-M\ell^2} \bigg)  \right|
\end{align*}
\end{prop}
%Its matrix coefficients can be directly computed from \cref{prop:Dehn_twist}. 

\begin{proof}
We get the operator expression \eqref{eq:LR_operator} from \cref{rem:concatenation}. See also \cref{fig:flip_LR}. The variables $Y_\alpha^\tri=\pi_N(\bp_\alpha^\tri)$ are also shown. 

\paragraph{\textbf{Coefficient parameters.}}
By the cluster transformation \eqref{eq:cluster_transf_Y}, we obtain
\begin{align*}
    Y'_1 &= Y_1^{-1}, & Y''_1 &= Y_1^{-1}(1+Y_2(1+Y_1)^2)^2, \\
    Y'_2 &= Y_2(1+Y_1)^2, & Y''_2 &= Y_2^{-1}(1+Y_1)^{-2}, \\
    Y'_3 &= Y_3(1+Y_1^{-1})^{-2}, & Y''_3 &= Y_3(1+Y_1^{-1})^{-2}(1+Y_2^{-1}(1+Y_1)^{-2})^{-2}, 
\end{align*}
and the $\phi$-fixed point equation $Y_1=Y''_3$, $Y_2=Y''_1$, $Y_3=Y''_2$. A solution is given by
\begin{align*}
    Y_1 &= e^{4\pi i/3} , & Y'_1 &= e^{-4\pi i/3}, & Y''_1 &= e^{4\pi i/3}, \\
    Y_2 &= e^{4\pi i/3} , & Y'_2 &= e^{-4\pi i/3}, & Y''_2 &= e^{4\pi i/3}, \\
    Y_3 &= e^{4\pi i/3} , & Y'_3 &= e^{-4\pi i/3}, & Y''_3 &= e^{4\pi i/3}.
\end{align*}
All the possible $\phi$-invariant coefficients over this solution are:
%\footnote{If $Y=e^{i\theta}$, then $p^{\pm}=(2\cos\frac{\theta}{2})^{-1/N}e^{\pm\frac{i\theta}{2N}}$.}
\begin{align}
\begin{aligned}\label{eq:LR_coeff}
    \bp_1&= (\bbe^{4+3(a+f)}, \bbe^{2+3a}), & \bp'_1&=(\bbe^{2+3a}, \bbe^{4+3(a+f)}), & \bp''_1&=(\bbe^{1+3(4a+b+2c+f)}, \bbe^{-1+3b}), \\
    \bp_2&=(\bbe^{1+3(4a+b+2c+f)}, \bbe^{-1+3b}), & \bp'_2&=(\bbe^{-4+3c}, \bbe^{-2+3(-2a-c-f)}), & \bp''_2&= (\bbe^{-2+3(-2a-c-f)}, \bbe^{-4+3c}), \\
    \bp_3&=(\bbe^{-2+3(-2a-c-f)}, \bbe^{-4+3c}) & \bp'_3&=(\bbe^{5+3(-2c+d+f)},\bbe^{-5+3d}), & \bp''_3&=(\bbe^{4+3(a+f)}, \bbe^{2+3a}).
\end{aligned}
\end{align}
Here, $a,b,c,d,f \in \bZ$ and we write $\mathbbm{e}:=e^{2\pi i /3N}$. 
We use $\bp_1$ and $\bp'_2$ in \eqref{eq:LR_operator}.  

\paragraph{\textbf{Matrix coefficients.}}
By using \cref{prop:Dehn_twist},
\begin{align*}
    &\bra{\mbb{m},\mb{n}} \opV_{\phi}^{\bD}(\bbp_\phi) \ket{\mbb{k},\mb{\ell}} \\
    =& \frac{1}{N^2} \sum_{p,r\in \bZ_N}\bra{\mbb{m},\mb{n}} \opT_{vw} \opA_w^{-1} \opP_{(vw)} \ket{\mbb{p},\mb{r}}\bra{\mbb{p},\mb{r}} \opT_{vw} \ket{\mbb{k},\mb{\ell}} \\
    =& i^{(1-N)/3} \sum_{p,r\in \bZ_N}\Psi_{\bp_1}(q^{2(n-m)}) \gamma(m-r)^{-1}q^{2(n-r)p} \Psi_{\bp'_2}(q^{2(r-p)})\gamma(k-p)q^{2(r-p-\ell)(p-k)}.
\end{align*}
Taking the trace,
\begin{align*}
    &Z_N(\phi;\bbp_\phi) \\
    =& \left| \sum_{m,n,p,r\in \bZ_N}\Psi_{\bp_1}(q^{2(n-m)}) \gamma(m-r)^{-1}q^{2(n-r)p} \Psi_{\bp'_2}(q^{2(r-p)})\gamma(m-p)q^{2(r-p-n)(p-m)}\right| \\
    =& \left| \sum_{m,n,p,r\in \bZ_N}\Psi_{\bp_1}(q^{2(n-m)})\Psi_{\bp'_2}(q^{2(r-p)})\frac{\gamma(p)}{\gamma(r)}q^{2m(r-p)} q^{2(n-r)p} q^{2(r-p-n)(p-m)}\right| \\
    =& \left| \sum_{m,k,\textcolor{red}{p},\ell\in \bZ_N}\Psi_{\bp_1}(q^{2k})\Psi_{\bp'_2}(q^{2\ell})\gamma(\ell)^{-1} q^{-2p\ell} q^{2m\ell} q^{2(m+k-p+\ell)p} q^{2(\ell-m-k)(p-m)}\right| \\
    =& \left| \zeta_-^{(2)}\sum_{\textcolor{red}{m},k,\ell\in \bZ_N}\Psi_{\bp_1}(q^{2k})\Psi_{\bp'_2}(q^{2\ell})\gamma(\ell)^{-1}  q^{2m\ell}  q^{-2m(\ell-m-k)} \gamma(\ell)^M\right|  \\
    =& \left| \zeta_-^{(2)}\zeta_+^{(2)}\sum_{k,\ell\in \bZ_N}\Psi_{\bp_1}(q^{2k})\Psi_{\bp'_2}(q^{2\ell})q^{-M\ell^2}  q^{-Mk^2}\right|  \\
    =& N\left|\bigg(\sum_{k \in \bZ_N} \Psi_{\bp_1}(q^{2k})q^{-Mk^2} \bigg)\bigg(\sum_{\ell \in \bZ_N} \Psi_{\bp'_2}(q^{2\ell})q^{-M\ell^2} \bigg)  \right|.
\end{align*}
Here we used \cref{lem:Gauss_2} twice.
\end{proof}

%\section{Relation to the Kashaev's \texorpdfstring{$6j$}{6j}-symbol}
\section{Relation to the quantum hyperbolic field theory}\label{sec:invariant}

In this section, we discuss the relation with the \emph{reduced quantum hyperbolic operator} (reduced QH operator for short) introduced and investigated in \cite{BB-QT,BB-GD}. 
The reduced QH operator is a ``reduced'' version of quantum hyperbolic operator \cite{BB-QT}, where the latter is a part of the quantum hyperbolic field theory (QHFT) \cite{BB-GT,BB-AGT}. 

In \cite{BB-GD}, the transpose of reduced QH operator associated with the mapping cylinder $C_\phi$ of a mapping class $\phi \in MC(\Sigma)$ gives an intertwiner of the local representation $\iota'_{\bD}$ \cite[Theorem 1.1]{BB-GD}. Hence it selects a canonical intertwiner among those arising from Bai--Bonahon--Liu \cite{BBL,Mazzoli}. 
We will show that this coincides with a conjugate of our quantum intertwiner associated with $\phi$. 

The author thanks St\'ephane Baseilhac for illuminating discussion on this topic.  

\subsection{Conjugation of the projective representation}

Fix $\bbp_\Sigma \in \cF_\Sigma$. We define a projective functor $V'_\ast(\bbp_\Sigma): \mathrm{Pt}^{\tdot}_\Sigma \to \mathrm{Vect}_\bC^\op$ by setting
\begin{align}
    &\opA'_v\ket{k_v}:=\zeta_A \gamma(k_v)^{-1}\ket{\mb{k_v}}, \nonumber\\ &\opT'_{vw}:=\Psi_{\bp_\alpha^{\tri}}([\opP_w\opP_v^{-1}\opU_w]) \circ \opS'_{vw} \label{eq:flip_op_BB}
\end{align}
with $\opS'_{vw}:=N^{-1} \sum_{i,j=0}^{N-1} q^{2ij} \opU_w^i \dbra{\opU_v\opP_v}^j$ and recall $\zeta_A=(N\zeta_-)^{-1/3}=N^{-1/2}i^{(1-N)/6}$. The rotation operator $\opA'_v$ is designed so that $(\opA'_v)^3=\mathsf{1}$ and its conjugation action satisfies $\opU_v \mapsto \opP_v \mapsto [\opU_v\opP_v]^{-1} \mapsto \opU_v$.

It can be verified that this assignment indeed yields a projective functor $V'_\ast(\bbp_\Sigma): \mathrm{Pt}^{\tdot}_\Sigma \to \mathrm{Vect}_\bC^\op$, with the same central charge $\zeta=e^{\frac{\pi i}{6}(1-\frac 1N)}$ as before.

We have an embedding $\iota'_{\bD}: \X_{\by^\tri}^\tri \to \cW_{\bD}$ given by a modified version of Weyl weight assignment\footnote{Here, the correspondence takes the inverse of that in \cite{Bai}, because of the difference in conventions for the quantum cluster transformation. Compare \eqref{eq:cluster_transf_X} with \cite[Proposition 5]{Liu}, which \cite{Bai} follows.}: 
\begin{align}\label{eq:variant_Weyl}
\begin{tikzpicture}
\foreach \i in {30,150,270} \draw(0,0) -- (\i:1);
\node[above] at (30:1) {$\dbra{U_vP_v}$};
\node[above] at (150:1) {$U_v^{-1}$};
\node[below] at (270:1) {$P_v^{-1}$};
\node[below right] at (0,0) {$v$};
\dast{(0,0.2)};
\end{tikzpicture}
\end{align}
and the flip operator $\opT'_{vw}$ restricts to the quantum cluster transformation, just similarly to \cref{thm:K_FG}. 
As we will discuss in detail in the proof of \cref{thm:QHFT}, the flip operator \eqref{eq:flip_op_BB} is the same as the tetrahedron operator (the \emph{basic matrix dilogarithm}) in the Baseilhac--Benedetti's QHFT (see \cite[(11)]{BB-GD}).
% , where the correspondence is
% \begin{align*}
%     \opU_w = A_0 \otimes 1, \quad \opP_w = A_1 \otimes 1, \quad \opU_v = 1 \otimes A_0, \quad \opP_v = 1\otimes A_1.
% \end{align*}
Let us call $\iota'_{\bD}$ the \emph{Baseilhac--Benedetti realization}. This realization is directly connected to the representation theory of the quantum Borel algebra of $\mathfrak{sl}_2$ and the Kashaev's $6j$-symbol as well, as explained by Baseilhac \cite{Baseilhac} and Bai \cite{Bai}. For an explanation in the present notation, see Section 6 of the first arXiv version of this paper. 
The two realizations are explicitly related as follows:

\begin{prop}\label{prop:two_realizations}
Let $\opG_{\bD}: V_{\bD} \to V_{\bD}$ be the operator defined by $\opG_{\bD}\ket{k_v}:=\gamma(k_v)\ket{k_v}$ for each $v \in t(\tri)$. Then it acts on $\cW_\bD$ by
\begin{align*}
    \Ad_{\opG_\bD}(\opU_v) = \opU_v, \quad \Ad_{\opG_\bD}(\opP_v) = [\opU_v\opP_v]
\end{align*}
for $v \in t(\tri)$. Moreover, it intertwines the two realizations as follows:
\begin{align*}
    \opA'_v = \opG_{\bD_1} \opA_v (\opG_{\bD_2})^{-1}, \quad \opT'_{vw} = \opG_{\bD_1} \opT_{vw} (\opG_{\bD_2})^{-1}
\end{align*}
for each elementary move $\bD_1 \to \bD_2$, and $\iota'_{\bD} = \Ad_{\opG_{\bD}} (\iota_{\bD})$ for each $\bD \in \mathrm{Pt}^{\tdot}_\Sigma$. 
\end{prop}

\begin{proof}
Straightforward verification. 
\end{proof}
Therefore, the two projective representations $V_\ast(\bbp_\Sigma)$ and $V'_\ast(\bbp_\Sigma)$ are equivalent.

\begin{rem}
Yet another finite-dimensional representation of $\mathrm{Pt}^{\tdot}_\Sigma$ has been constructed by Kim \cite{Kim19} based on the Kashaev's $6j$-symbol. Kim's representation has a trivial central charge $\zeta=1$ but with another phase factor in the $\opA\opT\opA=\opA\opT\opA$ relation \cite[Propositions 5.4 and 5.8]{Kim19}.
\end{rem}

\subsection{Comparison with the reduced QH operator of mapping cylinder}

Let us fix a mapping class $\phi \in MC(\Sigma)$ and its representation sequence $\omega$ as in \eqref{eq:rep_seq}. Here we assume that $\omega$ is \emph{full}, in the sense that every interior edge in the initial triangulation $\bD$ is flipped at least once during the sequence $\omega$. This condition can be always satisfied if we deform a given representation sequence by inserting a ``round-trip'' corresponding to the relation $A_v T_{vw} A_w = A_w T_{wv} A_v$ for every edge. 

The underlying sequence $\underline{\omega}$ of flips in $\mathrm{Pt}_\Sigma$ defines a 3D triangulation $\Delta^{(3)}_{C_\phi}$ of the relative mapping cylinder\footnote{Obtained from $\Sigma \times [0,1]$ by collapsing $\{x\} \times [0,1]$ to a point for each $x \in \partial \Sigma$.} $C_\phi$,
%$M_\phi:=(\Sigma \times [0,1])/(x,0) \sim (\phi^{-1}(x),1)$,
where each flip corresponds to a tetrahedron:
\begin{align*}
\begin{tikzpicture}
\draw(0,0) -- (2,0) -- (2,2) -- (0,2) --cycle;
\draw(0,0) -- (2,2);
\draw[thick,->] (2.5,1) --++(3,0);
\begin{scope}[xshift=6cm]
\draw(0,0) -- (2,0) -- (2,2) -- (0,2) --cycle;
\draw(2,0) -- (0,2);
\end{scope}
\begin{scope}[xshift=3cm,yshift=-2cm]
\draw(0,0) -- (2,0) -- (2,2) -- (0,2) --cycle;
\draw(0,0) -- (2,2);
\filldraw[white] (1,1) circle(0.5cm);
\draw(2,0) -- (0,2);
\end{scope}
\end{tikzpicture}
\end{align*}
Let $\{K_\tau\}_{\tau=1,\dots,\ell(\underline{\omega})}$ denote the set of tetrahedra of $C_\phi$, where $\ell(\underline{\omega})$ is the length of $\underline{\omega}$. The mapping cylinder $C_\phi$ is obtained from these tetrahedra by 2-face pairings:
\begin{align*}
    C_\phi = \bigsqcup_{\tau=1}^{\ell(\underline{\omega})} K_\tau / \sim.
\end{align*}
Let $v(K_\tau), e(K_\tau), t(K_\tau)$ denote the set of vertices, edges and 2-faces (triangles) of $K_\tau$, respectively. We use the similar notation $v(C_\phi), e(C_\phi), t(C_\phi)$ for the triangulated relative mapping cylinder $C_\phi$. Let $\pi: \bigsqcup_\tau e(K_\tau) \to e(C_\phi)$ denote the projection.

For a detailed comparison with the reduced QH operator of Baseilhac--Benedetti, we need to explain the relation to their defining data, the \emph{weak branching} and the \emph{quantum shape parameters}. 

\subsubsection{Branchings}
We refer to \cite[Section 2]{BB-AGT} for several notions of branchings. 

\smallskip

\paragraph{\textbf{Pre-branching}}
Recall that a \emph{pre-branching} of a tetrahedron is a choice of co-orientation of 2-faces such that exactly two of them are incoming, and the others are outgoing. In our case, the tangent vector field $d/dt$ in the $[0,1]$-direction of $C_\phi$ naturally determines a pre-branching of each tetrahedron $K_\tau$. Obviously they are compatible with the 2-face pairings, and hence define a pre-branching of the 3D triangulation $\Delta^{(3)}_{C_\phi}$. 

\smallskip

\paragraph{\textbf{Weak branching}}
Recall that an ordering of vertices of a tetrahedron by $\{0,1,2,3\}$ is equivalent to fixing a \emph{(local) branching}, which is an orientation on the edges such that the $j$-th vertex $v_j$ has exactly $j$ incoming edges for $j=0,1,2,3$. It is a finer structure than a pre-branching, and we have a 4:1 correspondence $\{\text{branching}\} \to \{\text{pre-branching}\}$ (which is 2:1 if we fix an orientation of the tetrahedron): see \cite{BB-AGT}. 
In our case, the data of dots specify a branching of $K_\tau$ by the following rule:
\begin{align}
\begin{tikzpicture}
\draw[->-={0.125}{},-<-={0.375}{},->-={0.625}{},->-={0.875}{}](0,0) -- (2,0) -- (2,2) -- (0,2) --cycle;
\draw[-<-] (0,0) -- (2,2);
\dast{(0.15,1.85)};
\dast{(0.25,0.1)};
\draw[thick,->] (2.5,1) --++(3,0);
\begin{scope}[xshift=6cm]
\draw[->-={0.125}{},-<-={0.375}{},->-={0.625}{},->-={0.875}{}](0,0) -- (2,0) -- (2,2) -- (0,2) --cycle;
\draw[-<-] (2,0) -- (0,2);
\dast{(0.15,0.15)};
\dast{(0.25,1.9)};
\end{scope}
\begin{scope}[xshift=3cm,yshift=-2cm]
\draw[->-={0.125}{},-<-={0.375}{},->-={0.625}{},->-={0.875}{}](0,0) -- (2,0) -- (2,2) -- (0,2) --cycle;
\draw[-<-={0.2}{}] (0,0) -- (2,2);
\filldraw[white] (1,1) circle(0.3cm);
\draw[-<-] (2,0) -- (0,2);
\node[scale=0.8] at (2.2,2.2) {$0$};
\node[scale=0.8] at (-0.2,2.2) {$1$};
\node[scale=0.8] at (-0.2,-0.2) {$2$};
\node[scale=0.8] at (2.2,-0.2) {$3$};
\node[scale=0.9] at (1,2.5) {$E_1^\tau$};
\node[scale=0.9] at (-0.5,1) {$E_0^\tau$};
\draw[dashed] (1.7,1.7) --++(1,-0.5) node[right,scale=0.9]{$E_2^\tau$};
\end{scope}
\end{tikzpicture}
\end{align}
Here, the dot corresponds to the non-extremal vertex in each triangle with respect to the branching. These local branchings combine to give a \emph{weak branching} $\widetilde{b}$ of $\Delta^{(3)}_{C_\phi}$. 

\begin{rem}
In the definition of weak branching, the local branchings are not required to be matched under the 2-face pairing, while the induced pre-branchings are. This reflects the fact that the dot rotation $A_v$ breaks the branching but preserves the co-orientation of the 2-face.
\end{rem}

For each branched tetrahedron $K_\tau$, let us label the edges as $e(K_\tau)=\{E_0^\tau,E_1^\tau,E_2^\tau\}$, where 
\begin{align}\label{eq:tetra_edge_label}
    E_0^\tau := [v_1,v_2], \quad E_1^\tau:= [v_0,v_1], \quad E_2^\tau := [v_0,v_2].
\end{align}

\begin{rem}
Here, we choose the labeling \eqref{eq:tetra_edge_label} different from the one in \cite[Section 2]{BB-GD}, so that the quantum shape assignment \eqref{eq:shape_assignment} agrees with our mutation rule of coefficients. This could be also fixed by taking the opposite of our exchange matrix, which in turn corresponds to reversing the orientation of the surface. So we have several equivalent ways to match the conventions. 
\end{rem}

\subsubsection{Quantum shape parameters}
We refer to the explanation in \cite[Section 3.2]{BB-GD}. 
A \emph{quantum shape assignment} is a function
\begin{align*}
    \mathbf{w}: \prod_{\tau} e(K_\tau) \to \bC^\ast
\end{align*}
subject to the following relations:
\begin{enumerate}
    \item (Opposite relation): $\mathbf{w}(E)=\mathbf{w}(E')$ for each opposite pair $(E,E')$ of edges of $K_\tau$.
    \item (Tetrahedron relation): $\mathbf{w}(E_0^\tau)\mathbf{w}(E_1^\tau)\mathbf{w}(E_2^\tau) = -q$ for each $\tau$.
    \item (Edge relation): $\prod_{E \in \pi^{-1}(\alpha)} \mathbf{w}(E) = q^2$ for each edge $\alpha \in e(C_\phi)$.
\end{enumerate}
We call $\mathbf{w}(E)$ the \emph{quantum shape parameter} associated with an edge $E \in e(K_\tau)$. The following is essentially \cite[Proposition 4.1]{NTY}, which gives a ``reverse correspondence'' of \cite[Proposition 4.1]{BB-GD}.

\begin{prop}\label{prop:coeff_shape}
Any $\phi$-invariant coefficient $\bbp_{\underline{\omega}}$ over $\underline{\omega}$ defines a quantum shape assignment $\mathbf{w}=\mathbf{w}(\bbp_{\underline{\omega}})$. 
\end{prop}

\begin{proof}
Write the underlying sequence of flips as $\underline{\omega}=(\mu_{\alpha_1},\dots,\mu_{\alpha_{\ell}})$ with $\ell=\ell(\underline{\omega})$, where $\mu_{\alpha_j}: \tri_j \to \tri_{j+1}$. Let $K_{\tau_j}$ be the tetrahedron arising from $\mu_{\alpha_j}$. 
%Consider the branched tetrahedron $K_\tau$ corresponding to a flip $T_{vw}: \bD \to \bD'$. 
For each $j=1,\dots,\ell$, let $p_j^\pm:= (p_{\alpha_j}^{\tri_j})^{\pm}$ be the coefficient parameter assigned to the edge $\alpha_j$, and $y_j=p_j^+/p_j^-$ its exchange ratio. Then we define
\begin{align}\label{eq:shape_assignment}
    \mathbf{w}(E_0^{\tau_j}):=p_j^-, \quad \mathbf{w}(E_1^{\tau_j}):=(p_j^+)^{-1}, \quad \mathbf{w}(E_2^{\tau_j}):=-qy_j,
\end{align}
and extend by the opposite relation. Then it satisfies the tetrahedron relation. It remains to prove that it satisfies the edge relation for each edge $\beta \in e(C_\phi)$.

Suppose that $\beta$ appears after the $j_1$-th flip and disappears by the $j_2$-th flip during the sequence $\underline{\omega}$, where $1 \leq j_1 \leq j_2 \leq \ell$. The situation is illustrated as
\begin{align*}
\begin{tikzpicture}
\draw[->-={0.125}{},-<-={0.375}{},->-={0.625}{},->-={0.875}{}](0,0) -- (2,0) -- (2,2) -- (0,2) --cycle;
\draw[-<-={0.2}{}] (0,0) -- (2,2);
\filldraw[white] (1,1) circle(0.3cm);
\draw[-<-,red,thick] (2,0) -- (0,2);
\node[red,scale=0.9] at (0.5,1.8) {$\beta$};
\node[scale=0.9] at (0.4,0.8) {$\alpha_{j_1}$};
\node at (2.5,-0.5) {$K_{\tau_{j_1}}$};
\node at (1,-1) {$j=j_1$};
\draw[dotted,thick] (3,1) -- (5,1);
\begin{scope}[xshift=6cm]
\draw[->-={0.125}{},-<-={0.375}{},->-={0.625}{},->-={0.875}{}](0,0) -- (2,0) -- (2,2) -- (0,2) --cycle;
\draw[-<-={0.2}{},red,thick] (0,0) -- (2,2);
\node[red,scale=0.9,anchor=east] at (0.4,0.6) {$\alpha_{j_2}=\beta$};
\filldraw[white] (1,1) circle(0.3cm);
\draw[-<-] (2,0) -- (0,2);
\node at (1,-1) {$j=j_2$};
\node at (2.5,-0.5) {$K_{\tau_{j_2}}$};
\end{scope}
\end{tikzpicture}
\end{align*}

The assertion is $\prod_{j=j_1}^{j_2} \mathbf{W}_j = q^2$, where 
\begin{align*}
    \mathbf{W}_j:= \prod_{E \in e(K_{\tau_j}) \cap \pi^{-1}(\beta)} \mathbf{w}(E).
\end{align*}
By the assumption on $\beta$, we have $\mathbf{W}_{j_1}=\mathbf{w}(E_2^{\tau_{j_1}})= -q y_{j_1}$ and $\mathbf{W}_{j_2}=\mathbf{w}(E_2^{\tau_{j_2}})= -q y_{j_2}$. 

Let us also introduce the notation $y_\beta[j]:=y_\beta^{\tri_j}$ for $j_1 < j < j_2$. For example, $y_\beta[j_1+1] = y_{j_1}^{-1}$ and $y_\beta[j_2]=y_{j_2}$.  
Then we have the equality 
\begin{align}\label{eq:W_ratio}
    \mathbf{W}_j= \frac{y_\beta[j]}{y_\beta[j+1]}
\end{align}
for $j_1 < j < j_2$, which can be verified by the same case-by-case argument on the value of $\ve^{\tri_j}_{\alpha_j,\beta} \in \{0,\pm 1,\pm 2\}$ as in the proof of \cite[Proposition 4.1]{NTY}. Then the edge relation along $\beta$ is proved as 
\begin{align*}
    \prod_{j=j_1}^{j_2} \mathbf{W}_j = (-q y_{j_1}) \cdot \prod_{j_1 < j < j_2} \frac{y_\beta[j]}{y_\beta[j+1]} \cdot (-q y_{j_2}) = q^2.
\end{align*}
For instance, consider the case $\ve^{\tri_j}_{\alpha_j,\beta}=+1$. Then the mutation formula \eqref{eq:coeff_relation} reads $y_\beta[j+1] = y_\beta[j] \cdot p_j^+$. This case corresponds to the position $\alpha_j=E_2^{\tau_j}$ and $\beta=E_1^{\tau_j}$ (or its opposite edge), and hence $\mathbf{W}_j=(p_j^+)^{-1}$ by \eqref{eq:shape_assignment} in this case. Hence we have \eqref{eq:W_ratio}. When $\ve^{\tri_j}_{\alpha_j,\beta}=-1$, we have $y_\beta[j+1] = y_\beta[j] \cdot (p_j^-)^{-1}$ and $\beta=E_0^{\tau_j}$ (or its opposite edge), and hence $\mathbf{W}_j=p_j^-$. 
The other cases are similar. 
% \begin{align*}
% \begin{tikzpicture}
% \draw[->-={0.125}{},-<-={0.375}{},->-={0.625}{},->-={0.875}{}](0,0) -- (2,0) -- (2,2) -- (0,2) --cycle;
% \draw[-<-={0.2}{}] (0,0) -- (2,2);
% \filldraw[white] (1,1) circle(0.3cm);
% \draw[-<-,] (2,0) -- (0,2);
% \draw[red,thick] (0,0) -- (2,0);
% \node[red,scale=0.9] at (1,-0.3) {$\beta$};
% \node[scale=0.9] at (0.4,0.8) {$\alpha_{j}$};
% \end{tikzpicture}
% \end{align*}
\end{proof}

\subsubsection{Comparison of operators}
Let us fix a mapping class $\phi \in MC(\Sigma)$ and its representation sequence $\omega$ which is full. Fix a coefficient $\bbp_{\underline{\omega}}$ over the underlying flip sequence $\underline{\omega}$. 

\begin{itemize}
    \item The sequence $\omega$ gives rise to the quantum intertwiner ${\opV'}^{\bD}_\phi(\bbp_{\underline{\omega}}) \in End(V'_{\bD})$ in the same way as in \cref{def:intertwiner}. 
    \item Let $\mathcal{H}_N^\mathrm{red}(\Delta^{(3)}_{C_\phi},\widetilde{b},\mathbf{w})^\top \in End((\bC^N)^{t(\tri)})$ denote the transpose of \emph{reduced QH operator} of Baseilhac--Benedetti associated with the 3D triangulation $\Delta^{(3)}_{C_\phi}$ of the relative mapping cylinder, the weak branching $\widetilde{b}$, and the quantum shape assignment $\mathbf{w}=\mathbf{w}(\bbp_{\underline{\omega}})$ defined by $\bbp_{\underline{\omega}}$.  
See \cite{BB-GD} for a detail. 
\end{itemize}
Here we naturally identified the domain $(\bC^N)^{t(\tri)}$ of reduced QH operator with $V'_{\bD}$, as explained in \cite[below (2)]{BB-GD}. 
Recall the mapping torus $M_\phi$ from \cref{def:mapping_torus}. 
%Let $M_\phi$ denote the relative mapping torus of $\phi$ obtained by identifying the two sheets $t=0$ and $t=1$ of $C_\phi$. 

\begin{thm}\label{thm:QHFT}
We have $\mathcal{H}_N^\mathrm{red}(\Delta^{(3)}_{C_\phi},\widetilde{b},\mathbf{w})^\top = {\opV'}^{\bD}_\phi(\bbp_{\underline{\omega}})$. In particular, the absolute value of the reduced QHI of the mapping torus $M_\phi$ coincides with the quantum trace $Z_N(\phi;\bbp_{\underline{\omega}})$. 
\end{thm}

\begin{proof}
The second statement follows from the first statement and \cref{prop:two_realizations}. 

The flip operator \eqref{eq:flip_op_BB} is the same as the tetrahedron operator (the \emph{basic matrix dilogarithm}) in the Baseilhac--Benedetti's QHFT \cite[(11)]{BB-GD},
% \begin{align*}
%     \mathcal{L}_N^\top(K_\tau,b,\mathbf{w}) = \Psi_{\mathbf{w}}(-A_0A_1 \otimes A_1^{-1}) \circ \frac 1 N \sum_{i,j \in \bZ_N} q^{-2ij-}
% \end{align*}
where the correspondence is
\begin{align*}
    \opU_w = A_0 \otimes 1, \quad \opP_w = A_1 \otimes 1, \quad \opU_v = 1 \otimes A_0, \quad \opP_v = 1\otimes A_1, 
\end{align*}
and \eqref{eq:shape_assignment}. 
Indeed, we have
\begin{align*}
    \opS'_{vw} = N^{-1} \sum_{i,j \in \bZ_N} q^{2ij} \opU_w^i [\opU_v\opP_v]^j = N^{-1} \sum_{i,j \in \bZ_N} q^{2ij} A_0^i (qA_1A_0)^j = \Upsilon,
\end{align*}
where we replace $j \mapsto -j$ in the last equality. Moreover, the relation $\mathbf{w}_2^N = 1 - \mathbf{w}_0^N$ in \cite[Lemma 3.7]{BB-GD} is equivalent to $(p^+)^N + (p^-)^N=1$, and the difference equation \cite[(12)]{BB-GD} agrees with our first formula in \cref{lem:q-diff_eq} by identifying $U=-qX$. With the notice that $-A_0A_1 \otimes A_1^{-1}= -q[\opU_w\opP_w\opP_v^{-1}]$, the dilogarithm parts also agree. 

Observe that the way of identification, the map $\iota_\phi$ in our formulation, is the same as the way explained right after \cite[(2)]{BB-GD}. Therefore we get $\mathcal{H}_N^\mathrm{red}(\Delta^{(3)}_{C_\phi},\widetilde{b},\mathbf{w})^\top = {\opV'}^{\bD}_\phi(\bbp_{\underline{\omega}})$ as an endomorphism on $V_{\bD}\cong (\bC^N)^{|t(\tri)|}$. 
\end{proof}

\section{Invariants of mapping classes II: Irreducible quantum trace}\label{sec:trace_irred}
In this section, we introduce our second invariant of mapping classes arising from the irreducible cyclic \Teich\ theory constructed in \cref{thm:functor_irrep}, which we call the \emph{irreducible quantum trace}.
We also describe a conjectural relation to the Bonahon--Liu intertwiner \cite{BL}. 

\subsection{Irreducible quantum trace}
Let $V_\ast(\bbp_\Sigma;L,\boldsymbol{\lambda}): \mathrm{Pt}^{\tdot}_\Sigma \to \mathrm{Vect}_\bC^\op$ denote the irreducible cyclic quantum \Teich\ theory  associated with a global coefficient $\bbp_\Sigma \in \cF_\Sigma$, a Lagrangian $L \subset H_1(\Sigma^\mathrm{cl};\bZ)$ and a character $\boldsymbol{\lambda}:\widehat{L} \to \bZ_N$ (\cref{thm:functor_irrep}). 

The mapping class group $MC(\Sigma)$ naturally acts on $H_\Sigma$. 
Recall that for a mapping class $\phi \in MC(\Sigma)$, we have an associated local intertwiner $\opV_\phi^{\bD}(\bbp_{\underline{\omega}}) : V_\bD \to V_\bD$ if $\bbp_{\underline{\omega}}$ is a $\phi$-invariant coefficient over a flip sequence $\underline{\omega}$ representing $\phi$. 

\begin{thm}\label{thm:polarized_intertwiner}
Let $\phi \in MC(\Sigma)$ be a mapping class, and $\bbp_{\underline{\omega}}$ a $\phi$-invariant coefficient over a flip sequence representing $\phi$. 
Given a Lagrangian sub-lattice $L \subset H_1(\Sigma^{\mathrm{cl}};\bZ)$ and an additive character $\boldsymbol{\lambda}:\widehat{L} \to \bZ_N$, the local intertwiner $\opV_\phi^{\bD}(\bbp_{\underline{\omega}})$ induces an intertwiner of $\X_{\by^\tri}^{\tri}$-modules
\begin{align*}
    \opV_{\phi}^{\bD}(\bbp_{\underline{\omega}};L,\boldsymbol{\lambda}): V_{\bD}(L,\boldsymbol{\lambda}) \xrightarrow{\sim} V_{\bD}(\phi(L),\phi_\ast\boldsymbol{\lambda}),
\end{align*}
where $\phi_\ast\boldsymbol{\lambda}: \phi(\widehat{L}) \to \bZ_N$ is given by $\phi_\ast\boldsymbol{\lambda}(\phi([c]))=\boldsymbol{\lambda}([c])$ for $[c] \in \widehat{L}$.
\end{thm}

\begin{proof}
By combining \cref{lem:homology_naturality,lem:homology_MCG}, we get the commutative diagram
\begin{equation*}
\begin{tikzcd}
    \cW_{\bD} \ar[r,"\Ad(\opV_\phi^{\bD}(\bbp_{\underline{\omega}}))"] & \cW_\bD \\
    H_\Sigma \ar[u,"\oph^{\bD}"] \ar[r,"\phi"'] & H_\Sigma \ar[u,"\oph^{\bD}"'].
\end{tikzcd}
\end{equation*}
In particular, we have $\opV_\phi^{\bD}(\bbp_{\underline{\omega}}) \oph^{\bD}_{[c]} = \oph^{\bD}_{\phi([c])} \opV_\phi^{\bD}(\bbp_{\underline{\omega}})$ for any $[c] \in \widehat{L}$. Therefore, the intertwiner $\opV_\phi^{\bD}(\bbp_{\underline{\omega}})$ maps each simultaneous eigenspace for the operators in $\oph^{\bD}(\widehat{L})$ to that for the operators in $\oph^{\bD}(\phi(\widehat{L}))$, preserving the weights. The assertion is proved. 
\end{proof}

\subsubsection*{Change of polarization}
In view of \cref{thm:polarized_intertwiner}, we need an operator that adjusts the polarization given by $\phi(L)$ to the original one.
As a slight variant of $\gamma$-operators (\cref{def:gamma_op}), we introduce the \emph{transvection operators} 
% $\widetilde{\gamma}(\oph_a^{\bD})$ for $a \in H_1(\Sigma^{\mathrm{cl}};\bZ)$ by the condition
% \begin{align*}
%     \widetilde{\gamma}(\oph_a^{\bD}) \ket{\underline{k}_a} = (-1)^k \gamma(k)^M \ket{\underline{k}_a} 
% \end{align*}
% for any $q^{2k}$-eigenvector $\ket{\underline{k}_a} $ of $\oph_a^{\bD}$.
\begin{align}\label{eq:transvection}
    \gamma(\oph_a^{\bD})^{\pm 1/2} :=\frac 1 N \sum_{i,j \in \bZ_N}q^{\mp 2ij} (\oph_{a}^{\bD})^{\frac{i+j}{2}}
\end{align}
for $a \in H_\Sigma$. Here $(\oph_{a}^{\bD})^{1/2}$ is the square-root operator in the sense of \cref{def:square-roots}. 
%In the same way as \cref{lem:gamma_op}, one can verify the following: 
\begin{lem}
We have
\begin{itemize}
    \item If $\ket{\lambda}_a$ is a $q^{2\lambda}$-eigenvector of $\oph_a^{\bD}$, then $\gamma(\oph_a^{\bD})^{\pm 1/2} \ket{\lambda}_a = \gamma(\lambda)^{\pm M} \ket{\lambda}_a$. 
    \item $\gamma(\oph_a^{\bD})^{\pm 1/2} \oph_b^{\bD} = \oph_{b\pm \bi(a,b)a}^{\bD}\gamma(\oph_a^{\bD})^{\pm 1/2}$ for any $b \in H_\Sigma$.
\end{itemize}
\end{lem}

\begin{proof}
We drop the superscript $\bD$.

(1): The proof is similar to that of \cref{lem:gamma_op} (1):
\begin{align*}
    \gamma(\oph_a)^{1/2} \ket{\lambda}_a &= \frac 1 N \sum_{i,j \in \bZ_N}q^{- 2ij} \oph_{a}^{\frac{i+j}{2}} \ket{\lambda}_a \\
    &= \bigg(\frac 1 N \sum_{i,j \in \bZ_N}q^{- 2ij+2M\lambda(i+j)} \bigg)\ket{\lambda}_a & \mbox{(recall $q^{1/2}=q^M$)}\\
    &= \bigg( \sum_{i \in \bZ_N}\delta_{i,M\lambda} q^{2M\lambda i} \bigg)\ket{\lambda}_a \\
    &= \gamma(\lambda)^M \ket{\lambda}_a.
\end{align*}
The case with the other sign is similar. 

(2): By \cref{lem:homology_intersection}, we obtain
\begin{align*}
    \gamma(\oph_a)^{1/2} \oph_{b} &= \oph_{b}\sum_{i,j \in \bZ_N}q^{- 2ij} q^{2(i+j)\bi(a,b)} \oph_{a}^{\frac{i+j}{2}} \\
    &= \oph_{b}\sum_{i,j \in \bZ_N}q^{- 2(i- \bi(a,b))(j- \bi(a,b))+ 2\bi(a,b)^2} \oph_{a}^{\frac{i+j}{2}} \\
    &= q^{2\bi(a,b)^2} \oph_{b}\oph_a^{\bi(a,b)}\gamma(\oph_a)^{1/2} & \mbox{(shift $i$, $j$ by $\bi(a,b)$)} \\
    &= \oph_{b+\bi(a,b)a} \gamma(\oph_a)^{1/2}.
\end{align*} 
The case with the other sign is similar. 
\end{proof}

The image of the homology action $MC(\Sigma) \to \mathrm{Aut}(H_1(\Sigma^{\mathrm{cl}};\bZ))$ is isomorphic to the symplectic group $Sp(2g,\bZ)$. It is well-known that $Sp(2g,\bZ)$ is generated by transvections $b \mapsto b+\bi(a,b)a$ for $a \in H_1(\Sigma^{\mathrm{cl}};\bZ)$. The equation above tells us that the conjugation by $\gamma(\oph_a^{\bD})^{1/2}$ implements the transvection. 
Hence, for any Lagrangian $L$ and a mapping class $\phi \in MC(\Sigma)$, we can find an operator $F_\phi^{\bD} \in \cW_{\bD}$ satisfying
\begin{align*}
    \opF_\phi^{\bD}(V_{\bD}(\phi(L),\phi_\ast\boldsymbol{\lambda})) = V_{\bD}(L,\boldsymbol{\lambda})
\end{align*}
as a composite of the transvection operators $\gamma(\oph_a^{\bD})^{1/2}$. 

\begin{lem}\label{lem:transvection_normalized}
$|\det \opF_\phi^{\bD}|=1$. 
\end{lem}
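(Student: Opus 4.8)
The plan is to reduce to a single transvection operator. Since $\det$ is multiplicative and, by the construction recalled just above the statement, $\opF_\phi^{\bD}$ is a finite composite of transvection operators of the form $\gamma(\oph_a^{\bD})^{\pm 1/2}$ with $a \in H_\Sigma$, it suffices to show $|\det \gamma(\oph_a^{\bD})^{\pm 1/2}| = 1$ for each such $a$.

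For this I would first observe that $\oph_a^{\bD}$ is a cyclic operator: it is a Weyl-ordered monomial in the operators $\opU_v,\opP_v$, each of which satisfies $\opU_v^N = \opP_v^N = \mathsf 1$, so $(\oph_a^{\bD})^N = \mathsf 1$ by \cref{lem:Weyl}. Hence $\oph_a^{\bD}$ is diagonalizable with spectrum contained in $\{q^{2\lambda}\}_{\lambda \in \bZ_N}$. On the $q^{2\lambda}$-eigenspace, the operator $\gamma(\oph_a^{\bD})^{\pm 1/2}$ defined in \eqref{eq:transvection} acts as the scalar $\gamma(\lambda)^{\pm M} = q^{\pm M\lambda^2}$ — this is the first bullet point following \eqref{eq:transvection}, established exactly as in \cref{lem:gamma_op}(1). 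Since $|q| = 1$, this scalar has modulus $1$. Consequently $\gamma(\oph_a^{\bD})^{\pm 1/2}$ is diagonalizable with all eigenvalues of modulus $1$, so its determinant — the product of the eigenvalues with multiplicity — has modulus $1$. Multiplying over the factors of $\opF_\phi^{\bD}$ yields $|\det \opF_\phi^{\bD}| = 1$.

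The only delicate point, which I regard as the (mild) main obstacle, is to be sure of the precise shape of $\opF_\phi^{\bD}$: one must know that it is literally a product of operators $\gamma(\oph_a^{\bD})^{\pm 1/2}$, with no extra scalar normalization and no auxiliary permutation operators attached. This is exactly what the construction preceding the lemma provides, using the generation of $Sp(2g,\bZ)$ by transvections $b \mapsto b + \bi(a,b)a$ together with the conjugation formula $\gamma(\oph_a^{\bD})^{\pm 1/2}\oph_b^{\bD} = \oph_{b\pm\bi(a,b)a}^{\bD}\gamma(\oph_a^{\bD})^{\pm 1/2}$. An alternative, more hands-on route would compute $\det\gamma(\oph_a^{\bD})^{\pm 1/2}$ directly as a product of Gauss-type sums in the manner of \cref{lem:gamma_Fourier,lem:gamma_prod}; but the eigenvalue argument above is shorter and conceptually cleaner, so I would adopt it.
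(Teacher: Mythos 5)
Your proof is correct and follows essentially the same route as the paper: reduce via multiplicativity of the determinant to a single transvection operator and use that its eigenvalues are $\gamma(\lambda)^{\pm M}$, all of modulus $1$. The only (harmless) difference is that the paper evaluates $\det\gamma(\oph_a^{\bD})$ explicitly as $\prod_{\lambda\in\bZ_N}\gamma(\lambda)^M=\zeta_\inv^{-NM}$ via \cref{lem:gamma_prod}, whereas you stop at the modulus-one observation, which even spares you any assumption on eigenvalue multiplicities.
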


\begin{proof}
It suffices to prove that $|\det \gamma(\oph_a)^{1/2}|=1$ for any $a\in H_\Sigma$. This follows from
\begin{align*}
    \det \gamma(\oph_a)^{1/2} = \prod_{\lambda \in \bZ_N} \gamma(\lambda)^M = \zeta_\inv^{-NM}
\end{align*}
and the fact that $|\zeta_\inv|=1$, where we used \cref{lem:gamma_prod}.
\end{proof}

\begin{dfn}\label{def:reduced_intertwiner}
The composite operator 
\begin{align*}
    \overline{\opV}_\phi^{\bD}(\bbp_{\underline{\omega}};L,\lambda):=\opF_\phi^{\bD} \circ V_\phi^{\bD}(\bbp_{\underline{\omega}};L,\boldsymbol{\lambda}): V_{\bD}(L,\boldsymbol{\lambda}) \xrightarrow{\sim} V_{\bD}(L,\boldsymbol{\lambda})
\end{align*}
is referred to as the \emph{irreducible intertwiner} of the mapping class $\phi$. The absolute value of the trace 
\begin{align*}
\overline{Z}_N(\phi;\bbp_{\underline{\omega}};L,\boldsymbol{\lambda}) := |\mathrm{Tr} (\overline{V}_\phi^{\bD}(\bbp_{\underline{\omega}};L,\boldsymbol{\lambda}))|
\end{align*}
is called the \emph{irreducible quantum trace}. 
\end{dfn}
Observe that $\overline{\opV}_\phi^{\bD}(\bbp_{\underline{\omega}};L,\lambda)$ is still an intertwiner for $\X_{\by^{\tri}}^{\tri}$, since the homology operators (and hence $\opF_\phi^{\bD}$) commute with the cluster Poisson variables. 
%We will discuss its relation to the Bonahon--Liu intertwiner \cite{BL} in \cref{subsec:BL}.

\begin{rem}
Since the stabilizer subgroup of a Lagrangian $L$ is a non-trivial parabolic subgroup, the choice of $F_\phi^{\bD}$ is not unique. To be more precise, we need to fix a path in the \emph{symplectic Ptolemy groupoid} as in \cite{Kim24}. In practice, we can find a simplest one. We postpone the discussion of the dependence of the irreducible intertwiner on this choice for a future work.
\end{rem}

\subsection{Relation with the Bonahon--Liu intertwiner}\label{subsec:BL}
Here we discuss the relation with the Bonahon--Liu intertwiner \cite{BL}. The latter is the same as the Bonahon--Wong intertwiner \cite{BW15,BWY-1,BWY-2}), due to \cite[Theorem 16]{BWY-1}. The author thanks St\'ephane Baseilhac for discussion. %\cref{conj:BW_intertwiner}, \cref{lem:character_change,prop:intertwiner_conjugate}. 

For any irreducible representation $V$ of $\X_{\by^\tri}^\tri$, an $\X_{\by^\tri}^\tri$-intertwiner on $V$ is unique up to complex scalars by Schur's lemma. Bonahon--Liu defined the intertwiner associated with a mapping class by normalizing the absolute value of its determinant to be $1$. Since our irreducible intertwiner is an $\X_{\by^\tri}^\tri$-intertwiner, it suffices to prove that it satisfies the determinant normalization condition.

\begin{conj}\label{conj:BW_intertwiner}
Each irreducible intertwiner satisfies $|\det \overline{\opV}_\phi^{\bD}(\bbp_{\underline{\omega}};L,\boldsymbol{\lambda})|=1$. In particular, it coincides with the Bonahon--Liu intertwiner of $\phi$ associated with the irreducible representation $V_{\bD}(L,\boldsymbol{\lambda})$. 
\end{conj}
%By \cref{lem:transvection_normalized}, we only need to prove $|\det V_\phi^{\bD}(\bbp_{\underline{\omega}};L,\boldsymbol{\lambda})|=1$. 

Below we give an observation that supports the conjecture in general, and proves it in the case of once-punctured closed surfaces. 

Let $L_0 \subset \widehat{L}$ be the sub-lattice generated by the classes $[c_p]$ for punctures $p$ and the classes $[c_b]$ for boundary components $b$, so that 
\begin{align*}
    0 \to L_0 \to \widehat{L} \to L \to 0
\end{align*}
is exact. 

\begin{lem}\label{lem:character_change}
For a homology class $[c] \in H_\Sigma$, the action of the square-root homology operator $g_{[c]}^{\bD}:=(\oph_{[c]}^{\bD})^{1/2}$ on $V_{\bD}$ maps $V_{\bD}(L;\boldsymbol{\lambda})$ to $V_{\bD}(L;\boldsymbol{\lambda'})$, where 
\begin{align*}
    \boldsymbol{\lambda'}([c'])=\boldsymbol{\lambda}([c']) + \bi([c],[c'])
\end{align*}
for $[c'] \in \widehat{L}$. In particular, $\boldsymbol{\lambda'}|_{L_0}=\boldsymbol{\lambda}|_{L_0}$. Moreover, any two characters $\boldsymbol{\lambda},\boldsymbol{\lambda'}$ on $\widehat{L}$ such that $\boldsymbol{\lambda'}|_{L_0}=\boldsymbol{\lambda}|_{L_0}$ are related in this way. 
\end{lem}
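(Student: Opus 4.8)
The plan is to read off how the operator $g_{[c]}^{\bD}=(\oph_{[c]}^{\bD})^{M}$, with $M=(N+1)/2$, conjugates the commuting family $\{\oph_{[c']}^{\bD}\}_{[c']\in\widehat L}$ whose joint eigenspaces cut out the summands $V_{\bD}(L,\boldsymbol\lambda)$. First observe that $g_{[c]}^{\bD}$ makes sense and is invertible: each $\oph_{[c]}^{\bD}$ is a Weyl-ordered monomial in the $U_v,P_v$, hence a cyclic operator by \cref{lem:Weyl}, so \cref{def:square-roots} applies, and moreover $\oph_{-[c]}^{\bD}=(\oph_{[c]}^{\bD})^{-1}$. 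For $[c']\in\widehat L$ and $v\in V_{\bD}(L,\boldsymbol\lambda)$, applying the commutation relation of \cref{lem:homology_intersection} $M$ times to move $\oph_{[c']}^{\bD}$ past the $M$ factors of $\oph_{[c]}^{\bD}$ gives
\begin{align*}
    \oph_{[c']}^{\bD}\,g_{[c]}^{\bD}v = q^{4M\,\boldsymbol{i}([c'],[c])}\,g_{[c]}^{\bD}\,\oph_{[c']}^{\bD}v = q^{4M\,\boldsymbol{i}([c'],[c])}\,q^{2\boldsymbol\lambda([c'])}\,g_{[c]}^{\bD}v.
\end{align*}
Writing $\boldsymbol i([c'],[c])=k/2$ with $k\in\bZ_N$ and using $2M\equiv1\pmod N$, we get $4M\,\boldsymbol i([c'],[c])=(N+1)k\equiv k=2\boldsymbol i([c'],[c])\pmod N$, so the scalar is $q^{2(\boldsymbol\lambda([c'])+\boldsymbol i([c'],[c]))}$. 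After the substitution $[c]\mapsto-[c]$ if needed to match the orientation convention built into $\oph^{\bD}$ and the pairing $\boldsymbol i$ of \cite{KQ22}, this shows $g_{[c]}^{\bD}v\in V_{\bD}(L,\boldsymbol\lambda')$ with $\boldsymbol\lambda'([c'])=\boldsymbol\lambda([c'])+\boldsymbol i([c],[c'])$; additivity of $\boldsymbol\lambda'$ on $\widehat L$ is automatic since $\boldsymbol i([c],-)$ is additive and annihilates the relation $\sum_p[c_p]+\sum_b[c_b]=0$.

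Next, $\boldsymbol\lambda'|_{L_0}=\boldsymbol\lambda|_{L_0}$ because the generators $[c_p]$ and $[c_b]$ of $L_0$ are represented by boundary-parallel curves, so they have vanishing algebraic intersection with every class of $H_\Sigma$; equivalently $\oph_{[c_p]}^{\bD}$ and $\oph_{[c_b]}^{\bD}$ are central, as already used in \cref{lem:Casimir}. For the converse, if $\boldsymbol\lambda'|_{L_0}=\boldsymbol\lambda|_{L_0}$ then $\boldsymbol\lambda'-\boldsymbol\lambda$ is an additive character of $\widehat L$ vanishing on $L_0$, hence descends to a homomorphism $\mu\colon L\to\bZ_N$ via $\widehat L/L_0\cong L$. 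Since the intersection form on $H_1(\Sigma^{\mathrm{cl}};\bZ)$ is unimodular, there is $[c]\in H_1(\Sigma^{\mathrm{cl}};\bZ)\subset H_\Sigma$ with $\boldsymbol i([c],a)=\mu(\bar a)$ for all $a\in\widehat L$; here the right-hand side depends only on the image $\bar a\in L$, which is consistent because $\boldsymbol i([c],-)$ already vanishes on the boundary-parallel generators of $L_0$. Since $g_{[c]}^{\bD}$ is invertible and permutes the summands $V_{\bD}(L,\boldsymbol\mu)$ according to $\boldsymbol\mu\mapsto\boldsymbol\mu+\boldsymbol i([c],-)$, it restricts to an isomorphism $V_{\bD}(L,\boldsymbol\lambda)\xrightarrow{\sim}V_{\bD}(L,\boldsymbol\lambda')$, which is the asserted relation.

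I expect the only genuinely delicate point to be the bookkeeping around the half-integrality of $\boldsymbol i$: one must check carefully that the $q$-factor $q^{4\boldsymbol i}$ of \cref{lem:homology_intersection} is well defined, that raising to the power $M$ reduces it to $q^{2\boldsymbol i}$ via $2M\equiv1\pmod N$, and that the resulting shift of characters comes out with the sign $+\boldsymbol i([c],[c'])$ rather than $-$; this last normalization is resolved, as above, by the harmless replacement $[c]\mapsto-[c]$ together with antisymmetry of $\boldsymbol i$. Everything else is linear algebra in the homology lattice.
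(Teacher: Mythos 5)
Your proposal is correct and follows essentially the same route as the paper: the commutation relation of \cref{lem:homology_intersection} (raised to the power $M$, which is exactly how the paper obtains $g_{[c]}\oph_{[c']}=q^{2\bi([c],[c'])}\oph_{[c']}g_{[c]}$) shows that $g_{[c]}$ shifts the eigenvalue of each $\oph_{[c']}$ by $q^{\pm 2\bi}$, and the converse is realized by a class dual to a basis of $L$ — the paper picks an explicit dual basis $b_1,\dots,b_g$ with $\bi(a_i,b_j)=\delta_{ij}$, which is the same unimodularity argument you give. Your explicit bookkeeping with $2M\equiv 1\pmod N$ and the sign/antisymmetry remark are harmless refinements of the paper's one-line versions of the same steps.
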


\begin{proof}
Omit the superscript $\bD$. 
From \cref{lem:homology_intersection}, we get $g_{[c]}\oph_{[c']}=q^{2\bi([c],[c'])}\oph_{[c']}g_{[c]}$ for any $[c'] \in \widehat{L}$. Hence, the action of $g_{[c]}$ shifts the eigenvalue $\lambda$ of $\oph_{[c']}$ to $\lambda q^{2\bi([c],[c'])}$. This proves the first assertion.

For the second assertion, let $a_1,\dots,a_g$ be a generator of $L$. Then we can find $b_1,\dots,b_g \in H_1(\Sigma^{\mathrm{cl}};\bZ)$ such that $\bi(a_i,b_j)=\delta_{ij}$. Therefore, any two characters $\boldsymbol{\lambda},\boldsymbol{\lambda'}$ on $\widehat{L}$ such that $\boldsymbol{\lambda'}|_{L_0}=\boldsymbol{\lambda}|_{L_0}$ are related by the action of $g_{[c]}$ with a suitable linear combination $[c]$ of $b_1,\dots,b_g$.
\end{proof}

\begin{prop}\label{prop:intertwiner_conjugate}
For any two characters $\boldsymbol{\lambda},\boldsymbol{\lambda'}$ on $\widehat{L}$ such that $\boldsymbol{\lambda'}|_{L_0}=\boldsymbol{\lambda}|_{L_0}$, the operators $\overline{\opV}_\phi^{\bD}(\bbp_{\underline{\omega}};L,\boldsymbol{\lambda})$ and $\overline{\opV}_\phi^{\bD}(\bbp_{\underline{\omega}};L,\boldsymbol{\lambda'})$ are conjugate to each other. In particular, they have the same determinant.
% we have 
% \begin{align*}
%     |\det\overline{\opV}_\phi^{\bD}(\bbp_{\underline{\omega}};L,\boldsymbol{\lambda})|=|\det\overline{\opV}_\phi^{\bD}(\bbp_{\underline{\omega}};L,\boldsymbol{\lambda'})|.
% \end{align*}
\end{prop}

\begin{proof}
Omit the superscript $\bD$. 
Let $[c] \in H_\Sigma$ be a homology class that transforms $\boldsymbol{\lambda}$ to $\boldsymbol{\lambda'}$ as in \cref{lem:character_change}. By the proof of \cref{thm:polarized_intertwiner}, $g_{[c]}$ commutes with $\opV_\phi(\bbp_{\underline{\omega}})$, and thus $\Ad_{g_{[c]}}(\opV_\phi(\bbp_{\underline{\omega}};L,\boldsymbol{\lambda}))=\opV_\phi(\bbp_{\underline{\omega}};L,\boldsymbol{\lambda'})$. 
Moreover, $g_{[c]}$ also commutes with $F_\phi^{\bD}$, since the both involve only the homology operators associated to $b_1,\dots,b_g$. 
Therefore, the two operators are conjugate to each other. %Combining this with \cref{lem:transvection_normalized}, we get the desired assertion. 
\end{proof}

\begin{cor}\label{cor:conj_once_punc}
For a once-punctured closed surface $\Sigma$, \cref{conj:BW_intertwiner} holds true.
\end{cor}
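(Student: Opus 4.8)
The plan is to leverage the two normalization results already at hand — $|\det\opV_\phi^{\bD}(\bbp_\Sigma)|=1$ from \cref{prop:normalization_total} and $|\det\opF_\phi^{\bD}|=1$ from \cref{lem:transvection_normalized} — together with \cref{prop:intertwiner_conjugate}, exploiting the fact that for a once-punctured closed surface the sub-lattice $L_0$ is trivial.

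First I would note that for a once-punctured closed surface $\Sigma$ of genus $g$ there is exactly one puncture and no boundary, so the relation $\sum_p[c_p]+\sum_b[c_b]=0$ in $H_\Sigma$ reads $[c_p]=0$; hence $L_0=\langle[c_p]\rangle=0$ and $\widehat{L}=L$ for every Lagrangian $L\subset H_1(\Sigma^{\mathrm{cl}};\bZ)$. In particular the hypothesis $\boldsymbol{\lambda'}|_{L_0}=\boldsymbol{\lambda}|_{L_0}$ of \cref{prop:intertwiner_conjugate} is automatic, so for a fixed $L$ the number $|\det\overline{\opV}_\phi^{\bD}(\bbp_\Sigma;L,\boldsymbol{\lambda})|$ does not depend on the character $\boldsymbol{\lambda}$; write $D_L\ge 0$ for this common value.

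Next I would view $\opF_\phi^{\bD}\circ\opV_\phi^{\bD}(\bbp_\Sigma)$ as a single operator on $V_{\bD}$. By \cref{def:reduced_intertwiner} it sends each eigenspace $V_{\bD}(L,\boldsymbol{\lambda})$ to itself, via the factorization $V_{\bD}(L,\boldsymbol{\lambda})\to V_{\bD}(\phi(L),\phi_*\boldsymbol{\lambda})\to V_{\bD}(L,\boldsymbol{\lambda})$, so it is block-diagonal with respect to $V_{\bD}=\bigoplus_{\boldsymbol{\lambda}}V_{\bD}(L,\boldsymbol{\lambda})$, with $\boldsymbol{\lambda}$-block $\overline{\opV}_\phi^{\bD}(\bbp_\Sigma;L,\boldsymbol{\lambda})$. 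Since $\widehat{L}=L$ carries $N^g$ characters valued in $\bZ_N$ — consistent with $\dim V_{\bD}=N^{4g-2}$ and the dimension $N^{3g-2}$ of each block given by \eqref{eq:dim_irrep} — we obtain $|\det(\opF_\phi^{\bD}\circ\opV_\phi^{\bD}(\bbp_\Sigma))|=\prod_{\boldsymbol{\lambda}}|\det\overline{\opV}_\phi^{\bD}(\bbp_\Sigma;L,\boldsymbol{\lambda})|=D_L^{\,N^g}$. On the other hand this determinant equals $|\det\opF_\phi^{\bD}|\cdot|\det\opV_\phi^{\bD}(\bbp_\Sigma)|=1$ by the two cited results. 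Hence $D_L^{N^g}=1$ with $D_L$ a non-negative real, forcing $D_L=1$; as $L$ and $\boldsymbol{\lambda}$ were arbitrary, $|\det\overline{\opV}_\phi^{\bD}(\bbp_\Sigma;L,\boldsymbol{\lambda})|=1$ always, and the identification with the Bonahon--Liu intertwiner follows as recorded in \cref{conj:BW_intertwiner}.

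The point that needs care is the block-diagonality assertion: one must check that the globally defined operator $\opF_\phi^{\bD}$ carries the entire $\phi(L)$-eigenspace decomposition of $V_{\bD}$ onto the $L$-eigenspace decomposition, so that the $\boldsymbol{\lambda}$-block of $\opF_\phi^{\bD}\circ\opV_\phi^{\bD}(\bbp_\Sigma)$ is genuinely the restriction of one operator. This is built into the construction of $\opF_\phi^{\bD}$ as a composite of transvection operators $\gamma(\oph_a^{\bD})^{1/2}$, whose conjugation realizes a symplectic automorphism $\psi$ of $H_\Sigma$ with $\psi(\phi(L))=L$: such a $\psi$ sends the simultaneous eigenspaces of $\{\oph^{\bD}_{[c]}\}_{[c]\in\phi(\widehat L)}$ bijectively onto those of $\{\oph^{\bD}_{[c]}\}_{[c]\in\widehat L}$, and \cref{def:reduced_intertwiner} fixes the bookkeeping so that $\opF_\phi^{\bD}\bigl(V_{\bD}(\phi(L),\phi_*\boldsymbol{\lambda})\bigr)=V_{\bD}(L,\boldsymbol{\lambda})$ for all $\boldsymbol{\lambda}$ simultaneously. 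The remaining steps are elementary.
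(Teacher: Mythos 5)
Your proof is correct and takes essentially the same route as the paper: the once-punctured geometry makes the hypothesis of \cref{prop:intertwiner_conjugate} vacuous (you note $[c_p]=0$, hence $L_0=0$ and $\widehat{L}=L$, while the paper equivalently observes that $\oph_{[c_p]}^{\bD}$ acts by a fixed scalar so the character on $L_0$ is pinned), and then the block-diagonal determinant count combined with \cref{prop:normalization_total} and \cref{lem:transvection_normalized} forces each block to have determinant of modulus one. You merely spell out the product-over-blocks counting and the compatibility of $\opF_\phi^{\bD}$ with the whole eigenspace decomposition, which the paper leaves implicit.
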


\begin{proof}
Let $p$ be the unique puncture of $\Sigma$. Then we have $\oph_{[c_p]}^{\bD}=\iota_{\bD}(H)^2$, since the edge path representing $[c_p]$ traverses every edge twice. In particular, its character is fixed to be the square of the central load $h$ (see \cref{rem:central_load}). Hence, any additive character $\boldsymbol{\lambda}:\widehat{L} \to \bZ_N$ that contributes to a non-zero irreducible component is determined by its restriction to $L$. Therefore, \cref{prop:normalization_total,prop:intertwiner_conjugate} imply that $|\det\overline{\opV}_\phi^{\bD}(\bbp_{\underline{\omega}};L,\boldsymbol{\lambda})|=1$ for any character $\boldsymbol{\lambda}$.
\end{proof}

\begin{rem}
\begin{enumerate}
    \item The action of $g_{[c]}$ on $V_{\bD}$ is exactly the same as the $H_\Sigma$-action introduced by Mazzoli \cite{Mazzoli}, which is used to parametrize the canonical set of intertwiners of local representations.
    \item Fix an additive character $\boldsymbol{\nu}: L_0 \to \bZ_N$ (\emph{peripheral weights}). Let us consider the direct sum
    \begin{align*}
        V_{\bD}(L;\boldsymbol{\nu}):= \bigoplus_{\substack{\boldsymbol{\lambda} \\ \boldsymbol{\lambda}|_{L_0}=\boldsymbol{\nu}}} V_\bD(L;\boldsymbol{\lambda}),
    \end{align*}
    which corresponds to an \emph{isotypic component} in \cite{BB-GD}. Our \cref{prop:intertwiner_conjugate} shows that each irreducible block of the intertwiner contained in an isotypic component are conjugate to each other. 
\end{enumerate}
 
\end{rem}

%\subsection{Examples}\label{subsec:example}

\subsection{Examples}\label{subsub:example_torus}
Let $\Sigma=\Sigma_1^1$ be a once-punctured torus, and take the dotted triangulation $\bD$ as shown in \cref{fig:torus}. In this case, $\oph_{[c_p]}=\mathsf{1}$ on $V_{\bD}$ for the unique puncture. Let $a,b \in H_1(\Sigma^{\mathrm{cl}};\bZ)$ be the homology classes represented by the horizontal and vertical curves, respectively. 

\begin{figure}[ht]
    \centering
\begin{tikzpicture}[scale=1.3]
\draw (0,0) -- (2,0) -- (2,2) -- (0,2) --cycle; 
\draw (0,2) -- (2,0); 
\draw[red](0.5,0.5) --++(-90:0.8) node[below,scale=0.7]{$\opU_v^{-1}$};
\draw[red](0.5,0.5) node[right,scale=0.7]{$\dbra{\opU_v\opP_v^{-1}}$} --++(180:0.8) node[left,scale=0.7]{$\opP_v$};
\draw[red](1.5,1.5) --++(0:0.8) node[right,scale=0.7]{$\opU_w^{-1}$};
\draw[red](1.5,1.5)++(-0.2,-0.2) node[left,scale=0.7]{$\opP_w$};
\draw[red](1.5,1.5) --++(90:0.8) node[above,scale=0.7]{$\dbra{\opU_w\opP_w^{-1}}$};
\draw[red](0.5,0.5) -- (1.5,1.5);
\node[above left,scale=0.8] at (0.5,0.5) {$v$};
\node[below right,scale=0.8] at (1.5,1.5) {$w$};
\dast{(0.15,0.15)};
\dast{(1.9,0.25)};
\node[scale=0.8] at (0.7,1.5){$1$};
\node[left,scale=0.8] at (0,1){$2$};
\node[below,scale=0.8] at (1,0){$3$};
\node[right,scale=0.8] at (2,1){$2$};
\node[above,scale=0.8] at (1,2){$3$};
\end{tikzpicture}
    \caption{A dotted triangulation of $\Sigma_1^1$. Here the edges of the same label are identified.}
    \label{fig:torus}
\end{figure}
The homology operators associated with these classes are computed as
\begin{align*}
    \oph_a=[\opU_v\opU_w\opP_w^{-1}], \quad \oph_b = \opU_w\opP_v.
\end{align*}

\begin{prop}\label{prop:torus_basis}
Given $\lambda,\mu \in \bZ_N$, the solution spaces of the equations $\oph_a\ket{\psi} = q^{2\lambda}\ket{\psi}$, $\oph_b\ket{\psi} = q^{2\mu}\ket{\psi}$ are $N$-dimensional, and spanned by
\begin{align*}
    \ket{a_{\lambda,m}}:=& %\sum_{n \in \bZ_N} \gamma(n)^{-1}q^{2(\lambda-m)n} \ket{m,n} 
     \ket{m,\mbb{m-\lambda}}, && m \in \bZ_N\\
    \ket{b_{\mu,n}}:=& %\sum_{m \in \bZ_N} q^{2(n-\mu)m} \ket{m,n}
    \ket{\mb{\mu-n},n}, && n \in \bZ_N,
\end{align*}
respectively.  
Here the first (resp. second) component corresponds to $v$ (resp. $w$). These solutions are normalized so that 
\begin{align*}
    \braket{a_{\lambda,m}}{a_{\lambda,m'}}=N\delta_{mm'}\delta_{\lambda\lambda'},\quad \braket{b_{\mu,n}}{b_{\mu,n'}}=N\delta_{nn'}\delta_{\mu\mu'},
\end{align*}
and the base-change among them is given by
    \begin{align}\label{eq:torus_polarization_change}
    \braket{b_{\mu,n}}{a_{\lambda,m}} =\gamma(n)^{-1} q^{-4nm+2\lambda n + 2\mu m}.
\end{align}
\end{prop}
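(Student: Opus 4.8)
The plan is to diagonalise $\oph_a$ and $\oph_b$ by hand on the tensor basis $\ket{m,n}=\ket{m}\otimes\ket{n}$, check that the vectors listed in the statement are eigenvectors, and then fix the dimensions by a linear‑independence and counting argument; the inner products are then immediate.

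First I would unwind the Weyl orderings (\cref{def:Weyl}). Since $\opU_v$ acts on the first tensor factor alone, one has $\oph_a=\opU_v\dbra{\opU_w\opP_w^{-1}}$, and a direct computation gives the basis actions
\[
  \oph_a\ket{m,n}=q^{2m+2n-1}\ket{m,n-1},\qquad \oph_b\ket{m,n}=q^{2n}\ket{m+1,n},
\]
the stray $q^{-1}$ in $\oph_a$ being the Weyl normalisation of $\dbra{\opU_w\opP_w^{-1}}$. Substituting $\ket{b_{\mu,n}}=\sum_{k\in\bZ_N}q^{-2k(\mu-n)}\ket{k,n}$ into $\oph_b$, shifting the index $k\mapsto k-1$, and collecting the phase $q^{2(\mu-n)}q^{2n}=q^{2\mu}$ yields $\oph_b\ket{b_{\mu,n}}=q^{2\mu}\ket{b_{\mu,n}}$. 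Substituting $\ket{a_{\lambda,m}}=\sum_{k\in\bZ_N}\gamma(k)^{-1}q^{-2k(m-\lambda)}\ket{m,k}$ into $\oph_a$, shifting $k\mapsto k+1$, and using $\gamma(k+1)^{-1}=\gamma(k)^{-1}q^{-1}q^{-2k}$ (which follows from $\gamma(k+\ell)=\gamma(k)\gamma(\ell)q^{2k\ell}$ and $\gamma(1)=q$), all the $q$‑powers collapse to $q^{2\lambda}$, giving $\oph_a\ket{a_{\lambda,m}}=q^{2\lambda}\ket{a_{\lambda,m}}$. I expect this last bookkeeping — making the Weyl factor of $\dbra{\opU_w\opP_w^{-1}}$ and the $\gamma$‑cocycle shift conspire into exactly $q^{2\lambda}$ — to be the only genuinely delicate point, and it is precisely where the Gaussian weight $\gamma$ in the $\mbb{\cdot}$‑basis is essential: with the plain momentum basis the shift induced by $\opP_w^{-1}$ would not close up.

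For the dimensions, by \cref{lem:Weyl} both $\oph_a$ and $\oph_b$ are Weyl‑ordered monomials in operators whose $N$‑th powers equal $\mathsf{1}$, hence are cyclic operators with spectrum $\{q^{2\mu}\}_{\mu\in\bZ_N}$. For fixed $\lambda$ the vectors $\ket{a_{\lambda,m}}$ ($m\in\bZ_N$) lie in the pairwise orthogonal subspaces $\ket{m}\otimes V_N$, so they are linearly independent and the $q^{2\lambda}$‑eigenspace of $\oph_a$ has dimension $\ge N$; summing over $\lambda\in\bZ_N$ gives $\ge N^2=\dim V_{\bD}$, so each eigenspace is exactly $N$‑dimensional and is spanned by the $\ket{a_{\lambda,m}}$. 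The same argument with the two tensor factors interchanged handles $\oph_b$ and the $\ket{b_{\mu,n}}$.

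Finally the normalisations come straight from \eqref{eq:slant_normalization} and \eqref{eq:momentum_normalization}: $\braket{a_{\lambda,m}}{a_{\lambda',m'}}=\braket{m}{m'}\braket{\mbb{m-\lambda}}{\mbb{m'-\lambda'}}=N\delta_{mm'}\delta_{\lambda\lambda'}$ and likewise $\braket{b_{\mu,n}}{b_{\mu',n'}}=N\delta_{nn'}\delta_{\mu\mu'}$, and the change of basis \eqref{eq:torus_polarization_change} is the single pairing
\[
  \braket{b_{\mu,n}}{a_{\lambda,m}}=\braket{\mb{\mu-n}}{m}\cdot\braket{n}{\mbb{m-\lambda}}=q^{2m(\mu-n)}\cdot\gamma(n)^{-1}q^{-2n(m-\lambda)}=\gamma(n)^{-1}q^{-4nm+2\lambda n+2\mu m}.
\]
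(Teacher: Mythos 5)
Your proof is correct, and it is exactly the direct verification that the paper has in mind when it says ``The proof is straightforward'' and omits the details: unwinding the Weyl ordering to get the basis action of $\oph_a,\oph_b$, checking the eigenvector property via the index shift and the cocycle $\gamma(k+1)=\gamma(k)\,q^{2k+1}$, counting dimensions by linear independence of the $N^2$ vectors, and reading off the pairings from the normalizations \eqref{eq:momentum_normalization}, \eqref{eq:slant_normalization} and the kernels $\braket{\mb{\ell}}{k}=q^{2k\ell}$, $\braket{n}{\mbb{k}}=\gamma(n)^{-1}q^{-2kn}$. All intermediate formulas (including $\oph_a\ket{m,n}=q^{2m+2n-1}\ket{m,n-1}$, $\oph_b\ket{m,n}=q^{2n}\ket{m+1,n}$, and the final base-change coefficient) check out against the paper's conventions.
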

The proof is straightforward by using \cref{lem:slant_diagonal}. In particular, if we choose the Lagrangian to be $L_a:=\langle a\rangle \subset H_1(\Sigma^{\mathrm{cl}};\bZ)$, we get the polarized space
\begin{align*}
    V_{\bD}(L_a,\lambda) = \bigoplus_{m \in \bZ_N} \bC \ket{a_{\lambda,m}} \subset V_{\bD}.
\end{align*}
The cluster Poisson variables act on $V_{\bD}(L_a,\lambda)$ by
\begin{align*}
    X_1^\tri \ket{a_{\lambda,m}} &= y_1 q^{4m-2\lambda-2} \ket{a_{\lambda,m-1}}, \\
    X_2^\tri \ket{a_{\lambda,m}} &= y_2 \ket{a_{\lambda,m+1}}, \\
    X_3^\tri \ket{a_{\lambda,m}} &= y_3 q^{2\lambda-4m} \ket{a_{\lambda,m}}.
\end{align*}

% The linear map $\opF_a^b:V_{\bD} \to V_{\bD}$ given by the matrix coefficients \eqref{eq:torus_polarization_change} 

%\paragraph{\textbf{Dehn twist operator $\opV_{T_a}^{\bD}$}}

\paragraph{\textbf{Irreducible intertwiner for $T_a$}}
Recall the Dehn twist operator $\opV_{T_a}^{\bD}(\bbp_a)=\opT_{vw}\opA_w^{-1}\opP_{(vw)}$ from \eqref{eq:Dehn_operator}. First note that it commutes with $\oph_{a}^{\bD}$:
\begin{align*}
    \opV_{T_a}^{\bD}(\bbp_a) \oph_{a}^{\bD} &= \opT_{vw} \opA_w^{-1} [\opU_w\opU_v \opP_v^{-1}] \opP_{(vw)} \\
    &= \opT_{vw} [\opP_w^{-1}\opU_v \opP_v^{-1}]  \opA_w^{-1} \opP_{(vw)} \\
    &= \Psi_{\bp}([\opP_v^{-1}\opU_v\opP_w]) [\opP_w^{-1}\opU_v \opU_w] \opS_{vw}  \opA_w^{-1} \opP_{(vw)} = \oph_{a}^{\bD} \opV_{T_a}^{\bD}(\bbp_a), 
\end{align*}
where we have used \cref{lem:A_comm} and \eqref{eq:S_comm}, and the fact that $[\opP_v^{-1}\opU_v\opP_w]$ commutes with $[\opP_w^{-1}\opU_v \opU_w]$. Hence $\opV_{T_a}^{\bD}(\bbp_a)$ induces a linear map $V_{T_a}^{\bD}(\bp_a;L_a,\lambda)$ on the eigenspace $V_{\bD}(L_a,\lambda)$.

\begin{prop}[Proof in \cref{subsec:reduced_Dehn_a}]\label{prop:reduced_Dehn_a}
The matrix coefficients of the irreducible intertwiner $\overline{V}_{T_a}^{\bD}(\bbp_a;L_a,\lambda)=V_{T_a}^{\bD}(\bbp_a;L_a,\lambda)$ are given by
\begin{align*}
    \bra{a_{\lambda,k}} \overline{V}_{T_a}^{\bD}(\bbp_a;L_a,\lambda) \ket{a_{\lambda,\ell}} =i^{(1-N)/3}\frac{\gamma(\ell)}{\gamma(k)}\frac{\gamma(\ell-\lambda)}{\gamma(k-\lambda)} q^{2\ell(-\ell+\lambda)} \sum_{u \in \bZ_N}\Psi_{\bp_1}(q^{2u}) \gamma(u)^{-1}q^{2u(k-\ell)},
\end{align*}
and the irreducible quantum trace is 
\begin{align}\label{eq:red_traca_a}
    \overline{Z}_N (T_a;\bbp_a;L_a,\lambda)= \frac{1}{N^{1/2}} \left| \sum_{k \in \bZ_N} \Psi_{\bp_1}(q^{2k})\gamma(k)^{-1} \right|.
\end{align}
\end{prop}

% \begin{lem}\label{lem:Gauss_2}
% \begin{align*}
%     \sum_{k \in \bZ_N} q^{2k(-k+a)} =N^{1/2} e^{\frac{\pi i}{2N}a^2} \sigma(N;a),
% \end{align*}
% where
% \begin{align*}
%     \sigma(N;a):=\begin{cases}
%         1 & \mbox{if $a+\frac{N+1}{2}$ is odd}, \\
%         -i & \mbox{if $a+\frac{N+1}{2}$ is even}. 
%     \end{cases}
% \end{align*}
% \end{lem}

% \begin{proof}
% Note that $\sum_{k \in \bZ_N} q^{2k(-k+a)}= \sum_{k=0}^{N-1} e^{\frac{\pi i}{N}(-2k^2+2ak)}=S(-2,2a,N)$. Let $M=(N+1)/2$. 
% Then by \eqref{eq:reciprocity}, we get
% \begin{align*}
%     \sum_{k \in \bZ_N} q^{2k(-k+a)} &= \big(\frac N 2\big)^{1/2} e^{\frac{\pi i}{4}(-1+\frac{2a^2}{N})} S(-N,-2a,-2) \\
%     &= N^{1/2} \frac{1-i}{2}e^{\frac{\pi i}{2N} a^2} \sum_{k=0}^1 e^{\frac{\pi i }{2}(Nk^2+2ak)} \\
%     &= N^{1/2} \frac{1-i}{2}e^{\frac{\pi i}{2N} a^2} (1+ e^{\frac{\pi i }{2}(N+2a)}) \\
%     &= N^{1/2} \frac{1-i}{2}e^{\frac{\pi i}{2N} a^2} (1- (-1)^{M+a}i) = N^{1/2} e^{\frac{\pi i}{2N} a^2}\sigma(N;a).
% \end{align*}
% Here we used $(1-i)(1+i)=2$ and $(1-i)(1-i)=-2i$. 

% \end{proof}

\paragraph{\textbf{Irreducible intertwiner for $T_b^{-1}$}}
Recall $\opV_{T_b^{-1}}^{\bD}(\bbp_b)=\opT_{vw}$ from \eqref{eq:Dehn_operator}. It is easy to verify the relation
\begin{align*}
    \opV_{T_b^{-1}}^{\bD}(\bbp_b) \oph_a^{\bD} = \oph_{a+b}^{\bD} \opV_{T_b^{-1}}^{\bD}(\bbp_b).
\end{align*}
Then we take $\opF_{T_b^{-1}}^{\bD}:= \gamma(\oph_b)^{1/2}$, and irreducible intertwiner $\overline{\opV}_{T_b^{-1}}^{\bD}(\bbp_b;L_a,\lambda)=\gamma(\oph_b)^{1/2}\opT_{vw}$ satisfies the relation
\begin{align*}
    \overline{\opV}_{T_b^{-1}}^{\bD}(\bbp_b;L_a,\lambda) \oph_a^{\bD} = \oph_a^{\bD} \overline{\opV}_{T_b^{-1}}^{\bD}(\bbp_b;L_a,\lambda).
\end{align*}

\begin{prop}[Proof in \cref{subsec:reduced_Dehn_b}]\label{prop:Dehn_b}
The matrix coefficients of the irreducible intertwiner $\overline{\opV}_{T_b^{-1}}^{\bD}(\bbp_b;L_a,\lambda)$ are given by
\begin{align*}
    \bra{a_{\lambda,k}} \overline{\opV}_{T_b^{-1}}^{\bD}(\bbp_b;L_a,\lambda) \ket{a_{\lambda,\ell}} = \zeta^{(2)}_+  \Psi_{\bp_b} (q^{2(2\ell-\lambda)}) \gamma(\ell-k)^{-2},
\end{align*}
and the irreducible quantum trace is
\begin{align}\label{eq:red_traca_b}
    \overline{Z}_N(T_b^{-1};\bbp_b;L_a,\lambda)
    &= \frac{1}{N^{1/2}} \left|\sum_{k \in \bZ_N} \Psi_{\bp_1} (q^{2(2k-\lambda)})\right|.
\end{align}
\end{prop}

\begin{rem}\label{rem:Dehn_BW}
For any function $f(n)$ on $\bZ_N$ with odd $N$, we have 
\begin{align*}
    \sum_{n \in \bZ_N} f(2n) = \sum_{n \in \bZ_N} f(n)
\end{align*}
since $2n$ runs over $\bZ_N$ exactly once as $n$ varies over $\bZ_N$. Therefore \eqref{eq:red_traca_a} and \eqref{eq:red_traca_b} (with $\lambda=0$) coincide with the traces of the operators $(L_{uv})^{-1}$ and $(R_{uv})$ in \cite[Section 4.4]{BWY-1}, respectively.  
\end{rem}

\begin{prop}
The irreducible quantum trace of $\phi=T_b^{-1}T_a$ is given by
\begin{align*}
    \overline{Z}_N(\phi;\bbp_\phi;L_a,\lambda) = \left|\bigg(\sum_{k \in \bZ_N} \Psi_{\bp_1}(q^{2k})q^{-Mk^2} \bigg)\bigg(\sum_{\ell \in \bZ_N} \Psi_{\bp'_2}(q^{2\ell})q^{-M\ell^2} \bigg)   \right|.
\end{align*}
Here we take a $\phi$-invariant coefficient $\bbp_\phi$ as in \eqref{eq:LR_coeff}. 
\end{prop}

\begin{proof}
By using the matrix coefficients in \cref{prop:reduced_Dehn_a,prop:Dehn_b}, 
\begin{align*}
    &\overline{Z}_N(\phi;\bbp_\phi;L_a,\lambda) \\
    =& \frac{1}{N} \left| \sum_{k, \ell \in \bZ_N} \frac{\gamma(\ell)}{\gamma(k)}\frac{\gamma(\ell-\lambda)}{\gamma(k-\lambda)} q^{2\ell(-\ell+\lambda)} \bigg(\sum_{u \in \bZ_N}\Psi_{\bp_1}(q^{2u}) \gamma(u)^{-1}q^{2u(k-\ell)} \bigg) \zeta^{(2)}_+  \Psi_{\bp'_2} (q^{2(2k-\lambda)}) \gamma(k-\ell)^{-2} \right| \\
    =&\frac{1}{N} \left| \zeta^{(2)}_+\zeta^{(2)}_-\sum_{k, u \in \bZ_N}\Psi_{\bp_1}(q^{2u}) \Psi_{\bp'_2} (q^{2(2k-\lambda)})\gamma(k)^{-1}\gamma(k-\lambda)^{-1}\gamma(u)^{-1} q^{2uk}q^{\lambda^2-2k^2} \gamma(-u+2k)^M \right| \\
    =& \left| \sum_{k, u \in \bZ_N}\Psi_{\bp_1}(q^{2u}) \Psi_{\bp'_2} (q^{2(2k-\lambda)})q^{-2k(k-\lambda)}q^{-Mu^2} \right| \\
    =& \left|\bigg(\sum_{u \in \bZ_N} \Psi_{\bp_1}(q^{2u})q^{-Mu^2} \bigg)\bigg(\sum_{\ell \in \bZ_N} \Psi_{\bp'_2}(q^{2\ell})q^{-M\ell^2} \bigg)  \right|.
\end{align*}
Here, in the last line, we changed the variable as $\ell:=2k-\lambda$. 
\end{proof}
Comparing with \cref{prop:LR_local}, we have
\begin{align*}
    Z_N(\phi;\bbp_\phi) = \sum_{\lambda \in \bZ_N} \overline{Z}_N(\phi;\bbp_\phi;L_a,\lambda)
\end{align*}
in this case.

\section{Proofs}\label{sec:proof}

\subsection{Proof of the pentagon relation}\label{sec:proof_pentagon}

%For any $\bp,\br \in \overline{\cF}_N$, 
%and $U,P$ be two variables satisfying $U^N=P^N=1$, $UP=q^2 PU$. Then 

\subsubsection{Proof of \cref{thm:pentagon}} 
We are going to prove the pentagon relation
\begin{align*}
    \Psi_\bp(\opU) \Psi_{\br'}(\opP) = \Psi_\br (\opP) \Psi_{\bp'}(q^{-1}\opU\opP) \Psi_{\bp''}(\opU),  
\end{align*}
where the parameters satisfy
\begin{align}
    \begin{aligned}\label{eq:pentagon_parameter_rel_proof}
    &p^- = {p'}^- {p''}^-, \quad p^+{r'}^- = {p'}^-{p''}^+, \quad p^+{r'}^+ = {p'}^+, \\
    &{r'}^- = r^-{p'}^-, \quad {r'}^+{p''}^- = r^+, \quad {r'}^+{p''}^+ = r^-{p'}^+.
    \end{aligned}
\end{align}

Let $F:=(\Psi_\bp(\opU) \Psi_{\br'}(\opP))^{-1}\Psi_\br (\opP) \Psi_{\bp'}(q^{-1}\opU\opP) \Psi_{\bp''}(\opU)$. We are going to prove $F \opP F^{-1} = \opP$ and $F \opU F^{-1} = \opU$, which implies that $F$ is a scalar. We refer the reader to the proof of \cite[Proposition 4.1]{Kim19} for a similar computation.\footnote{Here, the cyclic quantum dilogarithm in \cite{Kim19} corresponds to the inverse of ours. See \cite[Lemma 2.19]{Kim19} for the $q$-difference relation.}

By \cref{lem:q-diff_eq}, we compute
\begin{align*}
    &\Psi_{\br'}(\opP)^{-1}\Psi_\bp(\opU)^{-1}\opP^{-1} \\
    &= \Psi_{\br'}(\opP)^{-1}\opP^{-1}(p^- + q^{-1}p^+ \opU)\Psi_\bp(\opU)^{-1} \\
    &= \opP^{-1}(p^- + q^{-1}p^+ \opU({r'}^- + q^{-1}{r'}^+ \opP))\Psi_{\br'}(\opP)^{-1}\Psi_\bp(\opU)^{-1}
\end{align*}
and 
\begin{align*}
    & \Psi_{\bp''}(\opU)^{-1}\Psi_{\bp'}(q^{-1}\opU\opP)^{-1}\Psi_\br (\opP)^{-1}\opP^{-1} \\
    &= \Psi_{\bp''}(\opU)^{-1}\Psi_{\bp'}(q^{-1}\opU\opP)^{-1}\opP^{-1}\Psi_\br (\opP)^{-1} \\
    &= \Psi_{\bp''}(\opU)^{-1}\opP^{-1}({p'}^- + q^{-1}{p'}^+(q^{-1}\opU\opP))\Psi_{\bp'}(q^{-1}\opU\opP)^{-1}\Psi_\br (\opP)^{-1} \\
    &= \Psi_{\bp''}(\opU)^{-1}({p'}^- \opP^{-1} + {p'}^+ \opU)\Psi_{\bp'}(q^{-1}\opU\opP)^{-1}\Psi_\br (\opP)^{-1} \\
    &= ({p'}^- \opP^{-1}({p''}^- + q^{-1}{p''}^+ \opU) + {p'}^+\opU)\Psi_{\bp''}(\opU)^{-1}\Psi_{\bp'}(q^{-1}\opU\opP)^{-1}\Psi_\br (\opP)^{-1}.
\end{align*}
Then we claim
\begin{align*}
    \opP^{-1}(p^- + q^{-1}p^+ \opU({r'}^- + q^{-1}{r'}^+ \opP)) ={p'}^-\opP^{-1}({p''}^- + q^{-1}{p''}^+ \opU) + {p'}^+\opU. 
\end{align*}
By the term-wise comparison, it is equivalent to
\begin{align*}
    % y^+ = {y'}^+ {y''}^+, \quad y^-{z'}^+ = {y'}^+{y''}^+, \quad y^-{z'}^- = {y'}^-,
    p^- = {p'}^- {p''}^-, \quad p^+{r'}^- = {p'}^-{p''}^+, \quad p^+{r'}^+ = {p'}^+.
\end{align*}
 Thus we obtain
\begin{align*}
    \Ad_{\Psi_{\br'}(\opP)^{-1}\Psi_\bp(\opU)^{-1}}(\opP^{-1}) = \Ad_{\Psi_{\bp''}(\opU)^{-1}\Psi_{\bp'}(q^{-1}\opU\opP)^{-1}\Psi_\br (\opP)^{-1}}(\opP^{-1}),
\end{align*}
which implies $\Ad_F(\opP)=\opP$. 

Similarly, from the computations
\begin{align*}
    &\Psi_{\bp''}(\opU)\Psi_{\br'}(\opP)^{-1}\Psi_\bp(\opU)^{-1}\opU \\
    &= \Psi_{\bp''}(\opU)\opU({r'}^- +q^{-1} {r'}^+ \opP)\Psi_{\br'}(\opP)^{-1}\Psi_\bp(\opU)^{-1} \\
    &= \opU({r'}^- + q^{-1} {r'}^+ \opP({p''}^- + q {p''}^+ \opU))\Psi_{\bp''}(\opU)\Psi_{\br'}(\opP)^{-1}\Psi_\bp(\opU)^{-1} \\
    &= ({r'}^-\opU + q^{-1} {r'}^+ {p''}^- \opU\opP +q^{-2} {r'}^+{p''}^+\opU^2P)\Psi_{\bp''}(\opU)\Psi_{\br'}(\opP)^{-1}\Psi_\bp(\opU)^{-1}
\end{align*}
and 
\begin{align*}
    & \Psi_{\bp'}(q^{-1}\opU\opP)^{-1}\Psi_\br (\opP)^{-1}\opU \\
    &= \Psi_{\bp'}(q^{-1}\opU\opP)^{-1}\opU(r^- + q r^+ \opP)\Psi_\br (\opP)^{-1} \\
    &= \Psi_{\bp'}(q^{-1}\opU\opP)^{-1}(r^-\opU + q r^+ \opU\opP)\Psi_\br (\opP)^{-1} \\
    &= (r^-\opU({p'}^- + q^{-1}{p'}^+(q^{-1}\opU\opP)) + q r^+ \opU\opP)\Psi_{\bp'}(q^{-1}\opU\opP)^{-1}\Psi_\br (\opP)^{-1} \\
    &= (r^-{p'}^-\opU + q^{-2}r^-{p'}^+\opU^2P + q r^+ \opU\opP)\Psi_{\bp'}(q^{-1}\opU\opP)^{-1}\Psi_\br (\opP)^{-1},
\end{align*}
we claim 
\begin{align*}
    % {z'}^+ = z^+{y'}^+, \quad {z'}^-{y''}^+ = z^-, \quad {z'}^-{y''}^- = {y'}^-,
    {r'}^- = r^-{p'}^-, \quad {r'}^+{p''}^- = r^+, \quad {r'}^+{p''}^+ = r^-{p'}^+.
\end{align*}
 Thus we obtain $\Ad_F(\opU)=\opU$. Therefore $F=C_{\bp,\br} \in \bC$ must be a scalar. Thus we get
\begin{align}\label{eq:pentagon_scalar}
    \Psi_\bp(U) \Psi_{\br'}(\opP) = C_{\bp,\br}\Psi_\br (\opP) \Psi_{\bp'}(q^{-1}\opU\opP) \Psi_{\bp''}(U). 
    %\Psi_{\br'}(P) \Psi_\bp(U) = \Psi_{\bp''}(U) \Psi_{\bp'}(-UP) \Psi_\br (P)
\end{align}

Let us show $C_{\bp,\br}=1$.
By \cref{lem:psi_product}, we have $\det \Psi=1$ for each term. Then by taking the determinant of both sides of \eqref{eq:pentagon_scalar}, we get $C_{\bp,\br}^N=1$. Namely, $C_{\bp,\br}=q^{2m}$ for some $m \in \bZ_N$. Since the both sides depend analytically on $\bp,\br$, we see that $C=C_{\bp,\br}$ is independent of $\bp$ and $\br$. 
Observe that the parameters $(r^+,r^-)=(0,1)$, $({r'}^+,{r'}^-)=(0,1)$, $({p'}^+,{p'}^-)=(0,1)$, $({p''}^+,{p''}^-)=(p^+,p^-)$ satisfies the relation \eqref{eq:pentagon_parameter_rel_proof}. Then \eqref{eq:pentagon_scalar} becomes
\begin{align*}
    \Psi_\bp(\opU) = C \Psi_\bp(\opU),
\end{align*}
which shows $C=1$. 
Thus \cref{thm:pentagon} is proved.

\subsubsection{Proof of \cref{thm:parameter_coeff}}
Let us prove that the relations 
\begin{align*}
    &(1)\ p^- = {p'}^- {p''}^-, && (2)\ p^+{r'}^- = {p'}^-{p''}^+, && (3)\ p^+{r'}^+ = {p'}^+, \\
    &(4)\ {r'}^- = r^-{p'}^-, && (5)\ {r'}^+{p''}^- = r^+, && (6)\ {r'}^+{p''}^+ = r^-{p'}^+
\end{align*}
are equivalent to the relations
\newcommand{\num}[1]{\langle #1 \rangle}
\begin{align*}
    &\num{1}\ \frac{{r'}^+}{{r'}^-} = \frac{{r}^+}{{r}^-} (p^-)^{-1}, &
    \num{2}\ \frac{{p''}^-}{{p''}^+} &= \frac{{p}^-}{{p}^+} ({r'}^-)^{-1}, &  
    \num{3}\ \frac{{p'}^-}{{p'}^+} &= \frac{{r'}^-}{{r'}^+} ({p''}^+)^{-1}, \\
    &\num{4}\ \frac{{p''}^+}{{p''}^-} = \frac{{r}^-}{{r}^+} {p'}^+, &
    \num{5}\ \frac{{p'}^+}{{p'}^-} &= \frac{{p}^+}{{p}^-} {r}^+.
\end{align*}

%Let us denote the $i$-th relation in \eqref{eq:pentagon_parameter_rel} by $(i)$ for $i=1,\dots,5$, and the $j$-th relation in \eqref{eq:pentagon_coeff_rel} by $\num{j}$ for $j=1,\dots,5$. 
Assume first the relations (1)--(6).

$\num{1}$: 
%From the fourth and fifth relations in \eqref{eq:pentagon_parameter_rel}, 
From (4) and (5), we get
\begin{align}\label{eq:pentagon_1}
    \frac{{r'}^+}{{r'}^-} = \frac{r^+/{p''}^-}{r^-{p'}^-} = \frac{{r}^+}{{r}^-} (p^-)^{-1}.
\end{align}
Here we have also used (1). 

$\num{2}$:  From (1) and (2), we get
%the first and second relations in \eqref{eq:pentagon_parameter_rel}, we get
\begin{align*}
    \frac{{p''}^-}{{p''}^+} =\frac{p^-/{p'}^-}{p^+{r'}^-/{p'}^-} = \frac{{p}^-}{{p}^+} ({r'}^-)^{-1}.
\end{align*}

$\num{3}$:  From (2) and (3), we get
%From the second and third relations in \eqref{eq:pentagon_parameter_rel}, we get
\begin{align*}
    \frac{{p'}^-}{{p'}^+} = \frac{p^+{r'}^-/{p''}^+}{p^+{r'}^+} = \frac{{r'}^-}{{r'}^+} ({p''}^+)^{-1}.
\end{align*}

$\num{4}$:  From (5) and (6), we get
%From the fifth and sixth relations in \eqref{eq:pentagon_parameter_rel}, we get
\begin{align*}
    \frac{{p''}^+}{{p''}^-} = \frac{r^-{p'}^+/{r'}^+}{r^+/{r'}^+} = \frac{{r}^-}{{r}^+} {p'}^+.
\end{align*}

$\num{5}$:  From (3) and (4), we get
%From the third and fourth relations in \eqref{eq:pentagon_parameter_rel}, we get
\begin{align*}
    \frac{{p'}^+}{{p'}^-} = \frac{p^+{r'}^+}{{r'}^-/r^-} = \frac{{p}^+}{{p}^-} {r}^+.
\end{align*}
Here we have used \eqref{eq:pentagon_1} in the second equality. 

Let us prove the converse direction. Assume the relations $\num{1}$--$\num{5}$. 
Using $\num{3}$ and then $\num{1}$ and $\num{5}$, we obtain
\begin{align}\label{eq:p''+}
    {p''}^+ = \frac{{p'}^+}{{p'}^-}\frac{{r'}^-}{{r'}^+} = \frac{p^+r^+}{p^-} \frac{r^-p^-}{r^+} = p^+r^-.
\end{align}
Substituting this into $\num{2}$ and $\num{4}$, respectively, we get
\begin{align}\label{eq:p''-_2}
    {p''}^- = \frac{p^-{p''}^+}{p^+{r'}^-} = \frac{p^-r^-}{{r'}^-}
\end{align}
and
\begin{align}\label{eq:p''-_4}
    {p''}^- = \frac{{p''}^+r^+}{r^-{p'}^+} = \frac{p^+r^+}{{p'}^+}.
\end{align}
Then \eqref{eq:p''-_2} and \eqref{eq:p''-_4} together imply
\begin{align}\label{eq:p'-r'}
    \frac{{p'}^+}{{r'}^-} = \frac{p^+r^+}{p^-r^-}. 
\end{align}

(1): From \eqref{eq:p''-_4} and $\num{5}$, we get
\begin{align*}
    {p'}^-{p''}^- = p^+r^+ \frac{{p'}^-}{{p'}^+} =p^-.
\end{align*}

(2): From $\num{5}$, \eqref{eq:p''+} and then \eqref{eq:p'-r'}, we get
\begin{align*}
    {r'}^+{p''}^+ = \frac{p^-}{p^+r^+}{p'}^+ \cdot p^+r^- = p^+ {r'}^-.
\end{align*}

(3): From $\num{1}$ and \eqref{eq:p'-r'}, we get
\begin{align*}
    p^+{r'}^+ = p^+ \cdot \frac{r^+}{r^-p^-}{r'}^- = {p'}^+.
\end{align*}

(4): From $\num{5}$ and \eqref{eq:p'-r'}, we get
\begin{align*}
    r^-{p'}^- = r^- \cdot \frac{p^-}{p^+r^+}{p'}^+ = {r'}^-.
\end{align*}

(5): From \eqref{eq:p''-_2} and $\num{1}$, we get
\begin{align*}
    {r'}^+{p''}^- = p^-r^- \frac{{r'}^+}{{r'}^-} = r^+.
\end{align*}

(6): From $\num{1}$, \eqref{eq:p''+} and then \eqref{eq:p'-r'}, we get
\begin{align*}
    {r'}^+{p''}^+ = {r'}^-\frac{r^+}{r^-p^-}\cdot p^+r^- = r^- {p'}^+.
\end{align*}
Thus \cref{thm:parameter_coeff} is proved. 

\subsection{Proof of the relations among $\opA_v$ and $\opT_{vw}$}\label{subsec:AT_relations_proof}
In this section, we give a proof of \cref{thm:AT_relations}.

\subsubsection{Proof of $\opA_v^3=\mathsf{1}$ and the conjugation actions of $\opA_v$ and $\opT_{vw}$}

\begin{lem}\label{lem:A_comm}
The rotation operator $\opA_v$ has order $3$, and acts on the $v$-th component of $V_{\tri'_\ast}$ as
\begin{align*}
    &\ket{k} \mapsto \zeta_A \ket{\mbb{k}}, \\
    &\ket{\mbb{k}} \mapsto \zeta_-\zeta_A\gamma(k) \ket{\mb{-k}}, \\
    &\ket{\mb{k}} \mapsto N\zeta_A\gamma(k)^{-1} \ket{-k}.
\end{align*}
Recall $\zeta_A=(N\zeta_-)^{-1/3}=N^{-1/2}i^{(1-N)/6}$. 
The conjugation action of $\opA_v$ is given by
\begin{align*}
    \opA_v\opU_v \opA_v^{-1} = \dbra{\opP_v \opU_v^{-1}},\quad \opA_v\opP_v \opA_v^{-1} = \opU_v^{-1}.
\end{align*} 
\end{lem}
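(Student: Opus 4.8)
The plan is to verify everything by a direct computation on the standard basis $\ket{k}$ of the single factor $V_N$ on which $\opA_v$ acts nontrivially, using the explicit expansions $\ket{\mb{\ell}}=\sum_n q^{-2n\ell}\ket{n}$ and $\ket{\mbb{k}}=\sum_m\gamma(m)^{-1}q^{-2km}\ket{m}$ together with the Gaussian Fourier transformation formulae of \cref{lem:gamma_Fourier} and the quadratic identity $\gamma(k+\ell)=\gamma(k)\gamma(\ell)q^{2k\ell}$.

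First I would establish the three transition formulas. The formula $\ket{k}\mapsto\zeta_A\ket{\mbb{k}}$ is the definition of $\opA_v$. For $\opA_v\ket{\mbb{k}}$, I expand $\opA_v\ket{\mbb{k}}=\zeta_A\sum_m\gamma(m)^{-1}q^{-2km}\ket{\mbb{m}}$, substitute $\ket{\mbb{m}}=\sum_n\gamma(n)^{-1}q^{-2mn}\ket{n}$, and carry out the inner sum over $m$ via the second formula in \cref{lem:gamma_Fourier}; this produces $\zeta_-\gamma(k+n)$, and $\gamma(n)^{-1}\gamma(k+n)=\gamma(k)q^{2kn}$ collapses the remaining sum to $\ket{\mb{-k}}$, giving $\opA_v\ket{\mbb{k}}=\zeta_-\zeta_A\gamma(k)\ket{\mb{-k}}$. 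For $\opA_v\ket{\mb{k}}$, the same substitution makes the inner sum over $m$ equal to the Kronecker delta $N\delta_{n,-k}$, yielding $\opA_v\ket{\mb{k}}=N\zeta_A\gamma(k)^{-1}\ket{-k}$. The order-three claim $\opA_v^3=\mathsf{1}$ then follows by composing the three formulas along the cycle $\ket{k}\to\ket{\mbb{k}}\to\ket{\mb{-k}}\to\ket{k}$: the accumulated scalar is $\zeta_A\cdot\zeta_-\zeta_A\gamma(k)\cdot N\zeta_A\gamma(k)^{-1}=N\zeta_-\zeta_A^3$, which is $1$ precisely because $\zeta_A=(N\zeta_-)^{-1/3}$.

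For the conjugation action I again test on $\ket{k}$. Since $\opU_v\ket{k}=q^{2k}\ket{k}$ and $\dbra{\opP_v\opU_v^{-1}}\ket{\mbb{k}}=q^{2k}\ket{\mbb{k}}$ by \cref{lem:slant_diagonal}, both $\opA_v\opU_v\ket{k}$ and $\dbra{\opP_v\opU_v^{-1}}\opA_v\ket{k}$ equal $q^{2k}\zeta_A\ket{\mbb{k}}$, which gives $\opA_v\opU_v\opA_v^{-1}=\dbra{\opP_v\opU_v^{-1}}$. For the other relation I note $\opA_v\opP_v\ket{k}=\zeta_A\ket{\mbb{k+1}}$ directly from the definition, while $\opU_v^{-1}\ket{\mbb{k}}=\sum_m\gamma(m)^{-1}q^{-2m(k+1)}\ket{m}=\ket{\mbb{k+1}}$ by absorbing the shift into the exponent, so $\opU_v^{-1}\opA_v\ket{k}=\zeta_A\ket{\mbb{k+1}}$ as well, proving $\opA_v\opP_v\opA_v^{-1}=\opU_v^{-1}$.

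All of this is routine Fourier analysis over $\bZ_N$; there is no conceptual obstacle. The only point requiring genuine care is the bookkeeping of the scalar factors $\zeta_A$, $\zeta_-$ and $\gamma(k)$ and verifying that they conspire to produce $N\zeta_-\zeta_A^3=1$ in the order-three computation — which is exactly the reason the normalization $\zeta_A=(N\zeta_-)^{-1/3}=N^{-1/2}i^{(1-N)/6}$ was fixed in the definition of $\opA_v$.
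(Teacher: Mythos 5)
Your proposal is correct and follows essentially the same route as the paper: direct computation of the three transition formulas on the standard basis via the Gaussian Fourier formulae, the order-three relation from the accumulated scalar $N\zeta_-\zeta_A^3=1$, and the conjugation relations read off from eigenbasis considerations. The only cosmetic difference is that for $\opA_v\opP_v\opA_v^{-1}=\opU_v^{-1}$ you verify the intertwining relation on $\ket{k}$ using $\opU_v^{-1}\ket{\mbb{k}}=\ket{\mbb{k+1}}$, whereas the paper checks the eigenvalue $q^{-2k}$ on $\ket{k}$ via the momentum basis; both are equivalent one-line checks.
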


\begin{proof}
The first equation is nothing but the definition. For the second one,
\begin{align*}
    \opA_v \ket{\mbb{k}} &= \sum_{j \in \bZ_N} \gamma(j)^{-1} q^{-2kj}\opA_v \ket{j} = \zeta_A \sum_{j \in \bZ_N} \gamma(j)^{-1} q^{-2kj}\ket{\mbb{j}} \\
    &= \zeta_A \sum_{j,m \in \bZ_N} \gamma(j)^{-1} q^{-2kj}\gamma(m)^{-1}q^{-2jm}\ket{m} \\
    &= \zeta_A \sum_{j,m \in \bZ_N} \gamma(j+m)^{-1} q^{-2kj}\ket{m} \\
    &= \zeta_A \sum_{m \in \bZ_N} \bigg(\sum_{j \in \bZ_N} \gamma(j)^{-1} q^{-2k(j-m)}\bigg)\ket{m} \\
    &= \zeta_A \sum_{m \in \bZ_N} \zeta_-\gamma(k) q^{2km} \ket{m} \\
    &= \zeta_-\zeta_A\gamma(k) \ket{\mb{-k}}.
\end{align*}
Here we have used $\gamma(j+m)=\gamma(j)\gamma(m)q^{2jm}$ and the Fourier transformation formula of $\gamma(j)^{-1}$. For the third one, 
\begin{align*}
    \opA_v \ket{\mb{k}} &= \sum_{j \in \bZ_N} q^{-2jk}\opA_v\ket{k} 
    = \zeta_A \sum_{j \in \bZ_N} q^{-2jk}\ket{\mbb{k}} \\
    &= \zeta_A \sum_{j,m \in \bZ_N} q^{-2jk}\gamma(m)^{-1}q^{-2jm}\ket{m} \\ 
    &= \zeta_A \sum_{m \in \bZ_N}\gamma(m)^{-1} \bigg(\sum_{j \in \bZ_N} q^{-2j(k+m)}\bigg)\ket{m} \\
    &= \zeta_A \sum_{m \in \bZ_N}\gamma(m)^{-1} N\delta_{k+m,0}\ket{m} \\ 
    &= N\zeta_A \gamma(k)^{-1}\ket{-k}. 
\end{align*}
We get $\opA_v^3=\mathsf{1}$ by these three equations and $\zeta_A^3=(N\zeta_-)^{-1}$. 

From the action on the vectors, we get $\opA_v \opU_v \opA_v^{-1} \ket{\mbb{k}} = q^{2k} \ket{\mbb{k}}$, which shows the first conjugation equation by \cref{lem:slant_diagonal}. Similarly, $\opA_v \opP_v \opA_v^{-1} \ket{k} = q^{-2k} \ket{k}$ shows the second one.
\end{proof}

\begin{rem}
In terms of the matrix coefficients $A_{nm}:=\bra{n}\opA_v \ket{m}$, $\opA_v$ is characterized by the difference equation $A_{nm}= q^{-2n}A_{n,m-1}$ and the normalization $\opA_v^3=\mathsf{1}$. 
\end{rem}

\begin{prop}\label{prop:T_comm}
The conjugation action of $\opT_{vw}$ is given by
\begin{enumerate}
    \item $\opT_{vw} \opU_v = \opU_v \opU_w \opT_{vw}$.
    \item $\opT_{vw}\opP_v \opU_w = \opP_v \opU_w \opT_{vw}$.
    \item $\opT_{vw} \opP_v \opP_w = \opP_w \opT_{vw}$.
    \item $\opT_{vw}\opP_v = ((p_\alpha^\tri)^-\opP_v + (p_\alpha^\tri)^+\opU_v \opP_w) \opT_{vw}$.
\end{enumerate}
\end{prop}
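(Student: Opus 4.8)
The plan is to compute the conjugation action of $\opT_{vw}=\Psi_{\bp_\alpha^\tri}(\dbra{\opP_v^{-1}\opU_v\opP_w})\opS_{vw}$ in two stages, first handling the monomial part $\opS_{vw}=\frac 1 N\sum_{i,j}q^{2ij}\opU_w^i\opP_v^j$ and then the quantum dilogarithm part. For the $\opS_{vw}$-conjugation I would first record the elementary relation $\Ad_{\opS_{vw}}$ on the generators (this is the input labelled \eqref{eq:S_comm} used in the proof of \cref{thm:K_FG}): since $\opS_{vw}$ is built from $\opU_w$ and $\opP_v$ with the Gaussian kernel $q^{2ij}$, conjugation implements a linear symplectic transformation on the Heisenberg pair attached to the square. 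Concretely one checks $\opS_{vw}\opU_v\opS_{vw}^{-1}=\opU_v\opU_w$, $\opS_{vw}\opP_v\opS_{vw}^{-1}=\opP_v$, $\opS_{vw}\opU_w\opS_{vw}^{-1}=\opU_w$, $\opS_{vw}\opP_w\opS_{vw}^{-1}=q^{?}\opU_v^{-1}\opP_w$ (up to the appropriate Weyl-ordering factor) — I would verify these by a short direct computation on the basis $\ket{\mb{k},\ell}$, exactly as $\opS_{vw}$ was diagonalized in the determinant lemma. These immediately give the $\opS_{vw}$-version of (1)--(3).

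Next I would feed these through $\Psi:=\Psi_{\bp_\alpha^\tri}(\dbra{\opP_v^{-1}\opU_v\opP_w})$. The key point is that $\dbra{\opP_v^{-1}\opU_v\opP_w}=:\opX$ is a cyclic operator ($\opX^N=1$ by \cref{lem:Weyl}) whose $q^2$-commutation with each generator is easy to read off: $\opX$ commutes with $\opU_v\opU_w$, with $\opP_v\opU_w$, and with $\opP_w$ — this is the whole reason those particular monomials appear in (1)--(3), so after applying $\Ad_\Psi$ nothing changes and (1)--(3) follow verbatim. For (4) the generator $\opP_v$ satisfies $\opX\opP_v = q^{?}\opP_v\opX$ for a nonzero power, so $\Ad_\Psi(\opP_v)=\opP_v\cdot\Psi(\lambda^{-1}\opX)\Psi(\opX)^{-1}$ for the relevant eigenvalue shift $\lambda$, and by the $q$-difference equation of \cref{lem:q-diff_eq} this ratio equals $(p^- + q^{\pm 1}p^+\opX)$ up to reordering; combined with $\Ad_{\opS_{vw}}(\opP_v)=\opP_v$ and the fact that $\opX\opS_{vw}^{-1}$-conjugation turns $\opX$-factors into $\opU_v\opP_w$ this reproduces $((p_\alpha^\tri)^-\opP_v + (p_\alpha^\tri)^+\opU_v\opP_w)$. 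The Weyl-ordering bracket on the right-hand side of (4) disappears because $\opP_v$ and $\opU_v\opP_w$ $q$-commute in exactly the right way, so I should be careful to track the single power of $q$ coming from $\dbra{\opP_v^{-1}\opU_v\opP_w}$.

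The main obstacle is purely bookkeeping: getting every power of $q$ right in the Weyl-ordering symbols $\dbra{\cdot}$, both in $\opX$ itself and in the various products $\opP_v\opU_w$, $\opU_v\opU_w$ that appear, and making sure the eigenvalue shift fed into the $q$-difference equation has the correct sign so that $\Psi$ produces $q^{+1}$ rather than $q^{-1}$ in the linear factor. I would organize this by fixing once and for all the commutation constants $\opX\,G = q^{2c_G}\,G\,\opX$ for each relevant monomial $G\in\{\opU_v,\opU_w,\opP_v,\opP_w,\opP_v\opU_w,\opU_v\opU_w,\opP_v\opP_w\}$ in a small table, then reading (1)--(4) off mechanically: $c_G=0$ gives the "no dilogarithm correction" cases and $c_G=\pm 1$ gives the telescoping product of linear factors via \cref{lem:q-diff_eq}. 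I do not expect any conceptual difficulty beyond this, since the structure is identical to the $\Ad(\opS_{vw})$ computation already carried out in the proof of \cref{thm:K_FG}, only now phrased in terms of the Weyl generators rather than the $\overline{X}$-variables.
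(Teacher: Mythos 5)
Your overall strategy is exactly the paper's: write $\opT_{vw}=\Psi_{\bp}(\opX)\,\opS_{vw}$ with $\opX:=\dbra{\opP_v^{-1}\opU_v\opP_w}$, first compute the conjugation action of $\opS_{vw}$ on the generators, then observe that $\opX$ commutes with $\opU_v\opU_w$, $\opP_v\opU_w$ and $\opP_w$ so that (1)--(3) pass through $\Psi_{\bp}(\opX)$ untouched, and finally use the $q$-difference relation of \cref{lem:q-diff_eq} for (4). However, there is a concrete error in your table of $\opS_{vw}$-conjugations: $\opS_{vw}\opP_w\opS_{vw}^{-1}$ is \emph{not} of the form $q^{?}\opU_v^{-1}\opP_w$. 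A direct computation (either by moving $\opP_w$ through $\sum_{i,j}q^{2ij}\opU_w^i\opP_v^j$, or on the basis using $\opS_{vw}\ket{k_v,k_w}=\ket{k_v-k_w,k_w}$ as in \cref{lem:S_action}) gives $\opS_{vw}\opP_w=\opP_w\opP_v^{-1}\opS_{vw}$, with no power of $q$ since $\opP_v$ and $\opP_w$ commute. This is precisely the input that relation (3) needs: $\opS_{vw}\opP_v\opP_w=\opP_v\opP_w\opP_v^{-1}\opS_{vw}=\opP_w\opS_{vw}$. With your guessed formula the product $\opP_v\opP_w$ would be sent to $q^{?}\opP_v\opU_v^{-1}\opP_w\neq\opP_w$ and (3) would not come out, so this formula must be corrected before the argument closes; the verification on the basis that you promise would indeed catch it.

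A second, smaller point concerns (4): no conjugation of $\opX$ by $\opS_{vw}^{\pm1}$ enters at all. Since $\opS_{vw}$ commutes with $\opP_v$, one has $\opT_{vw}\opP_v=\Psi_{\bp}(\opX)\opP_v\opS_{vw}=\opP_v\Psi_{\bp}(q^{2}\opX)\opS_{vw}$, because $\opP_v^{-1}\opX\opP_v=q^{2}\opX$; the $q$-difference relation $\Psi_{\bp}(q^{2}\opX)=(p^-+qp^+\opX)\Psi_{\bp}(\opX)$ then yields $\opP_v\bigl(p^-+p^+\opP_v^{-1}\opU_v\opP_w\bigr)\Psi_{\bp}(\opX)\opS_{vw}=(p^-\opP_v+p^+\opU_v\opP_w)\opT_{vw}$, where the monomial $\opU_v\opP_w$ arises simply from the left multiplication $\opP_v\cdot q\opX=\opU_v\opP_w$ (using $q\opX=\opP_v^{-1}\opU_v\opP_w$), not from any ``$\opS_{vw}^{-1}$-conjugation turning $\opX$ into $\opU_v\opP_w$''. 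With these two fixes your plan reproduces the paper's proof verbatim.
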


\begin{proof}
Let us write $\bp:=\bp_\alpha^\tri$ for short.
We first note that the conjugation action of $\opS_{vw}$ is given by
\begin{align}
    \begin{aligned}\label{eq:S_comm}
    &\opS_{vw}\opU_v=\opU_v\opU_w \opS_{vw}, & &\opS_{vw}\opP_v=\opP_v \opS_{vw}, \\
    &\opS_{vw}\opU_w=\opU_w \opS_{vw}, & &\opS_{vw}\opP_w=\opP_w\opP_v^{-1} \opS_{vw}. 
    \end{aligned}
\end{align}
For instance, the first one is verified as
\begin{align*}
    \opS_{vw}\opU_v= \opU_v \sum_{i,j} q^{2(i-1)j}\opU_w^i \opP_v^j = \opU_v\opU_w \opS_{vw}.
\end{align*}
Since $\opU_v \opU_w$, $\opP_v \opU_w$ and $\opP_w$ commutes with $\dbra{\opP_v^{-1}\opU_v\opP_w}$, the first three statements are verified as
\begin{align*}
    \opT_{vw} \opU_v &= \Psi_\bp(\dbra{\opP_v^{-1}\opU_v\opP_w})\opU_v \opU_w \opS_{vw} =\opU_v \opU_w \opT_{vw} \\
    \opT_{vw}\opP_v \opU_w &= \Psi_\bp(\dbra{\opP_v^{-1}\opU_v\opP_w})\opP_v \opU_w \opS_{vw} = \opP_v \opU_w \opT_{vw} \\
    \opT_{vw} \opP_v \opP_w &= \Psi_\bp(\dbra{\opP_v^{-1}\opU_v\opP_w})\opP_w \opS_{vw} = \opP_w \opT_{vw}.
\end{align*}
The last one is computed as
\begin{align*}
    \opT_{vw}\opP_v &= \Psi_\bp(\dbra{\opP_v^{-1}\opU_v\opP_w})\opP_v \opS_{vw} =\opP_v \Psi_\bp(q^2\dbra{\opP_v^{-1}\opU_v\opP_w}) \opS_{vw}\\
    &= \opP_v(p^- + p^+\opP_v^{-1}\opU_v \opP_w) \opT_{vw}.
\end{align*}
Here we note $\dbra{\opP_v^{-1}\opU_v\opP_w}=q^{-1}\opP_v^{-1}\opU_v\opP_w$ and use the $q$-difference relation (\cref{lem:q-diff_eq}).
\end{proof}

\subsubsection{Proof of $\opT_{uv}\opT_{uw}\opT_{vw}=\opT_{vw}\opT_{uv}$}

\begin{lem}\label{lem:S_action}
The operator $\opS_{vw}$ acts on the $(v,w)$-th components of $V_{\bD}$ as $\ket{k_v,k_w} \mapsto \ket{k_v-k_w,k_w}$.
\end{lem}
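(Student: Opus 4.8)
The claim is that $\opS_{vw}$ acts on the $(v,w)$-components of $V_{\bD}$ by $\ket{k_v,k_w}\mapsto\ket{k_v-k_w,k_w}$. Recalling the definition $\opS_{vw}=\frac1N\sum_{i,j=0}^{N-1}q^{2ij}\opU_w^i\opP_v^j$ from \eqref{eq:flip_op}, the strategy is simply to apply each factor to the basis vector $\ket{k_v,k_w}$ and sum, exactly as in the computation of $\det\opS_{vw}$ carried out above, but now keeping track of the resulting basis vectors rather than scalars.

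\textbf{Key steps.} First I apply $\opP_v^j$ to $\ket{k_v,k_w}$, which gives $\ket{k_v+j,k_w}$. Then $\opU_w^i$ multiplies this by the scalar $q^{2ik_w}$, leaving $q^{2ik_w}\ket{k_v+j,k_w}$. Thus
\begin{align*}
    \opS_{vw}\ket{k_v,k_w} = \frac1N\sum_{i,j\in\bZ_N} q^{2ij}q^{2ik_w}\ket{k_v+j,k_w}
    = \frac1N\sum_{j\in\bZ_N}\Big(\sum_{i\in\bZ_N}q^{2i(j+k_w)}\Big)\ket{k_v+j,k_w}.
\end{align*}
The inner sum over $i$ is the standard character-orthogonality identity $\sum_{i\in\bZ_N}q^{2i(j+k_w)}=N\delta_{j,-k_w}$ (valid since $q^2$ is a primitive $N$-th root of unity), so only the term $j=-k_w$ survives, yielding $\ket{k_v-k_w,k_w}$ as claimed.

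\textbf{Obstacle.} There is essentially no obstacle here: the only point requiring any care is that $j$ ranges over $\bZ_N$ and the shift operator $\opP_v$ is genuinely cyclic of order $N$, so the index $j=-k_w$ picked out by the delta is a well-defined element of $\bZ_N$ and the output vector $\ket{k_v-k_w,k_w}$ makes sense with indices read modulo $N$. One might also remark for consistency that this is compatible with the conjugation relations \eqref{eq:S_comm}: since $\opS_{vw}\opU_v=\opU_v\opU_w\opS_{vw}$ translates under the action on $\ket{k_v,k_w}$ into $q^{2(k_v-k_w)}=q^{2k_v}q^{-2k_w}$ — wait, one should note $\opU_w$ acts on $\ket{k_v-k_w,k_w}$ by $q^{2k_w}$, giving $q^{2(k_v-k_w)}q^{2k_w}=q^{2k_v}$, matching $\opS_{vw}\opU_v$ acting as $q^{2k_v}$ — so the formula is self-consistent.
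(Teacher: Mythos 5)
Your proof is correct and follows essentially the same route as the paper: a direct computation of $\opS_{vw}$ on the standard basis $\ket{k_v,k_w}$, using $\sum_{i\in\bZ_N}q^{2i(j+k_w)}=N\delta_{j,-k_w}$ to collapse the double sum. The added consistency check against \eqref{eq:S_comm} is fine but not needed.
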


\begin{proof}
\begin{align*}
    \opS_{vw}\ket{k_v,k_w} &= \frac{1}{N} \sum_{i,j \in \bZ_N} q^{2ij} \opU_w^i \opP_v^j \ket{k_v,k_w} = \frac{1}{N}\sum_{i,j \in \bZ_N} q^{2ij} q^{2k_w i} \ket{k_v+2j,k_w} \\
    &= \frac{1}{N}\sum_{j \in \bZ_N} N\delta_{j
    +k_w,0} \ket{k_v+2j,k_w} = \ket{k_v-k_w,k_w}. 
\end{align*}
\end{proof}

In order for the proof of the pentagon relation, let us label the coefficient tuples associated to the edges as in \cref{fig:pentagon_proof}. The five parameters $\bp,\bp',\bp'',\br,\br'$ are related to each other by \eqref{eq:coeff_relation}. 
%Let us label the relevant dotted triangulations and the edges as in \cref{fig:pentagon_proof}. Let
% \begin{align*}
%     \bp:= \bp_\alpha^{\tri_0}, \quad \br:= \bp_\beta^{\tri_0}, \quad \bp':= \bp_\beta^{\tri_1},\quad \br':= \bp_\beta^{\tri_4}, \quad \bp'':= \bp_\beta^{\tri_2}.
% \end{align*}

% Then the statement is equivalent to 
% \begin{align*}
%     &\Psi_\br(\dbra{\opP_u^{-1}\opU_u\opP_v}) \opS_{uv} 
%     \Psi_{\bp'}(\dbra{\opP_u^{-1}\opU_u\opP_w}) \opS_{uw}
%     \Psi_{\bp''}(\dbra{\opP_v^{-1}\opU_v\opP_w}) \opS_{vw} \\
%     &=
%     \Psi_\bp(\dbra{\opP_v^{-1}\opU_v\opP_w}) \opS_{vw} 
%     \Psi_{\br'}(\dbra{\opP_u^{-1}\opU_u\opP_v}) \opS_{uv}.
% \end{align*}

It is easy to see 
\begin{align}\label{eq:S_pentagon}
    \opS_{uv}\opS_{uw} \opS_{vw} = \opS_{vw} \opS_{uv}.
\end{align}
Indeed, by \cref{lem:S_action}, both sides send $\ket{k_u,k_v,k_w}$ to $\ket{k_u-k_v,k_v-k_w,k_w}$ 
for any $k_u,k_v,k_w \in \bZ_N$. 

Then by using the commutation relations \eqref{eq:S_comm} to move all the $\opS$-operator to the right and by applying \eqref{eq:S_pentagon}, the statement can be rewritten as
\begin{align*}%\label{eq:pentagon_proof}
    &\Psi_\br(\dbra{\opP_u^{-1}\opU_u\opP_v})  
    \Psi_{\bp'}(\dbra{\opP_u^{-1}\opU_u\opU_v\opP_w}) 
    \Psi_{\bp''}(\dbra{\opP_v^{-1}\opU_v\opP_w})=
    \Psi_\bp(\dbra{\opP_v^{-1}\opU_v\opP_w}) 
    \Psi_{\br'}(\dbra{\opP_u^{-1}\opU_u\opP_v}).
\end{align*}
Note that the operators $\opU:=\dbra{\opP_v^{-1}\opU_v\opP_w}$, $\opP:=\dbra{\opP_u^{-1}\opU_u\opP_v}$ satisfy $\opU\opP=q^2 \opP\opU$. Then the asserted equation above can be further rewritten as
\begin{align}\label{eq:pentagon_proof}
    \Psi_\br(\opP)  
    \Psi_{\bp'}(\dbra{\opU\opP}) 
    \Psi_{\bp''}(\opU)=
    \Psi_\bp(\opU) 
    \Psi_{\br'}(\opP).
\end{align}
Now observe that the mutation relation \eqref{eq:coeff_relation} gives exactly the parameter relation \eqref{eq:pentagon_coeff_rel}. Therefore \cref{thm:parameter_coeff} tells us that the relation \eqref{eq:pentagon_proof} is exactly the pentagon relation in \cref{thm:pentagon}. Thus $\opT_{uv}\opT_{uw}\opT_{vw}=\opT_{vw}\opT_{uv}$ holds. 

\subsubsection{Proof of $\opA_v \opT_{vw} \opA_w =\opA_w \opT_{wv} \opA_v$}

\begin{lem}\label{lem:S_symmetry}
Let $\widetilde{\opS}_{vw}:=\opA_v \opS_{vw}\opA_v^{-1}$. Then it acts as $\widetilde{\opS}_{vw}\ket{k_v,k_w} = q^{2k_v k_w} \ket{k_v,k_w}$. In particular, it has the symmetry $\widetilde{\opS}_{vw}=\widetilde{\opS}_{wv}$. 
\end{lem}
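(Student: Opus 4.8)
\textbf{Proof proposal for \cref{lem:S_symmetry}.}

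The plan is to compute the action of $\widetilde{\opS}_{vw} = \opA_v \opS_{vw} \opA_v^{-1}$ directly on the standard basis vectors $\ket{k_v,k_w}$ of the $(v,w)$-components, using the explicit formulas for $\opA_v^{\pm 1}$ from \cref{lem:A_comm} and the action of $\opS_{vw}$ from \cref{lem:S_action}. Since $\opA_v$ only touches the $v$-component, the computation reduces entirely to a chain of $\bZ_N$-Fourier-type sums in the variable $k_v$, with $k_w$ a spectator. First I would apply $\opA_v^{-1}$ to $\ket{k_v,k_w}$; from \cref{lem:A_comm} one reads off that $\opA_v$ cyclically permutes the three bases $\ket{k}\mapsto \zeta_A\ket{\mbb{k}}\mapsto \dots$, so $\opA_v^{-1}$ sends $\ket{k_v}$ to a constant multiple of $\ket{\mb{\,\cdot\,}}$-type vector (one inverse power down the cycle); expanding this in the standard basis produces a single Fourier sum $\sum_{m} q^{\pm 2 k_v m}\ket{m}$ up to a $\gamma$-factor. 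Then $\opS_{vw}$ acts by the shift $\ket{m,k_w}\mapsto \ket{m-k_w,k_w}$ of \cref{lem:S_action}, which merely reindexes the sum and pulls out a factor $q^{2k_w m}$ after shifting. Finally applying $\opA_v$ collapses the sum back via the orthogonality relation $\frac1N\sum_j q^{2j\ell}=\delta_{\ell,0}$ (or the $\gamma$-Fourier formulas of \cref{lem:gamma_Fourier}), leaving $\ket{k_v,k_w}$ times a scalar; collecting the $q$-powers and the $\zeta_A$, $\zeta_\pm$, $\gamma$-factors, the claim is that everything except $q^{2k_v k_w}$ cancels — the $\zeta$ and $\gamma$ factors being exactly arranged for this by the normalization $\zeta_A^3 = (N\zeta_-)^{-1}$ already used in \cref{lem:A_comm}.

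The second assertion, $\widetilde{\opS}_{vw} = \widetilde{\opS}_{wv}$, is then immediate: the diagonal action $\ket{k_v,k_w}\mapsto q^{2k_v k_w}\ket{k_v,k_w}$ is manifestly symmetric under interchange of the two indices, since $q^{2k_v k_w} = q^{2 k_w k_v}$.

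The main obstacle — though it is bookkeeping rather than a genuine difficulty — is keeping track of the accumulated scalar prefactors and quadratic $q$-exponents through the three successive sums, and confirming that the $\gamma(k)^{\pm M}$ square-root factors introduced by $\opA_v^{\pm 1}$ (via \cref{def:square-roots,def:gamma_op}) combine to $1$ rather than to some stray $\gamma$. A safeguard is to verify the claim on eigenvalues abstractly instead: $\opS_{vw}$ is characterized (cf.\ the determinant computation preceding this lemma, where $\opS_{vw}\ket{\mb{k},\ell} = q^{-k\ell}\ket{\mb{k},\ell}$) by its conjugation action \eqref{eq:S_comm}, so one can alternatively deduce $\widetilde{\opS}_{vw}$ from $\Ad_{\opA_v}$ applied to \eqref{eq:S_comm}, obtaining that $\widetilde{\opS}_{vw}$ commutes with $\opU_v\opU_w$-type monomials appropriately, and then pin it down by its value on one basis vector — but the direct basis computation is the cleanest and is the route I would write up.
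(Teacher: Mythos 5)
Your proposal is correct and is essentially the paper's argument: the paper simply conjugates the sum formula for $\opS_{vw}$ using $\Ad_{\opA_v}(\opP_v)=\opU_v^{-1}$ from \cref{lem:A_comm}, obtaining $\widetilde{\opS}_{vw}=\tfrac1N\sum_{i,j\in\bZ_N}q^{2ij}\opU_w^i\opU_v^{-j}$, which is manifestly diagonal in the standard basis and gives the eigenvalue $q^{2k_vk_w}$ by a single orthogonality sum. Your step-by-step basis computation works too (the prefactors cancel as $(N\zeta_A)^{-1}\gamma(k_v)\cdot N\zeta_A\gamma(k_v)^{-1}=1$, with no appeal to $\zeta_A^3=(N\zeta_-)^{-1}$), though note that $\opA_v^{\pm1}$ only introduces $\gamma(k)^{\pm1}$ factors, not the $\gamma(k)^{\pm M}$ square-root factors you mention, so the bookkeeping you flag as the main obstacle is lighter than you fear.
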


\begin{proof}
From \cref{lem:A_comm}, we get $\widetilde{\opS}_{vw}=\frac 1 N \sum_{i,j\in \bZ_N} q^{2ij} \opU_w^i \opU_v^{-i}$. The remaining statements are easily verified. 
\end{proof}

\begin{lem}
$\opT_{vw}= \opS_{vw} \Psi_{\bp_\alpha^\tri}(\dbra{\opU_v\opU_w^{-1}\opP_w})$. 
\end{lem}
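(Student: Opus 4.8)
The statement to prove is the alternative factorization $\opT_{vw}= \opS_{vw}\,\Psi_{\bp_\alpha^\tri}(\dbra{\opU_v\opU_w^{-1}\opP_w})$, where the definition in \eqref{eq:flip_op} reads $\opT_{vw}=\Psi_{\bp_\alpha^\tri}(\dbra{\opP_v^{-1}\opU_v\opP_w})\,\opS_{vw}$. So the claim is equivalent to the commutation identity
\begin{align*}
    \Psi_{\bp_\alpha^\tri}(\dbra{\opP_v^{-1}\opU_v\opP_w})\,\opS_{vw} = \opS_{vw}\,\Psi_{\bp_\alpha^\tri}(\dbra{\opU_v\opU_w^{-1}\opP_w}),
\end{align*}
which by the functional-analysis convention \cref{def:funct_analysis} (the operator $\Psi_\bp$ only depends on the cyclic operator plugged in, and is built naturally from it) follows once I show that conjugation by $\opS_{vw}$ carries the cyclic operator $\dbra{\opU_v\opU_w^{-1}\opP_w}$ to $\dbra{\opP_v^{-1}\opU_v\opP_w}$, i.e.
\begin{align*}
    \opS_{vw}\,\dbra{\opU_v\opU_w^{-1}\opP_w}\,\opS_{vw}^{-1} = \dbra{\opP_v^{-1}\opU_v\opP_w}.
\end{align*}
This is the only substantive step.

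\textbf{Key steps.} First I would record the conjugation rules for $\opS_{vw}$ from \eqref{eq:S_comm}: $\opS_{vw}\opU_v\opS_{vw}^{-1}=\opU_v\opU_w$, $\opS_{vw}\opP_v\opS_{vw}^{-1}=\opP_v$, $\opS_{vw}\opU_w\opS_{vw}^{-1}=\opU_w$, $\opS_{vw}\opP_w\opS_{vw}^{-1}=\opP_w\opP_v^{-1}$. Applying these factor by factor to $\opU_v\opU_w^{-1}\opP_w$ gives $(\opU_v\opU_w)(\opU_w^{-1})(\opP_w\opP_v^{-1})=\opU_v\opP_w\opP_v^{-1}$. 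Since conjugation is an algebra automorphism it respects the Weyl-ordering normalization (the $q$-power prefactor in \cref{def:Weyl} is determined by the commutation exponents, which are automorphism-invariant), so $\opS_{vw}\dbra{\opU_v\opU_w^{-1}\opP_w}\opS_{vw}^{-1}=\dbra{\opU_v\opP_w\opP_v^{-1}}$. Finally, by the permutation-invariance of Weyl ordering, $\dbra{\opU_v\opP_w\opP_v^{-1}}=\dbra{\opP_v^{-1}\opU_v\opP_w}$, which is exactly the operator appearing in $\opT_{vw}$. Plugging this into $\Psi_{\bp_\alpha^\tri}$ via \cref{def:funct_analysis} and multiplying on the right by $\opS_{vw}$ yields the claim; one should note that both $\dbra{\opU_v\opU_w^{-1}\opP_w}$ and $\dbra{\opP_v^{-1}\opU_v\opP_w}$ are cyclic operators (their $N$-th power is $1$ by \cref{lem:Weyl}), so $\Psi_{\bp_\alpha^\tri}$ of each is well-defined in the sense of \cref{def:funct_analysis}.

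\textbf{Main obstacle.} There is no real obstacle here — the lemma is a bookkeeping consequence of \eqref{eq:S_comm} together with the automorphism-compatibility of Weyl ordering. The only point requiring a moment of care is the passage $\Ad_{\opS_{vw}}(\Psi_\bp(\opX))=\Psi_\bp(\Ad_{\opS_{vw}}(\opX))$: this holds because $\Psi_\bp(\opX)$ is, by \cref{def:funct_analysis}, a polynomial in the cyclic operator $\opX$ with coefficients depending only on the fixed scalars $\Psi_\bp(q^{2k})$, and conjugation by an invertible operator commutes with forming such polynomials. I would phrase the proof in two or three lines invoking \eqref{eq:S_comm}, the permutation-invariance in \cref{def:Weyl}, and \cref{def:funct_analysis}, without any further computation.
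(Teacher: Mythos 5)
Your proposal is correct and is essentially the paper's argument: the paper's proof consists of the single line ``Follows from \eqref{eq:S_comm}'', i.e.\ exactly the observation that conjugation by $\opS_{vw}$ carries $\dbra{\opU_v\opU_w^{-1}\opP_w}$ to $\dbra{\opP_v^{-1}\opU_v\opP_w}$ and hence commutes past $\Psi_{\bp_\alpha^\tri}$ via the functional calculus of \cref{def:funct_analysis}. You simply spell out the details (preservation of the Weyl-ordering prefactor and the spectral/polynomial argument for $\Ad_{\opS_{vw}}\Psi_\bp(\opX)=\Psi_\bp(\Ad_{\opS_{vw}}\opX)$), all of which check out.
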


\begin{proof}
Follows from \eqref{eq:S_comm}.
\end{proof}

Observe that $\opT_{vw}$ and $\opT_{wv}$ in the statement involves the same coefficient $\bp:=\bp_{\alpha}^\tri$, where $\alpha \in e_{\interior}(\tri)$ is the edge shared by $v,w$ in the initial triangulation. 
By using the previous lemma, the asserted relation is equivalent to
\begin{align*}
    \widetilde{\opS}_{vw}\opA_v \Psi_{\bp}(\dbra{\opU_v\opU_w^{-1}\opP_w}) \opA_w = \widetilde{\opS}_{wv}\opA_w \Psi_{\bp}(\dbra{\opU_w\opU_v^{-1}\opP_v}) \opA_v.
\end{align*}
Then we compute the action of the left-hand side as
\begin{align*}
    \widetilde{\opS}_{vw}\opA_v \Psi_{\bp}(\dbra{\opU_v\opU_w^{-1}\opP_w}) \opA_w \ket{k_v,k_w} &= \zeta_A\widetilde{\opS}_{vw}\opA_v \Psi_{\bp}(\dbra{\opU_v\opU_w^{-1}\opP_w}) \ket{k_v,\mbb{k_w}} \\
    &= \zeta_A\Psi_{\bp}(q^{2(k_v+k_w)})\widetilde{\opS}_{vw}\opA_v  \ket{k_v,\mbb{k_w}} \\
    &= \zeta_A\Psi_{\bp}(q^{2(k_v+k_w)})\widetilde{\opS}_{vw}\ket{\mbb{k_v},\mbb{k_w}}.
\end{align*}
Here we used computed the dilogarithm term by \cref{def:funct_analysis} and \cref{lem:slant_diagonal}. Similarly, the right-hand side is computed as 
\begin{align*}
    \widetilde{\opS}_{wv}\opA_w \Psi_{\bp}(\dbra{\opU_w\opU_v^{-1}\opP_v}) \opA_v \ket{k_v,k_w} &= \widetilde{\opS}_{wv}\opA_w \Psi_{\bp}(\dbra{\opU_w\opU_v^{-1}\opP_v}) \ket{\mbb{k_v},k_w} \\
    &= \zeta_A\Psi_{\bp}(q^{2(k_v+k_w)})\widetilde{\opS}_{wv}\opA_v  \ket{\mbb{k_v},k_w} \\
    &= \zeta_A\Psi_{\bp}(q^{2(k_v+k_w)})\widetilde{\opS}_{wv}\ket{\mbb{k_v},\mbb{k_w}}.
\end{align*}
Then by the symmetry $\widetilde{\opS}_{vw}=\widetilde{\opS}_{wv}$, we get the desired relation. 

\subsubsection{Proof of $\opT_{vw}\opA_v \opT_{wv} = \zeta \opA_v \opA_w \opP_{(12)}$}

Let $\bp:=\bp_{\alpha}^\tri$, where $\alpha \in e_{\interior}(\tri)$ is the edge shared by $v,w$ in the initial triangulation. 
We can rewrite the asserted relation as 
\begin{align*}
    \opS_{vw} \Psi_\bp(\dbra{\opU_v\opU_w^{-1}\opP_w}) \opA_v \Psi_{\bp^{-1}}(\dbra{\opP_w^{-1}\opU_w\opP_v})\opS_{wv} = \zeta \opA_v\opA_w \opP_{(vw)},
\end{align*}
or equivalently
\begin{align}\label{eq:charge_proof}
    \Psi_\bp(\dbra{\opU_v\opU_w^{-1}\opP_w}) \opA_v \Psi_{\bp^{-1}}(\dbra{\opP_w^{-1}\opU_w\opP_v}) = \zeta \opS_{vw} ^{-1}\opA_v\opA_w \opP_{(vw)}\opS_{wv}^{-1}.
\end{align}
We compute the matrix coefficients $\bra{m_v,\mbb{m_w}}\cdots \ket{\mb{n_v},\mbb{n_w}}$ of both sides of \eqref{eq:charge_proof}. For the left-hand side, 
\begin{align}
    &\bra{m_v,\mbb{m_w}} \Psi_\bp(\dbra{\opU_v\opU_w^{-1}\opP_w}) \opA_v \Psi_{\bp^{-1}}(\dbra{\opP_w^{-1}\opU_w\opP_v})\ket{\mb{n_v},\mbb{n_w}} \nonumber\\
    &=\Psi_\bp(q^{2(m_v+m_w)})\bra{m_v,\mbb{m_w}}\opA_v\ket{\mb{n_v},\mbb{n_w}} \Psi_{\bp^{-1}}(q^{2(n_v-n_w)}) \nonumber\\
    &= N\zeta_A \gamma(n_v)^{-1}\braket{m_v,\mbb{m_w}}{-n_v,\mbb{n_w}} \Psi_\bp(q^{2(m_v+m_w)})\Psi_{\bp^{-1}}(q^{2(n_v-n_w)})\nonumber\\
    &= \delta_{m_v+n_v,0}\delta_{m_w,n_w}N^2\zeta_A \gamma(n_v)^{-1}\Psi_\bp(q^{2(m_v+m_w)})\Psi_{\bp^{-1}}(q^{-2(m_v+m_w)})\nonumber\\
    &= \delta_{m_v+n_v,0}\delta_{m_w,n_w}N^2\zeta_A \gamma(n_v)^{-1}\zeta_\inv\gamma(m_v+m_w) \nonumber\\
    &= \delta_{m_v+n_v,0}\delta_{m_w,n_w}N^2\zeta_A\zeta_\inv \gamma(m_w)q^{2m_vm_w}. \label{eq:central_LHS}
\end{align}
Here we have used the inversion relation (\cref{thm:inv}(3)) in the fourth equality. 

For the right-hand side, we first compute
% first note $\opS_{wv}^{-1}\ket{\mb{n_v},\mbb{n_w}}=\ket{\mb{n_v},\mbb{n_v+n_w}}$. Indeed, 
\begin{align*}
    &\opS_{wv}^{-1}\ket{\mb{n_v},\mbb{n_w}} = \sum_{j,k \in \bZ_N} q^{-2jn_v}q^{-2kn_w} \gamma(k)^{-1}\opS_{wv}^{-1}\ket{j,k} = \sum_{j,k \in \bZ_N} q^{-2jn_v}q^{-2kn_w} \gamma(k)^{-1}\ket{j,j+k}, \\
    &\bra{m_v,\mbb{m_w}}\opS_{vw}^{-1} = \sum_{i \in \bZ_N} \gamma(i) q^{2i m_w} \bra{m_v,i} \opS_{vw}^{-1} =\sum_{i \in \bZ_N} \gamma(i) q^{2i m_w} \bra{m_v-i,i}. 
\end{align*}
Then 
\begin{align}
    &\zeta \bra{m_v,\mbb{m_w}}\opS_{vw} ^{-1}\opA_v\opA_w \opP_{(vw)}\opS_{wv}^{-1}\ket{\mb{n_v},\mbb{n_w}} \nonumber\\
    &= \zeta\sum_{i,j,k \in \bZ_N} \gamma(i) q^{2i m_w} q^{-2jn_v}q^{-2kn_w} \gamma(k)^{-1}\bra{m_v-i,i}\opA_v\opA_w \opP_{(vw)}\ket{j,j+k}\nonumber \\
    &= \zeta\sum_{i,j,k \in \bZ_N} \gamma(i) q^{2i m_w} q^{-2jn_v}q^{-2kn_w} \gamma(k)^{-1}\zeta_A^2 \braket{m_v-i,i}{\mbb{j+k},\mbb{j}} \nonumber\\
    &= \zeta\zeta_A^2\sum_{i,j,k \in \bZ_N} \gamma(i) q^{2i m_w} q^{-2\textcolor{red}{j}n_v}q^{-2kn_w} \gamma(k)^{-1} \gamma(m_v-i)^{-1}q^{-2(m_v-i)(\textcolor{red}{j}+k)} \gamma(i)^{-1}q^{-2i\textcolor{red}{j}}\nonumber \\
    &= \zeta\zeta_A^2 N\delta_{m_v+n_v,0} \sum_{i,k \in \bZ_N} q^{2i m_w} q^{-2\textcolor{red}{k}n_w} \gamma(\textcolor{red}{k})^{-1}\gamma(m_v-i)^{-1} q^{-2(m_v-i)\textcolor{red}{k}} \nonumber\\
    &= \zeta\zeta_A^2 \zeta_- N\delta_{m_v+n_v,0} \sum_{i \in \bZ_N} q^{2i m_w} \gamma(m_v+n_w-i)\gamma(m_v-i)^{-1}\nonumber\\
    &= \zeta\zeta_A^2 \zeta_- N\delta_{m_v+n_v,0} \sum_{i \in \bZ_N} q^{2i m_w} \gamma(n_w)q^{2(m_v-i)n_w}\nonumber\\
    &= \zeta\zeta_A^2 \zeta_- N^2\delta_{m_v+n_v,0} \gamma(n_w) \delta_{m_w,n_w}q^{2m_vn_w}. \label{eq:central_RHS}
\end{align}
By comparing \eqref{eq:central_RHS} with \eqref{eq:central_LHS}, we see that $\opT_{vw}\opA_v \opT_{wv} = \zeta \opA_v \opA_w \opP_{(12)}$ holds if and only if 
$\zeta_A\zeta_{\inv} = \zeta \zeta_A^2 \zeta_-$, namely $\zeta = \zeta_\inv(\zeta_A \zeta_-)^{-1}$.

\subsection{Matrix coefficients of the Dehn twist operators}\label{subsec:proof_Dehn}

\subsubsection{Local intertwiners: Proof of \cref{prop:Dehn_twist}}\label{subsubsec:proof_Dehn}
(1): Since $\bra{\mbb{m},\mb{n}}$ is a $q^{2(n-m)}$-eigenvector for $\Psi_{\bp_1}([\opP_v^{-1}\opU_v\opP_w])$ and $\opP_{(vw)}$ permutes the components, we get
\begin{align*}
    \bra{\mbb{m},\mb{n}} \opV_{T_a}^{\bD}(\bbp_a) \ket{\mbb{k},\mb{\ell}} &= \Psi_{\bp_1}(q^{2(n-m)}) \bra{\mbb{m},\mb{n}}\opS_{vw}\opA_w^{-1} \ket{\mb{\ell},\mbb{k}} \\
    &= \zeta_A^{-1}\Psi_{\bp_1}(q^{2(n-m)}) \bra{\mbb{m},\mb{n}}\opS_{vw} \ket{\mb{\ell},k} \\
    &= \zeta_A^{-1}\zeta_+\Psi_{\bp_1}(q^{2(n-m)}) \gamma(m-\ell)^{-1}q^{2(n-\ell)k}.
\end{align*}
Here we have computed the matrix coefficient of $\opS_{vw}$ as
\begin{align*}
    \bra{\mbb{m},\mb{n}}\opS_{vw} \ket{\mb{\ell},k} &= \sum_{p \in \bZ_N} q^{-2\ell p}\bra{\mbb{m},\mb{n}}\opS_{vw} \ket{p,k} \\
    &= \sum_{p \in \bZ_N} q^{-2\ell p}\braket{\mbb{m},\mb{n}}{p-k,k} \\
    &= \sum_{p \in \bZ_N} q^{-2\ell p}\gamma(p-k)q^{2m(p-k)}q^{2nk} \\
    &= \sum_{p \in \bZ_N} q^{-2\ell (p+k)}\gamma(p)q^{2mp}q^{2nk} \\
    &= \zeta_+ \gamma(m-\ell)^{-1}q^{2(n-\ell)k}
\end{align*}
by using \cref{lem:gamma_Fourier}. With a notice that $\zeta_A^{-1}\zeta_+=N^{1/2}i^{(N-1)/6}\cdot N^{1/2}i^{(1-N)/2}=N i^{(1-N)/3}$, we obtain the asserted formula. 

The local quantum trace is computed as\footnote{Here one should notice that when we represent a linear map $f: V_{\bD} \to V_{\bD}$ in the momentum basis as $f\ket{\mb{\ell}}=\sum_{n \in \bZ_N} f_{n\ell} \ket{\mb{n}}$, we get $\bra{\mb{n}}f \ket{\mb{\ell}}=N f_{n\ell}$ by our normalization \eqref{eq:momentum_normalization}. We similarly get another $N$ by \eqref{eq:slant_normalization}.} 
\begin{align*}
    Z_N(T_a;\bbp_a) &= \frac{1}{N^2}\left|\sum_{k,\ell \in \bZ_N} \bra{\mbb{k},\mb{\ell}} \opV_{T_a}^{\bD}(\bbp_a) \ket{\mbb{k},\mb{\ell}} \right|\\
    &= \frac 1 N\left|\sum_{k,\ell \in \bZ_N}\Psi_{\bp_1}(q^{2(\ell-k)}) \gamma(k-\ell)^{-1} \right|\\
    &= \left|\sum_{u \in \bZ_N}\Psi_{\bp_1}(q^{2u}) \gamma(u)^{-1}\right|
\end{align*}
by changing the variable $\ell \mapsto u:=\ell-k$ and summing $k$. 

(2): Similarly, 
\begin{align*}
    \bra{\mbb{m},\mb{n}} \opV_{T_b^{-1}}^{\bD}(\bbp_b) \ket{\mbb{k},\mb{\ell}} &= \Psi_{\bp_1}(q^{2(n-m)})\bra{\mbb{m},\mb{n}} \opS_{vw} \ket{\mbb{k},\mb{\ell}} = N \Psi_{\bp_1}(q^{2(n-m)})\gamma(k-m)q^{2(n-m-\ell)(m-k)}.
\end{align*}
Here,
\begin{align*}
    \bra{\mbb{m},\mb{n}} \opS_{vw} \ket{\mbb{k},\mb{\ell}} &=\sum_{p,r \in \bZ_N} \gamma(p)^{-1}q^{-2kp}q^{-2\ell r}\bra{\mbb{m},\mb{n}} \opS_{vw} \ket{p,r} \\
    &= \sum_{p,r \in \bZ_N}\gamma(p)^{-1}q^{-2kp}q^{-2\ell r} \braket{\mbb{m},\mb{n}}{p-r,r} \\
    &= \sum_{p,r \in \bZ_N}\gamma(p)^{-1}q^{-2kp}q^{-2\ell r} \gamma(p-r)q^{2m(p-r)}q^{2nr} \\
    &= \sum_{p,r \in \bZ_N}q^{-2kp}q^{-2\ell r} \gamma(r)q^{-2pr}q^{2m(p-r)}q^{2nr} \\
    &= \sum_{r \in \bZ_N}N\delta_{-k-r+m,0} q^{-2\ell r} \gamma(r)q^{-2mr}q^{2nr} \\
    &=N \gamma(k-m)q^{2(n-m-\ell)(m-k)}.
\end{align*}
Then
\begin{align*}
    Z_N(T_b^{-1};\bbp_b) &= \frac 1 N\left|\sum_{k,\ell \in \bZ_N}\Psi_{\bp_1}(q^{2(\ell-k)}) \right|= \left| \sum_{u \in \bZ_N}\Psi_{\bp_1}(q^{2u})\right|.
\end{align*}

\subsubsection{The irreducible intertwiner for $T_a$}\label{subsec:reduced_Dehn_a}
We give a proof of \cref{prop:reduced_Dehn_a}. 
Recall that $\ket{a_{\lambda,m}}=\ket{m,\mbb{m-\lambda}}$ and $\opV_{T_a}^{\bD}(\bbp_a)=\opT_{vw}\opA_w^{-1}\opP_{(vw)}$. 
We compute
\begin{align*}
    \bra{a_{\lambda,k}} \overline{V}_{T_a}^{\bD}(\bbp_a;L_a,\lambda) \ket{a_{\lambda,\ell}} &=\frac{1}{N^2}\sum_{m,n \in \bZ_N} \braket{k,\mbb{k-\lambda}}{\mbb{m},\mb{n}} \bra{\mbb{m},\mb{n}} \Psi_{\bp_1}([\opP_v^{-1}\opU_v \opP_w]) \opS_{vw}\opA_w^{-1}\opP_{(vw)}\ket{\ell,\mbb{\ell-\lambda}}\\
    &=\frac{1}{N^2}\sum_{m,n \in \bZ_N}\Psi_{\bp_1}(q^{2(n-m)})  \braket{k,\mbb{k-\lambda}}{\mbb{m},\mb{n}} \bra{\mbb{m},\mb{n}} \opS_{vw}\opA_w^{-1}\ket{\mbb{\ell-\lambda},\ell} \\
    &=\frac{1}{N^3} \zeta_A^{-1}\gamma(\ell)\sum_{m,n \in \bZ_N}\Psi_{\bp_1}(q^{2(n-m)})  \braket{k,\mbb{k-\lambda}}{\mbb{m},\mb{n}} \bra{\mbb{m},\mb{n}} \opS_{vw}\ket{\mbb{\ell-\lambda},\mb{-\ell}}.
\end{align*}
We have $\braket{k,\mbb{k-\lambda}}{\mbb{m},\mb{n}} = \gamma(k)^{-1}q^{-2km} \zeta_+ \gamma(k-\lambda-n)^{-1}$ by using \cref{lem:mbb-mb}, and the matrix coefficients of $\opS_{vw}$ are computed as
\begin{align*}
    \bra{\mbb{m},\mb{n}} \opS_{vw}\ket{\mbb{\ell-\lambda},\mb{-\ell}} &= \sum_{r,p \in \bZ_N} \gamma(r)^{-1}q^{-2r(\ell-\lambda)}q^{2p\ell} \bra{\mbb{m},\mb{n}} \opS_{vw} \ket{r,p} \\
    &= \sum_{r,p \in \bZ_N} \gamma(r)^{-1}q^{-2r(\ell-\lambda)}q^{2p\ell} \braket{\mbb{m},\mb{n}}{r-p,p} \\
    &= \sum_{r,p \in \bZ_N} \textcolor{red}{\gamma(r)}^{-1}q^{-2r(\ell-\lambda)}q^{2p\ell} \textcolor{red}{\gamma(r-p)}q^{2m(r-p)}q^{2np} \\
    &= \sum_{r,p \in \bZ_N} q^{-2r(\ell-\lambda)}q^{2p\ell} \gamma(p)q^{-2rp}q^{2m(r-p)}q^{2np} \\
    &= \sum_{p \in \bZ_N}N\delta_{-\ell+\lambda-p+m,0}q^{2p\ell}\gamma(p)q^{-2mp}q^{2np} \\
    &= N\gamma(m-\ell+\lambda)q^{2(\ell-m+n)(m-\ell+\lambda)}
\end{align*}
by \cref{lem:S_action}. Hence
\begin{align*}
    \bra{a_{\lambda,k}} \overline{V}_{T_a}^{\bD}(\bbp_a;L_a,\lambda) \ket{a_{\lambda,\ell}} 
    &=\frac{1}{N^2}\zeta_A^{-1}\zeta_+\frac{\gamma(\ell)}{\gamma(k)}\sum_{m,n \in \bZ_N}\Psi_{\bp_1}(q^{2(n-m)}) q^{-2km} \frac{\gamma(\ell-\lambda-m)}{\gamma(k-\lambda-n)}q^{2(\ell-m+n)(m-\ell+\lambda)} \\
    &=\frac{1}{N}i^{(1-N)/3}\frac{\gamma(\ell)}{\gamma(k)}\sum_{m,u \in \bZ_N}\Psi_{\bp_1}(q^{2u}) q^{-2km} \frac{\gamma(\ell-\lambda-m)}{\gamma(k-\lambda-m-u)}q^{2(\ell+u)(m-\ell+\lambda)} \\
    &=\frac{1}{N}i^{(1-N)/3}\frac{\gamma(\ell)}{\gamma(k)}\sum_{\textcolor{red}{m},u \in \bZ_N}\Psi_{\bp_1}(q^{2u}) q^{-2k\textcolor{red}{m}} \frac{\gamma(\ell-\lambda)}{\gamma(k-\lambda-u)} q^{-2\textcolor{red}{m}(\ell-k+u)} q^{2(\ell+u)(\textcolor{red}{m}-\ell+\lambda)} \\
    &= i^{(1-N)/3}\frac{\gamma(\ell)}{\gamma(k)}\frac{\gamma(\ell-\lambda)}{\gamma(k-\lambda)} \sum_{u \in \bZ_N}\Psi_{\bp_1}(q^{2u}) \gamma(u)^{-1}q^{2u(k-\lambda)} q^{2(\ell+u)(-\ell+\lambda)} \\
    &= i^{(1-N)/3}\frac{\gamma(\ell)}{\gamma(k)}\frac{\gamma(\ell-\lambda)}{\gamma(k-\lambda)} q^{2\ell(-\ell+\lambda)} \sum_{u \in \bZ_N}\Psi_{\bp_1}(q^{2u}) \gamma(u)^{-1}q^{2u(k-\ell)}. 
\end{align*}
Therefore, the irreducible quantum trace is
\begin{align*}
   \overline{Z}_N (T_a;\bbp_a;L_a,\lambda)= \frac 1 N \left|\sum_{k\in \bZ_N} q^{2k(-k+\lambda)} \sum_{u \in \bZ_N} \Psi_{\bp_1}(q^{2u})\gamma(u)^{-1} \right| = \frac{1}{N^{1/2}} \left| \sum_{u \in \bZ_N} \Psi_{\bp_1}(q^{2u})\gamma(u)^{-1} \right|.
\end{align*}
Here we used \cref{lem:Gauss_2} for the summation on $k$.  

\subsubsection{The irreducible intertwiner for $T_b^{-1}$}\label{subsec:reduced_Dehn_b}
We give a proof of \cref{prop:Dehn_b}.
We use the following, which is proved easily:
\begin{lem}\label{lem:basis_a+b}
Given $\nu \in \bZ_N$, the solution space of the equation $\oph_{a+b}\ket{\psi}=q^{2\nu}\ket{\psi}$ is $N$-dimensional, and spanned by
\begin{align*}
    \ket{(a+b)_{\nu,r}} := \sum_{m \in \bZ_N}\gamma(m)^{-1}q^{2(2r-\lambda)m}\ket{m,r-m}.
\end{align*}
They are  normalized so that $\braket{(a+b)_{\nu,r}}{(a+b)_{r',\nu'}}=N\delta_{rr'}\delta_{\nu\nu'}$. We have the base-change matrix
\begin{align*}
    \braket{b_{\mu,p}}{(a+b)_{\nu,r}} = \gamma(p)q^{2(-2r+\nu-\mu)p} \gamma(r)^3 q^{2(\mu-\nu) r}.
\end{align*}
\end{lem}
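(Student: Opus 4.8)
The plan is first to make the operator $\oph_{a+b}^{\bD}$ explicit. By the multiplicative extension in the definition of $\oph^{\bD}$ together with $\bi(a,b)=1$ on $\Sigma_1^1$, one has $\oph_{a+b}^{\bD}=\dbra{\oph_a^{\bD}\oph_b^{\bD}}$; substituting $\oph_a^{\bD}=\dbra{\opU_v\opU_w\opP_w^{-1}}$ and $\oph_b^{\bD}=\opU_w\opP_v$ from the computation preceding \cref{prop:torus_basis} and collecting the $q$-factors gives $\oph_{a+b}^{\bD}=q\,\opU_v\opP_v\opU_w^2\opP_w^{-1}$. Applying this monomial to $\ket{m,r-m}$ shows that it preserves each line $\{m+n=r\}$ and acts there as the shift $m\mapsto m+1$ with an exponent linear in $m$; hence the eigenspaces of $\oph_{a+b}^{\bD}$ are spanned by one Gaussian vector for each value of $r\in\bZ_N$. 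Solving the one-step recurrence $c_m/c_{m-1}=q^{2m-1}\cdot(\text{const})$ reproduces $c_m=\gamma(m)^{-1}q^{2(2r-\nu)m}$, i.e.\ the stated vectors $\ket{(a+b)_{\nu,r}}$, with eigenvalue $q^{2\nu}$ (the symbol printed as $\lambda$ in the displayed formula should read $\nu$). A dimension count then forces each eigenspace to be $N$-dimensional, consistently with $\dim V_{\bD}=N^2$.

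For the normalization I would take complex conjugates (using $\overline{\gamma(m)}=\gamma(m)^{-1}$ since $|q|=1$) and pair $\bra{(a+b)_{\nu,r}}$ with $\ket{(a+b)_{\nu',r'}}$ in the standard basis: the two position-basis inner products collapse to $\delta_{m,m'}$ and $\delta_{r,r'}$, the $\gamma$-factors cancel, and what remains is $\sum_{m}q^{2(\nu-\nu')m}=N\delta_{\nu\nu'}$, giving $\braket{(a+b)_{\nu,r}}{(a+b)_{\nu',r'}}=N\delta_{rr'}\delta_{\nu\nu'}$. For the base-change matrix I would expand $\bra{b_{\mu,p}}=\bra{\mb{\mu-p},p}=\sum_k q^{2k(\mu-p)}\bra{k,p}$ and pair it with $\ket{(a+b)_{\nu,r}}$; here no Gauss sum is needed, because $\ket{b_{\mu,p}}$ uses the $v$-momentum/$w$-position polarization while $\ket{(a+b)_{\nu,r}}$ is purely in the position basis, so the $w$-component pairing forces $m=r-p$ and the $v$-component contributes the single phase $q^{2(r-p)(\mu-p)}$. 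Reorganizing the surviving factor $\gamma(r-p)^{-1}$ via $\gamma(r-p)=\gamma(r)\gamma(p)q^{-2rp}$ and combining it with the quadratic phase $q^{4r^2}$ hidden in $q^{2(2r-\nu)(r-p)}$ produces the factor $\gamma(r)^3$, and collecting the remaining exponents yields exactly $\gamma(p)q^{2(-2r+\nu-\mu)p}\gamma(r)^3q^{2(\mu-\nu)r}$.

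The only real obstacle is bookkeeping: pinning down the Weyl-ordering constant in $\oph_{a+b}^{\bD}$ precisely (the coefficient is $q$, from $\bi(a,b)=1$ together with the internal $q$-factors of $\oph_a^{\bD}$ and $\oph_b^{\bD}$) and then tracking the quadratic exponents accurately through the reindexing $m\mapsto m-1$ in the eigenvalue check and through the regrouping of $\gamma$-factors in the base-change step. A convenient cross-check is that $\oph_{a+b}^{\bD}=q\,\opU_v\opP_v\opU_w^2\opP_w^{-1}$ is compatible with the commutation relation $\opV_{T_b^{-1}}^{\bD}(\bbp_b)\oph_a^{\bD}=\oph_{a+b}^{\bD}\opV_{T_b^{-1}}^{\bD}(\bbp_b)$ already used in \cref{subsub:example_torus}, and that converting the matrix coefficients computed there into the $\ket{(a+b)_{\nu,r}}$-basis via the base-change formula above is consistent with \cref{prop:Dehn_b}.
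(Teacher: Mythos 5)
Your proposal is correct. The paper states this lemma without proof (``which is proved easily''), and your route --- identifying $\oph_{a+b}=\dbra{\oph_a\oph_b}=q\,\opU_v\opP_v\opU_w^2\opP_w^{-1}$, diagonalizing antidiagonal by antidiagonal with Gaussian coefficients, and computing the pairings directly --- is the natural verification; it is also consistent with the paper's subsequent use of the lemma, in particular with the relation $\opT_{vw}\oph_a^{\bD}=\oph_{a+b}^{\bD}\opT_{vw}$, and your reading of the misprint ($\lambda$ should be $\nu$ in the displayed vector) is right. One trivial bookkeeping slip: the one-step ratio is $c_m/c_{m-1}=q^{-(2m-1)}q^{2(2r-\nu)}$, i.e.\ the exponent is $-(2m-1)$ rather than $2m-1$ (this sign is exactly what produces $\gamma(m)^{-1}$ instead of $\gamma(m)$); the eigenvector you state, its eigenvalue $q^{2\nu}$, the normalization, and the base-change coefficients all check out.
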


Now we are going to compute the matrix coefficients of $\opV_{T_b^{-1}}^{\bD}(\bbp_b;L_a,\lambda)=\gamma(\oph_b)^{1/2}\opT_{vw}$. 

\paragraph{\textbf{(1) Matrix coefficients of $\opT_{vw}$}}
We first prove 
\begin{align}\label{eq:coeff_a+b_a}
    \bra{(a+b)_{\nu,r}} \opT_{vw} \ket{a_{\lambda,\ell}} = N \delta_{r,\ell} \Psi_{\bp_1}(q^{2(2\ell-\lambda)}) \gamma(\ell)^{-3}q^{2\lambda \ell}. 
\end{align}
We factorize the left-hand side as
\begin{align*}
    \bra{(a+b)_{\nu,r}} \opT_{vw} \ket{a_{\lambda,\ell}} &=\frac{1}{N^2} \sum_{m,n} \braket{(a+b)_{\nu,r}}{\mbb{m},\mb{n}}\bra{\mbb{m},\mb{n}}\Psi_{\bp_1} ([\opP_v^{-1}\opU_v\opP_w]) \opS_{vw} \ket{a_{\lambda,\ell}} \\
    &=\frac{1}{N^2} \sum_{m,n} \braket{(a+b)_{\nu,r}}{\mbb{m},\mb{n}}\Psi_{\bp_1} (q^{2(n-m)}) \bra{\mbb{m},\mb{n}} \opS_{vw} \ket{a_{\lambda,\ell}}.
\end{align*}
Each of the matrix coefficients are computed as
\begin{align*}
    \braket{(a+b)_{\nu,r}}{\mbb{m},\mb{n}} &= \sum_{s\in \bZ_N}\gamma(s)q^{-2(2r-\lambda)s}\braket{s,r-s}{\mbb{m},\mb{n}} \\
    &= \sum_{s\in \bZ_N}q^{-2(2r-\lambda)s} q^{-2sm}q^{-2(r-s)n} \\
    &= N \delta_{\lambda-2r-m+n,0}q^{-2rn}
\end{align*}
by \cref{lem:basis_a+b}, and 
\begin{align*}
    \bra{\mbb{m},\mb{n}} \opS_{vw} \ket{a_{\lambda,\ell}} &= 
    \bra{\mbb{m},\mb{n}}  \opS_{vw} \ket{\ell,\mbb{\ell-\lambda}} \\
    &=\sum_{r \in \bZ_N}  \bra{\mbb{m},\mb{n}}  \opS_{vw} \ket{\ell,r} \gamma(r)^{-1}q^{-2r(\ell-\lambda)} \\
    &=\sum_{r \in \bZ_N}  \braket{\mbb{m},\mb{n}}{\ell-r,r} \gamma(r)^{-1}q^{-2r(\ell-\lambda)} \\
    &=\sum_{r \in \bZ_N}  \textcolor{red}{\gamma(\ell-r)}q^{2m(\ell-r)}q^{2nr} \textcolor{red}{\gamma(r)^{-1}}q^{-2r(\ell-\lambda)} \\
    &=\gamma(\ell)\sum_{r \in \bZ_N}  q^{-2\ell r} q^{2m(\ell-r)}q^{2nr} q^{-2r(\ell-\lambda)} \\
    &=N \delta_{-m+n-2\ell+\lambda,0}\gamma(\ell) q^{2m \ell}.
\end{align*}
The two $\delta$-functions are compatible only when $r=\ell$, and impose the same condition in this case. Then
\begin{align*}
    \bra{(a+b)_{\nu,r}} \opT_{vw} \ket{a_{\lambda,\ell}} &= \delta_{r,\ell}\sum_{m,n \in \bZ_N} \delta_{\lambda-2r-m+n,0}q^{-2rn} \Psi_{\bp_1} (q^{2(n-m)})\gamma(\ell) q^{2m \ell} \\
    &= \delta_{r,\ell}\sum_{n \in \bZ_N} q^{-2\ell n} \Psi_{\bp_1} (q^{2(2\ell-\lambda)})\gamma(\ell) q^{2(n-2\ell+\lambda) \ell} \\
    &= \delta_{r,\ell}\sum_{n \in \bZ_N} \Psi_{\bp_1} (q^{2(2\ell-\lambda)}) q^{-3\ell^2+2\lambda\ell} \\
    &=N\delta_{r,\ell} \Psi_{\bp_1} (q^{2(2\ell-\lambda)}) \gamma(\ell)^{-3}q^{2\lambda\ell},
\end{align*}
as is claimed. 

\paragraph{\textbf{(2) Matrix coefficients of $\gamma(\oph_b)^{1/2}$}}
We prove
\begin{align}\label{eq:coeff_a_a+b}
    \bra{a_{\lambda,k}} \gamma(\oph_b)^{1/2}\ket{(a+b)_{\nu,r}} = \zeta^{(2)}_+ \delta_{\lambda,\nu}\gamma(r)^3 \gamma(2r-2k+\lambda-\nu)^{-M}q^{-2\nu r}.
\end{align}
Indeed, by inserting the diagonal basis for $\oph_b$, we compute
\begin{align*}
    \bra{a_{\lambda,k}} \gamma(\oph_b)^{1/2}\ket{(a+b)_{\nu,r}} &= 
    \frac{1}{N} \sum_{\mu,p \in \bZ_N} \braket{a_{\lambda,k}}{b_{\mu,p}}\bra{b_{\mu,p}}\gamma(\oph_b)^{1/2}\ket{(a+b)_{\nu,r}} \\
    &= 
    \frac{1}{N} \sum_{\mu,p \in \bZ_N} \braket{a_{\lambda,k}}{b_{\mu,p}}\gamma(\mu)^{M}\braket{b_{\mu,p}}{(a+b)_{\nu,r}} \\
    &= 
    \frac{1}{N} \sum_{\mu,p \in \bZ_N} \gamma(p)q^{4kp-2\lambda p-2\mu k}\cdot \gamma(\mu)^{M}\cdot\gamma(p)q^{2(-2r+\nu-\mu)p} \gamma(r)^3 q^{2(\mu-\nu) r} \\
    &= 
    \frac{1}{N} \gamma(r)^3\sum_{\mu \in \bZ_N} \gamma(\mu)^{M}q^{2(\mu-\nu) r-2\mu k} \sum_{p \in \bZ_N} \gamma(p)^2q^{2(2k-2r+\nu-\mu-\lambda)p}.
\end{align*}
Here we used \cref{prop:torus_basis,lem:basis_a+b}. Then by using \cref{lem:Gauss_2} for the summation on $p$, we get
\begin{align*}
    \bra{a_{\lambda,k}} \gamma(\oph_b)^{1/2}\ket{(a+b)_{\nu,r}} &=
    \zeta^{(2)}_+\frac{1}{N} \gamma(r)^3\sum_{\mu \in \bZ_N} \gamma(\mu)^{M}q^{2(\mu-\nu) r-2\mu k} \gamma(2k-2r+\nu-\mu-\lambda)^{-M} \\
    &=
    \zeta^{(2)}_+\frac{1}{N} \gamma(r)^3\sum_{\mu \in \bZ_N} q^{2(\mu-\nu) r-2\mu k}\gamma(2r-2k+\lambda-\nu)^{-M} q^{-(2r-2k+\lambda-\nu)\mu} \\
    &=
    \zeta^{(2)}_+\frac{1}{N} \gamma(r)^3\gamma(2r-2k+\lambda-\nu)^{-M}q^{-2\nu r} \sum_{\mu \in \bZ_N} q^{-(\lambda-\nu)\mu} \\
    &=
    \zeta^{(2)}_+\delta_{\lambda,\nu} \gamma(r)^3\gamma(2r-2k+\lambda-\nu)^{-M}q^{-2\nu r},
\end{align*}
as is claimed. 

Finally, by combining \eqref{eq:coeff_a+b_a} and \eqref{eq:coeff_a_a+b}, we obtain

\begin{align*}
    \bra{a_{\lambda,k}} \overline{V}_{T_b^{-1}}^{\bD}(\bbp_b;L_a,\lambda) \ket{a_{\lambda,\ell}} &= 
    \frac{1}{N} \sum_{\nu,r\in \bZ_N} \bra{a_{\lambda,k}} \gamma(\oph_b)^{1/2}\ket{(a+b)_{\nu,r}} \bra{(a+b)_{\nu,r}} \opT_{vw} \ket{a_{\lambda,\ell}} \\
    &= \zeta^{(2)}_+\sum_{\nu,r\in \bZ_N} \delta_{\lambda,\nu} \gamma(r)^3\gamma(2r-2k+\lambda-\nu)^{-M}q^{-2\nu r} \cdot \delta_{r,\ell} \Psi_{\bp_1} (q^{2(2\ell-\lambda)}) \gamma(\ell)^{-3}q^{2\lambda\ell} \\
    &= \zeta^{(2)}_+  \Psi_{\bp_1} (q^{2(2\ell-\lambda)}) \gamma(2\ell-2k)^{-M} \\
    &= \zeta^{(2)}_+  \Psi_{\bp_1} (q^{2(2\ell-\lambda)}) \gamma(\ell-k)^{-2}. 
\end{align*}
Taking its trace, we obtain
\begin{align*}
    \overline{Z}_N(T_b^{-1};\bbp_b;L_a,\lambda) &= \frac{1}{N} \left|\sum_{k \in \bZ_N} \bra{a_{\lambda,k}} \overline{V}_{T_b^{-1}}^{\bD}(\bbp_b;L_a,\lambda) \ket{a_{\lambda,k}}\right| \\
    &= \frac{1}{N^{1/2}} \left|\sum_{k \in \bZ_N} \Psi_{\bp_1} (q^{2(2k-\lambda)})\right|.
\end{align*}

\appendix

\section{Mapping torus over a marked surface}\label{sec:mapping_torus}
Recall from \cref{subsec:surface} that a marked surface is a compact oriented surface $\Sigma$ equipped with a fixed non-empty finite set $\bM \subset \Sigma$ of marked points. For each boundary component $b$ of $\Sigma$, let $\bM_b:=\bM \cap b$. Let $\Sigma^\ast:=\Sigma \setminus \bM$. We also use the notation $\bM_\circ:=\bM \setminus \bM_\partial$, so that
\begin{align*}
    \bM = \bM_\circ \sqcup \bM_\partial = \bM_\circ \sqcup \bigsqcup_b \bM_b.
\end{align*}
Recall from \cref{subsec:trace} that a mapping class $\phi \in MC(\Sigma)$ is an isotopy class of an orientation-preserving homeomorphism of $\Sigma$ that preserves the subsets $\bM$ and $\partial\Sigma$. We consider the mapping torus of $\phi$ in the following sense:

\begin{dfn}[Mapping torus]\label{def:mapping_torus}
Given a mapping class $\phi \in MC(\Sigma)$, let 
\begin{align*}
    M_\phi^\circ:= ([0,1] \times (\Sigma\setminus \bM_\circ))/(0,x) \sim (1,\phi^{-1}(x)). 
\end{align*}
Each $p \in \bM_\circ$ gives rise to a torus boundary $T_p$ of $M_\phi^\circ$. 
\begin{itemize}
    \item If $\partial\Sigma=\emptyset$ (and hence $\bM_\partial=\emptyset$), let $M_\phi:=M_\phi^\circ$. 
    \item Otherwise, each boundary component $b$ of $\Sigma$ gives rise to a torus boundary $T_b$ of $M_\phi^\circ$. The orbit of $\bM_b$ forms a torus link $L_b \subset T_b$ of some slope $p_b/q_b$. Consider the $p_b/q_b$-Dehn filling: attach a solid torus $D^2 \times S^1$ to $T_b$ so that its meridian represents the slope $p_b/q_b$. The resulting space is denoted by $M_\phi$, and called the \emph{mapping torus} of $\phi$.
\end{itemize}

\end{dfn}
For example, if $\Sigma$ is a disk with $n$ marked points on the boundary and $\phi \in MC(\Sigma)$ is the $1/n$ rotation, then $M_\phi=L(n,1)$ is the lens space.

We are going to investigate the relationship between local systems on $\Sigma$ and $M_\phi$. We focus on the case $G=PGL_2$, while the generalization to semisimple adjoint groups is straightforward. 
%Let $G$ be a semisimple adjoint group, and fix an opposite pair $B^\pm \subset G$ of Borel subgroups. Let $U^\pm:=[B^\pm,B^\pm]$
Following \cite{GS19}, let $\mathcal{P}_{PGL_2,\Sigma}$ denote the moduli stack of framed $PGL_2$-local on $\Sigma$ with pinnings. It parametrizes the following data:
\begin{itemize}
    \item A $PGL_2$-local system $\mathcal{L}$ (a flat $PGL_2$-bundle) over $\Sigma^\ast$.
    \item For each $p \in \bM_\circ$, a flat section $\beta_p$ of $\mathcal{L}\times_{PGL_2} \mathbb{P}^1$ defined over a small punctured disk centered at $p$.
    \item For each $m \in \bM_\partial$, a flat section $\alpha_m$ of $\mathcal{L}\times_{PGL_2} \widehat{\mathbb{P}}^1$ defined over a small punctured half-disk centered at $m$. Here, $\widehat{\mathbb{P}}^1:=(\bC^2 \setminus \{0\})/\{\pm 1\}$. For each consecutive pair $(m,m')$, the pair $(\alpha_m,\alpha_{m'})$ is required to be generic, namely $\alpha_m \wedge \alpha_{m'} \neq 0$ (after a parallel-transport to a common fiber along the boundary). 
\end{itemize}
The mapping class group $MC(\Sigma)$ naturally acts on $\mathcal{P}_{PGL_2,\Sigma}$. 

\begin{thm}\label{thm:mapping_torus}
For a mapping class $\phi \in MC(\Sigma)$, the $\phi$-fixed points of $\mathcal{P}_{PGL_2,\Sigma}$ lifts to the following data on $M_\phi$:
\begin{itemize}
    \item A $PGL_2$-local system $\widehat{\mathcal{L}}$ on $M_\phi$.
    \item For each $p \in \bM_\circ$, a flat section $\widehat{\beta}_p$ of $\widehat{\mathcal{L}}\times_{PGL_2} \mathbb{P}^1$ along the torus boundary $T_p$.
    \item For each $\phi$-orbit $o$ of $\bM_b$, a flat section $\widehat{\alpha}_o$ of $\widehat{\mathcal{L}}\times_{PGL_2} \widehat{\mathbb{P}}^1$ along the corresponding component of the torus link $L_b$. Each pair of such decorations that are consecutive along the longitude of the filled solid torus is generic. 
\end{itemize}
If $\bM_\partial \neq \emptyset$ or the data is generic, then the correspoindence is one-to-one.  
\end{thm}

\begin{proof}
By the van Kampen theorem, the fundamental group of $M_\phi^\circ$ is generated by $\pi_1(\Sigma^\ast)$ and the fiber loop $t$, subject to the relation
\begin{align}\label{eq:rel_mapping_torus}
    t \ast \gamma \ast t^{-1} = \phi(\gamma), \qquad \gamma \in \pi_1(\Sigma^\ast).
\end{align}
For simplicity, suppose that the $\phi$-action on each boundary component of $\Sigma$ rotates the marked point by one step. Then the slope of $L_b$ is $N_b:=|\bM_b|$. By construction, the fundamental group of $M_\phi$ is obtained from $\pi_1(M_\phi^\circ)$ by adding the relation
\begin{align}\label{eq:rel_filling}
    t^{N_b} \gamma_b= 1, \qquad b \in \pi_0(\partial\Sigma),
\end{align}
where $\gamma_b$ is the loop surrounding the boundary component $b$ going clockwise, since the loop corresponding to the meridian of the solid torus is trivialized. 

Let $[\mathcal{L},\beta,\alpha] \in \P_{PGL_2,\Sigma}$ be a $\phi$-fixed point. Fix a basepoint $x_0 \in \Sigma^\ast$ and a point $s \in \mathcal{L}_{x_0}$. Then we get the monodromy homomorphism $\rho_s: \pi_1(\Sigma^\ast,x_0) \to PGL_2$ corresponding to $\mathcal{L}$. 
For $m \in \bM$, let $\Gamma_m$ denote the set of homotopy classes of paths from $x_0$ to $m$. Let $\Gamma_\Sigma^\circ:=\bigsqcup_{p \in \bM_\circ} \Gamma_p$ and $\Gamma_\Sigma^\partial:=\bigsqcup_{m \in \bM_\partial} \Gamma_m$. The decoration $\beta$ determines $\beta(\gamma) \in \mathbb{P}^1$ for $\gamma \in \bM_\circ$ by parallel-transport and the evaluation $(\mathcal{L}\times_{PGL_2} \mathbb{P}^1)_{x_0} \cong \mathbb{P}^1$ determined by $s$. Similarly, $\alpha$ determines $\alpha(\gamma) \in \widehat{\mathbb{P}}^1$ for $\gamma \in \bM_\partial$. 

Then there exists $g \in PGL_2$ such that
\begin{align}
    \rho(\phi(\gamma)) &= g \rho(\gamma) g^{-1}, \qquad \gamma \in \pi_1(\Sigma^\ast,x_0), \label{eq:fixed_monodromy}\\
    \beta(\phi(\gamma)) &= g. \beta(\gamma), \qquad \gamma \in \Gamma_\Sigma^\circ,\label{eq:fixed_beta} \\
    \alpha(\phi(\gamma)) &= g. \alpha(\gamma), \qquad \gamma \in \Gamma_\Sigma^\partial. \label{eq:fixed_alpha}
\end{align}
Such $g$ is unique if the stabilizer of the triple $(\rho,\beta,\alpha)$ is trivial. This condition is satisfied if $\bM_\partial \neq \emptyset$, by the genericity condition on $\alpha$. Extend $\rho_s$ to
\begin{align*}
    \rho^\circ_s: \pi_1(M_\phi^\circ) \to PGL_2
\end{align*}
by setting $\rho^\circ_s(t):=g$. Then the condition \eqref{eq:fixed_monodromy} implies the relation \eqref{eq:rel_mapping_torus}, hence $\rho^\circ_s$ is well-defined. For a small corner arc at $m \in \bM_\partial$ (cf. \cref{fig:boundary_sum}), a normalized parallel-transport $\rho_m \in G$ is determined. See \cite[Lemma-Definition 3.11]{IOS}.\footnote{$\rho_m$ is called the twisted Wilson line \emph{loc. cit.} We do not need to care about framings, since we are working on the adjoint group. See also \cite[Remark 3.10]{IOS}.} If $m_1,\dots,m_{N_b}$ are the marked points on $b$ in this counter-clockwise order, then these elements satisfy $\rho_{m_1} \rho_{m_2}\dots \rho_{m_{N_b}} = \rho_s(\gamma_b)^{-1}$. By \eqref{eq:fixed_beta}, \eqref{eq:fixed_alpha} and the definition of $\rho_m$, it follows that $g=\rho_{m_1}=\dots \rho_{m_{N_b}}$.\footnote{Recall from \cite[Section 3.2]{IOS}, the pinnings (local trivializations of $\mathcal{L}$) are determined by the pairs $p_E:=(\alpha_m,\pi(\alpha_{m'})) \in \widehat{\mathbb{P}}^1 \times  \mathbb{P}^1$, where $\pi: \widehat{\mathbb{P}}^1 \to \mathbb{P}^1$ denotes the canonical projection; $m'$ is counter-clockwise to $m$, and $E$ is the boundary interval bounded by $m$ and $m'$. Let $E_i$ denote the boundary interval on $b$ between $m_i$ and $m_{i+1}$ ($i$ mod $N_b$). Writing $p_{E_i}=g_i.p_\mathrm{std}$ for a unique $g_i \in PGL_2$, we have $\rho_{m_i}=g_{i-1}^{-1} g_i$. The $\phi$-fixed condition implies $\rho_{m_i}=\rho_{m_{i+1}}$ for $i$ mod $N_b$.} 
Therefore, the relation \eqref{eq:rel_filling} is satisfied and $\rho_s^\circ$ descends to a group homomorphism $\widehat{\rho}_s: \pi_1(M_\phi) \to PGL_2$. It determines a $PGL_2$-local system $\widehat{\mathcal{L}}$ on $M_\phi$. The correspondence of the remaining data should be clear. 

Conversely, given such data on $M_\phi$, a point of $\P_{PGL_2,\Sigma}^\phi$ is obtained via the restriction to $\Sigma^\ast=\{0\} \times \Sigma^\ast \hookrightarrow M_\phi$.
\end{proof}

\begin{rem}\label{rem:mapping_torus_hyperbolic}
By the Hyperbolic Dehn Surgery Theorem \cite{Thurston}, the mapping torus $M_\phi$ has a complete hyperbolic structure if $M_\phi^\circ$ has a complete hyperbolic structure and the slopes for the surgery avoid finitely many exceptions. Note that $M_\phi^\circ$ is hyperbolic if and only if $\phi$ is pseudo-Anosov. 
\end{rem}

\section{The case of \texorpdfstring{$q^N=-1$}{qN=-1}}\label{sec:root}
In this section, we briefly discuss how the theory changes if we employ the choice $q=e^{\pi i/N}$ for an arbitrary positive integer $N$, so that $q^N=-1$, $q^{2N}=1$. Most part of the theory does not change drastically, but gets extra signs in some formulas. 
As pointed out by Bonahon--Liu \cite{BL}, the representation theory of quantum tori best works for the cases $q^N=(-1)^{N+1}$ (see also \cite{Bai,BWY-1,BWY-2}).

For distinction, let us write $\zeta:=e^{\pi i/N}$. When $N$ is odd, it is related to the previous choice by $\zeta=-q$. Basically, the modification goes with this substitution. 

\smallskip 

\paragraph{\textbf{Cyclic quantum dilogarithm}}
The expression of cyclic quantum dilogarithm is changed into
\begin{align*}
    \Psi_\bp(X):= \prod_{j=0}^{N-1} ( p^- - \zeta^{2j+1} p^+X )^{j/N},
\end{align*}
where $\bp=(p^+,p^-) \in \overline{\cF}_N$. 
The basic formula in \cref{lem:cyclic_prod} changes as follows: 
\begin{lem}%\label{lem:cyclic_prod}
For $\zeta=e^{\pi i/N}$, we have 
\begin{align*}
    \prod_{k=0}^{N-1} (y-q^{2k+1}x)=y^N+x^N, \quad
    \prod_{k=0}^{N-1} (y-q^{2k}x)=y^N-x^N.
\end{align*}
\end{lem}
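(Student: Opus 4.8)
The plan is to run the same two-step argument as in the proof of \cref{lem:cyclic_prod}, tracking the one sign that is now different. First I would record that $\zeta^2=e^{2\pi i/N}$ is a primitive $N$-th root of unity, so the factorization of $z^N-1$ over $\bC$ reads $1-z^N=\prod_{k=0}^{N-1}(1-\zeta^{2k}z)$. Substituting $z=x/y$ and multiplying through by $y^N$ gives the second identity $\prod_{k=0}^{N-1}(y-\zeta^{2k}x)=y^N-x^N$.

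For the first identity I would substitute $x\mapsto \zeta x$ into the second one, obtaining
\begin{align*}
    \prod_{k=0}^{N-1}(y-\zeta^{2k+1}x)=y^N-(\zeta x)^N=y^N-\zeta^N x^N.
\end{align*}
Here the key point — and the only place the current normalization $\zeta=e^{\pi i/N}$ differs from the odd-$N$ case with $q^N=1$ — is that $\zeta^N=e^{\pi i}=-1$, so $y^N-\zeta^N x^N=y^N+x^N$, giving the claimed formula.

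There is essentially no obstacle here: the computation is routine and parallels \cref{lem:cyclic_prod} verbatim except that the substitution $x\mapsto \zeta x$ (rather than $x\mapsto \pm qx$) now produces a genuine sign flip in the second term because $\zeta^N=-1$ instead of $q^N=1$. The only thing worth stating explicitly is that $\zeta^2$ is still primitive of order exactly $N$ for every positive integer $N$, so that the product over $k\in\{0,\dots,N-1\}$ does run over all $N$-th roots of unity without repetition; this is immediate since $\zeta$ has order $2N$.
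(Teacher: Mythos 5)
Your proof is correct and is essentially the intended argument: the paper states this lemma without a separate proof, as the obvious adaptation of \cref{lem:cyclic_prod}, and your two steps (factorizing $1-z^N=\prod_{k}(1-\zeta^{2k}z)$ using that $\zeta^2$ is a primitive $N$-th root of unity, then substituting $x\mapsto\zeta x$ and using $\zeta^N=-1$) are exactly that adaptation. The explicit remark that the only change from the odd-$N$ case is the sign coming from $\zeta^N=-1$ instead of $q^N=1$ is the right point to highlight.
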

Accordingly, the $q$-difference relation (\cref{lem:q-diff_eq}) is modified as follows:
\begin{align}\label{eq:zeta_diff_eq}
    \Psi_\bp(\zeta^{2}X) &= (p^- - \zeta p^+X ) \Psi_\bp(X), \\
    \Psi_\bp(\zeta^{-2}X) &= (p^- -\zeta^{-1}p^+ X )^{-1} \Psi_\bp(X)
\end{align}
for $X^N=1$. The inversion relation (\cref{thm:inv}) becomes
\begin{align*}
    \Psi_\bp(\zeta^{2k})\cdot \Psi_{\bp^\op}(\zeta^{-2k})=\gamma(k)\zeta_\inv,
\end{align*}
where $\zeta_\inv$ does not change but 
\begin{align*}
    \gamma(k):=(-1)^k \zeta^{k^2}=\zeta^{k(k-N)}
\end{align*}
The $\gamma$-operator (\cref{def:gamma_op}) is still defined, which has the spectrum $\{\gamma(k)\}_{k \in \bZ_N}$ with this modification. 

\smallskip

\paragraph{\textbf{Pentagon relation and the modified Weyl ordering}}
The pentagon relation (\cref{thm:pentagon}) still holds with the simple replacement $q=-\zeta$. 
In particular, $-\zeta^{-1}\opU\opP$ appears in the right-hand side, which satisfies $(-\zeta^{-1}\opU\opP)^N=\mathsf{1}$. In general, the following modification of the Weyl ordering is useful:

\begin{dfn}[Modified Weyl ordering, cf.~{\cite[Section 8.2]{AK-CS}}]\label{def:modified_Weyl}
For two operators $\opX,\opY$ acting on $V_N$ such that $\opX \opY = \zeta^{2a}\opY \opX$, we define 
\begin{align*}
    \ddbra{\opX\opY} :=(-\zeta)^{-a}\opX\opY = (-\zeta)^a\opY\opX.
\end{align*}
\end{dfn}
It differs from the usual Weyl ordering $[\opX\opY]=\zeta^{-a}\opX\opY$ by a sign. It is designed so that the following holds:

\begin{lem}%\label{lem:modified_Weyl}
If $\opX^N=c_1$ and $\opY^N=c_2$ for some $c_1,c_2 \in \bC^\ast$, then $\ddbra{\opX\opY}^N=c_1c_2$. 
\end{lem}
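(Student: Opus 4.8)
The plan is to repeat the computation behind \cref{lem:Weyl} almost verbatim, with the scalar $q$ there (which satisfied $q^{N}=1$) replaced by $-\zeta$. The one genuinely new point is that now neither $(-\zeta)^{N}=(-1)^{N+1}$ nor $\zeta^{N}=-1$ is trivial, so the two displaced signs must be shown to cancel; this cancellation is exactly why \cref{def:modified_Weyl} builds the ordering out of $-\zeta$ rather than $\zeta$.

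Concretely, write $a$ for the integer with $\opX\opY=\zeta^{2a}\opY\opX$ (the commutation exponent of two Laurent monomials in $\opU,\opP$ is always an integer). From the definition $\ddbra{\opX\opY}=(-\zeta)^{-a}\opX\opY$ one gets $\ddbra{\opX\opY}^{N}=(-\zeta)^{-aN}(\opX\opY)^{N}$. The first step is to reorder $(\opX\opY)^{N}=(\opX\opY)(\opX\opY)\cdots(\opX\opY)$ into $\opX^{N}\opY^{N}$: sliding each $\opX$ past the copies of $\opY$ to its left requires $1+2+\cdots+(N-1)=\binom{N}{2}$ transpositions, each contributing $\zeta^{2a}$, so $(\opX\opY)^{N}=\zeta^{aN(N-1)}\opX^{N}\opY^{N}=\zeta^{aN(N-1)}c_{1}c_{2}$ using the hypotheses $\opX^{N}=c_{1}$ and $\opY^{N}=c_{2}$.

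The second step is the sign check: simplify the scalar $(-\zeta)^{-aN}\zeta^{aN(N-1)}$ modulo the relations $\zeta^{N}=-1$, $\zeta^{2N}=1$. Splitting off the sign gives $(-\zeta)^{-aN}=(-1)^{aN}\zeta^{-aN}$, and $\zeta^{-aN}\zeta^{aN(N-1)}=\zeta^{aN(N-2)}=(\zeta^{N})^{aN}(\zeta^{2N})^{-a}=(-1)^{aN}$; multiplying, the total scalar is $(-1)^{aN}(-1)^{aN}=(-1)^{2aN}=1$, and hence $\ddbra{\opX\opY}^{N}=c_{1}c_{2}$.

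I do not expect a real obstacle beyond this bookkeeping; the only point worth flagging is the integrality of $a$, which is precisely what makes $(-1)^{2aN}=1$ and thus forces the two stray signs to cancel exactly rather than merely up to $\pm1$. (For half-integer commutation exponents — e.g.\ those that could arise from square-root operators — the statement would acquire a compensating sign, but the lemma is only ever applied to operators with integer $a$.)
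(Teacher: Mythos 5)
Your proof is correct and is exactly the intended argument: the paper states this lemma without proof as the direct analogue of \cref{lem:Weyl}, and your computation is precisely that adaptation, with the extra sign bookkeeping for $(-\zeta)^{-aN}$ carried out correctly (and the integrality of $a$ rightly flagged as the reason the stray signs cancel). One cosmetic point: moving the $\opX$'s to the left gives a factor $\zeta^{-2a}$ per transposition rather than $\zeta^{2a}$, but the two total phases differ by $\zeta^{2aN(N-1)}=(\zeta^{2N})^{a(N-1)}=1$, so your conclusion is unaffected.
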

The diagonalization of $\ddbra{\opP\opU^{-1}}$ (\cref{lem:slant_diagonal}) goes with the same formula with the redefined $\gamma(m)$ above. The formulae in \cref{subsec:Gaussian} remain unchanged. 

The constructions in \cref{sec:Kashaev} are mostly unchanged under the redefinitions above: with the replacements $q=-\zeta$ and $[\cdots] \mapsto \ddbra{\cdots}$. The formulae in \cref{thm:AT_relations} remain unchanged. 
% \begin{rem}
% As explained in \cite[Section 8.2]{AK-CS}, the computation of the modified Weyl ordering can be understood by formally considering an operator $L(\opX)$ such that $\opX=e^{2\pi i L(\opX)/N}$ and the Heisenberg relation 
% \begin{align*}
%     [L(\opX),L(\opY)] = \frac{a (1-N)N}{2\pi i}
% \end{align*}
% corresponding to the Weyl relation $\opX\opX =q^{2a}\opY\opX$. Indeed, by the Baker--Campbell--Hausdorff formula, we formally obtain
% \begin{align*}
%     \opX \opY = e^{2\pi i L(\opX)/N}e^{2\pi i L(\opY)/N} = e^{2\pi i (L(\opX)+L(\opY))/N} e^{(2\pi i/N)^2 [L(\opX),L(\opY)]/2} = (-q)^a e^{2\pi i (L(\opX)+L(\opY))/N}.
% \end{align*}
% Therefore by identifying $\ddbra{\opX\opY}=e^{2\pi i (L(\opX)+L(\opY))/N}$, we recover the relation $\ddbra{\opX\opY}=(-q)^{-a}\opX\opY$. While we will not use this formalism explicitly, it is useful to compute the $q$-exponents in $q$-commutation relations. 
% \end{rem}

\smallskip 

\paragraph{\textbf{Determinant of the flip operator}}

The determinant of the flip operator is changed into $\det \opT_{vw} = (-1)^{\frac{(N-1)^2N}{2}}$, while the important conclusion $|\det \opT_{vw}|=1$ is the same. 

\smallskip

\paragraph{\textbf{Chekhov--Fock algebras and the quantum Frobenius map}}
According to the sign corrections above, the setting for the Chekhov--Fock theory slightly changes. The basic idea is as follows. In view of \eqref{eq:zeta_diff_eq}, we should consider $\Psi_\bp(-\overline{X}_k)$ as the automorphism part in order to obtain the correct mutation formula. Also observe that this formula is valid only when $(-\overline{X}_k)^N=1$. 

Based on these facts, we consider
\begin{align*}
    \X_{\boldsymbol{y}} := \X/ \langle X_i^N - (-y_i)^N \mid i \in I \rangle
\end{align*}
for a given $\boldsymbol{y}=(y_i)_{i \in I} \in \bC^I$. 
By using $\Psi_\bp(-y_k^{-1}X_k)=\Psi_\bp(-\overline{X}_k)$ as the automorphism part, we again obtain the same mutation formula. %The quantum Frobenius map (\cref{prop:Frobenius}) gets a correction:

% \begin{prop}
% There exists an algebra embedding
% \begin{align*}
%     \opF_N: \cO_1(\X_\bs) \to \cO_\zeta(\X_\bs)
% \end{align*}
% such that $X_i \mapsto (-X_i)^N$, $i \in I$ for each seed in $\bs$.
% \end{prop}

\paragraph{\textbf{Square-root operators}}
It seems that we have no analog of the square-root operators when $N$ is even. The square-root operators are used in the transvection operator \eqref{eq:transvection} (and hence to introduce the irreducible intertwiner). We also need to take a square-root of $\gamma(\ell)$ in defining the basis $\ket{\phi_\ell}$ in \cref{thm:cyclic_quantum_group}.

\bigskip

\subsection*{Author Declarations}
The author declares no conflict of interest associated with this manuscript. The author confirms that the data supporting the findings of this study are available within the article.

\end{document}